\providecommand{\U}[1]{\protect \rule{.1in}{.1in}}
\newcommand\@dotsep{4.5}
\def\@tocline#1#2#3#4#5#6#7{\relax
  \ifnum #1>\c@tocdepth 
  \else
    \par \addpenalty\@secpenalty\addvspace{#2}%
    \begingroup \hyphenpenalty\@M
    \@ifempty{#4}{%
      \@tempdima\csname r@tocindent\number#1\endcsname\relax
    }{%
      \@tempdima#4\relax
    }%
    \parindent\z@ \leftskip#3\relax \advance\leftskip\@tempdima\relax
    \rightskip\@pnumwidth plus1em \parfillskip-\@pnumwidth
    #5\leavevmode\hskip-\@tempdima{#6}\nobreak
    \leaders\hbox{$\m@th\mkern \@dotsep mu\hbox{.}\mkern \@dotsep mu$}\hfill
    \nobreak
    \hbox to\@pnumwidth{\@tocpagenum{\ifnum#1=1\bfseries\fi#7}}\par
    \nobreak
    \endgroup
  \fi}
\renewcommand\csname r@tocindent0\endcsname{0pt}
\def\l@subsection{\@tocline{2}{0pt}{2.5pc}{5pc}{}}
\newtheorem{theorem}{Theorem}[section] 
\newtheorem{corollary}[theorem]{Corollary} 
\newtheorem{definition}[theorem]{Definition} 
\newtheorem{lemma}[theorem]{Lemma} 
\newtheorem{proposition}[theorem]{Proposition}
\newtheorem{theoremA}{Theorem} 
\theoremstyle{remark}
\newtheorem{remark}[theorem]{Remark} 
\numberwithin{equation}{section}  
\def\be{\begin{equation}}
\def\ee{\end{equation}}
\def\BR{{\mathcal B}}
\def\CR{{\mathcal C}}
\def\DR{{\mathcal D}}
\def\ER{{\mathcal E}}
\def\IR{{\mathcal I}}
\def\OR{{\mathcal O}}
\def\QR{{\mathcal Q}}
\def\SR{{\mathcal S}}
\def\XR{{\mathcal X}}
\def\R{\mathbb R}
\def\N{\mathbb N}
\DeclareMathAlphabet{\mathpzc}{OT1}{pzc}{m}{it}
\def\p{\mathpzc{p}}
\def\al{\alpha}
\def\ga{\gamma}
\def\Ga{\Gamma}
\def\ep{\varepsilon}
\def\iy{\infty}
\def\la{\lambda}
\def\vp{\varphi}
\def\pa{\partial}
\def\ti{\tilde}
\def\wt{\widetilde}
\def\ov{\overline}
\def\lab{\label}
\def\f{\frac}
\def\sbr#1{\left(#1\right)} 
\def\mbr#1{\left[#1\right]} 
\def\lbr#1{\left\{#1\right\}} 
\def\abr#1{\left\langle#1\right\rangle} 
\def\abs#1{\left\lvert#1\right\rvert} 
\def\nm#1{\left\|#1\right\|} 
\def\vs{\vskip .2in} 
\def\rd{\mathrm d} 
\def\loc{{\operatorname{\rm loc}}} 
\def\dist{\operatorname{dist}} 
\begin{document}
\theoremstyle{plain}

\title[Compactness on singular surface]{Critical points of the Moser-Trudinger functional on conical singular surfaces, I: compactness}
\author{Zhijie Chen}\address{Department of Mathematical Sciences, Yau Mathematical Sciences Center, Tsinghua University, Beijing, China}\email{zjchen2016@tsinghua.edu.cn}
\author{Houwang Li}\address{Beijing Institute of Mathematical Sciences and Applications \& Yau Mathematical Sciences Center, Tsinghua University, Beijing,  China}\email{lhwmath@bimsa.cn}

\keywords{}


\begin{abstract}
Let $(\Sigma, g_1)$ be a compact Riemann surface with conical singularites of angles in $(0, 2\pi)$, and $f: \Sigma\to\mathbb R$ be a positive smooth function.
In this paper, by establishing a sharp quantization result, we prove the compactness of the set of positive critical points for the Moser-Trudinger functional 
$$F_1(u)=\int_{\Sigma}(e^{u^2}-1)f\rd v_{g_1}$$constrained to $u\in\ER_\beta:=\{u\in H^1(\Sigma,g_1) : \nm{u}_{H^1(\Sigma,g_1)}^2=\beta\}$ for any $\beta>0$.
This result is a generalization of the compactness result for the Moser-Trudinger functional on regular compact surfaces, proved by  De Marchis-Malchiodi-Martinazzi-Thizy (Inventiones Mathematicae, 2022, 230: 1165-1248). The presence of conical singularities brings many additional difficulties and we need to develop different ideas and techniques.
The compactness lays the foundation for proving the existence of critical points of the Moser-Trudinger functional on conical singular surfaces in a sequel work.

\end{abstract}
\maketitle

\tableofcontents

\section{Introduction}\lab{section-1}
\subsection{Motivation}\ 

Let $(\Sigma,g)$ be a compact Riemann surface without boundary. We endow the usual Sobolev space $H^1(\Sigma,g)$ with the standard norm $\nm{~\cdot~}_{H^1(\Sigma,g)}$ given by
\be\lab{norm-1} \nm{u}_{H^1(\Sigma,g)}^2=\int_\Sigma(|\nabla_{g} u|^2+u^2)\rd v_{g}. \ee

If $g=g_0$ is a smooth metric, then building up on previous works, see e.g. \cite{MT-1,MT-2,MT-3}, Li \cite{MT-4} proved the following Moser-Trudinger inequality
\be\lab{MT} 
	\sup\Big\{ \int_\Sigma e^{u^2}\rd v_{g_0}:~u\in H^1(\Sigma,g_0), \nm{u}_{H^1(\Sigma,g_0)}^2=\beta\Big\}<+\iy\quad\Longleftrightarrow\quad \beta\le 4\pi,
\ee
and  that there is an extremal function for \eqref{MT} even in the critical case $\beta=4\pi$ (see also \cite[Remark 5.1]{MT-blowup-7}). Let $F_0$ be defined by
	$$F_0(u):=\int_\Sigma (e^{u^2}-1)f\rd v_{g_0}, \quad\text{for}~u\in\Big\{u\in H^1(\Sigma,g_0) :  \nm{u}_{H^1(\Sigma,g_0)}^2=\beta\Big\},$$
where $f$ is a smooth positive function, then the critical points of $F_0$ are solutions of the following Moser-Trudinger equation
\be\lab{equ-1} -\Delta_{g_0}u+u=2\la f ue^{u^2} \quad\text{in}~(\Sigma,g_0),\ee
where $\Delta_{g_0}$ is the Laplace-Beltrami operator and $\lambda>0$ appears as the Lagrange multiplier and is given by
	$$2\la\int_\Sigma u^2e^{u^2}f\rd v_{g_0}=\beta=\nm{u}_{H^1(\Sigma,g)}^2.$$
For $\beta<4\pi$, by using Moser-Trudinger inequality \eqref{MT}, finding critical points of $F_0$ reduces to a standard maximization argument. Finding such critical points for $\beta>4\pi$ is a more challenging problem, since upper bounds on the functional fail. Some existence results of critical points for planar domains and slightly supercritical regions $0<\beta-4\pi\ll1$ were obtained in \cite{EQ-2,exist-2}. 
Moreover, as observed in \cite{MT-blowup-1}, $F_0$ fails to satisfy the global Palais-Smale condition, and instead, sequences of solutions of equation \eqref{equ-1}, as well as Palais-Smale sequences for $F_0$, may present a lack of compactness and develop some concentration phenomena. This prevents one from using the methods of  \cite{EQ-2,exist-2} for large $\beta$. 

Keeping in mind the well-known concentration compactness results \cite{MF-1,MF-2,MF-3} for Liouville type equations, it seems to be reasonable to recover the compactness for functional $F_0$. 
However, the problem becomes more difficult for Moser-Trudinger equations due to the critical increasing nonlinearity $ue^{u^2}$ in dimension two, and there have been many outstanding contributions over the past two decades. 
In \cite{MT-blowup-2}, Druet obtained an energy quantization result for solutions of equation \eqref{equ-1} on Euclidean bounded domains. Later, this quantization result was extended to multi-harmonic equations \cite{EQ-1,EQ-3}, and to elliptic equations on smooth closed surfaces \cite{MT-blowup-3}. Recently, up on Druet's quantization result, Druet and Thizy \cite{MT-blowup-6} showed that when $\Sigma=\Omega$ is an Euclidean bounded domain, for any solution sequence $u_n$ satisfying
	$$-\Delta u_n=\la_nfu_ne^{u_n^2},\quad u_n>0,\quad\text{in}~\Omega,\quad u_n=0,\quad\text{on}~\pa\Omega,$$
where $\Delta$ is the Laplace operator in $\R^2$, $\la_n>0$ and $f$ is a smooth positive function, if $u_n$ is uniformly bounded in $H^1_0(\Omega)$, then there holds
	$$\text{either}~\sup_{n}\nm{u_n}_{L^\iy(\Omega)}<\iy,\quad \text{or}~\nm{u_n}_{H^1_0(\Omega)}^2=4\pi N+o(1),\quad\text{where }N\in\N_+.$$
That is, the lack of compactness can only occur at the levels $4\pi\N_+$. 
Using this important result, and combining a sharper quantization based on the accurate error estimates in \cite{MT-blowup-4,MT-blowup-5}, Thizy \cite{exist-1} proved the existence and non-existence of extremal functions for some general Moser-Trudinger inequalities. 
More recently, in a remarkable work \cite{MT-blowup-7}, by considering a class of subcritical functionals $I_{p,\beta}$ (see \eqref{Ip} with $g_1$ replaced by $g_0$), and combining a minimax scheme together with compactness and sharp quantization results, De Marchis-Malchiodi-Martinazzi-Thizy succeeded to prove the existence of positive critical points of the functional $F_0$ for any $\beta>0$. Actually, they proved a more general result:

\begin{theoremA}(\cite{MT-blowup-7})
Let $(\Sigma,g_0)$ be a compact surface without boundary, where $g_0$ is a smooth metric. Let $f$ be a smooth positive function. Let $p\in[1,2]$ and $\beta>0$. Then, the following statements hold.
\begin{itemize}
\item[(1)] If $p\in(1,2)$, the set $\CR_{p,\beta}$ of the positive critical points of $I_{p,\beta}$ is compact in $C^2(\Sigma)$;
\item[(2)] If $p=1$ and $\beta\not\in4\pi\N_+$, the set $\CR_{1,\beta}$ of the positive critical points of $I_{1,\beta}$ is compact in $C^2(\Sigma)$;
\item[(3)] If $p=2$, the set $\CR_{2,\beta}$ of the positive critical points of $F_0$ constrained to $\{u\in H^1(\Sigma,g_0) : \nm{u}_{H^1(\Sigma,g_0)}^2=\beta\}$ is  compact in $C^2(\Sigma)$.
\end{itemize}
In all the above cases, the set $\CR_{p,\beta}$ is nonempty.
\end{theoremA}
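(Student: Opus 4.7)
The plan is to argue by contradiction: for given $p\in[1,2]$ and $\beta>0$ as in the statement, I assume the existence of a sequence $u_n\in\CR_{p,\beta}$ of positive critical points that admits no $C^2(\Sigma)$-convergent subsequence. The first task is to verify that $u_n$ is bounded in $H^1(\Sigma,g_0)$ (automatic in case (3) from the constraint, and from the structure of $I_{p,\beta}$ in cases (1)--(2)), so that non-compactness must come from the blow-up alternative $\|u_n\|_{L^\infty(\Sigma)}\to\infty$. From this point on, the argument is a quantitative blow-up analysis tailored to the Moser--Trudinger exponential nonlinearity.

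\textbf{Blow-up analysis and first-order quantization.} Following the Adimurthi--Struwe $\varepsilon$-regularity scheme refined by Druet \cite{MT-blowup-2,MT-blowup-3}, one shows that the concentration set $S=\{x_1,\dots,x_k\}\subset\Sigma$ is finite, $u_n\rightharpoonup u_\infty$ in $H^1$ with $u_\infty$ a smooth critical point of the limiting functional, and that around each $x_j$ a suitable concentration scale $\mu_{n,j}\to 0$ produces a standard Liouville bubble of mass $4\pi$. Combining Druet's energy quantization on closed surfaces with the sharp quantization of Druet--Thizy \cite{MT-blowup-6}, one obtains
\[
\|u_n\|^2_{H^1(\Sigma,g_0)}=\|u_\infty\|^2_{H^1(\Sigma,g_0)}+4\pi N_\star+o(1),\qquad N_\star\in\N_+.
\]
This identity settles the easier cases: in case (1) the strict subcriticality $p<2$ forces the blow-up profile to carry energy strictly exceeding $4\pi$ in the appropriate scaling, giving $N_\star=0$ and hence compactness; in case (2) the classical $4\pi\N$-quantization for the Liouville limit combined with the above forces $\beta\in 4\pi\N_+$, contradicting the hypothesis $\beta\notin 4\pi\N_+$.

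\textbf{Case (3) and the main obstacle.} The genuinely delicate case is $p=2$: at the exceptional levels $\beta\in 4\pi\N_+$ the first-order quantization becomes an identity rather than a contradiction, and a sharper second-order expansion is needed. Following the strategy of \cite{MT-blowup-4,MT-blowup-5,MT-blowup-6}, I would expand $u_n$ near each $x_j$ as a standard bubble plus a first-order correction driven by the Green's function of $(\Sigma,g_0)$ and the positive weight $f$, and derive a sharp two-term quantization of the form
\[
\beta=4\pi N_\star+\frac{A_{N_\star}(f,g_0;x_1,\dots,x_k)}{\log(1/\mu_n)}+o\!\left(\tfrac{1}{\log(1/\mu_n)}\right),
\]
accompanied by the strict sign condition $A_{N_\star}\neq 0$, which uses the positivity of $f$ in an essential way. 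Letting $n\to\infty$ then contradicts the constancy of $\beta$. The main technical obstacle is precisely the derivation of this expansion and, above all, the nondegeneracy of $A_{N_\star}$: it demands very precise pointwise control of the error after projecting $u_n$ onto the bubble, weighted $L^\infty$-log estimates via Green's-function representations, and the extraction of a definite sign from the cancellation of local blow-up data with global geometric quantities on $(\Sigma,g_0)$.
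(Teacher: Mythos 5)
Your overall frame (contradiction, $H^1$-boundedness, finite blow-up set, Liouville bubbles of mass $4\pi$, quantization plus a sharper expansion at the exceptional levels) is the right family of ideas, and it matches how the cited work and the present paper's singular analogues (Theorems \ref{thm0}--\ref{thm1}) proceed. But as written there are several genuine gaps. First, the statement also asserts that $\CR_{p,\beta}$ is \emph{nonempty} in all cases; your proposal proves (at best) compactness and says nothing about existence, which in \cite{MT-blowup-7} is a separate minimax/barycenter construction for $p\in(1,2)$ and a.e.\ $\beta$, upgraded to all $\beta$ and to $p=2$ precisely by means of the compactness statements. Second, your first-order quantization is stated with a residual $\nm{u_\infty}_{H^1}^2$; the key point in Druet--Thizy and in \cite{MT-blowup-7} (and in Theorem \ref{thm0} here, via $\lim_n\la_n=0$) is that blow-up forces the Lagrange multiplier to vanish, hence the weak limit solves $-\Delta_{g_0}u+u=0$ and is zero, so the quantization reads exactly $\beta\in 4\pi\N_+$ with no residual. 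Without establishing this, your case (2) deduction ``forces $\beta\in4\pi\N_+$'' does not follow from the formula you wrote. Third, your case (1) mechanism is wrong: for fixed $p\in(1,2)$ the bubbles still carry exactly $4\pi$ in the limit, so nothing forces $N_\star=0$; quantization alone only rules out $\beta\notin4\pi\N_+$, and compactness at the levels $\beta\in4\pi\N_+$ for subcritical $p$ requires the same sharp strict inequality $\beta_n>\beta$ as the critical case (this is exactly the ``Moreover'' clause, valid for all $p\in(1,2]$, in Theorems \ref{thm0-1} and \ref{thm0-2}).

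Finally, in case (3) the expansion you propose has the wrong structure. In \cite{MT-blowup-7} (and in Lemmas \ref{lemma5-7} and \ref{lemma6-7} here), the correction at order $\ga_n^{-p}$ --- which is the order of your $1/\log(1/\mu_n)$ term --- \emph{cancels}; the genuine first correction is
\[
\beta_n\ \ge\ 4\pi N_\star+\f{4(p-1)}{p^2}\,(4\pi+o(1))\sum_{i}\ga_{n,i}^{-2p},
\]
a universal, strictly positive, purely local quantity coming from a refined radial ODE analysis of the standard bubble (Malchiodi--Martinazzi, Mancini--Martinazzi), not from a global coefficient $A_{N_\star}(f,g_0;x_1,\dots,x_k)$ built from Green's functions whose nonvanishing must be proved. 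So the ``main technical obstacle'' you single out (nondegeneracy of a geometric coefficient) is misplaced: at the order you propose the coefficient is identically zero, and your argument would collapse there; conversely, with the correct expansion no global nondegeneracy is needed, the strict positivity of the $\ga_{n,i}^{-2p}$ term contradicts $\beta_n\equiv\beta$, the same estimate simultaneously settles case (1) at the levels $4\pi\N_+$, and its vanishing at $p=1$ is exactly why that case is excluded.
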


We explain roughly their strategy: firstly, by a minimax scheme based on the so called {\it barycenters}, they proved the existence of positive critical points of $I_{p,\beta}$ for $p\in(1,2)$ and for a.e. $\beta>0$, where the subcriticality of the nonlinearity, due to $p<2$, plays an essential role;  
secondly, by using the crucial compactness result in (1)-(3) of Theorem A (see \cite[Theorem 4.1, 5.1]{MT-blowup-7} or our Theorem \ref{thm0} below for a general case), they extended the existence, on the one hand, for the subcritical case $p<2$ to the critical case $p=2$, and on the other hand, for a.e. $\beta$ to every $\beta\in(0,+\iy)$.
This process to get general existence results has been widely used for many problems, see for example \cite{barycenter-1,barycenter-2,bg-2,barycenter-3,barycenter-4,barycenter-5,barycenter-6,barycenter-7} and the references therein. 
Apart from the barycenter approach, there is another approach — replacing the first step in the above process with counting the topological degree — to get the general existence. For the topological degree approach, we refer the readers to \cite{degree-1,degree-2,degree-3,degree-5,degree-4} and the references therein, and to a recent paper \cite{MT-blowup-8} by Malchiodi-Martinazzi-Thizy on equation \eqref{equ-1} over Euclidean bounded domains.
In any case, compactness is always the base to get the general existence result in both approaches.  

\vskip0.1in
Our main goal is to generalize this remarkable Theorem A to singular surfaces. Precisely, we consider the following natural questions: Does the compactness and  existence results still hold on singular surfaces? Or in other words, when blow-up occurs, will the energy still quantize on singular surfaces? And if so, what kind of asymptotic behaviors does the solution sequence exhibit? To the best of our knowledge, these questions have only been studied in a very special case \cite{exist-s-1, exist-s-2, exist-s-3} (see the paragraph below \eqref{617-2}), and remains largely open.

We try to answer these questions in a series of papers. To start with, we consider surfaces with conical singularities. Roughly speaking, such a surface is a compact Riemann surface with a smooth metric (i.e., of class $C^2$) everywhere except at finitely many points. 
To be precise, we follow Troyanov's notations (\cite{Singular-1}) .  

\vskip0.1in

\noindent{\bf Definition.}
Let $\Sigma$ be a compact surface without boundary. A {\it divisor} on $\Sigma$ is a formal sum
	$$\bm{\al}=\sum_{a\in\DR}\al_{a}a,$$
where $\DR$ is a finite set consisting of distinct points in $\Sigma$ and $\al_{a}>-1$ for each $a\in\DR$. A metric $g$ on $\Sigma$ is said to {\it represent} the divisor $\bm{\al}$, if $g$ is a smooth  Riemannian metric on $\Sigma\setminus \DR$ such that under a local coordinate $x$ defined near $a\in\DR$, there exists a continuous function $\phi_a$ satisfying
	$$ g=e^{2\phi_a}|x|^{2\al_{a}}|\rd x|^2, \quad \text{where $x(a)=0$.}$$
The point $a\in\DR$ is then said to be a {\it conical singularity} of order $\al_{a}$, and the {\it conical angle} of $a$ is $\theta_a=2\pi(1+\al_{a})$. 

\vskip0.1in
For example, a (somewhat idealized) American football has two singularities of equal angles, while a teardrop has only one singularity. Both these examples correspond to the case $\theta\in(0,2\pi)$. Such singularities also appear in orbifolds and branched coverings. They can also describe the ends of complete Riemann surfaces with finite total curvature. In the last decades, conical singularities have been widely studied in many problems, see for example \cite{clas-0,bg-1,bg-2,bg-3,bg-4,bg-5} and the references therein. 

Let $g$ be a conical singular metric on $\Sigma$ representing the divisor $\bm{\al}$. It has been proved in \cite{Singular-1,Singular-2} (see \cite{SMT-0} for planar domains) that the following singular Moser-Trudinger inequality holds:
\be \begin{aligned} 
	\sup\Big\{ \int_\Sigma e^{u^2}\rd v_{g}:~u\in H^1(\Sigma,g), \int_\Sigma u\rd v_g=0, \int_\Sigma|\nabla_gu|^2\rd v_g=\beta\Big\}<+\iy\\
	 \quad\iff\quad \beta\le 4\pi(1+\al_{\DR}),   
\end{aligned}\ee
where 
\be\lab{almin} \al_{\DR}=\min\lbr{0~,~\min_{a\in\DR}\al_{a}}. \ee 
Then, exactly as in \cite{MT-4}, one can obtain that
\be\lab{SMT}\begin{aligned} 
	\sup\Big\{ \int_\Sigma e^{u^2}\rd v_{g}:~u\in H^1(\Sigma,g), \nm{u}_{H^1(\Sigma,g)}^2=\beta\Big\}<+\iy\\
	 \quad\iff\quad \beta\le 4\pi(1+\al_{\DR}). 
\end{aligned}\ee

\subsection{Setting on conical singular surface}\ 

In this paper, we assume that $(\Sigma,g)$ has conical singularities of angles in $(0,2\pi)$. Namely, we take $g=g_1$ which represents the following divisor
\be\lab{divisor} \bm{\al_1}=\sum_{a\in\DR}\al_{a} a,\quad\text{with}~\al_{a}\in\sbr{-1,0}~\text{for each}~a\in\DR,\ee
where $\DR$ is a finite set of singularities. 

\begin{remark}\label{remark1-1}
The generalization of the compactness result of Theorem A to conical singular surfaces is quite difficult, and in this paper we need to assume the technical condition $\al_{a}\in (-1,0)$ for all $a\in\DR$, which will be used in several places of our proofs (see Section 1.4 for more explanations). We believe that the compactness should also hold for the general case that $\al_{a}>0$ is allowed for $a\in\DR$. We should study this general case elsewhere.
\end{remark}

Now given a smooth positive function $f: \Sigma\to (0,+\infty)$, we let the Moser-Trudinger functional $F_1$ on the conical singular surface $(\Sigma, g_1)$ be defined by
\be\lab{F1} F_1(u):=\int_\Sigma (e^{u^2}-1)f\rd v_{g_1},\ee
constrained to
\be u\in\ER_\beta:=\Big\{u\in H^1(\Sigma,g_1) : \nm{u}_{H^1(\Sigma,g_1)}^2=\beta\Big\}.\ee
Then the critical points of $F_1$ are solutions of the following singular Moser-Trudinger equation
\be\lab{617-1} -\Delta_{g_1}u+u=2\la f ue^{u^2} \quad\text{in}~\Sigma,\ee
where $\Delta_{g_1}$ is the Laplace-Beltrami operator and $\lambda>0$ is the Lagrange multiplier given by
\be\lab{617-2} 2\la\int_\Sigma u^2e^{u^2}f\rd v_{g_1}=\beta=\nm{u}_{H^1(\Sigma,g_1)}^2. \ee
It is clear that $4\pi(1+\al_{\DR})<4\pi$ by \eqref{divisor}. 
For $\beta<4\pi(1+\al_{\DR})$, by using singular Moser-Trudinger inequality \eqref{SMT}, finding critical points of $F_1|_{\ER_\beta}$ reduces to a standard maximization argument. 
For $\Sigma\subset\R^2$ being a planar domain, $f\equiv1$ and $g_1=|x|^{2\al}|\rd x|^2$ with $\al\in(-1,0)$, critical points of $F_1|_{\ER_\beta}$ with $\beta=4\pi(1+\al)$ (i.e., extremal functions of \eqref{SMT}) has been obtained in \cite{exist-s-1} (see also \cite{exist-s-2} for a simpler proof and also \cite{exist-s-3} for a general version on conical singular surfaces), and their proof relies on the method of Carleson–Chang \cite{extremal-0} which can not work for $\beta>4\pi(1+\al_{\DR})$. 
To the best of our knowledge, there is no other existence results of critical points of $F_1|_{\ER_\beta}$. 
We point out that finding such critical points for $\beta>4\pi(1+\al_\DR)$ is a more challenging problem, since the upper bound on the functional fails. In this paper, we succeed to prove new sharp quantization and compactness results on the conical singular surface $(\Sigma, g_1)$, and in a sequel work, we will prove the general existence result by applying this compactness result. 

We follow the strategy of De Marchis-Malchiodi-Martinazzi-Thizy \cite{MT-blowup-7} to interpolate between a singular Liouville type problem and the critical singular Moser-Trudinger problem. More precisely, given $p\in[1,2)$ and $\beta>0$, we let $I_{p,\beta}: H^1(\Sigma,g_1)\to\mathbb R\cup\{+\infty\}$ be given by
\be\lab{Ip} I_{p,\beta}(u):=\f{2-p}{2}\sbr{\f{p\nm{u}_{H^1(\Sigma,g_1)}^2}{2\beta}}^{\f{p}{2-p}}-\ln\int_\Sigma\sbr{e^{u_+^p}-1}f\rd v_{g_1}, \ee
where $u_+=\max\{u,0\}$ and we set $I_{p,\beta}(u)=+\iy$ if $u\le 0$. It is easy to see that positive critical points of $I_{p,\beta}$ are solutions of
\be\lab{equ-2} -\Delta_{g_1}u+u=p\la fu^{p-1}e^{u^p},\quad u>0\quad\text{in}~\Sigma, \ee
where $\la>0$ is given by
\be\lab{la-0} \f{\la p^2}{2}\sbr{\f{p\nm{u}_{H^1(\Sigma,g_1)}^2}{2\beta}}^{\f{2(p-1)}{2-p}}\int_\Sigma\sbr{e^{u^p}-1}f\rd v_{g_1}=\beta. \ee
Multiplying \eqref{equ-2} by $u$ and integrating by parts in $\Sigma$, \eqref{la-0} may be rewritten as
\be\lab{la-1}   \f{\la p^2}{2}\sbr{\int_\Sigma\sbr{e^{u^p}-1}f\rd v_{g_1}}^{\f{2-p}{p}}\sbr{\int_\Sigma u^pe^{u^p}f\rd v_{g_1}}^{\f{2(p-1)}{p}}=\beta. \ee
By \eqref{SMT} and Young inequality, $I_{p,\beta}$ is bounded from below for all $\beta\le4\pi(1+\al_{\DR})$, and finding critical points of $I_{p,\beta}$ reduces to a standard maximization argument. As we shall prove, the compactness and quantization will give that, as $p$ approaches the borderline case $p_0=2$, the critical points of $I_{p,\beta}$ converge to critical points of the functional $F_1$ in \eqref{F1} constrained to $\ER_\beta$, at least when $\beta>0$ is given out of the set of critical values
\be\lab{OR}\begin{aligned} 
	\OR:=\Big\{\beta:~\beta=4\pi m+4\pi\sum_{a\in\DR'}(1+\al_{a})~&\text{for some}~m\in\N,~\DR'\subset\DR~\\
		&\text{such that}~m+|\DR'|>0  \Big\},  
\end{aligned}\ee
where $|\DR'|$ denotes the number of the elements of $\DR'$.

\subsection{Conformal to smooth metric}\ 

To keep on computing, it is convenient to translate the conical singular metric $g_1$ to some smooth metric $g_0$, so that $g_1$ is also called a conformal conical metric. 
We take a smooth basic metric $g_0$ in $\Sigma$ such that, there exists a function $h:\Sigma\to\R$ satisfying that 
\be\lab{h} g_1=h\cdot g_0,\ee
and for each $a\in\DR$, 
\be\lab{hh} h\sim d_{g_0}(~\cdot~,a)^{2\al_{a}}\quad\text{near}~a. \ee
Additionally, like in many papers (see e.g. \cite{clas-0,bg-1,bg-3,bg-4}) concerning conical singularities in PDEs, we require that 
\be\lab{hhh} d_{g_0}(~\cdot~,a)^{-2\al_{a}}h~\text{is $C^1$ near each singularity}~a\in\DR. \ee
The existence of such conical metrics has been given in \cite{bg1-1,barycenter-3}. Generally speaking, the function $d_{g_0}(~\cdot~,a)^{-2\al_{a}}h$ is just continuous and positive near $a\in\DR$, and is smooth outside $\DR$. 
By the elliptic theory (see \cite[Lemma 3]{Singular-1}), we get that for each $a\in\DR$, the function $h$ would be of class $C^1$ near $a$ if $\al_a\in\big(-\f{1}{2},0\big)$; while the function $h$ would be of class $C^{0,\delta}$ near $a$ if $\al_a\in\big(-1,-\f{1}{2}\big]$. 
By assuming \eqref{hhh}, we mainly focus on a singular Moser-Trudinger equation, with the singularities reflected in the coefficient function (see \eqref{equ-3} below), which can be seen as a generalization on the nonlinearity of the singular Liouville type equation  studied in \cite{bg-1,bg1-1}.

We introduce the norm $\nm{~\cdot~}_{h}$ given by
\be  \nm{u}_h^2=\int_\Sigma |\nabla_{g_0}u|^2+hu^2\rd v_{g_0}. \ee
Keeping then the notation in \eqref{norm-1}, we have $$\nm{u}_h=\nm{u}_{H^1(\Sigma,g_1)} \quad\text{for all $u\in H^1(\Sigma,g_1)$},$$ thanks to
	$$\nabla_{g_1}=h^{-1/2}\nabla_{g_0},\quad \rd v_{g_1}=h\rd v_{g_0}.$$
By \cite[Proposition 3]{Singular-1}, it holds $H^1(\Sigma,g_1)=H^1(\Sigma,g_0)$ in the sense that $\nm{~\cdot~}_{H^1(\Sigma,g_1)}\sim\nm{~\cdot~}_{H^1(\Sigma,g_0)}$, so we denote $$H^1(\Sigma)=H^1(\Sigma,g_1)=H^1(\Sigma,g_0),$$ with $\nm{~\cdot~}_h$ as its norm. Besides, since $\Delta_{g_1}=h^{-1}\Delta_{g_0}$ by the conformal covariance of the Laplacian, we obtain that $u$ solves \eqref{equ-2} if and only if it solves
\be\lab{equ-3}  
	-\Delta_{g_0}u+hu=p\la hfu^{p-1}e^{u^p}, \quad u>0 \quad\text{in}~\Sigma.
\ee 
Since $h\sim d_{g_0}(\cdot,a)^{2\al_{a}}$ near each singularity $a$, we call equation \eqref{equ-3} a singular Moser-Trudinger equation, which can be seen as a generalization of the Moser-Trudinger equation studied in \cite{MT-blowup-7}.

For all $p\in[1,2)$, by conformal to the smooth metric $g_0$, we get that the aforementioned critical points $u$ of $I_{p,\beta}$ solving \eqref{equ-2}-\eqref{la-1} are exactly those of the functional $J_{p,\beta}$ given by
\be\lab{Jp} J_{p,\beta}(u):=\f{2-p}{2}\sbr{\f{p\nm{u}_h^2}{2\beta}}^{\f{p}{2-p}}-\ln\int_\Sigma hf\sbr{e^{u_+^p}-1}\rd v_{g_0}, \ee
solving \eqref{equ-3} with $\la>0$ be given by
\be\lab{la-2}   \f{\la p^2}{2}\sbr{\int_\Sigma hf\sbr{e^{u^p}-1}\rd v_{g_0}}^{\f{2-p}{p}}\sbr{\int_\Sigma hfu^pe^{u^p}\rd v_{g_0}}^{\f{2(p-1)}{p}}=\beta. \ee
Now, even for $p=2$, we also have  that $u\in H^1(\Sigma)$ solves our problem \eqref{equ-2} if and only if $u$ solves \eqref{equ-3} for $p=2$, namely
\be\lab{equ-4} -\Delta_{g_0}u+hu=2\la hfue^{u^2}, \quad u>0 \quad\text{in}~\Sigma,\ee
with $\la>0$ given by \eqref{la-2}.

\subsection{Main results}\

First, we get the following sharp quantization result, which determines in a precise way the possible blow-up energy levels.
\begin{theorem}\lab{thm0}
Let $h$ be given by \eqref{h} satisfying \eqref{hhh} and $f$ be a smooth positive function. Let $(\la_n)_{n}$ be a sequence of positive real numbers and $(p_n)_n$ be any sequence of numbers in $[1,2]$. Let $(u_n)_n$ be a sequence of solutions solving 
\be\lab{equ-5} -\Delta_{g_0}u_n+hu_n=p_n\la_n hfu_n^{p_n-1}e^{u_n^{p_n}},\quad u_n>0\quad\text{in}~\Sigma, \ee
for any $n$. Let $(\beta_n)_n$ be given by
\be\lab{betan} \beta_n=\f{\la_n p_n^2}{2}\sbr{\int_\Sigma hf\sbr{e^{u_n^{p_n}}-1}\rd v_{g_0}}^{\f{2-p_n}{p_n}}\sbr{\int_\Sigma hfu_n^{p_n}e^{u_n^{p_n}}\rd v_{g_0}}^{\f{2(p_n-1)}{p_n}} \ee
for any $n$. If we assume the energy bound
\be\lab{bound} \lim_{n\to+\iy}\beta_n=\beta\in(0,+\iy), \ee
but the pointwise blow-up of the $u_n$, namely
\be\lab{blow-up} \lim_{n\to+\iy}\max_\Sigma u_n=+\iy, \ee
then, up to a subsequence, there holds $\lim_{n\to+\iy}\la_n=0$ and 
\be\lab{level-1} \beta\in\OR, \ee
where $\OR$ is given by \eqref{OR}.
Moreover, if $p_n\equiv p\in(1,2]$ for all $n$, then 
\be\lab{level-2} \beta_n>\beta \ee 
for large $n$.
\end{theorem}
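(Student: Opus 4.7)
The plan is to follow the blow-up analysis framework pioneered by Druet \cite{MT-blowup-2} and substantially sharpened in \cite{MT-blowup-6, MT-blowup-7}, adapted to handle the singular weight $h\sim d_{g_0}(\cdot,a)^{2\al_a}$ near each conical point. First, testing \eqref{equ-5} against $u_n$ gives
$$\nm{u_n}_h^2=p_n\la_n\int_\Sigma hfu_n^{p_n}e^{u_n^{p_n}}\rd v_{g_0},$$
which together with \eqref{betan}, \eqref{bound}, and the singular Moser-Trudinger inequality \eqref{SMT} rules out the scenario in which $\la_n$ remains bounded away from zero while $u_n$ blows up pointwise. Once $\la_n\to 0$, a standard elliptic argument forces the weak $H^1$-limit of $u_n$ to vanish, so all the energy $\beta$ must concentrate at a finite blow-up set $S\subset\Sigma$.

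At each $x_0\in S$, one selects $x_n\to x_0$ realizing a local maximum of $u_n$ with $u_n(x_n)\to\iy$ and rescales by a scale $r_n\to 0$ dictated by the natural balance
$$p_n\la_n r_n^2h(x_n)f(x_n)u_n(x_n)^{p_n-2}e^{u_n(x_n)^{p_n}}=O(1),$$
where at a conical point $x_0=a\in\DR$ the factor $h(x_n)\sim r_n^{2\al_a}$ modifies the effective blow-up scale. At a smooth point the rescaled profile converges to a standard Liouville bubble of $-\Delta v=e^v$ on $\R^2$, carrying energy $4\pi$. At a conical point, the regularity assumption \eqref{hhh} allows passing to the limit to obtain the singular Liouville equation $-\Delta v=|y|^{2\al_a}e^v$ on $\R^2$, whose positive finite-mass solutions are classified (see e.g.\ \cite{bg-1}) and contribute exactly $4\pi(1+\al_a)$. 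The decisive obstacle is the neck analysis: on the annular region between each bubble and the bulk, one must prove that no residual $H^1$-energy is lost, by adapting the Pohozaev-identity and pointwise decay techniques of \cite{MT-blowup-6, MT-blowup-7} to the singular weight. The restriction $\al_a\in(-1,0)$ imposed in \eqref{divisor} is essential here, both for the classification of singular bubbles and for excluding the superposition of a smooth and a singular bubble at the same conical point. Summing the contributions over $S$, with $m:=|S\setminus\DR|$ and $\DR':=S\cap\DR$, then yields
$$\beta=4\pi m+4\pi\sum_{a\in\DR'}(1+\al_a)\in\OR,$$
where $m+|\DR'|\ge 1$ by \eqref{blow-up}.

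For the strict inequality $\beta_n>\beta$ when $p_n\equiv p\in(1,2]$, the blow-up analysis must be pushed one order further to extract the sign of the next-to-leading correction in the energy. Writing $u_n$ near each concentration point as $U_n+w_n$, with $U_n$ a carefully tuned standard or singular bubble profile and $w_n$ a small remainder controlled via the linearized operator, a Pohozaev identity on a suitable ball together with sharp pointwise estimates on $w_n$ should produce an expansion of the form $\beta_n=\beta+\delta_n+o(\delta_n)$ with $\delta_n>0$. The positivity of $\delta_n$ is where the hypothesis $p>1$ is truly used: for $p\in(1,2)$ the correction is of algebraic order in $u_n(x_n)^{-1}$, while the critical case $p=2$ requires tracking a more delicate logarithmic correction analogous to \cite{MT-blowup-4, MT-blowup-5, MT-blowup-7}, and is expected to be the hardest piece of the argument because it must be carried out simultaneously with the control of the singular weight $h$ at each conical blow-up point.
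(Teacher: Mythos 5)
Your overall skeleton (selection of concentration points, classification of rescaled limits into regular and singular Liouville bubbles, neck analysis, and a sharp second-order expansion for \eqref{level-2}) is the same framework the paper uses, but your structural picture of the blow-up contains a genuine error that bypasses the paper's main difficulty. You assume each blow-up point carries exactly one bubble, that a blow-up point located at a conical singularity always yields the singular profile with quantum $4\pi(1+\al_a)$, and that $\al_a\in(-1,0)$ serves to ``exclude the superposition of a smooth and a singular bubble at the same conical point.'' None of this is what actually happens: several bubbles may cluster at one point; at a conical point the rescaled limit is the \emph{non-singular} bubble (quantum $4\pi$) whenever the blow-up centers $x_{n,i}$ satisfy $|x_{n,i}|\gg\mu_{n,i}$, so a conical concentration point need not carry any singular bubble at all (the paper's Type I); and Type II is precisely one singular bubble coexisting with $N'\geq 0$ non-singular bubbles at the same singularity, with local quantum $4\pi(1+\al_{q_0})+4\pi N'$ — what is excluded (Proposition \ref{bubble}-(2), Proposition \ref{bubble-1}-(5)) is only a second \emph{singular} bubble at the same point. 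With your identification $m=|S\setminus\DR|$, $\DR'=S\cap\DR$ the two cases that constitute Sections \ref{sec5}--\ref{sec6} never arise: the Type I analysis, where the singularity acts as a ``hole'' that must be filled and the contradiction comes from the sign of the limit vector in \eqref{regular-4} (this is where $\al<0$ really enters), and the Type II neck analysis, where the bubble heights at a common point are not known to be comparable, so the local Pohozaev identity has to be balanced by the factors $u_n(x_{n,j})^{p_n-1}$ and only the condition $\al\in(-1,0)$ rules the configuration out. As written, your argument proves quantization only for an artificially restricted class of blow-up configurations and therefore does not establish \eqref{level-1}.

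A second gap concerns $\la_n\to0$. Testing \eqref{equ-5} against $u_n$ and invoking \eqref{SMT} only yields the upper bound \eqref{boundla}; in the paper $\la_n\to0$ is a nontrivial consequence of the refined bound on spherical means (Lemmas \ref{lemma5-2} and \ref{lemma6-2}, Corollaries \ref{lemma5-3} and \ref{lemma6-3}), which itself requires the ``first estimate'' of Lemmas \ref{lemma5-1} and \ref{lemma6-1} — the hardest part of the analysis — and when $p_n\to2$ the term $(1-\frac{p_n}{2})\ga_{n,i}^{p_n}$ in \eqref{5-79} degenerates, so one needs the additional logarithmic term (and $\al<0$) to conclude. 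The claimed vanishing of the weak limit is neither proved this way nor needed: the region away from the concentration points contributes $o(1)$ to $\beta_n$ simply because of the explicit factor $\la_n\to0$ in \eqref{betan}. Finally, for \eqref{level-2} your sketch is in the right spirit, but the mechanism is not a special ``logarithmic correction at $p=2$'': the paper's expansion (Lemmas \ref{lemma5-7} and \ref{lemma6-7}, via the radial comparison bubbles of Appendix B) gives, for every $p\in(1,2]$, a correction $\frac{4(p-1)}{p^2}(4\pi+o(1))\sum_i\ga_{n,i}^{-2p}$ after a cancellation of the $\ga_{n,i}^{-p}$ term; the hypothesis $p>1$ enters only through the coefficient $4(p-1)/p^2$, which vanishes at $p=1$.
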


The basic idea of the proof of Theorem \ref{thm0} mainly follows \cite[Section 2-5]{MT-blowup-7}, but the appearance of the conical singularities brings many more intrinsic difficulties, and we need to develop different ideas and techniques. Let us briefly explain our strategy and point out that our method highly and nontrivially relies on our condition $\al_{a}<0$, as already mentioned in Remark \ref{remark1-1}. 

Since we assume $u_n$ blows up, i.e., \eqref{blow-up} holds, we first locate the concentration points. 
To achieve this purpose, the strategy in \cite{MT-blowup-2} seems to be inefficient due to the appearance of singularities, and also the strategy in \cite{MF-1} seems to be inefficient due to the possible critical nonlinearity for $p_n\to2$. Instead, we introduce a selection process suitable for the singular Moser-Trudinger equation, which has been widely used in many problems, for example, in prescribing curvature-type equations \cite{Selection-1,Selection-2}, in singular Toda systems \cite{Selection-3} and in Chern-Simons-Higgs equations \cite{Selection-4}. 
Here, we modify it to select the concentration points one by one. 
Then, the energy bound \eqref{bound} implies the finiteness of concentration points, and a rough blow-up picture is gathered in Proposition \ref{bubble}. 
We observe that around each concentration point, there are some (maybe more than one) bubbles concentrating at this point. By a suitable scaling of $u_n$, if it leads to the non-singular Liouville equation
\be\lab{Liouville-1} -\Delta \eta=e^{2\eta}\ \ \text{in}\ \R^2,\quad \int_{\R^2}e^{2\eta}\rd x<+\iy, \ee
then we say there is a non-singular bubble; if it leads to the singular Liouville equation
\be\lab{Liouville-2} -\Delta \eta=|\cdot|^{2\al}e^{2\eta}\ \ \text{in}\ \R^2,\quad \int_{\R^2}|x|^{2\al}e^{2\eta}\rd x<+\iy,\ee
then we say there is a singular bubble. It is well known that solutions of \eqref{Liouville-1} have been classified by \cite{clas-2}, and solutions of \eqref{Liouville-2} have been classified by \cite{clas-0,clas-1}. We find that if the concentration point is not in $\DR$, then all the bubbles concentrating at this point are non-singular bubbles, and the local energy quantization has been actually given by \cite[Theorem 4.1]{MT-blowup-7}. 

The main task of this paper is to study the new situation that the concentration point is in $\DR$, for which we need to develop new ideas and techniques.  For each concentration point $a\in\DR$, we find that apart from non-singular bubbles, there appears at most one singular bubble, so that we have two types of bubbles (see Section \ref{sec2-4} for details):
\begin{itemize}[fullwidth,itemindent=1em]
\item[{\bf Type I } ] {\it There are only non-singular bubbles concentrating at $a\in\DR$.} 
\item[{\bf Type II}] {\it There are a singular bubble and some non-singular bubbles concentrating at $a\in\DR$.} 
\end{itemize}

To get the quantization for Type I, we compare in small balls our blow-up solutions $u_n$ with some standard non-singular bubbles, some radially symmetric functions solving the same PDE without singularity, and we directly show that the difference is ``small'' and the limit of its gradient is ``good''. From these nice estimates, we get that the union of these separate balls is ``large'' in the sense that the complementary region can not contribute energy in the quantization, and this process relies on our condition $\al_a<0$, see Remark \ref{614-1}. 

To get the quantization for Type II, apart from non-singular bubbles, we have to compare in small balls our blow-up solutions $u_n$ with some standard singular bubbles, some radially symmetric functions solving the same PDE with singularity. However, due to $\al_a<0$, there is no gradient estimates of the difference near the singularity, and to overcome this difficulty, we use the H\"older semi-norm and again get the pointwise smallness of the difference, see Remark \ref{515-2}. Another difficulty appears when showing the largeness of the union of these separate balls, since we have no gradient convergences like in Type I. To solve this difficulty, we use the so called {\it local Pohozaev identity} technique, which also relies on $\al_{a}<0$, see Remark \ref{611-1}. 

One delicate consequence is that each ball associated to a non-singular bubble brings the minimal energy $4\pi$, while each ball associated to a singular bubble brings the minimal energy $4\pi(1+\al_a)$ in \eqref{level-1}. 
Finally, the sharp estimate \eqref{level-2} relies on the delicate energy expansion of $u_n$ carried out in \cite[Section 5]{MT-blowup-7}, originated in \cite{MT-blowup-4,MT-blowup-5}.

We shall compare the local Pohozaev identity here with those of Liouville type equations. For example, for the Chern-Simons-Higgs equation in \cite{Selection-4}, the local Pohozaev identity is obtained by multiplying the equation with $(x-x_{n,i})\cdot\nabla u_n$ and integrating over a small ball around $x_{n,i}$, where $x_{n,i}$ is some bubble center, and then by computing energy in neck domains by some decay estimates, one can get energy identities for the Chern-Simons-Higgs equation. However, for the singular Moser-Trudinger equation here, apart from the same process to the Chern-Simons-Higgs equation, we have to balance the Pohozaev identity by multiplying $\f{p_n}{2}u_n(x_{n,j})^{p_n-1}$ near each bubble center $x_{n,j}$ due to our blow-up scaling $\f{p_n}{2}u_n(x_{n,j})^{p_n-1}(u_n(x_{n,j}+\mu_{n,j}\cdot)-u_n(x_{n,j}))$. As a result, the local Pohozaev identity will lead to
  $$\sbr{(1+\al_a)+\sum_{j\in J_i\setminus\{i\}}\theta_j}^2=(1+\al_a)\sbr{(1+\al_a)+\sum_{j\in J_i\setminus\{i\}}\theta_j^2},$$
where $\theta_j>0$ are limits of the ratio of the bubble heights $u_n(x_{n,j})$ and $u_n(x_{n,i})$. 
The problem is that we don't know apriori the relationship between different bubble heights $u_n(x_{n,i})$ and $u_n(x_{n,j})$, i.e., we have no $\theta_j=1$. 
That's why we can only use this identity to obtain a contradiction for the case $\al_a\in(-1,0)$.

\vskip0.1in


By applying Theorem \ref{thm0}, we can obtain the following compactness result.
\begin{theorem}\lab{thm1}
Let $(\Sigma,g_1)$ be a compact surface without boundary, where $g_1$ is a conformal conical metric on $\Sigma$ representing divisor $\bm{\al_1}$ given by \eqref{divisor}, such that $g_1$ satisfies \eqref{h}-\eqref{hhh}. Let $f$ be a smooth positive function. Let $p\in[1,2]$ and $\beta>0$. Then the following hold.
\begin{itemize}
\item[(1)] If $p\in(1,2)$, the set $\CR_{p,\beta}$ of the positive critical points of $I_{p,\beta}$ is compact in $C^0(\Sigma)$;
\item[(2)] If $p=1$ and $\beta\not\in\OR$, the set $\CR_{1,\beta}$ of the positive critical points of $I_{1,\beta}$ is compact in $C^0(\Sigma)$;
\item[(3)] If $p=2$, the set $\CR_{2,\beta}$ of the positive critical points of $F_1$ constrained to $\ER_\beta$ is compact in $C^0(\Sigma)$.
\end{itemize}
\end{theorem}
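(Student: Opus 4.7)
The plan is to fix $p\in[1,2]$ and $\beta>0$ as in one of the three cases, take an arbitrary sequence $(u_n)_n\subset\CR_{p,\beta}$ of positive critical points with associated Lagrange multipliers $(\la_n)_n$ given by \eqref{la-2}, and extract a subsequence that converges in $C^0(\Sigma)$ to some $u_\iy\in\CR_{p,\beta}$. The argument will pivot entirely on the sharp quantization result Theorem \ref{thm0}: since each $u_n$ is a critical point at level $\beta$, the quantity $\beta_n$ from \eqref{betan} equals $\beta$ for every $n$, so blow-up would force either $\beta\in\OR$ or the strict inequality $\beta_n>\beta$, both of which are incompatible with the hypotheses.

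Concretely, the first step is to rule out pointwise blow-up. If $\max_\Sigma u_n\to+\iy$ along a subsequence, Theorem \ref{thm0} gives $\beta\in\OR$: in case (2) this contradicts the assumption $\beta\notin\OR$, while in cases (1) and (3), where $p\in(1,2]$, the refinement \eqref{level-2} forces $\beta_n>\beta$ for large $n$, contradicting $\beta_n\equiv\beta$. Hence one obtains a uniform bound $M:=\sup_n\nm{u_n}_{L^\iy(\Sigma)}<+\iy$ in all three cases.

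The second step is to upgrade this bound to $C^0$-compactness via elliptic regularity on the conformal background $g_0$. Because $\al_a\in(-1,0)$ for every $a\in\DR$, the weight $h$ belongs to $L^q(\Sigma,g_0)$ for every $q\in[1,1/|\al_\DR|)$, an interval that contains some $q>1$. Testing \eqref{equ-5} against $u_n$ and using $\nm{u_n}_h^2=\beta$ yields the identity $\beta=p\la_n\int_\Sigma hfu_n^{p}e^{u_n^p}\rd v_{g_0}$; combined with the $L^\iy$ bound on $u_n$ and the fact that $u_n$ cannot vanish in the limit (the constraint $\nm{u_n}_h^2=\beta>0$ rules this out after extracting a subsequence converging in $L^2$ by Rellich), this yields two-sided bounds $0<c\le\la_n\le C$. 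Therefore the right-hand side of \eqref{equ-5} is uniformly bounded in $L^q(\Sigma,g_0)$, and the classical $L^q$ theory applied to the linear equation $-\Delta_{g_0}u_n=p\la_n hfu_n^{p-1}e^{u_n^p}-hu_n$ produces uniform $W^{2,q}$, hence $C^{0,\gamma}$ bounds with $\gamma=2-2/q>0$. Arzel\`a--Ascoli then extracts a subsequence with $u_n\to u_\iy$ in $C^0(\Sigma)$, and the Sobolev embedding $W^{2,q}\hookrightarrow W^{1,s}$ with $s=2q/(2-q)>2$ upgrades this to strong convergence in $H^1(\Sigma)$, preserving the constraint $\nm{u_\iy}_h^2=\beta$.

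Finally, passing to the limit in \eqref{la-2} produces $\la_n\to\la_\iy>0$, and passing to the limit in \eqref{equ-5} shows that $u_\iy$ solves \eqref{equ-3} (or \eqref{equ-4} when $p=2$) with Lagrange multiplier $\la_\iy$; strict positivity $u_\iy>0$ follows from the strong maximum principle on $\Sigma\setminus\DR$ combined with continuity at each conical point. Hence $u_\iy\in\CR_{p,\beta}$. The main obstacle in executing this plan is the bootstrap in the second step: one must secure a uniform lower bound on $\la_n$ \emph{before} invoking the $L^q$ elliptic estimates, and one must ensure the exponent $q>1$ supplied by $\al_\DR\in(-1,0)$ is large enough to reach a H\"older bound — both of which hinge delicately on the singularity exponents staying strictly above $-1$. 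Once these ingredients are in place, the remainder of the argument is a direct consequence of Theorem \ref{thm0}.
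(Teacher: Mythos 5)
Your overall route is exactly the paper's: since every element of $\CR_{p,\beta}$ solves \eqref{equ-5} with $\beta_n\equiv\beta$, blow-up is excluded by Theorem \ref{thm0} (via $\beta\in\OR$ in case (2), via \eqref{level-2} contradicting $\beta_n\equiv\beta$ in cases (1) and (3)), and the resulting uniform $C^0$ bound is upgraded to compactness by elliptic theory using $h\in L^q(\Sigma,g_0)$ for some $q\in(1,1/|\al_\DR|)$, which is possible precisely because $\al_a\in(-1,0)$. The paper's proof is exactly this, stated more tersely (it quotes \eqref{boundla} for the bound on $\la_n$ and then "elliptic theory"), so in structure there is nothing new or different here.

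There is, however, one concrete slip you should repair: the identity you use to control $\la_n$, namely "$\nm{u_n}_h^2=\beta$, hence $\beta=p\la_n\int_\Sigma hfu_n^{p}e^{u_n^{p}}\rd v_{g_0}$", is valid only in case (3). For $p\in[1,2)$ the critical points of $I_{p,\beta}$ are \emph{unconstrained}, and $\la_n$ is tied to $\beta$ through \eqref{la-1}/\eqref{la-2}, not through a fixed $H^1$-norm; likewise your closing remark that strong $H^1$ convergence "preserves the constraint $\nm{u_\iy}_h^2=\beta$" is meaningless in cases (1)--(2). The conclusions you need are still true and easy to recover: the upper bound $\la_n\le 1/\min_\Sigma f$ is \eqref{boundla} (and this alone suffices for the $L^q$ bootstrap — no lower bound on $\la_n$ is needed beforehand, contrary to your final caveat); the lower bound on $\la_n$ follows from \eqref{la-2} with $\beta_n\equiv\beta$ once $u_n\le M$, since both integrals there are then bounded above by a constant times $\int_\Sigma h\,\rd v_{g_0}<\infty$; and $u_\iy\not\equiv0$ (hence $u_\iy>0$ by the maximum principle) follows because $u_n\to0$ uniformly would force both integrals in \eqref{la-2} to vanish and thus $\la_n\to+\iy$, contradicting \eqref{boundla}. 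With these substitutions your argument is complete and coincides with the paper's.
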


Theorem \ref{thm1} generalizes the compactness result of Theorem A to singular surfaces.  Like Theorem A,
a notable fact in Theorem \ref{thm1} is that, except for $p=1$, the full range $\beta>0$ is covered and in particular also the case $\beta\in\OR$. In fact, by writing
  \[\OR=\lbr{4\pi k_i:0<k_1<k_2<\cdots,k_i>0},\]
we also prove that the sets
  \[\bigcup_{\substack{\beta\in[4\pi k_i+\delta,4\pi k_i]\\ p\in[1+\delta,2]}}\CR_{p,\beta}, \quad \bigcup_{\substack{\beta\in[4\pi k_i+\delta,4\pi k_i-\delta]\\ p\in[1,2]}}\CR_{p,\beta}\]
are compact for any fixed $\delta>0$, i.e., blow-up can only occur either for $\beta\downarrow4\pi k_i$ or for $p\to1$ and $\beta\to4\pi k_i$ for some $k_i$.

\vskip0.1in
\noindent{\bf Notations.}
\begin{itemize}
\item For sequences $A_n$ and $B_n$, we use $A_n=O(B_n)$ to denote $A_n\le CB_n$ for some constant $C>0$ independent of $n$, and $A_n\sim B_n$ to denote $\f{1}{C}B_n\le A_n\le C B_n$ for some constant $C>1$ independent of $n$, and $A_n=o(B_n)$ or $A_n\ll B_n$ to denote $\displaystyle\lim_{n\to+\iy}A_n/B_n=0$.
\item We use $q$ or $q_n$ to denote points in $\Sigma$, and $x,y,z$ to denote points in $\R^2$. We use $d_{g_0}(\cdot,\cdot)$ to denote distance in $(\Sigma,g_0)$, and use $d(\cdot,\cdot)$ to denote distance in $\R^2$. Without causing any ambiguity, we use the same notation  $B_r(*)$ to denote the ball of radius $r$ and center $*$ in $\Sigma$ or $\R^2$. 
\item In this article, when we get a convergence result, we often means in the sense of up to a subsequence, and we don’t use different notations for sequences and subsequences.  We use $C,\bar C,\wt C,C_1,\cdots$  to denote constants whose values may be different in different places.
\item In the sequel, for any radially symmetric function $f$ around $0\in\R^2$, since no confusion is then possible, we often make an abuse of notation and write $f(r)$ instead of $f(x)$ for $|x|=r$.
\end{itemize}

\vs
\section{Preliminaries}
\subsection{Concentration phenomena}\ 

Let $h$ be given by \eqref{h} and $f$ be a smooth positive function. 
Let $(\la_n)_n$ be a sequence of positive numbers, $(p_n)_n$ be a sequence of numbers in $[1,2]$, $(u_n)_n$ be a sequence of solutions solving \eqref{equ-5}. Let $(\beta_n)_n$ be given by \eqref{betan}. We also assume \eqref{bound}. Then, by integrating \eqref{equ-5} in $\Sigma$ and using $pt^{p-1}e^{t^p}\ge t$ for all $t\geq0$ and $p\in[1,2]$, we obtain
\[\int_{\Sigma}hu_n\rd v_{g_0}=\la_n\int_{\Sigma}p_nhfu_n^{p_n-1}e^{u_n^{p_n}}\rd v_{g_0}\geq \la_n\min_{\Sigma}f\int_{\Sigma}hu_n\rd v_{g_0},\]
and so
\be\lab{boundla} \la_n\le \f{1}{\min_{\Sigma}f}.\ee
Since $\f{2-p_n}{p_n}+\f{2(p_n-1)}{p_n}=1$, H\"older inequality and \eqref{betan}-\eqref{bound} give that
\be\lab{tem-1}\la_n\int_\Sigma hfu_n^{2(p_n-1)}\sbr{e^{u_n^{p_n}}-1}\rd v_{g_0}=O(\beta_n)=O(1). \ee
By \eqref{boundla}-\eqref{tem-1}, $p_n\in[1,2]$ and the fact that $h\in L^1(\Sigma,g_0)$, we have
\be\lab{tem-2}\begin{aligned}
	\la_n\int_\Sigma hu_n^{2(p_n-1)}\rd v_{g_0}&=\la_n\int_{\{u_n\le2\}}hu_n^{2(p_n-1)}\rd v_{g_0}+\la_n\int_{\{u_n>2\}}hu_n^{2(p_n-1)}\rd v_{g_0}\\
	&\le O(1)+\frac{\la_n}{e^2-1}\int_\Sigma hu_n^{2(p_n-1)}(e^{u_n^{p_n}}-1)\rd v_{g_0}=O(1),
\end{aligned}\ee
which implies
\[ \la_n\int_\Sigma hu_n^{2(p_n-1)}e^{u_n^{p_n}}\rd v_{g_0}=\la_n\int_\Sigma hu_n^{2(p_n-1)}(e^{u_n^{p_n}}-1)\rd v_{g_0}+\la_n\int_\Sigma hu_n^{2(p_n-1)}\rd v_{g_0}=O(1).\]
Then as a consequence, for any $p\in[0,2(p_n-1)]$,
\be\lab{bound-1}\begin{aligned} \la_n\int_\Sigma hu_n^pe^{u_n^{p_n}}\rd v_{g_0}
&=\la_n\int_{\{u_n\leq 1\}} hu_n^pe^{u_n^{p_n}}\rd v_{g_0}+\la_n\int_{\{u_n> 1\}} hu_n^pe^{u_n^{p_n}}\rd v_{g_0}\\
&\leq O(1)+ \la_n\int_\Sigma hu_n^{2(p_n-1)}e^{u_n^{p_n}}\rd v_{g_0}=O(1).\end{aligned}\ee

As a first step, observe that we can directly get the following rough, subcritical but global bounds on the $u_n$.
\begin{lemma}\lab{bound-0}
There exists a $C>0$ and a small $\ep_0>0$ such that
	$$\int_\Sigma he^{\ep_0u_n^{1/3}}\rd v_{g_0}\le C$$
for all $n$. In particular, we have that for any $0<s<+\iy$, $hu_n^s$ and $u_n^s$ are both bounded in $L^1(\Sigma, g_0)$, and that for any $s'\in(1,1/|\al_\DR|)$, $(hu_n)^{s'}$ is bounded in $L^{1}(\Sigma,g_0)$. 
\end{lemma}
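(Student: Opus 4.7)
The plan is to derive a Brezis--Merle type exponential integrability bound on $u_n$ from an $L^1$ bound on the right-hand side of \eqref{equ-5}, and then convert this into the weighted estimate of the lemma via the integrability properties of $h$. Integrating \eqref{equ-5} over $\Sigma$ (and using $\int_\Sigma\Delta_{g_0}u_n\,\rd v_{g_0}=0$) gives
$$\int_\Sigma hu_n\,\rd v_{g_0}=p_n\la_n\int_\Sigma hfu_n^{p_n-1}e^{u_n^{p_n}}\,\rd v_{g_0},$$
and \eqref{bound-1} applied to $p=p_n-1$ (admissible since $p_n\ge 1$) shows that the non-negative source $g_n:=p_n\la_n hfu_n^{p_n-1}e^{u_n^{p_n}}$ is uniformly bounded in $L^1(\Sigma,g_0)$. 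Rewriting \eqref{equ-5} as $-\Delta_{g_0}u_n=g_n-hu_n=:\tilde g_n$ yields a source with zero mean and $\nm{\tilde g_n}_{L^1(\Sigma,g_0)}=O(1)$.

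Next I would invoke the classical Brezis--Merle inequality for $-\Delta_{g_0}$ on the smooth closed surface $(\Sigma,g_0)$: the Green's representation $u_n(q)-\bar u_n=\int_\Sigma G_0(q,q')\tilde g_n(q')\,\rd v_{g_0}(q')$ together with the logarithmic singularity of $G_0$ and Jensen's inequality produces $\int_\Sigma e^{\delta(u_n-\bar u_n)}\,\rd v_{g_0}\le C$ for some small $\delta>0$ independent of $n$, where $\bar u_n$ denotes the mean of $u_n$. The assumption $\al_a\in(-1,0)$ from \eqref{divisor} ensures that $h$ is continuous and positive off $\DR$ and blows up at each $a\in\DR$, so $h\ge c_0>0$ on $\Sigma$; combined with $\int_\Sigma hu_n\,\rd v_{g_0}=O(1)$ this gives $\bar u_n=O(1)$, whence $\int_\Sigma e^{\delta u_n}\,\rd v_{g_0}\le C$. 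The same assumption gives $h\in L^{s'}(\Sigma,g_0)$ for every $s'\in[1,1/|\al_\DR|)$, since $h\sim d_{g_0}(\cdot,a)^{2\al_a}$ near each singularity. Choosing $s'>1$ close to $1$ and its H\"older conjugate $s<\iy$, one gets
$$\int_\Sigma he^{\epsilon_0 u_n}\,\rd v_{g_0}\le\nm{h}_{L^{s'}}\Big(\int_\Sigma e^{s\epsilon_0 u_n}\,\rd v_{g_0}\Big)^{1/s}\le C$$
as soon as $s\epsilon_0<\delta$. Since $u_n^{1/3}\le 1+u_n$, this delivers the required bound $\int_\Sigma he^{\epsilon_0 u_n^{1/3}}\,\rd v_{g_0}\le C$.

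For the ``in particular'' statements, a polynomial-vs-exponential estimate $t^s\le C_s(1+e^{\epsilon_0 t^{1/3}/2})$ (applied with $\epsilon_0$ halved) upgrades the exponential bound to $\int_\Sigma hu_n^s\,\rd v_{g_0}=O(1)$ for any $s>0$, and then $h\ge c_0$ yields $\int_\Sigma u_n^s\,\rd v_{g_0}=O(1)$. Finally, for $s'\in(1,1/|\al_\DR|)$ I would apply H\"older to $(hu_n)^{s'}=h^{s'}u_n^{s'}$, using that $h^{s'}\in L^q(\Sigma,g_0)$ for some $q>1$ (possible since $s'|\al_\DR|<1$) and that $u_n^{s'}$ is bounded in every $L^{q'}$ by the previous step.

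The hard part will be justifying the Brezis--Merle estimate in our singular setting. My plan is to sidestep the singular operator $\Delta_{g_1}=h^{-1}\Delta_{g_0}$ by rewriting the PDE in terms of the smooth Laplacian $-\Delta_{g_0}$ and absorbing $h$ into the source, so that only the standard Green's function of $-\Delta_{g_0}$ on a smooth compact surface is used. The hypothesis $\al_a\in(-1,0)$ enters decisively at two places: once to force $h\ge c_0>0$, which controls $\bar u_n$, and once to provide $h\in L^{s'}$ for some $s'>1$, which makes the H\"older step work.
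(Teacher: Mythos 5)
Your proposal is correct, but it takes a genuinely different route from the paper's. The paper obtains the exponential bound by repeating the proof of \cite[Lemma 4.1]{MT-blowup-7} with the singular Moser-Trudinger inequality \eqref{SMT} in place of the smooth one: one derives an $H^1(\Sigma,g_1)$-bound on a fractional power of $u_n$ (of the form $\int_\Sigma |\nabla_{g_0}u_n^{1/6}|^2+hu_n^{1/3}\,\rd v_{g_0}=O(1)$, obtained by testing \eqref{equ-5} with a negative power of $u_n$ and using $\int_\Sigma hu_n\,\rd v_{g_0}=O(1)$), and then \eqref{SMT} applied on $(\Sigma,g_1)$ directly produces the weighted bound with the exponent $u_n^{1/3}$; the ``in particular'' statements are then proved exactly as you do, using $\inf_\Sigma h>0$ and $h\in L^{s}(\Sigma,g_0)$ for $s<1/|\al_\DR|$. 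You instead use only the uniform $L^1$ bound on the right-hand side coming from \eqref{bound-1} (which is legitimately available, being established before the lemma), the Green function of $-\Delta_{g_0}$ on the smooth closed surface together with Jensen's inequality (a Brezis-Merle type estimate), the control of the mean $\bar u_n$ via $\inf_\Sigma h>0$ and $\int_\Sigma hu_n\,\rd v_{g_0}=O(1)$, and a final H\"older step with $h\in L^{s'}$, $s'\in(1,1/|\al_\DR|)$. All these steps are sound and nothing circular is used, so your argument is a valid alternative proof; it is more elementary in that it bypasses the Moser-Trudinger inequality altogether, and it in fact yields the stronger conclusion $\int_\Sigma he^{\ep_0 u_n}\,\rd v_{g_0}\le C$ (exponential in $u_n$ rather than $u_n^{1/3}$), from which the stated lemma follows at once. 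The trade-off is that your control of $\bar u_n$ already uses $\inf_\Sigma h>0$, i.e. $\al_a\le0$, for the main exponential estimate, whereas the paper's route needs only $h\in L^1$ (i.e. $\al_a>-1$) at that stage and invokes $\inf_\Sigma h>0$ only for the unweighted consequences; both facts are of course available here since $\al_a\in(-1,0)$.
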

\begin{proof}
Thanks to the singular Moser-Trudinger inequality \eqref{SMT}, we can repeat the proof of \cite[Lemma 4.1]{MT-blowup-7} to obtain
  $$\int_\Sigma he^{\ep_0u_n^{1/3}}\rd v_{g_0}\le C,$$
from which we see that $hu_n^s$ is bounded in $L^1(\Sigma, g_0)$ for any $0<s<+\iy$. Since $h>0$ is bounded away from zero due to $\al_\DR\in(-1,0)$, we get that $u_n^s$ is bounded in $L^1(\Sigma, g_0)$ for any $0<s<+\iy$. Note that \[h\in L^{s}(\Sigma, g_0)\quad\text{ for any } \quad s\in [1, 1/|\al_{\DR}|).\]
For any $s'\in(1,1/|\al_\DR|)$, we may take a $t>1$ satisfying $2+2\al_\DR\f{s't-1}{t-1}>0$, so that $h^{\f{s't-1}{t-1}}\in L^1(\Sigma,g_0)$, and then, by H\"older inequality, we get that
$$\begin{aligned}
  \sbr{\int_{\Sigma}(hu_n)^{s'}\rd v_{g_0}}^{1/s'}
  &=O\sbr{ \sbr{\int_{\Sigma}h^{\f{s't-1}{t-1}}\rd v_{g_0} }^{\sbr{1-\f{1}{t}}\f{1}{s'}} 
    \sbr{\int_{\Omega}hu_n^{s't}\rd x }^{\f{1}{s't}}  }=O(1).
\end{aligned}$$
This finishes the proof. 
\end{proof}

Now, we also assume that $u_n$ blows up, namely we assume that \eqref{blow-up} holds. By using \eqref{bound-1}, we can get a pointwise exhaustion of concentration points, and then get a rough blow-up picture gathered in Proposition \ref{bubble} below,  the detailed proof of which will be postponed in Section \ref{sec3}.
As mentioned in the introduction, the strategy in \cite[Section 2]{MT-blowup-2} seems to be inefficient due to the appearance of singularities, and instead, we introduce a widely used selection process suitable for $u_n$. Then, we locate the concentration points, which is a union of finite points, and more importantly we get the pointwise estimate and the gradient estimate for $u_n$. 

To give the statement of Proposition \ref{bubble}, we need to introduce some notations that will be used frequently in this paper.
\begin{itemize}
\item
Let $\DR\subset\Sigma$ be the singularity set, and for any fixed point $q_0\in\DR$, we let $q_{n,0}=q_0$ for any $n$, and also let points $q_{n,i}\in\Sigma\setminus\DR$ for all $i\in\{1,\cdots,N\}$ and all $n$. 
\item As in \cite{MT-blowup-7}, for any $i\in\{0,1,\cdots,N\}$, we may choose isothermal coordinates $(B_{\kappa_1}(q_{n,i})$, $\phi_{n,i},U_{n,i})$ around $q_{n,i}$, such that $\phi_{n,i}$ is a diffeomorphism from $B_{\kappa_1}(q_{n,i})\subset\Sigma$ to $U_{n,i}\subset\R^2$, where $\kappa_1>0$ is some appropriate given positive constant and $B_{\kappa_1}(q_{n,i})$ is the ball of radius $\kappa_1$ and center $q_{n,i}$ for the metric $g_0$, such that $\phi_{n,i}(q_{n,i})=0$, $B_{2\kappa}(0)\subset U_{n,i}$, and such that $(\phi_{n,i})_*g_0=e^{2\vp_{n,i}}|\rd x|^2$, where $|\rd x|^2$ denotes the standard metric of $\mathbb R^2$. 
We may also assume that $(\vp_{n,i})_n$  satisfies
\be\lab{vpn} \forall~n,\quad \vp_{n,i}(0)=0 \quad\text{and}\quad \lim_{n\to+\iy}\vp_{n,i}=\vp_i~\text{in}~C_{\loc}^2(B_{2\kappa}(0)). \ee
\item At last, we set
\be\lab{ufh} u_{n,i}=u_n\circ\phi_{n,i}^{-1},\quad f_{n,i}=f\circ\phi_{n,i}^{-1} \quad\text{and}\quad h_{n,i}=e^{2\vp_{n,i}}\cdot\sbr{h\circ\phi_{n,i}^{-1}} \ee
in $B_{2\kappa}(0)$.  
\end{itemize}

\begin{proposition}\lab{bubble}
Up to a subsequence, there exist a non-negative integer $N$, and $N$ sequences of points $(q_{n,i})_n$ in $\Sigma\setminus\DR$ for $i\in\{1,\cdots,N\}$ (if $N\ge1$), such that the following things hold:
\begin{itemize}[fullwidth,itemindent=1em] 
\item[(1)] for all $i\in\{1,\cdots,N\}$, there hold $\nabla u_n(q_{n,i})=0$, and by setting $\ga_{n,i}=u_n(q_{n,i})$,
\be\lab{Sigman} \Sigma_n:=\lbr{q_{n,1},\cdots,q_{n,N}}, \ee
\be\lab{mun-1} \mu_{n,i}:=\sbr{\f{8}{\la_np_n^2h_{n,i}(0)f_{n,i}(0)\ga_{n,i}^{2(p_n-1)}e^{\ga_{n,i}^{p_n}}}}^{\f{1}{2}}, \ee
we have $\ga_{n,i}\to+\iy$, $\mu_{n,i}\to0$,
\be\lab{dist-1} \f{d_{g_0}(q_{n,i},\DR\cup\Sigma_n\setminus\{q_{n,i}\})}{\mu_{n,i}}\to+\iy, \ee
and 
\be\lab{conv-1} \f{p_n}{2}\ga_{n,i}^{p_n-1}(\ga_{n,i}-u_{n,i}(\mu_{n,i}\cdot))\to \ln(1+|\cdot|^2)~\text{in}~C_{\loc}^1(\R^2), \ee
as $n\to+\iy$.
\item[(2)] for any $q_0\in\DR$, let $q_{n,0}=q_0$ and $\ti h_{n,0}=|\cdot|^{-2\al}h_{n,0}$ in $B_{2\kappa}(0)$ for all $n$, then there is an alternative that either the following holds or not: let $\ga_{n,0}=u_n(q_{n,0})$,
\be\lab{mun-2} \mu_{n,0}:=\sbr{\f{8(1+\al_{q_0})^2}{\la_np_n^2\ti h_{n,0}(0)f_{n,0}(0)\ga_{n,0}^{2(p_n-1)}e^{\ga_{n,0}^{p_n}}}}^{\f{1}{2(1+\al_{q_0})}}, \ee
we have $\ga_{n,0}\to+\iy$, $\mu_{n,0}\to0$,
\be\lab{dist-2} \f{d_{g_0}(q_{n,0},\DR\cup\Sigma_n\setminus\{q_{n,0}\})}{\mu_{n,0}}\to+\iy, \ee
and 
\be\lab{conv-2} \f{p_n}{2}\ga_{n,0}^{p_n-1}(\ga_{n,0}-u_{n,0}(\mu_{n,0}\cdot))\to \ln(1+|\cdot|^{2(1+\al_{q_0})})~\text{in}~C_{\loc}^{0,\delta}(\R^2)\cap C_\loc^1(\R^2\setminus\{0\}) \ee
for any $\delta\in(0,\min\{1,2(1+\al_{q_0})\})$, as $n\to+\iy$. Moreover, we define the set $\DR'$ of singular concentration points as 
\be\lab{DR'} \DR'=\lbr{q_0\in\DR:~\text{ the above alternative \eqref{mun-2}-\eqref{conv-2} holds for $q_0$}}, \ee
then there must be $N+|\DR'|>0$. 
\item[(3)] there exists a constant $C>0$ such that
\be\lab{est-1} d_{g_0}(~\cdot~,\DR\cup\Sigma_n)^2\la_nhfu_n^{2(p_n-1)}e^{u_n^{p_n}}\le C\quad\text{in}~\Sigma\setminus\bigcup_{a\in\DR}B_{\mu_n(a)}(a), \ee
and
\be\lab{est-2} d_{g_0}(~\cdot~,\DR\cup\Sigma_n)u_n^{p_n-1}|\nabla_{g_0} u_n|\le C\quad\text{in}~\Sigma\setminus\bigcup_{a\in\DR}B_{\mu_n(a)}(a), \ee
for all $n$, where for each $a\in\DR$, $\mu_n(a)$ is defined by
	$$\mu_n(a):=\begin{cases}\mu_{n,0}, &\quad\text{if the alternative of (2) holds for $a$, i.e., $a\in\DR'$}, \\0, &\quad\text{if the alternative of (2) doesn't hold for $a$, i.e., $a\not\in\DR'$}, \end{cases}$$
namely $\cup_{a\in\DR}B_{\mu_n(a)}(a)=\cup_{a\in\DR'}B_{\mu_n(a)}(a)$.
\item[(4)] we also have that $\displaystyle\lim_{n\to+\iy}q_{n,i}=q_i$ for all $i\in\lbr{1,\cdots,N}$, and that there exists $u_0\in C^2(\Sigma\setminus(\DR\cup\Sigma_\iy))$ such that
\be\lab{conv-0} \lim_{n\to+\iy}u_n=u_0\quad\text{in}~C_\loc^{2}(\Sigma\setminus(\DR\cup\Sigma_\iy)), \ee
where 
\be\lab{Sigma}\Sigma_\iy:=\{q_1,\cdots,q_N\}. \ee
\end{itemize}
\end{proposition}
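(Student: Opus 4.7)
The plan is to run an iterative selection procedure to exhaust all blow-up points, treating the singular locus $\DR$ and the regular blow-up points separately, and then to prove the uniform estimates (3) by contradiction, so that any violation would have produced an extra concentration point that the selection should have already captured.

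First, for each fixed $a\in\DR$, I would test whether $a$ is a singular concentration point. Working in the isothermal chart around $a$, consider the candidate scale $\mu_{n,0}$ in \eqref{mun-2} and the rescaling $\eta_n(x)=\tfrac{p_n}{2}\ga_{n,0}^{p_n-1}(\ga_{n,0}-u_{n,0}(\mu_{n,0}x))$. A direct computation from \eqref{equ-5}, using that $h_{n,0}(x)\sim|x|^{2\al_a}\widetilde h_{n,0}(0)$ and the $C^1$-regularity of $\widetilde h_{n,0}$ assumed in \eqref{hhh}, gives $-\Delta\eta_n=|x|^{2\al_a}W_n(x)e^{-\eta_n(2-p_n\ga_{n,0}^{-p_n}\eta_n+\ldots)}$ with $W_n(0)\to 8(1+\al_a)^2$. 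If $\ga_{n,0}\to+\infty$ and the rescaled functions remain locally bounded, the classification results for \eqref{Liouville-2} from \cite{clas-0,clas-1} force $\eta_n\to\ln(1+|x|^{2(1+\al_a)})$; near $0$ the convergence is only $C^{0,\delta}_\loc$ for $\delta<\min\{1,2(1+\al_a)\}$ because $\al_a<0$ makes the right-hand side merely H\"older, while away from $0$ elliptic regularity upgrades to $C^1_\loc$. The set $\DR'$ is then defined by keeping exactly those $a$ for which this scenario really occurs.

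Next I would iteratively build the regular concentration points $q_{n,i}$ for $i\ge 1$. Fix $\Sigma_n^{(0)}=\DR'$ and, inductively, choose
\[q_{n,i}\in\operatorname*{argmax}_{q\in\Sigma}\,d_{g_0}\bigl(q,\DR\cup\Sigma_n^{(i-1)}\bigr)^{2}\la_nh(q)f(q)u_n(q)^{2(p_n-1)}e^{u_n(q)^{p_n}},\]
set $\Sigma_n^{(i)}=\Sigma_n^{(i-1)}\cup\{q_{n,i}\}$, and stop as soon as this maximum is bounded in $n$. The non-singular scaling \eqref{mun-1} at $q_{n,i}$, combined with \eqref{dist-1} (which follows from the maximality of $q_{n,i}$ on scales $\lesssim\mu_{n,i}$), reduces \eqref{equ-5} to $-\Delta\eta_n=e^{2\eta_n}(1+o(1))$, so by Chen--Li's classification for \eqref{Liouville-1} together with the height identity $\eta_n(0)\to 0$ and $\nabla\eta_n(0)=0$ (from $\nabla u_n(q_{n,i})=0$), one gets the $C^1_\loc$ convergence to $\ln(1+|x|^2)$ in \eqref{conv-1}. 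Energy quantization for the standard bubble yields a definite amount $4\pi(1+o(1))$ of $\la_n\int h f u_n^{p_n}e^{u_n^{p_n}}$-mass absorbed by each bubble; the global bound \eqref{bound} together with \eqref{bound-1} therefore force the selection to terminate after finitely many steps, producing the finite $N$. The non-emptiness condition $N+|\DR'|>0$ follows from the blow-up assumption \eqref{blow-up}.

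Third, I would prove \eqref{est-1}--\eqref{est-2} by contradiction: if \eqref{est-1} failed along some sequence of points $\widetilde q_n$ outside $\bigcup_{a\in\DR'}B_{\mu_n(a)}(a)$, then $d_{g_0}(\widetilde q_n,\DR\cup\Sigma_n)^2\la_nhfu_n^{2(p_n-1)}e^{u_n^{p_n}}(\widetilde q_n)\to+\infty$; rescaling at $\widetilde q_n$ by its own natural scale and repeating the bubble analysis of the two preceding steps, one produces a new concentration point, contradicting the stopping rule of the selection (either it should have been captured as a further $q_{n,i}$, or, if $\widetilde q_n$ accumulates at some $a\in\DR$, the singular alternative at $a$ should have been activated). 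The gradient estimate \eqref{est-2} is then deduced from \eqref{est-1} by standard elliptic regularity applied on balls of radius a fixed fraction of $d_{g_0}(\cdot,\DR\cup\Sigma_n)$, together with Lemma \ref{bound-0} to control the lower-order terms.

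Finally, \eqref{conv-0} follows from \eqref{est-1}: on any compact subset $K\Subset\Sigma\setminus(\DR\cup\Sigma_\infty)$, the distance to $\DR\cup\Sigma_n$ stays bounded below, so $\la_nhfu_n^{p_n-1}e^{u_n^{p_n}}$ is uniformly bounded on $K$, and together with Lemma \ref{bound-0}, standard elliptic estimates give $C^{2}$-bounds for $u_n$, hence a $C^2_\loc$ limit $u_0$. The main obstacle I anticipate is making the selection process honest at the singular points: the usual argument works cleanly only when one knows a priori that the natural scale at which a would-be bubble develops at $a\in\DR$ is precisely \eqref{mun-2}, and ruling out intermediate scales (for example a non-singular bubble forming at $a$ at a scale faster than $\mu_{n,0}$ but still capturing the singularity) requires the hypothesis $\al_a\in(-1,0)$ so that $h$ is integrable to a power $>1$ and the H\"older control on $\widetilde h_{n,0}$ from \eqref{hhh} transfers to $\eta_n$; this is where the restriction to angles in $(0,2\pi)$ enters critically.
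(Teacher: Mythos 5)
Your overall architecture (a selection process for the concentration points, classification of the rescaled limits via Chen--Li and Prajapat--Tarantello, termination via energy quantization, and uniform estimates by contradiction) is the same as the paper's, but two steps are genuinely missing, and the first is serious. The gradient estimate \eqref{est-2} does \emph{not} follow from \eqref{est-1} by ``standard elliptic regularity'' on balls of radius a fixed fraction of $d_{g_0}(\cdot,\DR\cup\Sigma_n)$. From \eqref{est-1} and the equation one only gets $|\Delta u_n|\lesssim d^{-2}u_n^{1-p_n}+hu_n$ on such a ball, and interior gradient estimates then bound $d\,|\nabla_{g_0}u_n|$ by (essentially) the oscillation of $u_n$ over the ball plus lower-order terms; to upgrade this to $d\,u_n^{p_n-1}|\nabla_{g_0}u_n|\le C$ you would need $\osc_{B_{\theta d}}u_n=O(u_n^{1-p_n})$, i.e.\ a Harnack-type control at scale $d$ in regions where $u_n\sim\ga_{n,i}\to+\iy$, and this is exactly what is at stake rather than something available a priori. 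The paper devotes Proposition \ref{gradient} to this: a Druet-type contradiction argument in which one rescales at a point where $d\,u_n^{p_n-1}|\nabla_{g_0} u_n|$ is maximal and diverges, shows that $(v_n-v_n(0))/|\nabla v_n(0)|$ converges to a harmonic function with gradient bounded by $1/d(\cdot,S)$ away from a finite set $S$, removes the singularities by flux and mean-value arguments (with extra care coming from the term $h\sim|x|^{2\al}$), and contradicts the Liouville theorem. Your proposal contains no substitute for this argument.

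A second gap concerns the selected points themselves and the singular locus. Your selection takes $q_{n,i}$ as a maximizer of the weighted quantity $d_{g_0}(\cdot,\DR\cup\Sigma_n^{(i-1)})^2\la_nhfu_n^{2(p_n-1)}e^{u_n^{p_n}}$, which does not make $q_{n,i}$ a critical point of $u_n$; yet you invoke $\nabla u_n(q_{n,i})=0$, which is both part of the statement and what forces the limit bubble to be centered at the origin in \eqref{conv-1}. The paper fixes this by a re-centering step: after the rescaled limit is identified as a translated bubble as in \eqref{3-27}, the center is replaced by the maximum point of $\ti\eta_n$ on a large ball, making the new center a genuine local maximum of $u_n$ with comparable height and scale. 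Likewise, your preliminary ``test'' deciding whether $a\in\DR'$ is not an argument: in the paper the dichotomy is decided inside the selection by whether $|x_n|^{2(1+\al)}\la_n\ga_n^{2(p_n-1)}e^{\ga_n^{p_n}}$ stays bounded, different auxiliary weights are needed near and away from the singularity (compare \eqref{604-1} with \eqref{604-2}), and the local upper bounds that legitimize applying the classification results are obtained from the maximality of these auxiliary functions together with a harmonic-decomposition/Harnack argument; your sketch instead assumes the rescaled functions ``remain locally bounded''. These points are exactly where the condition $\al_a\in(-1,0)$ and the structure of the selection interact, so they cannot be deferred to a remark.
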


\begin{remark} (1) Note that there might happen $q_i=q_j$  for $i\neq j$ in $\Sigma_\iy$ and  $\Sigma_\iy\cap \DR\neq\emptyset$. On the other hand, we will prove in Section \ref{sec7} that $\Sigma_\iy\cup \DR'$ is precisely the blow-up set of $(u_n)_n$, namely for any point $q\in\Sigma\setminus(\Sigma_\iy\cup \DR')$, there is $r_q>0$ such that $\max_{B_{r_q}(q)}u_n\leq C$ for all $n$.

(2) In Proposition \ref{bubble}-(2), we put the singular bubble centering at the singularity (i.e. $q_{n,0}\equiv q_0$), which will bring great convenience to the ODE discussion in Section \ref{appe2-1}. Moreover, although we lose the maximality of the singular bubble center $q_0\in\DR$, i.e., $\nabla_{g_0}u_n(q_0)\neq 0$ may happen, we still have a classification result for the linear system due to $\al_{q_0}\in(-1,0)$, see Lemma \ref{appe3-1} (comparing with Lemma \ref{appe3-0}) in Appendix \ref{appe3}. 

(3) The estimate \eqref{est-1} is reasonable. Let $q_0\in\DR$ with $\mu_n(q_0)=0$, i.e., $q_0\notin \DR'$. Noting that $d_{g_0}(~\cdot~,\DR\cup\Sigma_n)\approx d_{g_0}(~\cdot~,q_0)$ near $q_0$, we get from \eqref{hh} that $d_{g_0}(~\cdot~,\DR\cup\Sigma_n)^2h(\cdot)$ is bounded near $q_0$ even though $h$ is infinity at $q_0$. 
\end{remark}

\vs
\subsection{Local compactness at the non-singularity}\lab{sec2-3}\

By Proposition \ref{bubble}, we give the following definition.
\begin{definition}
If $N\ge1$, we say that there is a non-singular bubble at $q_{n,i}$ for each $i\in\{1,\cdots,N\}$. 
If the alternative in conclusion (2) of Proposition \ref{bubble} holds for $q_0\in\DR$, i.e., $q_0\in\DR'$, we say that there is a singular bubble at $q_{n,0}=q_0$. 
\end{definition}
Remark that, Proposition \ref{bubble}-(2) shows that at each singularity $q_0\in\DR$, there is at most one singular bubble concentrating at $q_0$. 

Moreover, we also find that if the concentration point is not a singularity, then all the bubbles concentrating at this point are non-singular bubbles, and the local energy quantization has been actually given by \cite{MT-blowup-7}. More precisely, for any subdomain $\Omega\subset\Sigma$, let
\be\lab{betadomain} \beta_n(\Omega):=\f{\la_n p_n^2}{2}\sbr{\int_\Omega hf\sbr{e^{u_n^{p_n}}-1}\rd v_{g_0}}^{\f{2-p_n}{p_n}}\sbr{\int_\Omega hfu_n^{p_n}e^{u_n^{p_n}}\rd v_{g_0}}^{\f{2(p_n-1)}{p_n}}, \ee
then the following statement was proved in \cite{MT-blowup-7}.

\begin{theorem}\lab{thm0-1}(\cite[Section 4-5]{MT-blowup-7})
Let $q_*\in\Sigma_\iy\setminus\DR$ and $\kappa_*>0$ such that $B_{2\kappa_*}(q_*)\cap(\DR\cup\Sigma_\iy)=\{q_*\}$, then it holds $\displaystyle\lim_{n\to+\iy}\la_n=0$ and 
	$$\lim_{n\to+\iy}\beta_n(B_{\kappa_*}(q_*))\in 4\pi\N_+.$$
Moreover, if $p_n\equiv p\in(1,2]$ for all $n$, then
	$$\beta_n(B_{\kappa_*}(q_*))-\lim_{n\to+\iy}\beta_n(B_{\kappa_*}(q_*))>0$$ 
for large $n$.

\end{theorem}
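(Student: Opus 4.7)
Since $q_*\notin\DR$ and $B_{2\kappa_*}(q_*)\cap\DR=\emptyset$ by hypothesis, my plan is to reduce the local blow-up analysis in $B_{\kappa_*}(q_*)$ to the smooth setting on a fixed Euclidean ball and then invoke the quantization and sharp estimates of \cite{MT-blowup-7} as a black box.

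First I would fix a single isothermal chart $(B_{2\kappa_*}(q_*),\phi_*,U_*)$ around $q_*$ as in the setup preceding Proposition~\ref{bubble}, with $(\phi_*)_*g_0=e^{2\vp_*}|\rd x|^2$, and write $u_{n,*}=u_n\circ\phi_*^{-1}$, $h_*=e^{2\vp_*}(h\circ\phi_*^{-1})$, $f_*=f\circ\phi_*^{-1}$ on the Euclidean ball $\phi_*(B_{\kappa_*}(q_*))$. Since $q_*\notin\DR$, the conical weight $h$ is smooth and uniformly bounded above and below in $B_{2\kappa_*}(q_*)$, so $h_*f_*$ is smooth, uniformly bounded, and uniformly bounded away from zero on this ball. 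In these coordinates, \eqref{equ-5} reads
\[-\Delta u_{n,*}+h_*u_{n,*}=p_n\la_n h_*f_*u_{n,*}^{p_n-1}e^{u_{n,*}^{p_n}},\]
which is exactly the Moser--Trudinger equation on a Euclidean bounded domain studied in \cite{MT-blowup-2,MT-blowup-6,MT-blowup-7}, with the subcritical lower-order term $h_*u_{n,*}$ being harmless.

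Next, I would verify that the blow-up structure inside $B_{\kappa_*}(q_*)$ is purely non-singular. By Proposition~\ref{bubble}-(1), the bubble centers $q_{n,i}\in\Sigma_n$ clustering at $q_*$ produce only non-singular bubbles modeled on \eqref{Liouville-1}; because $B_{2\kappa_*}(q_*)\cap\DR=\emptyset$, the alternative of Proposition~\ref{bubble}-(2) plays no role. The outside-bubble controls \eqref{est-1}--\eqref{est-2} give precisely the uniform pointwise and gradient estimates required by the blow-up induction of \cite{MT-blowup-7}. Applying \cite[Theorem~4.1]{MT-blowup-7} to $u_{n,*}$ (whose proof classifies each bubble via \cite{clas-2}, balances neck regions by a local Pohozaev identity, and shows by induction that each non-singular bubble contributes exactly $4\pi$ to the local energy) yields
\[\lim_{n\to+\iy}\beta_n(B_{\kappa_*}(q_*))\in 4\pi\N_+,\]
and $\la_n\to 0$ follows from a standard bubble computation: inserting the convergence \eqref{conv-1} into the scaling \eqref{mun-1} and comparing with the energy bound \eqref{bound-1} rules out any subsequence along which $\la_n$ stays bounded below.

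For the sharper inequality under $p_n\equiv p\in(1,2]$, I would invoke \cite[Theorem~5.1]{MT-blowup-7}, whose proof rests on the refined pointwise expansions of $u_n$ around each non-singular bubble developed in \cite{MT-blowup-4,MT-blowup-5}: one computes the next-order correction in $\beta_n(B_{\kappa_*}(q_*))-\lim_n\beta_n(B_{\kappa_*}(q_*))$ and shows it carries a definite positive sign for all large $n$. The argument transfers to our chart because $h_*f_*$ is smooth and positive. The main obstacle in the entire proposal is purely bookkeeping: checking that (i) the weights are smooth and positive in the chart, immediate from $q_*\notin\DR$; (ii) the outside-bubble controls \eqref{est-1}--\eqref{est-2} of Proposition~\ref{bubble} are available; and (iii) no singular-bubble contribution arises inside $B_{\kappa_*}(q_*)$, immediate from $B_{2\kappa_*}(q_*)\cap\DR=\emptyset$. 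Since all three are in place, the statement is essentially a direct citation of \cite[Section~4--5]{MT-blowup-7}, which is why it is recorded as Theorem~\ref{thm0-1} rather than reproved.
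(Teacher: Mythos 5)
Your proposal is correct and matches the paper's treatment: since $q_*\notin\DR$ and $B_{2\kappa_*}(q_*)$ avoids $\DR$, the weight $h$ is smooth and positive there, all bubbles clustering at $q_*$ are non-singular, and the paper likewise records Theorem \ref{thm0-1} as a direct citation of \cite[Sections 4--5]{MT-blowup-7} rather than reproving it. One minor caveat: your ad hoc sketch for $\la_n\to0$ (inserting \eqref{conv-1} into \eqref{mun-1} and comparing with \eqref{bound-1}) is looser than the actual mechanism, which rests on radial-average estimates of the type \eqref{5-70}--\eqref{5-79}, but this is immaterial since $\la_n\to0$ is itself part of the cited result.
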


\vs
\subsection{Local compactness at the singularity}\lab{sec2-4}\

Thanks to Theorem \ref{thm0-1}, we only need to consider the case that the concentration point is a singularity. For this case,  we find that apart from non-singular bubbles, there appears at most one singular bubble (see the conclusion (2) of Proposition \ref{bubble}), so that we have two different types of concentration phenomena. 

Let $N\ge0$ and $q_{n,i}$, $i\in\{1,\cdots,N\}$ (if $N\ge1$) be given by Proposition \ref{bubble}. 
Let the set $\DR'$ of singular concentration points be defined by \eqref{DR'}. For any $q_0\in\DR$, according to the singular bubble at $q_0$ appears or not, we consider:
\begin{itemize}
\vskip0.1in
\item[{\bf Type I}] {\it There are only non-singular bubbles at $q_0$, i.e., $q_0\in\Sigma_\iy\cap\DR\setminus\DR'$.} We assume that $N\ge1$ and there exists a positive integer $1\le N'\le N$ such that
	$$\lim_{n\to+\iy}q_{n,i}=q_0,~\forall~i\in\{1,\cdots,N'\},$$ 
and
	$$\lim_{n\to+\iy}q_{n,i}\neq q_0,~\forall~i\in\{N'+1,\cdots,N\},$$
and the alternative in conclusion (2) of Proposition \ref{bubble} doesn't hold for $q_0$.
\vskip0.1in
\item[{\bf Type II}] {\it There are a singular bubble and some non-singular bubbles at $q_0$, i.e., $q_0\in\DR'$.} We assume that there exists a non-negative integer $0\le N'\le N$ such that
	$$\lim_{n\to+\iy}q_{n,i}=q_0,\quad\forall~i\in\{1,\cdots,N'\}~\text{(if $N'\ge1$)},$$ 
and
	$$\lim_{n\to+\iy}q_{n,i}\neq q_0,\quad\forall~i\in\{N'+1,\cdots,N\},$$
and the alternative in conclusion (2) of Proposition \ref{bubble} does hold for $q_0$.
\end{itemize}
Then we will prove the following new quantization result, which is also the main contribution of this paper.

\begin{theorem}\lab{thm0-2}
Let $q_0\in(\Sigma_\iy\cap\DR\setminus\DR')\cup\DR'$ and $\kappa_0>0$ such that $B_{2\kappa_0}(q_0)\cap(\DR\cup\Sigma_\iy)=\{q_0\}$, then $\displaystyle\lim_{n\to+\iy}\la_n=0$ and 
	$$\lim_{n\to+\iy}\beta_n(B_{\kappa_0}(q_0))=\begin{cases}4\pi N',&\text{for { Type I, i.e., $q_0\in (\Sigma_\infty\cap \DR)\setminus \DR'$}},\\4\pi(1+\al_{q_0})+4\pi N',&\text{for {Type II}, i.e., $q_0\in\DR'$}. \end{cases}$$
Moreover, if $p_n\equiv p\in(1,2]$ for all $n$, then
	$$\beta_n(B_{\kappa_0}(q_0))-\lim_{n\to+\iy}\beta_n(B_{\kappa_0}(q_0))>0$$ 
for large $n$.
\end{theorem}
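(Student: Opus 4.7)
My plan is to localize at $q_0$ via an isothermal coordinate chart, separate the ball $B_{\kappa_0}(q_0)$ into a disk around each bubble concentrating at $q_0$ plus a ``neck'' region, quantize each bubble's contribution, and show the neck carries no mass. Pulling back by $(B_{\kappa_0}(q_0),\phi_{n,0},U_{n,0})$ turns \eqref{equ-5} into a planar singular Moser-Trudinger equation with coefficient $h_{n,0}(x)\sim|x|^{2\alpha_{q_0}}$ near $0$. I relabel the bubble centers concentrating at $q_0$ as $q_{n,1},\ldots,q_{n,N'}$ (the non-singular ones) together with, in Type II, the singular bubble $q_{n,0}=q_0$.

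For each non-singular bubble $1\le i\le N'$, the $C^1_{\loc}$ convergence \eqref{conv-1} together with the separation \eqref{dist-1} reduces the local energy on $B_{R\mu_{n,i}}(q_{n,i})$ to the one computed in the proof of Theorem \ref{thm0-1} in \cite{MT-blowup-7}, yielding $\lim_{R\to+\iy}\lim_{n\to+\iy}\beta_n(B_{R\mu_{n,i}}(q_{n,i}))=4\pi$. For the singular bubble in Type II, I use \eqref{conv-2} and the classification of \cite{clas-0,clas-1} for the profile $\ln(1+|\cdot|^{2(1+\alpha_{q_0})})$ (total Dirichlet ``mass'' $8\pi(1+\alpha_{q_0})$) to get $\lim_{R\to+\iy}\lim_{n\to+\iy}\beta_n(B_{R\mu_{n,0}}(q_0))=4\pi(1+\alpha_{q_0})$. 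Since only $C^{0,\delta}_{\loc}\cap C^1_{\loc}(\R^2\setminus\{0\})$ convergence is available, I would replace the usual pointwise gradient argument by a H\"older-seminorm comparison of $u_n$ against the standard singular profile lifted to height $\ga_{n,0}$, as indicated in Remark \ref{515-2}; this still suffices to pass to the limit in the dominant integrand $\la_n h_{n,0} f_{n,0} u_n^{p_n}e^{u_n^{p_n}}$.

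The core difficulty is proving that the neck contributes no energy,
\[ \lim_{R\to+\iy}\lim_{n\to+\iy}\beta_n\Big(B_{\kappa_0}(q_0)\setminus\bigcup_{i}B_{R\mu_{n,i}}(q_{n,i})\Big)=0. \]
In Type I the gradient bound \eqref{est-2} combined with the fact that $d_{g_0}(\cdot,q_0)^{2}h$ stays bounded --- valid precisely because $\alpha_{q_0}\in(-1,0)$, per Remark \ref{614-1} --- lets the standard neck argument from \cite{MT-blowup-7} go through. In Type II this breaks down near the singular bubble, so I would use the local Pohozaev identity: multiply the pulled-back equation by $(x-x_{n,i})\cdot\nabla u_n$, reweight by $\tfrac{p_n}{2}u_n(q_{n,i})^{p_n-1}$ to match the scaling of the $i$-th bubble, and integrate over an annulus enclosing all inner bubble disks. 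In the limit the identity reduces to
\[ \Big((1+\alpha_{q_0})+\sum_{j\in J_i\setminus\{i\}}\theta_j\Big)^{2}=(1+\alpha_{q_0})\Big((1+\alpha_{q_0})+\sum_{j\in J_i\setminus\{i\}}\theta_j^{2}\Big), \]
where $\theta_j>0$ are the limiting bubble-height ratios; the restriction $\alpha_{q_0}\in(-1,0)$ makes this algebraic identity incompatible with any nontrivial residual, forcing the neck mass to vanish (see Remark \ref{611-1}).

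Summing the above gives the stated values $4\pi N'$ (Type I) or $4\pi(1+\alpha_{q_0})+4\pi N'$ (Type II). The conclusion $\la_n\to 0$ is immediate from \eqref{bound-1}: if $\la_n$ were bounded below, the blow-up \eqref{blow-up} together with the $L^1$ control of $u_n$ would drive $\la_n\int h u_n^{2(p_n-1)}e^{u_n^{p_n}}\to+\iy$, contradicting the energy bound. For the strict inequality $\beta_n(B_{\kappa_0}(q_0))>\lim_n\beta_n(B_{\kappa_0}(q_0))$ under $p_n\equiv p\in(1,2]$, I would adapt the refined energy expansion of \cite[Section 5]{MT-blowup-7} based on \cite{MT-blowup-4,MT-blowup-5}: around each non-singular bubble the next-order correction to $\beta_n$ is strictly positive, and a parallel weighted expansion around the singular bubble, built on \eqref{conv-2} and an ODE analysis suited to $\alpha_{q_0}<0$, contributes a further positive correction. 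The principal obstacle is the Type~II neck analysis: the absence of gradient convergence at $0$ forces the bubble-weighted Pohozaev identity, whose limiting algebraic constraint can be contradicted only by genuinely exploiting $\alpha_{q_0}\in(-1,0)$.
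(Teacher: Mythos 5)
Your overall roadmap matches the paper's strategy in spirit (comparison with radial profiles, H\"older-seminorm control of the singular bubble, a Pohozaev identity exploiting $\al_{q_0}\in(-1,0)$, and the refined expansion for the strict inequality), but two steps as you argue them do not hold up. First, the claim that $\la_n\to0$ is ``immediate from \eqref{bound-1}'' is wrong: pointwise blow-up does not force $\int_\Sigma hu_n^{2(p_n-1)}e^{u_n^{p_n}}\rd v_{g_0}\to+\iy$. Near a bubble one has $\la_nhfu_n^{2(p_n-1)}e^{u_n^{p_n}}\approx 8p_n^{-2}\mu_{n,i}^{-2}e^{-2t_{n,i}}$ by \eqref{mun-1}, so the integral over $B_{R\mu_{n,i}}$ is $O(1/\la_n)$, which stays bounded if $\la_n$ is bounded below; no contradiction with \eqref{bound} arises. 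In the paper $\la_n\to0$ is Corollary \ref{lemma5-3}/\ref{lemma6-3}, obtained only after the average bound \eqref{5-70}/\eqref{s6-70-0} is established up to macroscopic radii, which in turn rests on the hardest step, the first estimate of Lemma \ref{lemma5-1}/\ref{lemma6-1}; in Type I the conclusion even uses the factor $|x_{n,i}|^{2\al}$ with $\al<0$. So the first conclusion of the theorem cannot be dispatched independently of the main machinery.

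Second, your quantization scheme treats $\beta_n$ as additive over bubble disks plus a neck killed either by ``the standard argument'' (Type I) or by the Pohozaev identity (Type II), and this misplaces where the real work and the tools sit. The Pohozaev identity in the paper is not a neck estimate: it enters the proof of \eqref{6-13} inside Lemma \ref{lemma6-1}, ruling out a cluster of comparable bubbles at scale $r_{n,0}$ around $q_0$ (the identity in the $\theta_j$'s you quote, contradicted by $\al\in(-1,0)$ when $|J_0|\ge2$); the absence of residual energy is then proved via Lemma \ref{lemma5-2}/\ref{lemma6-2}, the sup bound \eqref{5-90}, and, in the critical regime $p_n\to2$, the oscillation radii $\nu_{n,j}$ of \eqref{5-102}/\eqref{6-102} with the Green-representation argument and the choice $\eta_0>1-\tfrac{1}{201(1+N')^2}$ — none of which your sketch addresses. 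Likewise, in Type I the new difficulty is not that $d_{g_0}(\cdot,q_0)^2h$ is bounded, but the first estimate itself: the convex-hull/vector argument built on \eqref{regular-3}--\eqref{regular-4}, where $\nabla\psi_0(0)\neq0$ precisely because of the weight, together with the ``hole-filling'' step \eqref{5-45} which uses $q_0\notin\DR'$. Finally, since $\beta_n(\Omega)$ in \eqref{betadomain} is a product of two integrals raised to the powers $\tfrac{2-p_n}{p_n}$ and $\tfrac{2(p_n-1)}{p_n}$, the limit is not computed bubble-by-bubble: for $p_n\not\to2$ one needs the height comparability \eqref{5-86} and the exponential neck bound \eqref{5-99}, and for $p_n\to2$ the cancellation in \eqref{5-147} requiring $(\ga_{n,j}/\ga_{n,j_0})^{2-p_n}=1+o(1)$. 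These omissions are genuine gaps, not merely compressed exposition.
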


Proposition \ref{bubble} will be proved in Section \ref{sec3}, and 
the rest sections are mainly devoted to the proof of Theorem \ref{thm0-2}, which is very complicated and quite long. Once Theorem \ref{thm0-2} is proved, we can complete the proofs of our main results Theorems \ref{thm0} and \ref{thm1} in Section \ref{sec7}.

\vs
\section{Concentration phenomena}\lab{sec3}

In this section, we give the proof of Proposition \ref{bubble}.
Let $h$ be given by \eqref{h} and $f$ be a smooth positive function. 
Let $(\la_n)_n$ be a sequence of positive numbers satisfying \eqref{boundla}, $(p_n)_n$ be a sequence of numbers in $[1,2]$, $(u_n)_n$ be a sequence of functions solving \eqref{equ-5}. Let $(\beta_n)_n$ be given by \eqref{betan}. We also assume \eqref{bound}. 

The following notations are used only in this section. 
To avoid ambiguity with the number sequence $p_n\in [1,2]$, we use $\p$, $\p_n$ or $\p_{n,i}$ to denote points in $\Sigma$.
For any fixed point $\p\in\Sigma$, we may choose isothermal coordinates $(B_r(\p),\Psi_\p,\Omega_\p)$ around $\p$, such that $\Psi_\p:B_r(\p)\to\Omega_\p\subset\R^2$ is a diffeomorphism with $\Psi_\p(\p)=0$ and $(\Psi_\p)_*g_0=e^{2\psi_\p}|\rd x|^2$, and such that $\psi_\p(0)=0$ and $B_{r'}(0)\subset\Omega_\p$.  Then, without causing ambiguity, we don't distinguish $u_n,f,h$ with their corresponding functions in $\Omega_\p$, and absorb the transform factor $\psi_\p$ into the function $h$, that is, for $x\in\Omega_\p$, 
\be\lab{ufh-1} u_n(x):=u_n\circ\Psi_\p^{-1}(x), \quad f(x):=f\circ\Psi_\p^{-1}(x), \quad h(x):=e^{2\psi_\p(x)}\cdot(h\circ\Psi_\p^{-1})(x). \ee
Then it follows from \eqref{equ-5} that  $u_n$ satisfies the local equation
\be\lab{equ-4-1} -\Delta u_n+hu_n=\la_np_nhfu_n^{p_n-1}e^{u_n^{p_n}},\quad u_n>0, \quad\text{in}~\Omega_\p.\ee
In addition, for $q_0\in\DR$, we may choose $r$ small such that $B_r(q_0)\cap\DR=\{q_0\}$, then since $h$ has a conical singularity at $q_0$ of order $\al_{q_0}\in(-1,0)$, we may assume that
\be\lab{h0} h(x)=h_{q_0}(x)|x|^{2\al_{q_0}}, \quad \text{for}~x\in\Omega_{q_0}, \ee
where $h_{q_0}$ is a positive $C^1$ function.

\subsection{Selection of bubbles}\lab{sec3-1}\ 

The purpose of this subsection is to locate the concentration points, and the main idea comes from \cite{Selection-4}.
However, since we handle the singular case $\al_a<0$ (not $\al_a>0$ as in \cite{Selection-4}), which leads to that $h$ goes to infinity near each singularity, we need to use different auxiliary functions in different cases, see \eqref{604-1} and \eqref{604-2} for example.

\begin{proposition}\lab{bubble-1}
Assume that $u_n$ satisfies \eqref{blow-up}. Then, up to a subsequence, there exist a positive integer $M$, a sequence of finite points $\wt\Sigma_n:=\{\p_{n,1},\p_{n,2},\cdots,\p_{n,M}\}$ and positive numbers $d_{n,1},d_{n,2},\cdots,d_{n,M}$ such that the following hold.
\begin{itemize}[fullwidth,itemindent=1em]
\item[(1)] $\displaystyle\lim_{n\to+\iy}\p_{n,i}=\p_i^*\in\Sigma$, $\displaystyle\lim_{n\to+\iy}d_{n,i}=0$, and $d_{n,i}\le\f{1}{2}d_{g_0}(\p_{n,i},\wt\Sigma_n\setminus\{\p_{n,i}\})$, for any $i=1,\cdots,M$.
\item[(2)] $u_n(\p_{n,i})=\displaystyle\max_{B_{d_{n,i}}(\p_{n,i})}u_n$ and $\displaystyle\lim_{n\to+\iy}u_n(\p_{n,i})=+\iy$, for any $i=1,\cdots,M$.
\item[(3)] For any $i=1,\cdots,M$, taking the isothermal coordinate system $(B_{r}(\p_i^*),\Psi_i,\Omega_i)$ as above \eqref{ufh-1}, and setting $x_{n,i}=\Psi_i(\p_{n,i})$, we have 
\begin{itemize}[fullwidth,itemindent=2em]
\item[(3-1)] If $\p_i^*\not\in\DR$, or $\p_i^*\in\DR$ and $\la_n|x_{n,i}|^{2(1+\al_{\p_i^*})}u_n(x_{n,i})^{2(p_n-1)}e^{u_n(x_{n,i})^{p_n}}\to+\iy$ as $n\to+\iy$, by defining
	$$\mu_{n,i}^{-2}:=\f{1}{8}\la_np_n^2h(x_{n,i})f(x_{n,i})u_n(x_{n,i})^{2(p_n-1)}e^{u_n(x_{n,i})^{p_n}},$$
	$$\eta_{n,i}:=\f{p_n}{2}u_n(x_{n,i})^{p_n-1}(u_n(x_{n,i}+\mu_{n,i}\cdot)-u_n(x_{n,i})),$$
then $\mu_{n,i}\to0$ and
	$$\eta_{n,i}\to -\ln\sbr{1+|\cdot|^2} \quad\text{in}~C_\loc^1(\R^2),$$
as $n\to+\iy$. Furthermore, $\f{|x_{n,i}|}{\mu_{n,i}}\to+\infty$ if $\p_i^*\in\DR$.
\item[(3-2)] If $\p_i^*\in\DR$ and $\la_n|x_{n,i}|^{2(1+\al_{\p_i^*})}u_n(x_{n,i})^{2(p_n-1)}e^{u_n(x_{n,i})^{p_n}}=O(1)$, by defining
	$$\mu_{n,i}^{-2(1+\al_{\p_i^*})}:=\f{1}{8(1+\al_{\p_i^*})^2}\la_np_n^2h_{\p_i^*}(x_{n,i})f(x_{n,i})u_n(x_{n,i})^{2(p_n-1)}e^{u_n(x_{n,i})^{p_n}},$$
	$$\eta_{n,i}:=\f{p_n}{2}u_n(x_{n,i})^{p_n-1}(u_n(x_{n,i}+\mu_{n,i}\cdot)-u_n(x_{n,i})),$$
where $h_{\p_i^*}(x)=h(x)/|x|^{2\al_{\p_i^*}}$ is given by \eqref{h0}, then $\mu_{n,i}\to0$, $\f{|x_{n,i}|}{\mu_{n,i}}\to0$ and 
	$$\eta_{n,i}\to-\ln\sbr{1+|\cdot|^{2(1+\al_{\p_i^*})}}\quad\text{in}~C^{0,\delta}_{\loc}(\R^2)\cap C^1_\loc(\R^2\setminus\{0\}),$$
for any $\delta\in(0,\min\{1,2(1+\al_{\p_i^*})\})$, as $n\to+\iy$.
\end{itemize}
\item[(4)] For any $i=1,\cdots,M$, $\mu_{n,i}=o(d_{n,i})$.
\item[(5)] For any singularity $q_0\in\DR$, there is at most one point sequence $\p_{n,i_0}\in\wt\Sigma_n$ such that 
	\be\lab{tem-321-1}\p_{i_0}^*=q_0\quad\text{and}\quad\la_n|x_{n,i_0}|^{2(1+\al_{q_0})}u_n(x_{n,i_0})^{2(p_n-1)}e^{u_n(x_{n,i_0})^{p_n}}=O(1).\ee
Further, by collecting all singularities $q_0$ satisfying \eqref{tem-321-1} in a set $\DR'\subset\DR$ (maybe $\DR'=\emptyset$),
we have that 
	$$\lim_{n\to+\iy}\f{|x_{n,l}|}{|x_{n,i_0}|}=+\iy,\quad\text{for all}\; l\neq i_0\;\text{satisfying}\;\p_l^*=q_0,$$
for any $q_0\in \DR'$, and that
\be\lab{tem-325-1} d_{g_0}(\cdot,\wt\Sigma_n\cup\DR)^2\la_nhfu_n^{2(p_n-1)}e^{u_n^{p_n}}=O(1), \quad\text{uniformly in}\;\Sigma\setminus \bigcup_{q_0\in\DR'}B_{\mu_{n,i_0}}(q_0).\ee
\end{itemize}
\end{proposition}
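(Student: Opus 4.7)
My plan is to implement an iterative bubble-selection scheme, adapted to the conical singularities in $\DR$. At step $k$, having produced bubble centers $\wt\Sigma_n^{(k)}=\{\p_{n,1},\ldots,\p_{n,k}\}$, I consider the weighted quantity
\be
W_n^{(k)}(q) := d_{g_0}(q,\DR\cup\wt\Sigma_n^{(k)})^2\, \la_n h(q)f(q) u_n(q)^{2(p_n-1)} e^{u_n(q)^{p_n}}.
\ee
If $\sup_\Sigma W_n^{(k)}$ stays bounded (away from small balls at the singular bubble centers), the process stops; otherwise I select $\p_{n,k+1}$ as (or near) a point realizing this supremum, and set $d_{n,k+1}=\tfrac12 d_{g_0}(\p_{n,k+1},\DR\cup\wt\Sigma_n^{(k)})$. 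The blow-up hypothesis \eqref{blow-up} starts the process at $k=0$: one may take $\p_{n,1}$ to be a global maximum of $u_n$, and combining \eqref{blow-up} with Lemma \ref{bound-0} forces $W_n^{(0)}$ to be unbounded along a subsequence.

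\textbf{Two regimes of local bubble analysis.} Working in an isothermal chart near $\p_i^*=\lim\p_{n,i}$, write $x_{n,i}=\Psi_i(\p_{n,i})$ and distinguish the two cases of item (3) via
\be
\Lambda_{n,i} := \la_n |x_{n,i}|^{2(1+\al_{\p_i^*})} u_n(x_{n,i})^{2(p_n-1)} e^{u_n(x_{n,i})^{p_n}},
\ee
with the convention $\al_{\p_i^*}=0$ when $\p_i^*\notin\DR$. When $\Lambda_{n,i}\to+\iy$, the conical weight $h\sim|x|^{2\al_{\p_i^*}}$ is essentially constant at the scale $\mu_{n,i}$ around $x_{n,i}$, which justifies the non-singular rescaling of (3-1); rewriting \eqref{equ-4-1} in $y=\mu_{n,i}^{-1}(x-x_{n,i})$, using $\eta_{n,i}\le 0$ on fixed balls (from the selection maximality $u_n(\p_{n,i})=\max_{B_{d_{n,i}}(\p_{n,i})}u_n$), the right-hand side converges to $4e^{2\eta}$; standard elliptic estimates yield a $C^1_\loc(\R^2)$-limit solving \eqref{Liouville-1}, with finite energy via Fatou applied to \eqref{bound-1}, and the Chen--Li classification \cite{clas-2} identifies it with $-\ln(1+|y|^2)$. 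In the opposite regime $\Lambda_{n,i}=O(1)$, one checks that necessarily $\p_i^*\in\DR$ and $|x_{n,i}|/\mu_{n,i}\to0$; the singular scaling of (3-2) is calibrated so that the weight $h_{\p_i^*}(x)|x|^{2\al_{\p_i^*}}$ rescales to $h_{\p_i^*}(0)|y|^{2\al_{\p_i^*}}$, and the limiting equation is \eqref{Liouville-2}, whose solutions are classified in \cite{clas-0,clas-1}, pinning down the radial profile $-\ln(1+|y|^{2(1+\al_{\p_i^*})})$.

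\textbf{Separation, termination, uniqueness, and the hard point.} Conclusion (4), $\mu_{n,i}=o(d_{n,i})$, follows from the construction because the rescaled $\eta_{n,i}$ must extend to a full-space limit on $\R^2$. The process terminates in finitely many steps: by Fatou on the rescaled equation, each non-singular bubble contributes asymptotic mass $\ge 4\pi$ to $\beta_n$ while each singular bubble contributes $\ge 4\pi(1+\al_{q_0})$, and \eqref{bound} caps the total. For the uniqueness part (5), I would argue that a second sequence $\p_{n,i_0'}$ accumulating at $q_0\in\DR$ with $\Lambda_{n,i_0'}=O(1)$ would force $|x_{n,i_0'}|=O(\mu_{n,i_0})$, hence $d_{g_0}(\p_{n,i_0},\p_{n,i_0'})=O(\mu_{n,i_0})$, contradicting the separation $d_{n,i_0'}\le\tfrac12 d_{g_0}(\p_{n,i_0'},\wt\Sigma_n\setminus\{\p_{n,i_0'}\})$ together with $\mu_{n,i_0}=o(d_{n,i_0})$; the ratio $|x_{n,l}|/|x_{n,i_0}|\to\iy$ for the non-singular bubbles at $q_0$ then follows because any such $l$ must have $\Lambda_{n,l}\to\iy$. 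The final estimate \eqref{tem-325-1} is exactly the termination criterion of the scheme. The main obstacle is the passage to the limit in the singular regime: the nonlinearity carries the non-bounded weight $|y|^{2\al_{q_0}}$, so standard $C^1$-compactness fails at the origin. One must separately extract $C^{0,\delta}_\loc(\R^2)$-compactness near $0$ via Morrey embedding applied to the weighted $L^p$-bound (with $1<p<1/|\al_{q_0}|$ from Lemma \ref{bound-0}) and interior Schauder estimates away from $0$, with the Hölder exponent constrained by $\delta<\min\{1,2(1+\al_{q_0})\}$ as dictated by the regularity of the limiting profile at the origin; this is where the assumption $\al_{q_0}\in(-1,0)$ is genuinely exploited.
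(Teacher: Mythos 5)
Your iterative skeleton (select a maximizer of a weighted concentration functional, stop when it stays bounded) is the same skeleton as the paper's selection process, but there are two genuine gaps at the selection step. First, conclusion (2) does not come for free from your selection: a maximizer of $W_n^{(k)}$ maximizes a distance-weighted version of the nonlinearity, not $u_n$, and need not be even a local maximum of $u_n$. Hence your claim that $\eta_{n,i}\le 0$ on fixed balls ``from the selection maximality'' is unjustified, and without some one-sided or local sup control the compactness argument for the rescaled equation does not start. The paper instead performs a Schoen-type maximization of an auxiliary comparison function (cf. \eqref{604-1}) over a ball around the bad point, which only yields the local bound \eqref{3-22} on $u_n^{2(p_n-1)}e^{u_n^{p_n}}$; the rescaled limit is then an off-center Liouville profile $\ln\xi-\ln(1+\xi^2|\cdot-b_0|^2)$, and one must re-center at the maximum of $\ti\eta_n$ on a large rescaled ball (the passage \eqref{3-26}--\eqref{3-35}) to recover (2), the centered profiles in (3-1)/(3-2), and the relation $\mu_{n,i}=o(d_{n,i})$; note the paper takes $d_{n,i}=R_n\mu_{n,i}/2$ with $R_n\to+\iy$ slowly, whereas with your choice $d_{n,i}=\tfrac12 d_{g_0}(\p_{n,i},\DR\cup\wt\Sigma_n)$ conclusion (2) is far stronger than what can be proved and (4) is no longer ``by construction''. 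The re-centering is also exactly what gives $|x_{n,i}|/\mu_{n,i}\to0$ in (3-2) for bubbles found after the first one; ``one checks'' hides this.

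Second, a single selection functional weighted by $h$ cannot work uniformly near $\DR$, where $h\sim d_{g_0}(\cdot,q_0)^{2\al_{q_0}}$ blows up: the ball on which maximality of $W_n^{(k)}$ controls $u_n$ has radius at most half the distance to $\DR$, hence excludes $q_0$, so your selection never yields the local bounds needed in the regime of (3-2) and can miss singular bubbles produced at later stages. This is why the paper splits according to whether the candidate point is comparable to, or much closer to, $q_0$ than to the previously selected centers, and in the latter case replaces the $h$-weighted squared distance by the unweighted functional with exponent $2+2\al$ in \eqref{604-2}, with further subcases (II-1, II-2, II-3), one of which requires a second round of Schoen selection. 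Your identification of H\"older compactness at the origin (via $L^s$ bounds with $s<1/|\al_{q_0}|$ and a harmonic decomposition) as the main obstacle is consistent with the paper, but it is not where the proof actually lives. A minor point: in (5), the statement $|x_{n,l}|/|x_{n,i_0}|\to+\iy$ follows from the separation $d_{n,i}\le\tfrac12 d_{g_0}(\p_{n,i},\wt\Sigma_n\setminus\{\p_{n,i}\})$ combined with $|x_{n,i_0}|=o(\mu_{n,i_0})=o(d_{n,i_0})$, not merely from $\Lambda_{n,l}\to+\iy$ as you assert.
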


\begin{remark}\lab{326-2}
Note that for any $q_0\in\DR'$, i.e., there is a unique $i_0$ such that \eqref{tem-321-1} holds, we see from $\Psi_{i_0}(q_0)=\Psi_{i_0}(\p_{i_0}^*)=0$ and $x_{n,i_0}=\Psi_{i_0}(\p_{n,i_0})$ that $d_{g_0}(\p_{n,i_0},q_0)=(1+o(1))|x_{n,i_0}|$. Since conclusion (3-2) of Proposition \ref{bubble-1} says $\frac{|x_{n,i_0}|}{\mu_{n,i_0}}\to 0$, 
we have $d_{g_0}(\p_{n,i_0},q_0)=o(\mu_{n,i_0})$, then $d_{g_0}(\cdot,q_0)=(1+o(1)) d_{g_0}(\cdot,\p_{n,i_0})$ uniformly in $\Sigma\setminus B_{\mu_{n,i_0}}(q_0)$, so that \eqref{tem-325-1} is actually
	$$\dist\sbr{\cdot,\wt\Sigma_n\cup(\DR\setminus\DR')}^2\la_nhfu_n^{2(p_n-1)}e^{u_n^{p_n}}=O(1), \;\text{uniformly in}\;\Sigma\setminus \bigcup_{q_0\in\DR'}B_{\mu_{n,i_0}}(q_0).$$
\end{remark}

\begin{proof}[\bf Proof of Proposition \ref{bubble-1}] 
Since all of our arguments are locally near some point, the number of singularities does not affect the proof and only brings complexity in notations, so we may assume that there is only one singularity, i.e., $\DR=\{q_0\}$ and the order of $q_0$ is $\al=\al_{q_0}\in(-1,0)$.

The proof is quite long and we divide it into several steps. 

\vskip0.1in
\noindent{\it Step 1: Construction of the first bubble $\p_{n,1}$ and $d_{n,1}$.}

By \eqref{blow-up}, we may choose $\p_n\in\Sigma$ such that
	$$\ga_n:=u_n(\p_n)=\max_{\Sigma}u_n\to+\iy,\quad\text{as}~n\to+\iy.$$
Up to a subsequence, we assume $\p_n\to \p^*$ as $n\to+\iy$.  We claim that 
\be\lab{514-1} \lim_{n\to+\iy}\la_n\ga_n^{2(p_n-1)}e^{\ga_n^{p_n}}=+\iy. \ee 
Indeed, if $\la_n\ga_n^{2(p_n-1)}e^{\ga_n^{p_n}}=O(1)$, then it follows from Lemma \ref{bound-0} and equation \eqref{equ-5} that $\Delta u_n$ is bounded in $L^{s}(\Sigma,g_0)$ for any $s\in(1,1/|\al|)$. Then, by the standard elliptic theory (see e.g. \cite[Theorem 9.11]{GT}), we get that $u_n$ is bounded in $W^{2,s}(\Sigma,g_0)$ for any $s\in(1,1/|\al|)$, so that by the embeeding theorem (see e.g. \cite[Theorem 7.26]{GT}), we get that $u_n$ is bounded in $L^\iy(\Sigma)$, which is a contradiction with $\displaystyle\lim_{n\to+\iy}\ga_n=+\iy$. This proves \eqref{514-1}.

Take the isothermal coordinates $(B_r(\p^*),\Psi,\Omega)$ around $\p^*$ as above \eqref{ufh-1}. Set $x_n=\Psi(\p_n)$ and 
\be\lab{3-13} \eta_n(\cdot):=\f{p_n}{2}\ga_n^{p_n-1}(u_n(x_n+\mu_n\cdot)-\ga_n)\quad\text{in}~\Omega_n:=\f{\Omega-x_n}{\mu_n}, \ee
where $\mu_n$ is a positive number going to $0$ that will be chosen later. Then $x_n\to 0$ and $\Omega_n\to\mathbb R^2$ as $n\to+\infty$. Now there are two possibilities.

\vskip0.1in
\noindent{\bf Case 1:} $\p^*=q_0$ and $|x_n|^{2+2\al}\la_n\ga_n^{2(p_n-1)}e^{\ga_n^{p_n}}=O(1)$.

In this case, we take $\mu_n$ by
\be\lab{mu-2-2} \mu_n^{-2(1+\al)}:=\f{1}{8(1+\al)^2}\la_np_n^2h_0(x_n)f(x_n)\ga_n^{2(p_n-1)}e^{\ga_n^{p_n}},\ee
where $h_0$ is a positive $C^1$ function given like in \eqref{h0}, i.e.,
\be\lab{h0-1} h(x)=h_0(x)|x|^{2\al}, \quad \text{in}~\Omega. \ee
Then, by \eqref{514-1} and \eqref{mu-2-2}, we get that $\mu_n=o(1)$ and $\f{|x_n|}{\mu_n}=O(1)$. Passing to a subsequence, we may assume $\displaystyle\lim_{n\to+\iy}\f{x_n}{\mu_n}=x_\iy$. Let $$v_n(\cdot):=\frac{u_n(x_n+\mu_n\cdot)}{\ga_n}\quad\text{in}\;\Omega_n=\frac{\Omega-x_n}{\mu_n}.$$
By using \eqref{mu-2-2}-\eqref{h0-1}, we get from \eqref{equ-4-1}  that $v_n$ satisfies
$$\begin{aligned}
	-\Delta v_n &=\mu_n^2\ga_n^{-1}\sbr{\la_np_nhfu_n^{p_n-1}e^{u_n^{p_n}}(x_n+\mu_n\cdot)-hu_n(x_n+\mu_n\cdot)}\\
		&=\f{8(1+\al)^2}{p_n}\abs{\cdot+\f{x_n}{\mu_n}}^{2\al}\f{h_0f(x_n+\mu_n\cdot)}{h_0(x_n)f(x_n)}\f{v_n^{p_n-1}}{\ga_n^{p_n}}e^{u_n^{p_n}-\ga_n^{p_n}}-\mu_n^2hv_n,
\end{aligned}$$
in $\Omega_n$ for all $n$. Noting that $\ga_n=\max_\Sigma u_n\to+\infty$, $\mu_n\to0$ and $\Omega_n\to\R^2$, we get that $|v_n|\le 1$ and $\nm{\Delta v_n}_{L^s(B_R(-x_\iy))}=o(1)$ for any $s\in(1,1/|\al|)$ and $R>1$. It follows from $v_n(0)\equiv1$ that 
\be\lab{3-10} \lim_{n\to+\iy}v_n= 1 \quad\text{in}~C^0_{\loc}(\R^2)\cap C^1_\loc(\R^2\setminus\{-x_\iy\}). \ee
Since $x_n=\Psi(\p_n)\to 0$, it is easy to see that for any $\tilde{R}_n>0$ satisfying $\tilde{R}_n\mu_n\to 0$ (such as $\tilde{R}_n\equiv R>0$) and all large $n$,
\be\lab{3-12} B_{\tilde{R}_n\mu_n/2}(\p_n) \subset\Psi^{-1}\sbr{B_{\tilde{R}_n\mu_n}(x_n)} \subset B_{2\tilde{R}_n\mu_n}(\p_n), \ee
we get from \eqref{3-10}, Lemma \ref{bound-0} and the H\"older inequality that
\be\lab{3-11}\begin{aligned}
	\mu_n\ga_n^s &=(1+o(1))\f{\mu_n}{\pi}\int_{B_1(0)}\ga_n^sv_n^s\rd x\leq C\mu_n\sbr{\int_{B_1(0)}\ga_n^{3s}v_n^{3s}\rd x}^{1/3}\\
&\leq C\mu_n^{1/3}\sbr{\int_{B_{\mu_n}(x_n)}u_n^{3s}\rd x}^{1/3}\leq C\mu_n^{1/3}=o(1),\quad \forall s\geq 1.
\end{aligned}\ee
Recalling $\eta_n$ defined in \eqref{3-13}, and by using \eqref{mu-2-2}-\eqref{h0-1}, we get from \eqref{equ-4-1} that
\be\lab{3-14}\begin{aligned}
	&-\Delta \eta_n \\ 
	&=4(1+\al)^2\abs{\cdot+\f{x_n}{\mu_n}}^{2\al}\f{h_0f}{h_0(x_n)f(x_n)}\f{u_n^{p_n-1}}{\ga_n^{p_n-1}}e^{u_n^{p_n}-\ga_n^{p_n}}(x_n+\mu_n\cdot)\\ &\quad -\f{p_n}{2}\mu_n^2\ga_n^{p_n-1}hu_n(x_n+\mu_n\cdot)\\
	&=4(1+\al)^2\abs{\cdot+\f{x_n}{\mu_n}}^{2\al}\f{h_0f(x_n+\mu_n\cdot)}{h_0(x_n)f(x_n)}\sbr{1+\f{2\eta_n}{p_n\ga_n^{p_n}}}^{p_n-1}e^{\ga_n^{p_n}[(1+\f{2\eta_n}{p_n\ga_n^{p_n}})^{p_n}-1]}\\
	&\quad -\f{p_n}{2}\mu_n^2\ga_n^{p_n-1}hu_n(x_n+\mu_n\cdot),\quad \text{in }\;\Omega_n.
\end{aligned}\ee
For any fixed $R>0$, we let $\eta_n=\eta_n^{(1)}+\eta_n^{(2)}$ with
	$$\begin{cases}\Delta\eta_n^{(1)}=\Delta\eta_n\quad\text{in}~B_R(0),\\
	\eta_n^{(1)}=0\quad\text{on}~\partial B_R(0),\end{cases}  \quad\text{and}\quad
	 \begin{cases}\Delta\eta_n^{(2)}=0\quad\text{in}~B_R(0),\\\eta_n^{(2)}=\eta_n\quad\text{on}~\pa B_R(0).\end{cases}$$
In view of \eqref{3-11} and the first expression of $\Delta\eta_n$ in \eqref{3-14}, we have that $\Delta\eta_n^{(1)}$ is bounded in $L^s(B_R(0))$ for any $s\in(1,1/|\al|)$, so by the elliptic theory (see e.g. \cite[Theorem 9.17]{GT}), we get that $\eta_n^{(1)}$ is bounded in $W^{2,s}(B_R(0))$ for any $s\in(1,1/|\al|)$, which implies that $\eta_n^{(1)}$ is bounded in $L^\iy(B_R(0))$. Since $\eta_n\le 0$, we get that $\eta_n^{(2)}$ is a harmonic function bounded from above in $B_R(0)$. Furthermore, $\eta_n^{(2)}(0)=\eta_n(0)-\eta_n^{(1)}(0)=-\eta_n^{(1)}(0)$ is bounded. Applying the Harnack inequality (see e.g. \cite{book-1}), we conclude that $\eta_n^{(2)}$ is bounded in $L^\iy(B_{R/2}(0))$, so that $\eta_n$ is bounded in $L^\iy_\loc(\R^2)$. Thus, by the elliptic theory, we get from the second equality in \eqref{3-14}  that
	$$\lim_{n\to+\iy}\eta_n=\eta \quad\text{in}~C^{0,\delta}_{\loc}(\R^2)\cap C^1_\loc(\R^2\setminus\{-x_\iy\}),$$
for any $\delta\in(0,\min\{1,2(1+\al)\})$, and $\eta$ satisfies 
	$$-\Delta\eta=4(1+\al)^2|x+x_\iy|^{2\al}e^{2\eta}\quad\text{in}~\R^2,\quad \eta(0)=0=\max_{\R^2}\eta.$$
Moreover, by using \eqref{bound-1} with $p=2(p_n-1)$ and Fatou lemma, we get $\int_{\R^2}|x+x_\iy|^{2\al}e^{2\eta(x)}\rd x<+\infty$. 
Then, noting $\al\in(-1,0)$, the classification result of  Prajapat-Tarantello \cite{clas-1} gives that $x_\iy=0$ and
	$$\eta(x)=-\ln\sbr{1+|x|^{2+2\al}},$$
and that
	$$\int_{\R^2}|x|^{2\al}e^{2\eta(x)}\rd x=\f{\pi}{1+\al}.$$
By $x_\iy=0$, we get that
\be\lab{514-2} |x_n|=o(\mu_n). \ee 
By $\mu_n=o(1)$, we may take a sequence $R_n\to+\iy$ such that \[R_n\mu_n=o(1), \quad\ln R_n=o(\ga_n^{p_n}),\quad\nm{\eta_n-\eta}_{L^\iy(B_{6R_n}(0))}=o(1).\]
Then
\be\lab{520-1}\begin{aligned}
  &|x-x_n|^{2(1+\al)}\la_nu_n^{2(p_n-1)}e^{u_n^{p_n}}\\
  &=\mu_n^{2+2\al}|y|^{2+2\al}\la_n\sbr{\ga_n+\f{2\eta_n(y)}{p_n\ga_n^{p_n-1}}}^{2(p_n-1)}\exp\sbr{\sbr{\ga_n+\f{2\eta_n(y)}{p_n\ga_n^{p_n-1}}}^{p_n}}\\
  &=\f{8(1+\al)^2}{p_n^2h_0(x_n)f(x_n)}|y|^{2+2\al}\sbr{1+\f{2\eta_n(y)}{p_n\ga_n^{p_n}}}^{2(p_n-1)}\exp\sbr{\ga_n^{p_n}\sbr{\sbr{1+\f{2\eta_n(y)}{p_n\ga_n^{p_n}}}^{p_n}-1}}\\
  &=O\sbr{|y|^{2+2\al}e^{2\eta(y)}}
\end{aligned}\ee
uniformly for $x=x_n+\mu_ny\in B_{6R_n\mu_n}(x_n)$ (i.e., $y\in B_{6R_n}(0)$) for large $n$. 
Here, the first equality uses the definition \eqref{3-13} of $\eta_n$; the second one uses the definition \eqref{mu-2-2} of $\mu_n$; the last one uses first $p_n\in[1,2]$, $\al\in(-1,0)$ and the positivity of $h_0$ and $f$, and then the fact that $\eta_n=\eta+o(1)=o(\ga_n^{p_n})$ in $B_{6R_n}(0)$ by the choice of $R_n$. 

Noting that
	$$\eta(x)=-2(1+\al)\ln|x|+O(1),\quad\text{for }\;|x|\geq 2,$$
we get from \eqref{520-1} that
	$$|x-x_n|^{2(1+\al)}\la_nu_n^{2(p_n-1)}e^{u_n^{p_n}}=O(1),$$
uniformly for $x\in B_{6R_n\mu_n}(x_n)$ for large $n$. 
Then, for any $c_0>0$, thanks to $|x|=(1+o(1))|x-x_n|$ for $x\in B_{6R_n\mu_n}(x_n)\setminus B_{c_0\mu_n}(x_n)$ by \eqref{514-2}, and by using \eqref{h0-1}, we get that
	$$|x-x_n|^2\la_nhu_n^{2(p_n-1)}e^{u_n^{p_n}}=|x-x_n|^2|x|^{2\al}\la_nh_0u_n^{2(p_n-1)}e^{u_n^{p_n}}=O(1),$$
uniformly for $x\in B_{6R_n\mu_n}(x_n)\setminus B_{c_0\mu_n}(x_n)$ for large $n$.
It follows from \eqref{3-12} that
	$$d_{g_0}(\cdot,\p_n)^2\la_nhu_n^{2(p_n-1)}e^{u_n^{p_n}}=O(1),$$
uniformly in $B_{3R_n\mu_n}(\p_n)\setminus B_{2c_0\mu_n}(\p_n)$ for large $n$.
Choosing $\p_{n,1}=\p_n$, $\mu_{n,1}=\mu_n$ and $d_{n,1}=R_n\mu_{n,1}/2$, we get the first bubble in Case 1. 
Moreover, since $d_{g_0}(\p_{n,1},q_0)=(1+o(1))|x_n|=o(\mu_{n,1})$ by \eqref{514-2}, we obtain
  $$d_{g_0}(\cdot,\{\p_{n,1},q_0\})^2\la_nhu_n^{2(p_n-1)}e^{u_n^{p_n}}=O(1),$$
uniformly in $B_{6d_{n,1}}(\p_{n,1})\setminus B_{\mu_{n,1}}(\p_{n,1})$ and also uniformly in $B_{6d_{n,1}}(\p_{n,1})\setminus B_{\mu_{n,1}}(q_0)$ for large $n$.

\vskip0.1in
\noindent{\bf Case 2:} $\p^*\neq q_0$, or, $\p^*=q_0$ and $|x_n|^{2+2\al}\la_n\ga_n^{2(p_n-1)}e^{\ga_n^{p_n}}\to+\iy$ as $n\to+\iy$.

In this case, we choose
	\be\lab{mu-2-3}\mu_n^{-2}:=\f{1}{8}\la_np_n^2h(x_n)f(x_n)\ga_n^{2(p_n-1)}e^{\ga_n^{p_n}}.\ee
Again by \eqref{514-1}, we get that $\mu_n=o(1)$.
To proceed, without loss of generality, we assume that $\p^*=q_0$ and $|x_n|^{2+2\al}\la_n\ga_n^{2(p_n-1)}e^{\ga_n^{p_n}}\to+\iy$ as $n\to+\iy$, and the case $\p^*\neq q_0$ is simpler and can be treated similarly. Then by \eqref{h0-1} and \eqref{mu-2-3}, we get that 
\be\lab{514-3} \mu_n=o(|x_n|). \ee 
Recalling $\eta_n$ given by \eqref{3-13}, we get from \eqref{equ-4-1}, \eqref{h0-1} and \eqref{mu-2-3} that
\be\lab{3-14-1}\begin{aligned}
	&-\Delta\eta_n\\
	&=4\abs{\f{x_n}{|x_n|}+\f{\mu_n}{|x_n|}\cdot}^{2\al}\f{h_0f}{h_0(x_n)f(x_n)}\f{u_n^{p_n-1}}{\ga_n^{p_n-1}}e^{u_n^{p_n}-\ga_n^{p_n}}(x_n+\mu_n\cdot)\\ 
		&\quad -\f{p_n}{2}\mu_n^2\ga_n^{p_n-1}hu_n(x_n+\mu_n\cdot)\\
	&=4\abs{\f{x_n}{|x_n|}+\f{\mu_n}{|x_n|}\cdot}^{2\al}\f{h_0f(x_n+\mu_n\cdot)}{h_0(x_n)f(x_n)}\sbr{1+\f{2\eta_n}{p_n\ga_n^{p_n}}}^{p_n-1} e^{\ga_n^{p_n}[(1+\f{2\eta_n}{p_n\ga_n^{p_n}})^{p_n}-1]}\\ 
		&\quad -\f{p_n}{2}\mu_n^2\ga_n^{p_n-1}hu_n(x_n+\mu_n\cdot),\quad\text{in}\;\Omega_n.
\end{aligned}\ee
By the argument between \eqref{h0-1} and \eqref{3-11}, we get that $\displaystyle\lim_{n\to+\iy}\ga_n^{-1}u_n(x_n+\mu_n\cdot)=1$ in $C_\loc^1(\R^2)$ (here $C_\loc^1(\R^2)$ instead of $C_\loc^0(\R^2)$ because $|\f{x_n}{|x_n|}+\f{\mu_n}{|x_n|}x|^{2\al}\to 1$ locally)
and that $\mu_n\ga_n^s=o(1)$ for any $s\geq1$.
Then by the argument below \eqref{3-14}, we get from \eqref{514-3}-\eqref{3-14-1} that
	$$\lim_{n\to+\iy}\eta_n=\eta \quad\text{in}~C_\loc^1(\R^2),$$
and $\eta$ satisfies 
	$$-\Delta\eta=4e^{2\eta}~~\text{in}~\R^2,\quad \eta(0)=0=\max_{\R^2}\eta, \quad\int_{\R^2}e^{2\eta}\rd x<+\iy.$$
The classification result of Chen-Li \cite{clas-2} gives that
\be \eta(x)=-\ln\sbr{1+|x|^2},\quad \int_{\R^2}e^{2\eta}\rd x=\pi. \ee
By using \eqref{514-3}, we may take a sequence $R_n\to+\iy$ such that $R_n\mu_n=o(|x_n|)$, $\ln R_n=o(\ga_n^{p_n})$ and 
\be\lab{514-4} \nm{\eta_n-\eta}_{L^\iy(B_{6R_n}(0))}=o(1).\ee
Thanks to $\la_n\mu_n^2h(x_n)\ga_n^{2(p_n-1)}e^{\ga_n^{p_n}}=O(1)$ by \eqref{mu-2-3}, and by using the argument in \eqref{520-1} with our choice of $R_n$, we get that
	$$|x-x_n|^2\la_nh(x_n)u_n^{2(p_n-1)}e^{u_n^{p_n}}=O(1),$$
uniformly for $x\in B_{6R_n\mu_n}(x_n)$ for large $n$.
Then, noting that $|x|=(1+o(1))|x_n|$ for $x\in B_{6R_n\mu_n}(x_n)$ by $R_n\mu_n=o(|x_n|)$, and by using \eqref{h0-1}, we get that
	$$|x-x_n|^2\la_nhu_n^{2(p_n-1)}e^{u_n^{p_n}}=O(1),$$
uniformly for $x\in B_{6R_n\mu_n}(x_n)$ for large $n$.
It follows from \eqref{3-12} that
	$$d_{g_0}(\cdot,\p_n)^2\la_nhu_n^{2(p_n-1)}e^{u_n^{p_n}}=O(1),$$
uniformly in $B_{3R_n\mu_n}(\p_n)$ for large $n$.
Choosing $\p_{n,1}=\p_n$, $\mu_{n,1}=\mu_n$ and $d_{n,1}=R_n\mu_{n,1}/2$, we get the first bubble in Case 2 and finish the proof of Step 1. Moreover, it is easy to check that
  $$d_{g_0}(\cdot,\{\p_{n,1},q_0\})^2\la_nhu_n^{2(p_n-1)}e^{u_n^{p_n}}=O(1),$$
uniformly in $B_{6d_{n,1}}(\p_{n,1})$ for large $n$, thanks to \be\label{325-40-0} d_{n,1}=R_n\mu_{n,1}/2=o(|x_n|)=o(d_{g_0}(\p_{n,1},q_0)).\ee

\vskip0.1in
\noindent{\it Step 2: Construction of the second bubble $\p_{n,2}$ and $d_{n,2}$.}

If 
	$$d_{g_0}(\cdot,\{\p_{n,1},q_0\})^2\la_nhu_n^{2(p_n-1)}e^{u_n^{p_n}}=O(1),$$
uniformly in $\Sigma\setminus B_{6d_{n,1}}(\p_{n,1})$, then we are done with $M=1$. Otherwise, we have that
	$$\lim_{n\to+\iy}\max_{\Sigma\setminus B_{d_{n,1}}(\p_{n,1}) } d_{g_0}(\cdot,\{\p_{n,1},q_0\})^2\la_nhu_n^{2(p_n-1)}e^{u_n^{p_n}}=+\iy. $$
Let $\bar \p_n\in\Sigma\setminus B_{d_{n,1}}(\p_{n,1})$ be the maximal point such that 
\be\lab{325-1} \lim_{n\to+\iy}d_{g_0}(\bar \p_n,\{\p_{n,1},q_0\})^2\la_nh(\bar \p_n)u_n(\bar \p_n)^{2(p_n-1)}e^{u_n(\bar \p_n)^{p_n}}=+\iy. \ee
By \eqref{325-1} and $\al\in(-1,0)$, we get that $\bar \p_n\neq q_0$ for large $n$. Then, up to a subsequence, we consider two different cases
	$$d_{g_0}(\bar \p_n,\p_{n,1})=O\sbr{d_{g_0}(\bar \p_n,q_0)},\quad\text{or}\quad d_{g_0}(\bar \p_n,q_0)=o\sbr{d_{g_0}(\bar \p_n,\p_{n,1})}.$$

\vskip0.1in
\noindent{\bf Case I:} Firstly, we assume 
\be\lab{3-28-0} d_{g_0}(\bar \p_n,\p_{n,1})=O\sbr{d_{g_0}(\bar \p_n,q_0)}. \ee

We get from \eqref{325-1} that
	$$\lim_{n\to+\iy}d_{g_0}(\bar \p_n,\p_{n,1})^2\la_nh(\bar \p_n)u_n(\bar \p_n)^{2(p_n-1)}e^{u_n(\bar \p_n)^{p_n}}=+\iy.$$
By \eqref{3-28-0} we can take $t_n=\f{1}{L}d_{g_0}(\bar \p_n,\p_{n,1})$ for some large $L\ge2$  such that $q_0\not\in B_{2t_n}(\bar \p_n)$. Define
\be\lab{604-1} \Phi_n:=\sbr{t_n-d_{g_0}(\cdot,\bar \p_n)}^2\la_nhu_n^{2(p_n-1)}e^{u_n^{p_n}}, \quad\text{in}~B_{t_n}(\bar \p_n). \ee
Since $d_{g_0}(\bar \p_n,\p_{n,1})\ge 6d_{n,1}$, we have 
\be\lab{3-28} B_{t_n}(\bar \p_n)\cap B_{3d_{n,1}}(\p_{n,1})=\emptyset, \ee
for large $n$. 
Thanks to $\Phi_n=0$ on $\pa B_{t_n}(\bar \p_n)$, and noting that $h$ has no singularity in $B_{t_n}(\bar \p_n)$ by $q_0\not\in B_{2t_n}(\bar \p_n)$, we may take a point sequence $\ti \p_n\in B_{t_n}(\bar \p_n)$ such that $\displaystyle\Phi_n(\ti \p_n)=\max_{B_{t_n}(\bar \p_n)}\Phi_n$, then 
	$$\Phi_n(\ti \p_n)\ge\Phi_n(\bar \p_n)=\f{1}{L^2}d_{g_0}(\bar \p_n,\p_{n,1})^2\la_nh(\bar \p_n)u_n(\bar \p_n)^{2(p_n-1)}e^{u_n(\bar \p_n)^{p_n}}\to+\iy,$$
as $n\to+\iy$. 
Denote
\be\lab{3-50} s_n:=\f{1}{2}\sbr{t_n-d_{g_0}(\ti \p_n,\bar \p_n)}, \ee
then we get
\be\lab{3-20} \Phi_n(\ti \p_n)=4s_n^2\la_nh(\ti \p_n)u_n(\ti \p_n)^{2(p_n-1)}e^{u_n(\ti \p_n)^{p_n}}\to+\iy, \ee
as $n\to+\iy$. 
By $q_0\not\in B_{2t_n}(\bar \p_n)$ and $\ti \p_n\in B_{t_n}(\bar \p_n)$, we obtain 
\be\lab{325-3} d_{g_0}(\ti \p_n,q_0)\ge t_n\ge2s_n,\ee 
and that
\be\lab{325-2} d_{g_0}(\ti \p_n,q_0)^{2+2\al}\la_nh_0(\ti \p_n)u_n(\ti \p_n)^{2(p_n-1)}e^{u_n(\ti \p_n)^{p_n}}\to+\iy, \ee
as $n\to+\iy$, by using \eqref{3-20} and \eqref{h0-1}. This implies $\displaystyle\lim_{n\to+\iy}u_n(\ti \p_n)=+\iy$. Since \eqref{3-50} implies $B_{s_n}(\ti \p_n)\subset B_{t_n}(\bar \p_n)$, we get that for any $\p\in B_{s_n}(\ti \p_n)$,
$$\begin{aligned}
	\Phi_n(\p)&=\sbr{t_n-d_{g_0}(\p,\bar p_n)}^2\la_nh(\p)u_n(\p)^{2(p_n-1)}e^{u_n(\p)^{p_n}}\\
	&\le\Phi_n(\ti \p_n)=4s_n^2\la_nh(\ti \p_n)u_n(\ti \p_n)^{2(p_n-1)}e^{u_n(\ti \p_n)^{p_n}}.
\end{aligned}$$
Noting from \eqref{3-50} that $t_n-d_{g_0}(\p,\bar \p_n)\ge s_n $ for $\p\in B_{s_n}(\ti \p_n)$, we get that
\be\lab{3-22} h(\p)u_n(\p)^{2(p_n-1)}e^{u_n(\p)^{p_n}}\le 4h(\ti \p_n)u_n(\ti \p_n)^{2(p_n-1)}e^{u_n(\ti \p_n)^{p_n}}, \ee
uniformly for $\p\in B_{s_n}(\ti \p_n)$ for large $n$.

Now, suppose $\displaystyle\lim_{n\to+\iy}\ti \p_n=\ti \p^*$, and take the isothermal coordinates $(B_r(\ti \p^*),\wt\Psi,\wt\Omega)$ around $\ti \p^*$ such that $\wt\Psi(\ti \p^*)=0$. Set $\ti x_n=\wt\Psi(\ti \p_n)\to 0$, $\ti\ga_n=u_n(\ti \p_n)\to+\infty$ and 
\be\lab{3-23}  \ti\eta_n:=\f{p_n}{2}\ti\ga_n^{p_n-1}(u_n(\ti x_n+\ti\mu_n\cdot)-\ti\ga_n)\quad\text{in}~\wt\Omega_n:=\f{\wt\Omega-\ti x_n}{\ti\mu_n}, \ee
where $\ti\mu_n$ is given by
\be\lab{mu-3-4}  \ti\mu_n^{-2}:=\f{1}{8}\la_np_n^2h(\ti x_n)f(\ti x_n)\ti\ga_n^{2(p_n-1)}e^{\ti\ga_n^{p_n}}. \ee
To proceed, we may assume without loss of generality that $\ti \p^*=q_0$, and the case $\ti \p^*\neq q_0$ may be treated similarly. Recalling the local coordinate around $q_0$ given in Step 1,  we have $(B_r(\ti \p^*),\wt\Psi,\wt\Omega)=(B_r(q_0),\Psi,\Omega)$. Then by \eqref{3-20}, \eqref{325-3} and \eqref{mu-3-4}, we get that
\be\lab{3-31} 2s_n\leq d_{g_0}(\ti \p_n,q_0)= |\ti x_n|(1+o(1)),\quad\text{and}\quad \ti\mu_n=o(s_n). \ee
Since $|\ti x_n|=o(1)$, we have $s_n=o(1)$, $\ti\mu_n=o(1)$, $\ti\mu_n=o(|\ti x_n|)$ and $\displaystyle\lim_{n\to+\iy}\wt\Omega_n=\mathbb R^2$.
Since $\Psi^{-1}\sbr{B_{s_n/2}(\ti x_n)}\subset B_{s_n}(\ti \p_n)$ for large $n$, and the first assertion in \eqref{3-31} implies $|x|\le3|\ti x_n|$ for $x\in B_{s_n/2}(\ti x_n)$, it follows from \eqref{3-22} and \eqref{h0-1} that
\be\lab{3-24} u_n^{2(p_n-1)}e^{u_n^{p_n}}\le 4\f{|\ti x_n|^{2\al}h_0(\ti x_n)}{|x|^{2\al}h_0}\ti\ga_n^{2(p_n-1)}e^{\ti\ga_n^{p_n}}=O\sbr{\ti\ga_n^{2(p_n-1)}e^{\ti\ga_n^{p_n}}}, \ee
uniformly for $x\in B_{s_n/2}(\ti x_n)$ for large $n$. From here and the fact that $t^{2(p-1)}e^{t^p}$ is increasing as a function of $t>0$ for any $p\in [1,2]$, we also have
\be\lab{3-25} u_n=O(\ti\ga_n),\quad u_n^{p_n-1}e^{u_n^{p_n}}=O\sbr{\ti\ga_n^{p_n-1}e^{\ti\ga_n^{p_n}} } \ee
uniformly for $x\in B_{s_n/2}(\ti x_n)$ for large $n$. 
Recalling $\ti\eta_n$ given by \eqref{3-23},  we get from \eqref{equ-4-1}, \eqref{h0-1} and \eqref{mu-3-4}  that 
\be\lab{3-14-2}\begin{aligned}
	&-\Delta\ti\eta_n\\
	&=4\abs{\f{\ti x_n}{|\ti x_n|}+\f{\ti \mu_n}{|\ti x_n|}\cdot}^{2\al}\f{h_0f}{h_0(\ti x_n)f(\ti x_n)}\f{u_n^{p_n-1}}{\ti \ga_n^{p_n-1}}e^{u_n^{p_n}-\ti \ga_n^{p_n}}(\ti x_n+\ti \mu_n\cdot)\\ 
		&\quad -\f{p_n}{2}\ti \mu_n^2\ti \ga_n^{p_n-1}hu_n(\ti x_n+\ti \mu_n\cdot)\\
	&=4\abs{\f{\ti x_n}{|\ti x_n|}+\f{\ti \mu_n}{|\ti x_n|}\cdot}^{2\al}\f{h_0f(\ti x_n+\ti \mu_n\cdot)}{h_0(\ti x_n)f(\ti x_n)}\sbr{1+\f{2\ti \eta_n}{p_n\ti \ga_n^{p_n}}}^{p_n-1} e^{\ti\ga_n^{p_n}[(1+\f{2\ti \eta_n}{p_n\ti \ga_n^{p_n}})^{p_n}-1]} \\ 
		&\quad -\f{p_n}{2}\ti \mu_n^2\ti \ga_n^{p_n-1}hu_n(\ti x_n+\ti \mu_n\cdot),\quad\text{in }\;\wt\Omega_n.
\end{aligned}\ee
Remark that comparing to Step 1, the different thing is that $\ti\gamma_n$ is not necessarily the maximum value of $u_n$ in $\widetilde\Omega$, i.e., we do not have $u_n\leq \ti\gamma_n$ in $\widetilde\Omega$. However,
thanks to \eqref{3-24}-\eqref{3-25} and $\frac{s_n}{\ti\mu_n}\to+\infty$ (i.e., $\frac{B_{s_n/2}(\ti x_n)-\ti x_n}{\ti\mu_n}\to\R^2$), we can still get by the argument between \eqref{h0-1} and \eqref{3-11} that $\displaystyle\lim_{n\to+\iy}\ti \ga_n^{-1}u_n(\ti x_n+\ti \mu_n\cdot)=1$ in $C_\loc^1(\R^2)$, and that $\ti\mu_n\ti\ga_n^s=o(1)$ for any $s\geq 1$. Consequently, by the argument below \eqref{3-14}, we eventually get from \eqref{3-14-2} and $\ti\mu_n=o(|\ti x_n|)$ that
	$$\lim_{n\to+\iy}\ti\eta_n=\ti\eta \quad\text{in}~C_\loc^1(\R^2),$$
and $\ti\eta$ satisfies 
	$$-\Delta\ti\eta=4e^{2\ti\eta}~~\text{in}~\R^2,\quad \ti\eta(0)=0, \quad\int_{\R^2}e^{2\ti\eta}\rd x<\iy.$$
Again the different thing is that $\ti\eta(0)=0$ is not necessarily the maximum value of $\ti\eta$. Then the result of Chen-Li \cite{clas-2} gives that
\be\lab{3-27} \ti\eta(x)=\ln\xi-\ln\sbr{1+\xi^2|x-b_0|^2},\ee
for some $\xi>0$ and $b_0\in\R^2$ with $\xi=1+\xi^2|b_0|^2$, and $\int_{\R^2}e^{2\ti\eta}\rd x=\pi$. 

Now we want to replace $\ti x_n$ with another sequence $x_{n,2}$ such that $b_0$ in \eqref{3-27} can be replaced by $0$. 
Since $\ti\mu_n=o(s_n)$, we may take a sequence $\ti R_n\to+\iy$ such that $\ti R_n\ti\mu_n=o(s_n)$ and 
\be\lab{3-26} \nm{\ti\eta_n-\ti\eta}_{L^\iy(B_{8\ti R_n}(0))}=o(1). \ee
Let $y_n\in B_{8\ti R_n(0)}$ such that
\be\lab{def-yn}\ti\eta_n(y_n)=\max_{B_{8\ti R_n}(0)}\ti\eta_n.\ee
Let $x_{n,2}:=\ti x_n+\ti\mu_ny_n$ and
	 \be\label{fc-3}\mu_{n,2}^{-2}:=\f{1}{8}\la_np_n^2h(x_{n,2})f(x_{n,2})u_n(x_{n,2})^{2(p_n-1)}e^{u_n(x_{n,2})^{p_n}}.\ee
By \eqref{3-27}, \eqref{3-26} and \eqref{3-23}, we get that $\displaystyle\lim_{n\to+\iy}y_n=b_0$ and 
\be\lab{3-29} u_n(x_{n,2})=\ti\ga_n+\f{2\ti\eta_n(y_n)}{p_n\ti\ga_n^{p_n-1}}=\ti\ga_n+\f{2(\ln\xi+o(1))}{p_n\ti\ga_n^{p_n-1}}\to+\infty. \ee
It follows from \eqref{3-31} that
\be\lab{3-34} |x_{n,2}-\ti x_n|=O(\ti\mu_n)=o(s_n)=o(|\ti x_n|),\ee
namely$|x_{n,2}|=(1+o(1))|\ti x_n|$ and hence \eqref{3-29} implies
\be\lab{3-30}  \sbr{\f{\mu_{n,2}}{\ti\mu_n}}^2=\f{|\ti x_n|^{2\al}h_0(\ti x_n)f(\ti x_n)\ti\ga_n^{2(p_n-1)}e^{\ti\ga_n^{p_n}}}{|x_{n,2}|^{2\al}h_0(x_{n,2})f(x_{n,2})u_n(x_{n,2})^{2(p_n-1)}e^{u_n(x_{n,2})^{p_n}}}=\xi^{-2}+o(1).\ee
Then by \eqref{3-23}, \eqref{3-27}, \eqref{3-26}, \eqref{3-29} and \eqref{3-30}, it is easy to check that
\be\lab{3-32}\begin{aligned}
	\eta_{n,2}(x):&=\f{p_n}{2}u_n(x_{n,2})^{p_n-1}(u_n(x_{n,2}+\mu_{n,2}x)-u_n(x_{n,2}))\\
		&=\f{u_n(x_{n,2})^{p_n-1}}{\ti\ga_n^{p_n-1}}\ti\eta_n\sbr{y_n+\f{\mu_{n,2}}{\ti\mu_n}x}+\f{p_n}{2}u_n(x_{n,2})^{p_n-1}(\ti\ga_n-u_n(x_{n,2}))\\
&\to \ti\eta\left(b_0+\frac{x}{\xi}\right)-\ln\xi=-\ln\sbr{1+|x|^2}=\eta(x) \quad\text{in}~C_\loc^1(\R^2),
\end{aligned}\ee
as $n\to+\iy$. 
We now take another sequence $R_n\to+\iy$ such that $R_n\le \xi\ti R_n$, $R_n\mu_{n,2}=o(s_n)$, $\ln R_n=o(\ti\ga_n^{p_n})$ and 
\be\lab{3-33} \nm{\eta_{n,2}-\eta}_{L^\iy(B_{6R_n}(0))}=o(1). \ee
and choose $\p_{n,2}=\Psi^{-1}(x_{n,2})$, and $d_{n,2}=R_n\mu_{n,2}/2$.

Now we check that $\{\p_{n,1},\p_{n,2}\}$ satisfies all the conditions of Proposition \ref{bubble-1}. 
Obviously, $d_{n,2}=o(s_n)$ and it follows from \eqref{3-34} that $\lim_{n\to+\iy}\p_{n,2}=q_0$ and $\p_{n,2}\in B_{s_n}(\ti \p_n)\subset B_{t_n}(\bar \p_n)$. Then by \eqref{3-28} and $t_n=\frac{1}{L}d_{g_0}(\p_{n,1},\bar\p_n)\geq 2s_n$, we get that
	$$d_{g_0}(\p_{n,1},\p_{n,2})\ge 2d_{n,1},$$
and
$$d_{g_0}(\p_{n,1},\p_{n,2})\geq d_{g_0}(\p_{n,1},\bar\p_n)-d_{g_0}(\bar\p_n,\p_{n,2})\geq (L-1)t_n\geq 2s_n\geq 2d_{n,2},$$
for large $n$. 
By the definition \eqref{def-yn} of $y_n$ and \eqref{3-23}, we get  that
\be\lab{3-35} u_n(x_{n,2})=\max_{B_{8\ti R_n\ti\mu_{n}}(\ti x_n)}u_n=\max_{B_{7R_n\mu_{n,2}}(\ti x_n)}u_n=\max_{B_{6R_n\mu_{n,2}}(x_{n,2})}u_n\to+\iy.\ee
Since, like \eqref{3-12}, we have
\be\label{3-12-2} B_{6d_{n,2}}(\p_{n,2}) =B_{3R_n\mu_{n,2}}(\p_{n,2}) \subset\Psi^{-1}\sbr{B_{6R_n\mu_{n,2}}(x_{n,2})} \subset B_{12R_n\mu_{n,2}}(\p_{n,2}), \ee
so 
	$$u_n(\p_{n,2})=\max_{B_{6d_{n,2}}(\p_{n,2})}u_n\to+\iy.$$
That is, conclusions (1) and (2) of Proposition \ref{bubble-1} hold for $\{\p_{n,1},\p_{n,2}\}$. Obviously, conclusion (4) holds and \eqref{3-32} gives  conclusion (3). Finally, by the argument in \eqref{520-1} with our choice of $R_n$,  and using $\la_n\mu_{n,2}^2h(x_{n,2})u_n(x_{n,2})^{2(p_n-1)}e^{u_n(x_{n,2})^{p_n}}=O(1)$ by \eqref{fc-3}, we can get as before that
	$$|x-x_{n,2}|^2\la_nh(x_{n,2})u_n^{2(p_n-1)}e^{u_n^{p_n}}=O(1),$$
uniformly for $x\in B_{6R_n\mu_{n,2}}(x_{n,2})$ for large $n$.
Using $|x|=(1+o(1))|x_{n,2}|$ for $x\in B_{6R_n\mu_{n,2}}(x_{n,2})$ by $R_n\mu_{n,2}=o(s_n)=o(|\ti x_n|)=o(|x_{n,2}|)$, we get that 
	$$|x-x_{n,2}|^2\la_nhu_n^{2(p_n-1)}e^{u_n(x)^{p_n}}=O(1),$$
uniformly for $x\in B_{6R_n\mu_{n,2}}(x_{n,2})$ for large $n$.
It follows from \eqref{3-12-2} that
	$$d_{g_0}(\cdot,\p_{n,2})^2\la_nhu_n^{2(p_n-1)}e^{u_n^{p_n}}=O(1),$$
uniformly in $B_{6d_{n,2}}(\p_{n,2})$ for large $n$.
As a result, we get that
	$$d_{g_0}(\cdot,\{\p_{n,1},\p_{n,2},q_0\})^2\la_nhu_n^{2(p_n-1)}e^{u_n^{p_n}}=O(1),$$
uniformly in $B_{6d_{n,2}}(\p_{n,2})$ for large $n$. This finishes the proof for Case I.

\vskip0.1in
\noindent{\bf Case II:} Secondly, we assume that
\be\lab{325-4-0} d_{g_0}(\bar \p_n,q_0)=o\sbr{d_{g_0}(\bar \p_n,\p_{n,1})},\ee
and so
$$d_{g_0}(\bar \p_n,\p_{n,1})=(1+o(1))d_{g_0}(q_0,\p_{n,1}).$$ 

This means that there must be Case 2 of Step 1 to hold, i.e., $(p_{n,1},d_{n,1})$ must be a non-singular bubble. Indeed, if Case 1 of Step 1 holds, then we get from \eqref{514-2}, $d_{n,1}=R_n\mu_{n,1}/2$ and $\bar \p_n\in\Sigma\setminus B_{d_{n,1}}(\p_{n,1})$ that 
$$d_{g_0}(\p_{n,1},q_0)=|x_{n,1}|(1+o(1))=o(\mu_{n,1})=o(d_{n,1})=o(d_{g_0}(\bar \p_n,\p_{n,1})),$$ and so
$$d_{g_0}(\bar \p_n,q_0)=(1+o(1)) d_{g_0}(\bar \p_n,\p_{n,1}),$$ which is a contradiction with our assumption \eqref{325-4-0}. Then by \eqref{325-40-0} in Case 2 of Step 1, 
\be\lab{325-4} d_{n,1}=o(d_{g_0}(\p_{n,1},q_0))=o(d_{g_0}(\bar \p_n,\p_{n,1})). \ee
In the following arguments, for convenience we use the same notations $t_n,\Phi_n,\ti \p_n, s_n$, etc, as Case I of Step 2, but the precise definitions of them may be different from Case 1 of Step 2. 

Observe from \eqref{325-1} and \eqref{325-4-0} that $d_{g_0}(\bar \p_n,\p_{n,1})\ge 6d_{n,1}$ and 
	$$\lim_{n\to+\iy}d_{g_0}(\bar \p_n,q_0)^2\la_nh(\bar \p_n)u_n(\bar \p_n)^{2(p_n-1)}e^{u_n(\bar \p_n)^{p_n}}=+\iy.$$
Denote $t_n:=2d_{g_0}(\bar \p_n,q_0)$, and define
\be\lab{604-2} \Phi_n:=\sbr{t_n-d_{g_0}(\cdot,\bar \p_n)}^{2+2\al}\la_nu_n^{2(p_n-1)}e^{u_n^{p_n}}, \quad\text{in}~B_{t_n}(\bar \p_n). \ee
Remark that the definitions of $t_n$ and $\Phi_n$ are different from those in \eqref{604-1}.
By \eqref{325-4} and $t_n=o(d_{g_0}(\bar \p_n,\p_{n,1}))$ by \eqref{325-4-0}, we get that 
\be\lab{325-5} B_{t_n}(\bar \p_n)\cap B_{3d_{n,1}}(\p_{n,1})=\emptyset. \ee
Thanks to $\Phi_n=0$ on $\partial B_{t_n}(\bar \p_n)$, we may take $\ti \p_n\in B_{t_n}(\bar \p_n)$ such that $\displaystyle\Phi_n(\ti \p_n)=\max_{B_{t_n}(\bar \p_n)}\Phi_n$, and then, by \eqref{h0-1} and \eqref{604-2}, we get that
	$$\Phi_n(\ti \p_n)\ge\Phi_n(\bar \p_n)\ge Cd_{g_0}(\bar \p_n,q_0)^2\la_nh(\bar \p_n)u_n(\bar \p_n)^{2(p_n-1)}e^{u_n(\bar \p_n)^{p_n}}\to+\iy,$$
as $n\to+\iy$. 
Denote
\be\lab{325-9} s_n:=\f{1}{2}\sbr{t_n-d_{g_0}(\ti \p_n,\bar \p_n)}, \ee
then we get 
\be\lab{325-6} \Phi_n(\ti \p_n)=(2s_n)^{2+2\al}\la_nu_n(\ti \p_n)^{2(p_n-1)}e^{u_n(\ti \p_n)^{p_n}}\to+\iy. \ee
This implies $\displaystyle\lim_{n\to+\iy}u_n(\ti \p_n)=+\iy$. Since $B_{s_n}(\ti \p_n)\subset B_{t_n}(\bar \p_n)$, we get  that  for any $\p\in B_{s_n}(\ti \p_n)$,
\be\lab{325-7-0}\begin{aligned}
	\Phi_n(\p)&=\sbr{t_n-d_{g_0}(\p,\bar \p_n)}^{2+2\al}\la_nu_n(\p)^{2(p_n-1)}e^{u_n(\p)^{p_n}}\\
	&\le\Phi_n(\ti \p_n)=(2s_n)^{2+2\al}\la_nu_n(\ti \p_n)^{2(p_n-1)}e^{u_n(\ti \p_n)^{p_n}}.
\end{aligned}\ee
Noting from \eqref{325-9} that $t_n-d_{g_0}(\p,\bar \p_n)\ge s_n$ for $\p\in B_{s_n}(\ti \p_n)$, we get that
\be\lab{325-7} u_n(\p)^{2(p_n-1)}e^{u_n(\p)^{p_n}}\le 2^{2+2\al}u_n(\ti \p_n)^{2(p_n-1)}e^{u_n(\ti \p_n)^{p_n}}, \ee
uniformly for $\p\in B_{s_n}(\ti \p_n)$ for large $n$.

Now, since  \eqref{325-4-0} implies
$$d_{g_0}(\ti\p_n, q_0)\leq t_n+d_{g_0}(\bar\p_n, q_0)=3d_{g_0}(\bar\p_n, q_0)\to 0,$$
i.e., $\lim_{n\to+\iy}\ti \p_n=q_0$, we take the isothermal coordinates $(B_r(q_0),\Psi,\Omega)$ around $q_0$ given in Step 1. Set $$\ti x_n=\Psi(\ti \p_n)\to 0,\quad \ti\ga_n=u_n(\ti \p_n)\to+\infty.$$ It follows from \eqref{325-7} and $B_{s_n/2}(\ti x_n)\subset\Psi (B_{s_n}(\ti\p_n))$ that
\be\lab{3-38} u_n^{2(p_n-1)}e^{u_n^{p_n}}=O\sbr{\ti\ga_n^{2(p_n-1)}e^{\ti\ga_n^{p_n}}}, \ee
and so
\be\lab{3-39} u_n =O(\ti\ga_n), \quad u_n^{p_n-1}e^{u_n^{p_n}}=O\sbr{\ti\ga_n^{p_n-1}e^{\ti\ga_n^{p_n}}},\ee
uniformly in $B_{s_n/2}(\ti x_n)$ for large $n$.  
Define 
\be\lab{325-8} \ti\eta_n:=\f{p_n}{2}\ti\ga_n^{p_n-1}(u_n(\ti x_n+\ti\mu_n\cdot)-\ti\ga_n)\quad\text{in}~\wt\Omega_n:=\f{\wt\Omega-\ti x_n}{\ti\mu_n}, \ee
where $\ti\mu_n$ is a positive number going to $0$ which will be chosen later.  
We have to consider the following three cases.
\vskip0.1in
\noindent{\bf Case II-1:} $|\ti x_n|^{2+2\al}\la_n\ti\ga_n^{2(p_n-1)}e^{\ti\ga_n^{p_n}}=O(1)$.

This case is similar to Case 1 of Step 1, but the different thing here is that $\ti \p_n$ is a local maximum point of $\Phi_n$ but not $u_n$.
Like Case 1 of Step 1,  we define
\be\lab{mu-3-5} \ti\mu_n^{-(2+2\al)}:=\f{1}{8(1+\al)^2}\la_np_n^2h_0(\ti x_n)f(\ti x_n)\ti\ga_n^{2(p_n-1)}e^{\ti\ga_n^{p_n}}, \ee
where $h_0$ is given by \eqref{h0-1}. Then $|\ti x_n|=O(\ti\mu_n)$ and hence we may assume 
\be\lab{3-38-0} \lim_{n\to+\iy}\f{\ti x_n}{\ti\mu_n}=\ti x_\iy. \ee
We get from \eqref{325-6} and \eqref{mu-3-5} that 
\be\lab{snmu-n} \sbr{\f{s_n}{\ti\mu_n}}^{2+2\al}=\f{p_n^2}{2^{5+2\al}(1+\al)^2}h_0(\ti x_n)f_n(\ti x_n)\Phi_n(\ti \p_n)\to+\iy,\ee
so $\ti\mu_n=o(s_n)=o(1)$.  
Recalling $\ti\eta_n$ defined by \eqref{325-8}, we get from \eqref{equ-4-1}, \eqref{h0-1} and \eqref{mu-3-5} that 
\be\lab{3-14-4}\begin{aligned}
	&-\Delta \ti \eta_n \\ 
	&=4(1+\al)^2\abs{\cdot+\f{\ti x_n}{\ti \mu_n}}^{2\al}\f{h_0f}{h_0(\ti x_n)f(\ti x_n)}\f{u_n^{p_n-1}}{\ti \ga_n^{p_n-1}}e^{u_n^{p_n}-\ti \ga_n^{p_n}}(\ti x_n+\ti \mu_n\cdot)\\ &\quad -\f{p_n}{2}\ti \mu_n^2\ti \ga_n^{p_n-1}hu_n(\ti x_n+\ti \mu_n\cdot)\\
	&=4(1+\al)^2\abs{\cdot+\f{\ti x_n}{\ti \mu_n}}^{2\al}\f{h_0f(\ti x_n+\ti \mu_n\cdot)}{h_0(\ti x_n)f(\ti x_n)}\sbr{1+\f{2\ti \eta_n}{p_n\ti \ga_n^{p_n}}}^{p_n-1}e^{\ti \ga_n^{p_n}[(1+\f{2\ti \eta_n}{p_n\ti \ga_n^{p_n}})^{p_n}-1]}\\
	&\quad -\f{p_n}{2}\ti \mu_n^2\ti \ga_n^{p_n-1}hu_n(\ti x_n+\ti \mu_n\cdot),\quad\text{in}\;\wt\Omega_n.
\end{aligned}\ee
By using \eqref{3-38}-\eqref{3-39} and $\frac{s_n}{\ti\mu_n}\to+\infty$ (i.e., $\frac{B_{s_n/2}(\ti x_n)-\ti x_n}{\ti\mu_n}\to\R^2$), we get first by the argument between \eqref{h0-1} and \eqref{3-11} that $\displaystyle\lim_{n\to+\iy}\ti\ga_n^{-1}u_n(\ti x_n+\ti \mu_n\cdot)=1$ in $C_\loc^0(\R^2)$, and that $\ti\mu_n\ti\ga_n^s=o(1)$ for any $s\geq1$, and then, by the argument below \eqref{3-14}, we eventually get from \eqref{3-14-4} and \eqref{3-38-0} that
	$$\lim_{n\to+\iy}\ti\eta_n=\ti\eta \quad\text{in}~C^{0,\delta}_{\loc}(\R^2)\cap C^1_\loc(\R^2\setminus\{-\ti x_\iy\}),$$
for any $\delta\in(0,\min\{1,2(1+\al)\})$, and $\ti\eta$ satisfies 
	$$-\Delta\ti\eta=4(1+\al)^2|x+\ti x_\iy|^{2\al}e^{2\ti\eta}~~\text{in}~\R^2,\quad \ti\eta(0)=0, \quad\int_{\R^2}|x+\ti x_\iy|^{2\al}e^{2\ti\eta}\rd x<\iy.$$
Again the result of  Prajapat-Tarantello \cite{clas-1} gives that
\be\lab{3-41} \ti\eta(x)=\ln\xi-\ln\sbr{1+\xi^2\abs{x+\ti x_\iy}^{2+2\al}},\ee
where $\xi>0$ satisfies $\xi=1+\xi^2\abs{\ti x_\iy}^{2+2\al}$, and that
	$$\int_{\R^2}|x+\ti x_\iy|^{2\al}e^{2\ti\eta(x)}\rd x=\f{\pi}{1+\al}.$$
Now similarly as Case I of Step 2, we want to replace $\ti x_n$ with another sequence $x_{n,2}$ such that $\ti x_\infty$ in \eqref{3-27} can be replaced by $0$. 
Using $\ti \mu_n=o(s_n)$ by \eqref{snmu-n}, we may take a sequence $\ti R_n\to+\iy$ such that $\ti R_n\ti\mu_n=o(s_n)$ and
\be\lab{3-42} \nm{\ti\eta_n-\ti\eta}_{L^\iy(B_{8\ti R_n}(0))}=o(1). \ee
Let $y_n\in B_{8\ti R_n(0)}$ be such that
	\be\lab{def-yn2}\ti\eta_n(y_n)=\max_{B_{8\ti R_n}(0)}\ti\eta_n.\ee
Let $x_{n,2}:=\ti x_n+\ti\mu_ny_n$ and 
	$$\mu_{n,2}^{-(2+2\al)}:=\f{1}{8(1+\al)^2}\la_np_n^2h_0(x_{n,2})f(x_{n,2})u_n(x_{n,2})^{2(p_n-1)}e^{u_n(x_{n,2})^{p_n}}.$$
By \eqref{3-41}, \eqref{3-42}, \eqref{def-yn2} and \eqref{325-8}, we get that $\displaystyle\lim_{n\to+\iy}y_n=-\ti x_\iy$ and 
\be\lab{3-43} u_n(x_{n,2})=\ti\ga_n+\f{2\ti\eta_n(y_n)}{p_n\ti\ga_n^{p_n-1}}=\ti\ga_n+\f{2(\ln\xi+o(1))}{p_n\ti\ga_n^{p_n-1}}\to+\infty. \ee
It follows that $\lim_{n\to+\infty}x_{n,2}=\lim_{n\to+\infty}\ti x_{n}=0$ and
\be\lab{3-44}  \sbr{\f{\mu_{n,2}}{\ti\mu_n}}^{2+2\al}=\f{h_0(\ti x_n)f(\ti x_n)\ti\ga_n^{2(p_n-1)}e^{\ti\ga_n^{p_n}}}{h_0(x_{n,2})f(x_{n,2})u_n(x_{n,2})^{2(p_n-1)}e^{u_n(x_{n,2})^{p_n}}}=\xi^{-2}+o(1),\ee
and hence
\be\lab{3-45} \f{x_{n,2}}{\mu_{n,2}}=\frac{\ti\mu_n}{\mu_{n,2}}\left(\frac{\ti x_n}{\ti \mu_n}+y_n\right)\to 0. \ee
By \eqref{325-8}, \eqref{3-41}, \eqref{3-42}, \eqref{3-43}, \eqref{3-44} and \eqref{3-45}, we get that
\be\lab{3-46}\begin{aligned}
	&\eta_{n,2}(x)\\
:=&\f{p_n}{2}u_n(x_{n,2})^{p_n-1}(u_n(x_{n,2}+\mu_{n,2}x)-u_n(x_{n,2}))\\
		=&\f{u_n(x_{n,2})^{p_n-1}}{\ti\ga_n^{p_n-1}}\ti\eta_n\sbr{y_n+\f{\mu_{n,2}}{\ti\mu_n}x}+\f{p_n}{2}u_n(x_{n,2})^{p_n-1}(\ti\ga_n-u_n(x_{n,2}))\\
		\to &\ti\eta\left(\frac{x}{\xi}-\ti x_\infty\right)-\ln\xi=-\ln\sbr{1+|x|^{2+2\al}}=\eta(x)\;\text{in}~C^{0,\delta}_{\loc}(\R^2)\cap C^1_\loc(\R^2\setminus\{0\}),
\end{aligned}\ee
for any $\delta\in(0,\min\{1,2(1+\al)\})$, as $n\to+\iy$. Now, we take another sequence $R_n\to+\iy$ such that $R_n\le \xi\ti R_n$, $R_n\mu_{n,2}=o(s_n)$, $\ln R_n=o(\ti\ga_n^{p_n})$ and 
\be\lab{3-47} \nm{\eta_{n,2}-\eta}_{L^\iy(B_{6R_n}(0))}=o(1), \ee
and choose $\p_{n,2}=\Psi^{-1}(x_{n,2})$, and $d_{n,2}=R_n\mu_{n,2}/2$.  
Then, by repeating the argument between \eqref{3-33} and \eqref{325-4-0} in Case I of Step 2, we get that $\{\p_{n,1},\p_{n,2}\}$ satisfies all the conditions of Proposition \eqref{bubble-1}. 
This finishes the proof of Case II-1.

\vskip0.1in
\noindent{\bf Case II-2:} $|\ti x_n|^{2+2\al}\la_n\ti\ga_n^{2(p_n-1)}e^{\ti\ga_n^{p_n}}\to+\iy$ as $n\to+\iy$ and $|\ti x_n|=O(s_n)$. 

Like Case I of Step 2, we choose
	$$\ti\mu_n^{-2}:=\f{1}{8}\la_np_n^2h(\ti x_n)f(\ti x_n)u_n(\ti x_n)^{2(p_n-1)}e^{u_n(\ti x_n)^{p_n}}.$$
Then	$\ti \mu_n=o(|\ti x_n|)$. From here and our assumption $|\ti x_n|=O(s_n)$, we obtain $\ti \mu_n=o(|s_n|)$.
Recall $\ti\eta_n$ given by \eqref{325-8}. Thanks to \eqref{3-38}-\eqref{3-39}, we can repeat the argument between \eqref{3-14-2} and \eqref{3-27}, and get that
	$$\lim_{n\to+\iy}\ti\eta_n=\ti\eta \quad\text{in}~C_\loc^1(\R^2),$$
with 
	$$\ti\eta(x)=\ln\xi-\ln\sbr{1+\xi^2|x-b_0|^2},$$
for some $\xi>0$ and $b_0\in\R^2$, and $\int_{\R^2}e^{2\ti\eta}\rd x=\pi$. Then, by the argument of Case I of Step 2 starting from \eqref{3-26}, we may take $x_{n,2}$, $\mu_{n,2}$, $\p_{n,2}$ and $d_{n,2}$ satisfying all conditions in Proposition \ref{bubble-1}. This finishes the proof of Case II-2.

\vskip0.1in
\noindent{\bf Case II-3:} $|\ti x_n|^{2+2\al}\la_n\ti\ga_n^{2(p_n-1)}e^{\ti\ga_n^{p_n}}\to+\iy$ as $n\to+\iy$ and $s_n=o(|\ti x_n|)$. 

In this case, we first get from $|\ti x_n|^{2+2\al}\la_n\ti\ga_n^{2(p_n-1)}e^{\ti\ga_n^{p_n}}\to+\iy$ and $d_{g_0}(\ti\p_n, q_0)=|\ti x_n|(1+o(1))$ that
	$$\lim_{n\to+\iy}d_{g_0}(\ti \p_n,q_0)^2\la_nh(\ti \p_n)u_n(\ti \p_n)^{2(p_n-1)}e^{u_n(\ti \p_n)^{p_n}}=+\iy.$$
Denote $\hat t_n=\f{1}{2}d_{g_0}(\ti \p_n,q_0)$ such that $B_{\hat t_n}(\ti \p_n)\cap B_{\hat t_n}(q_0)=\emptyset$. Now similarly as \eqref{604-1}, we define
	$$\hat\Phi_n:=\sbr{\hat t_n-d_{g_0}(\cdot,\ti \p_n)}^{2}\la_nhu_n^{2(p_n-1)}e^{u_n^{p_n}}, \quad\text{in}~B_{\hat t_n}(\ti \p_n).$$
By using $\ti \p_n, q_0\in B_{t_n}(\bar \p_n)$ and \eqref{325-4-0}-\eqref{325-4} which say that $\hat t_n\leq t_n=o(d_{g_0}(\bar\p_n, \p_{n,1}))$ and $d_{n,1}=o(d_{g_0}(\bar\p_n, \p_{n,1}))$, we get that $B_{\hat t_n}(\ti \p_n)\cap B_{3d_{n,1}}(\p_{n,1})=\emptyset$ for large $n$. 
Noting that $\hat\Phi_n=0$ on $\partial B_{\hat t_n}(\ti \p_n)$, we may take a point sequence $\hat \p_n\in B_{\hat t_n}(\ti \p_n)$ such that $\hat \Phi_n(\hat \p_n)=\max_{B_{\hat t_n}(\ti \p_n)}\hat \Phi_n$, then we get that
	$$\hat \Phi_n(\hat \p_n)\ge\hat \Phi_n(\ti \p_n)=\f{1}{4}d_{g_0}(\ti \p_n,q_0)^2\la_nh(\ti \p_n)u_n(\ti \p_n)^{2(p_n-1)}e^{u_n(\ti \p_n)^{p_n}}\to+\iy,$$
as $n\to+\iy$. 
Denote
	$$\hat s_n:=\f{1}{2}\sbr{\hat t_n-d_{g_0}(\hat \p_n,\ti \p_n)}, $$
then
	$$\hat \Phi_n(\hat \p_n)=4\hat s_n^2\la_nh(\hat \p_n)u_n(\hat \p_n)^{2(p_n-1)}e^{u_n(\hat \p_n)^{p_n}}\to+\iy.$$
Since $B_{\hat t_n}(\ti \p_n)\cap B_{\hat t_n}(q_0)=\emptyset$ and $\hat \p_n\in B_{\hat t_n}(\ti \p_n)$ implies $$d_{g_0}(\hat\p_n, q_0)\geq \hat t_n\geq2\hat s_n,$$ we also have
$$d_{g_0}(\hat\p_n, q_0)^2\la_nh(\hat \p_n)u_n(\hat \p_n)^{2(p_n-1)}e^{u_n(\hat \p_n)^{p_n}}\to+\iy$$
and so $u_n(\hat \p_n)\to+\infty$.
Furthermore, like the proof of \eqref{3-22}, by using $B_{\hat s_n}(\hat \p_n)\subset B_{\hat t_n}(\ti \p_n)$ and that $\hat t_n-d_{g_0}(\p,\ti \p_n)\ge \hat s_n$ for $\p\in B_{\hat s_n}(\hat \p_n)$, we get from the maximality of $\hat\p_n$ that
	$$h(\p)u_n(\p)^{2(p_n-1)}e^{u_n(\p)^{p_n}}\le 4h(\hat \p_n)u_n(\hat \p_n)^{2(p_n-1)}e^{u_n(\hat \p_n)^{p_n}},$$
uniformly for $\p\in B_{\hat s_n}(\hat \p_n)$ for large $n$. 
Now, since $\displaystyle\lim_{n\to+\iy}\hat \p_n=q_0$, we take the isothermal coordinates $(B_r(q_0),\Psi,\Omega)$ around $q_0$ given in Step 1. Set $\hat x_n:=\Psi(\hat \p_n)\to 0$, $\hat\ga_n:=u_n(\hat \p_n)\to+\infty$ and 
	$$\hat\eta_n:=\f{p_n}{2}\hat\ga_n^{p_n-1}(u_n(\hat x_n+\hat\mu_n\cdot)-\hat\ga_n)\quad\text{in}~\hat\Omega_n:=\f{\hat\Omega-\hat x_n}{\hat\mu_n},$$
where $\hat\mu_n$ is given by
	$$\hat\mu_n^{-2}:=\f{1}{8}\la_np_n^2h(\hat x_n)f(\hat x_n)u_n(\hat x_n)^{2(p_n-1)}e^{u_n(\hat x_n)^{p_n}}.$$
Clearly, we have
	$$2\hat s_n\le d_{g_0}(\hat\p_n, q_0)=|\hat x_n|(1+o(1)) \quad\text{and}\quad \hat\mu_n=o(\hat s_n)=o(|\hat x_n|).$$
Then by repeating the argument between \eqref{3-31} and \eqref{325-4-0}, we may take again $x_{n,2}$, $\mu_{n,2}$, $\p_{n,2}$ and $d_{n,2}$ satisfying all conditions in Proposition \ref{bubble-1}. This finishes the proof of Case II-3 and hence the proof of Step 2.

\vskip0.1in
\noindent{\it Step 3: We complete the proof of Proposition \ref{bubble-1}.} 

By the above two steps, we have defined the selection process. Continuously, we consider the function
	$$d_{g_0}(\cdot,\{\p_{n,1},\p_{n,2},q_0\})^2\la_nhu_n^{2(p_n-1)}e^{u_n^{p_n}},\quad\text{in}~\Sigma\setminus\bigcup_{i=1}^2 B_{6d_{n,i}}(\p_{n,i}).$$
If it is uniformly bounded, then we stop and conclude this proposition for $M=2$. Otherwise, using the same arguments, we get $\p_{n,3}$ and $d_{n,3}$. By induction, suppose there are $k$ bubble areas $B_{d_{n,i}}(\p_{n,i})$, $i=1,\cdots,k$, then by conclusion (3) of Proposition \ref{bubble-1}, we get that
	$$\lim_{R\to\iy}\lim_{n\to\iy}\f{p_n}{2}\int_{B_{R\mu_{n,i}}(\p_{n,i})}\la_np_nhfu_n^{2(p_n-1)}e^{u_n^{p_n}}\rd v_{g_0}=4\pi~\text{or}~4\pi(1+\al).$$
That is, each bubble area $B_{d_{n,i}}(\p_{n,i})$ contributes a positive energy of at least $4\pi(1+\al)$, thus by using \eqref{bound-1}, this process must stop after finite steps. 

This completes the proof of Proposition \ref{bubble-1}.
\end{proof}

\subsection{Gradient estimate}\lab{sec3-2}\ 

To obtain Proposition \ref{bubble}, we need to give the gradient estimate.
We take $\wt\Sigma_\iy$ as the limit of $\wt\Sigma_n$ given in Proposition \ref{bubble-1},
\be \wt\Sigma_\iy:=\lbr{p_i^*:~i=1,\cdots,M}. \ee
In view of conclusion (5) of Proposition \ref{bubble-1}, given any compact subset $K\subset\Sigma\setminus(\wt\Sigma_\iy\cup\DR)$, we get that $\la_nhfu_n^{2(p_n-1)}e^{u_n^{p_n}}=O(1)$, so $$\la_nhfu_n^{p}e^{u_n^{p_n}}=O(1)$$
uniformly in $K$ for any $p\in[0,2(p_n-1)]$ and large $n$. Furthermore, since $h$ is bounded in $K$, we see from Lemma \ref{bound-0} that $u_n$ and $hu_n$ are bounded in $L^s(K)$ for any $s>1$. 
Thus, it follows from \eqref{equ-5} that $\Delta_{g_0}u_n$ is bounded in $L^s_\loc(\Sigma\setminus(\wt\Sigma_\iy\cup\DR))$ for any $s>1$, and by the elliptic theory, we get that $u_n$ is bounded in $W^{2,s}_{\loc}(\Sigma\setminus(\wt\Sigma_\iy\cup\DR))$ for any $s>1$, and then
\be\lab{out} \lim_{n\to+\iy}u_n=u_0 \quad\text{in}~C_\loc^1(\Sigma\setminus(\wt\Sigma_\iy\cup\DR)). \ee
where $u_0\geq 0$. 

\begin{definition}
For $\p_{n,i}\in\wt\Sigma_n$, we call $(\p_{n,i},d_{n,i})$ a bubble. More precisely,
\begin{itemize}[fullwidth,itemindent=0em]
\item[(a)] we call $(\p_{n,i},d_{n,i})$ a singular bubble, if the second case (3-2) of the conclusion (3) in Proposition \ref{bubble-1} happens, i.e.
	\[\p_i^*\in\DR\quad\text{and } \;\lim_{n\to+\iy}\f{|x_{n,i}|}{\mu_{n,i}}=0.\]
\item[(b)] we call $(\p_{n,i},d_{n,i})$ a non-singular bubble, if the first case (3-1) of the conclusion (3) in Proposition \ref{bubble-1} happens, i.e.
	\[\p_i^*\not\in\DR, \quad\text{or } \; \p_i^*\in\DR\;\text{and}\; \lim_{n\to+\iy}\f{|x_{n,i}|}{\mu_{n,i}}=+\infty.\]
\end{itemize}
Remark that, Proposition \ref{bubble-1}-(5) shows that at each singularity $q_0\in\DR$, there is at most one singular bubble $(\p_{n,i_0},d_{n,i_0})$ concentrating at $q_0$.
\end{definition}

In this subsection, we establish the following gradient estimate on $u_n$, which can be viewed as an analog of \cite[Proposition 2]{MT-blowup-2} by Druet for conical singular surfaces. The basic idea of the proof also comes from \cite[Proposition 2]{MT-blowup-2}. Of course, since we deal with the singular case, some different techniques are needed.

\begin{proposition}\lab{gradient}
Assume that $u_n$ satisfies \eqref{blow-up}.  Let $M\in\N_+$ and $\wt\Sigma_n$ be given by Proposition \ref{bubble-1}, and let $\DR'$ be  given by conclusion (5) of Proposition \ref{bubble-1}, such that \eqref{tem-321-1} holds for any $q_0\in\DR'$. Then
\be\lab{326-1} d_{g_0}(\cdot,\wt\Sigma_n\cup\DR)u_n^{p_n-1}|\nabla_{g_0} u_n|=O(1),\ee
or equivalently,
 \be\label{326-5} d_{g_0}\sbr{\cdot,\wt\Sigma_n\cup(\DR\setminus\DR')}u_n^{p_n-1}|\nabla_{g_0} u_n|=O(1),\ee
uniformly in $\Sigma\setminus \cup_{q_0\in\DR'}B_{\mu_{n,i_0}}(q_0)$ for large $n$.
\end{proposition}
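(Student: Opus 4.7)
The plan is to argue by contradiction, following Druet's gradient-estimate technique \cite[Prop.~2]{MT-blowup-2} and adapting it to the conical singular setting. The equivalence of \eqref{326-1} and \eqref{326-5} is immediate from Remark \ref{326-2}, since $d_{g_0}(\cdot,q_0)=(1+o(1))d_{g_0}(\cdot,\p_{n,i_0})$ uniformly on $\Sigma\setminus B_{\mu_{n,i_0}}(q_0)$ for each $q_0\in\DR'$; I focus on \eqref{326-1}. Outside the excluded singular balls, Proposition \ref{bubble-1}-(5) (cf.\ \eqref{tem-325-1}) already gives the pointwise bound
\[
\la_n h f u_n^{2(p_n-1)}e^{u_n^{p_n}}\le C\,d_{g_0}(\cdot,\wt\Sigma_n\cup\DR)^{-2},
\]
so the task is to promote this bound on the nonlinearity to a gradient bound at the same scale. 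Suppose \eqref{326-1} fails; then up to a subsequence there exist near-maximum points $\p_n^\star$ of
\[
\Phi_n(\p):=d_{g_0}(\p,\wt\Sigma_n\cup\DR)\,u_n(\p)^{p_n-1}|\nabla_{g_0}u_n(\p)|
\]
in $\Sigma\setminus\cup_{q_0\in\DR'}B_{\mu_{n,i_0}}(q_0)$ with $T_n:=\Phi_n(\p_n^\star)\to+\infty$ and $\Phi_n\le 2T_n$ throughout. By \eqref{out}, $\p_n^\star$ must accumulate at some $\p^\star\in\wt\Sigma_\infty\cup\DR$; set $s_n:=\tfrac12 d_{g_0}(\p_n^\star,\wt\Sigma_n\cup\DR)\to 0$ and work in isothermal coordinates $(B_r(\p^\star),\Psi,\Omega)$ around $\p^\star$ with $z_n:=\Psi(\p_n^\star)$.

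I would then split on the height $u_n(\p_n^\star)$. In the bounded case $u_n(\p_n^\star)=O(1)$, the near-maximality bound $\Phi_n\le 2T_n$, integrated along radial paths, controls the oscillation of $u_n^{p_n}$ on $B_{s_n/2}(\p_n^\star)$; combining this with \eqref{tem-325-1}, equation \eqref{equ-4-1}, and Lemma \ref{bound-0} (taking an exponent $s\in(1,1/|\al_{\DR}|)$ to absorb the conical singularity of $h$), a standard rescaling of $u_n$ at scale $s_n$ produces a limit with bounded $C^1$-norm at the origin, contradicting the scaling identity $|\nabla_{g_0}u_n(\p_n^\star)|\sim T_n/s_n$. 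In the blow-up case $u_n(\p_n^\star)\to+\infty$, I would rescale at the proper bubble scale $\mu_n^\star$, defined as in \eqref{mun-1} or \eqref{mun-2} according to whether $|z_n|^{2(1+\al_{\p^\star})}\la_n u_n(\p_n^\star)^{2(p_n-1)}e^{u_n(\p_n^\star)^{p_n}}$ diverges or stays bounded, and repeat the analysis of Step~1 of the proof of Proposition \ref{bubble-1}, using the a priori bound $\Phi_n\le 2T_n$ in place of the global maximum property of $\ga_n=\max_\Sigma u_n$ employed there. The rescaled $\eta_n$ converges to a standard Liouville bubble \eqref{Liouville-1} or \eqref{Liouville-2}, exhibiting a new concentration point $\p_n^\star$ which, because $d_{g_0}(\p_n^\star,\p_{n,i})\ge 2s_n\gg\mu_n^\star$ for every $i$, lies outside every $B_{d_{n,i}}(\p_{n,i})$. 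This contradicts the exhaustion property of the selection process underlying Proposition \ref{bubble-1}.

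The main obstacle is the blow-up case, and within it the correct handling of the conical singularity. One must follow carefully the trichotomy of Case~II (subcases II-1, II-2, II-3) in the proof of Proposition \ref{bubble-1}, according to the position of $z_n$ relative to $0$ and the relative sizes of $|z_n|$, $\mu_n^\star$, and $s_n$. It is precisely here that the standing hypothesis $\al_a\in(-1,0)$ for $a\in\DR$ enters: it makes the auxiliary maximizers \eqref{604-1}--\eqref{604-2} well-posed near the singularities and guarantees that any emergent profile carries a positive amount of energy, so that it should have been captured in $\wt\Sigma_n$ at an earlier stage of the selection procedure, thereby closing the contradiction.
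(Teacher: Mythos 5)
Your reduction steps (the equivalence of \eqref{326-1} and \eqref{326-5} via Remark \ref{326-2}, the choice of a maximizing sequence $q_n$ of $\Phi_n$ with $\Phi_n(q_n)\to\infty$, the localization at a point of $\wt\Sigma_\infty\cup\DR$ using \eqref{out}, and the rescaling at the scale $s_n=$ distance to $\wt\Sigma_n\cup\DR$) agree with the paper, and your ``bounded height'' case is essentially the paper's proof that $v_n(0)\to\infty$: there the contradiction indeed comes from elliptic estimates plus Harnack applied to the harmonic part (your claim that integrating $\Phi_n\le 2T_n$ controls the oscillation of $u_n^{p_n}$ is not right as stated, since it only gives oscillation $O(T_n)$, but this is repairable along the paper's lines). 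The genuine gap is in the blow-up case, which is the heart of the proposition. There you propose to rescale at the bubble scale $\mu_n^\star$ defined as in \eqref{mun-1}/\eqref{mun-2} at $q_n$, extract a new Liouville bubble, and contradict the ``exhaustion'' of the selection process. This cannot work: conclusion (5) of Proposition \ref{bubble-1}, i.e.\ \eqref{tem-325-1}, already holds \emph{at and near} $q_n$, so $s_n^2\,\la_n h(q_n)u_n(q_n)^{2(p_n-1)}e^{u_n(q_n)^{p_n}}=O(1)$, which forces $\mu_n^\star\gtrsim s_n$. Hence the claimed scale separation $\mu_n^\star\ll s_n$ is false, the rescaled function does not converge to a finite-energy Liouville profile inside $B_{s_n}(q_n)$, and no new concentration point is produced. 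Moreover, the ``exhaustion'' you invoke is circular: the selection process terminates precisely when the \emph{nonlinearity} bound \eqref{tem-325-1} holds, and that bound does not preclude blow-up of the weighted gradient $d\,u_n^{p_n-1}|\nabla u_n|$ --- ruling that out is exactly what Proposition \ref{gradient} is asserting, so it cannot be used as an input.

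What the paper does instead in this regime ($v_n(0)\to\infty$ with the nonlinearity controlled) is a renormalization argument: first $v_n/v_n(0)\to 1$ in $C^1_{\loc}(\R^2\setminus S)$, then one studies $\ti v_n=(v_n-v_n(0))/|\nabla v_n(0)|$, which by the maximality of $q_n$ satisfies $|\nabla\ti v_n|\le (1+o(1))/d(\cdot,S_n)$ and converges (using that $\Delta\ti v_n\to0$ in $L^s$ away from $S$, see \eqref{69}) to a harmonic function $\ti v$ on $\R^2\setminus S$ with $\ti v(0)=0$, $|\nabla\ti v(0)|=1$ and \eqref{70}. The delicate step, entirely absent from your proposal, is the removability of the points of $S$: one lower-bounds $v_n$ near each $y_0\in S$ by a maximum-principle argument (cf.\ \eqref{515-1}), uses the $L^1$ bound on $v_n(0)^{p_n-1}\Delta v_n$ coming from \eqref{bound-1} and Lemma \ref{bound-0}, and the normalization \eqref{64}, to show the flux $\int_{\pa B_r(y_0)}\pa_\nu\ti v\,\rd\sigma$ vanishes, whence $\ti v$ is bounded near $y_0$ and extends harmonically. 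Then $\ti v$ is a bounded entire harmonic function, so Liouville's theorem gives $\ti v\equiv0$, contradicting $|\nabla\ti v(0)|=1$. Without this (or an equivalent substitute), your argument does not close in the main case, so as written the proposal has a genuine gap rather than being a different but complete proof.
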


\begin{proof} First, by the same argument as Remark \ref{326-2}, we know that  \eqref{326-1} is equivalent to \eqref{326-5}.

As in the proof of Proposition \ref{bubble-1}, since the discussion is locally near some point, the number of singularities does not affect the proof and only brings complexity in notations, so we may assume that $\DR=\{q_0\}$ and the order of $q_0$ is $\al=\al_{q_0}\in(-1,0)$.

We divide the proof into two cases.

\vskip0.1in
\noindent{\bf Case 1:} There is a singular bubble concentrating at $q_0$, say $(\p_{n,1},d_{n,1})$, i.e., $\DR'=\DR=\{q_0\}$.

Suppose by contradiction that \eqref{326-1} or equivalently \eqref{326-5} does not hold,  then there exists a point sequence $q_n\in \Sigma\setminus B_{\mu_{n,1}}(q_0)\subset \Sigma\setminus B_{4\mu_{n,1}/5}(\p_{n,1})$ such that
\be\lab{66} d_{g_0}(q_n,\wt\Sigma_n)u_n(q_n)^{p_n-1}|\nabla_{g_0} u_n(q_n)|=\max_{\Sigma\setminus B_{4\mu_{n,1}/5}(\p_{n,1})}d_{g_0}(\cdot,\wt\Sigma_n)u_n^{p_n-1}|\nabla_{g_0} u_n|, \ee
and that
\be\lab{60} \lim_{n\to+\iy}d_{g_0}(q_n,\wt\Sigma_n)u_n(q_n)^{p_n-1}|\nabla_{g_0}u_n(q_n)|=+\iy. \ee
Up to a subsequence, we have $\displaystyle\lim_{n\to+\iy}q_n=q^*$. By \eqref{out} and \eqref{60}, we get that $q^*\in\wt\Sigma_\iy$. 
If $q^*\neq q_0$, this proposition has been proved in \cite[Proposition 4.1]{MT-blowup-3}. Therefore, we only need to consider the case $q^*=q_0$, namely we may assume that $q^*=q_0$, and that $\p_1^*=\p_2^*=\cdots=\p_l^*=q_0$ for some $1\le l\le M$, and that $\p_{j}^*\neq q_0$ for $j=l+1,\cdots,M$. 

Take the isothermal coordinates $(B_r(q_0),\Psi,\Omega)$ around $q_0$ as above \eqref{ufh-1}. 
Set $y_n=\Psi(q_n)$ and $x_{n,i}=\Psi(\p_{n,i})$ for $i=1,\cdots,l$. 
Define 
\be s_n:=\min_{i=1,\cdots,l}|y_n-x_{n,i}| \quad\text{and}\quad y_{n,i}:=\f{x_{n,i}-y_n}{s_n},~i=1,\cdots,l. \ee
Clearly, $|y_{n,i}|\geq 1$ for all $i$ and $n$. We claim:
\be\lab{61-0}\f{\mu_{n,1}}{s_n}=o(|y_{n,1}|),\ee
where $\mu_{n,i}$, $i=1,\cdots,l$, are given by Proposition \ref{bubble-1}.
Indeed, if $|y_{n,1}|=\frac{|x_{n,1}-y_n|}{s_n}\to+\infty$ as $n\to+\iy$, then $s_n=|y_n-x_{n,i}|$ for some $2\leq i\leq l$, and we see from $\mu_{n,1}\ll d_{n,1}\leq |x_{n,1}-x_{n,i}|$ that
	\[\f{\mu_{n,1}}{s_n}\ll \frac{|x_{n,1}-x_{n,i}|}{s_n}\leq \frac{|x_{n,1}-y_{n}|}{s_n}+\frac{|y_{n}-x_{n,i}|}{s_n}=|y_{n,1}|+1,\]
so \eqref{61-0} holds. If $1\leq |y_{n,1}|=\frac{|x_{n,1}-y_n|}{s_n}=O(1)$, then to prove \eqref{61-0} is equivalent to prove
\be\lab{61} \lim_{n\to+\iy}\f{s_n}{\mu_{n,1}}=+\iy. \ee
Assume by contradiction that $s_n=|y_n-x_{n,i}|=O(\mu_{n,1})$ for some $i\in\{1,\cdots,l\}$. If $i\ge2$, then we have
	$$\f{Cs_n}{\mu_{n,1}}\ge\f{|y_n-x_{n,1}|}{\mu_{n,1}}\ge\f{|x_{n,i}-x_{n,1}|-|y_n-x_{n,i}|}{\mu_{n,1}}\ge\f{d_{n,1}-O(\mu_{n,1})}{\mu_{n,1}}\to\iy, $$
which is impossible, so that $i=1$, i.e., $s_n=|y_n-x_{n,1}|$. From here and $q_n\in \Sigma\setminus B_{4\mu_{n,1}/5}(\p_{n,1})$, we obtain $C^{-1}\leq \f{s_n}{\mu_{n,1}}=\frac{|y_n-x_{n,1}|}{\mu_{n,1}}\leq C$.  By conclusion (3-2) of Proposition \ref{bubble-1}, we know that $\eta_{n,1}=\f{p_n}{2}\ga_{n,1}^{p_n-1}\big(u_n(x_{n,1}+\mu_{n,1}\cdot)-\ga_{n,1}\big)$ converges to $-\ln(1+|\cdot|^{2(1+\al)})$ in $C_\loc^1(\R^2\setminus\{0\})$, so that,
	$$u_n(y_n)=\ga_{n,1}+\frac{2\eta_{n,1}(\frac{y_n-x_{n,1}}{\mu_{n,1}})}{p_n\ga_{n,1}^{p_n-1}}=\ga_{n,1}+O\sbr{\f{1}{\ga_{n,1}^{p_n-1}}},\; \nabla u_n(y_n)=O\sbr{\f{1}{\ga_{n,1}^{p_n-1}\mu_{n,1}}}.$$
It follows from $d_{g_0}(q_n,\wt\Sigma_n)=|q_n-\p_{n,1}|=(1+o(1))|y_n-x_{n,1}|=(1+o(1))s_n$ that
	$$d_{g_0}(q_n,\wt\Sigma_n)u_n(q_n)^{p_n-1}|\nabla u_n(q_n)|_{g_0}=O\sbr{s_nu_n(y_n)^{p_n-1}\abs{e^{-2\psi_0(y_n)}\nabla u_n(y_n)}}=O(1),$$
which is a contradiction with \eqref{60}. This proves \eqref{61} and so \eqref{61-0} holds.

We define
	$$v_n(x):=u_n(y_n+s_nx),\quad\text{for}~x\in\Omega_n:=\f{\Omega-y_n}{s_n}.$$
Since $|y_n|=o(1)$ and $s_n=o(1)$, we have $\lim_{n\to+\iy}\Omega_n=\R^2$. Denote
	$$S_n:=\lbr{y_{n,i}:~i=1,\cdots,l},\quad S:=\lim_{n\to+\iy}S_n.$$
We have
	$$d(0,S_n)=\min_{i=1,\cdots,l}\f{|x_{n,i}-y_n|}{s_n}=1,$$
so $d(0,S)=1$. By \eqref{equ-4-1}, we get that
\be\lab{68-0} -\Delta v_n=s_n^2\la_np_nhf(y_n+s_n\cdot)v_n^{p_n-1}e^{v_n^{p_n}}-s_n^2h(y_n+s_n\cdot)v_n\quad\text{in }\;\Omega_n. \ee
By Remark \ref{326-2}, we obtain that
\be\lab{68} s_n^2\la_nh(y_n+s_n\cdot)v_n^{2(p_n-1)}e^{v_n^{p_n}}=O\sbr{\f{1}{d(\cdot,S_n)^2}}, \ee
uniformly in  $\Omega_n\setminus B_{2\f{\mu_{n,1}}{s_n}}(y_{n,1})$ for large $n$.
For any fixed $R\geq 4$, let 
	$$A_R:=B_R(0)\setminus \bigcup_{y\in S}B_{\f{1}{R}}(y).$$
Since $A_R\subset \Omega_n\setminus B_{2\f{\mu_{n,1}}{s_n}}(y_{n,1})$ for $n$ large by \eqref{61-0}, so $s_n^2\la_nh(y_n+s_n\cdot)v_n^{2(p_n-1)}e^{v_n^{p_n}}=O(1)$ in $A_R$ and for $n$ large. Thus, for any $s>1$, we get by using $p_n\in[1,2]$ and \eqref{boundla} that
$$\begin{aligned} 
	&~~\int_{A_R}\sbr{s_n^2\la_nh(y_n+s_nx)v_n(x)^{p_n-1}e^{v_n(x)^{p_n}}}^s\rd x\\
	&\le \sbr{\int_{A_R\cap\{v_n\ge \ga\}}+\int_{A_R\cap\{v_n\le \ga\}}}\sbr{s_n^2\la_nh(y_n+s_nx)v_n(x)^{p_n-1}e^{v_n(x)^{p_n}}}^s\rd x\\
	&\le \f{C_{R,s}}{\ga^{s(p_n-1)}}+C_{\ga}s_n^{2s}\int_{A_R}h(y_n+s_nx)^{s}\rd x,
\end{aligned}$$
and then, by letting $n\to+\iy$ and $\ga\to+\iy$, and noting from $\frac{|x_{n,1}|}{s_n}=o(\frac{\mu_{n,1}}{s_n})=o(|y_{n,1}|)$ and $\alpha\in (-1,0)$ that
	\be\lab{hAR} h(y_n+s_nx)=O\sbr{|y_n+s_nx|^{2\al}}=O\sbr{s_n^{2\al}\abs{x-y_{n,1}+\f{x_{n,1}}{s_n}}^{2\al}}=O\sbr{s_n^{2\al}},\ee
uniformly for $x\in A_R$, we get 
\be\lab{62} \nm{s_n^2\la_nh(y_n+s_n\cdot)v_n^{p_n-1}e^{v_n^{p_n}}}_{L^s(A_R)}=o(1). \ee
For the linear term $s_n^2hv_n$, by \eqref{hAR} and Lemma \ref{bound-0}, we get that for any $s>1$,
\be\lab{63}\begin{aligned} 
	\int_{A_R}\sbr{s_n^2h(y_n+s_nx)v_n}^s \rd x&=O\sbr{s_n^{2s+2\al(s-1)}\int_{A_R}h(y_n+s_nx)v_n^s\rd x} \\
		&=O\sbr{s_n^{(2+2\al)(s-1)}\int_{\Sigma}hu_n^s\rd v_{g_0}}=o(1).
\end{aligned}\ee
Then, by \eqref{62} and \eqref{63}, we get from \eqref{68-0} that $$\nm{\Delta v_n}_{L^s(A_R)}=o(1) \quad\text{for any}\; s>1.$$ 

Now we claim that $v_n(0)\to+\iy$ as $n\to+\iy$. 
If not, we assume $v_n(0)=O(1)$. For any $R\geq 4$, take $w_n$ to be the harmonic part of $v_n$ on $A_R$ with $w_n=v_n$ on $\pa A_R$, i.e.,
$$ \begin{cases}\Delta w_n=0\quad\text{in}~A_R,\\w_n=v_n\quad\text{on}~\pa A_R,\end{cases} \quad\text{and}\quad
	 \begin{cases}\Delta (v_n-w_n)=\Delta v_n\quad\text{in}~A_R,\\
	v_n-w_n=0\quad\text{on}~\partial A_R.\end{cases}$$ 
Then $\Delta (v_n-w_n)$ is bounded in $L^s(A_R)$ for any $s>1$, so that the elliptic theory gives that $v_n-w_n$ is bounded in $W^{2,s}(A_R)$ and hence in $C^{1}(A_R)$. 
Since $v_n>0$ and $v_n(0)=O(1)$, we know that $w_n$ is a harmonic function in $A_R$, which is bounded from below and satisfies $w_n(0)=O(1)$. Applying the Harncak inequality, we obtain that $w_n$ is bounded in $L^\iy(A_{R/2})$, and so it is bounded in $C^{1}(A_{R/2})$. Therefore, $v_n$ is bounded in $C^1(A_{R/2})$.
Noting from $d(0,S)=1$ that $0\in A_{R/2}$, we have $v_n(0)^{p_n-1}|\nabla v_n(0)|=O(1)$. While by \eqref{60}, we get
\be\lab{64} v_n(0)^{p_n-1}|\nabla v_n(0)|=s_n u_n(y_n)^{p_n-1}|\nabla u_n(y_n)|\to+\iy,\ee
as $n\to+\iy$. This is a contradiction. This proves $v_n(0)\to+\iy$ as $n\to+\iy$.

Replacing $v_n$ by $\f{v_n}{v_n(0)}-1$ in the above arguments, we obtain
\be\lab{65} \lim_{n\to+\iy}\f{v_n}{v_n(0)}=1,\quad\text{in}~C_\loc^1(\R^2\setminus S). \ee
We now set
	$$\ti v_n:=\f{v_n-v_n(0)}{|\nabla v_n(0)|},\quad\text{in}~\Omega_n.$$
Given any compact subset $K\subset \Omega_n\setminus (S_n\cup B_{\f{\mu_{n,1}}{s_n}}(y_{n,1}))$, since 
$$d_{g_0}(\Psi^{-1}(y_n+s_nx), \widetilde\Sigma_n)=(1+o(1))d(y_n+s_nx, \{x_{n,i}\}_{i=1}^l)=(1+o(1))s_n d(x, S_n)$$
uniformly for $x\in K$,
it follows from \eqref{66} and $d(0, S_n)=1$ that
	$$v_n(x)^{p_n-1}|\nabla v_n(x)|\le (1+o(1))\f{v_n(0)^{p_n-1}|\nabla v_n(0)|}{d(x,S_n)}, \quad\text{ uniformly for $x\in K$}.$$
This together with \eqref{65} gives
	$$|\nabla \ti v_n(x)|\le\f{1+o(1)}{d(x,S_n)}=\f{1+o(1)}{d(x,S)}, \quad\text{ uniformly for $x\in K$}.$$
From here and $\ti v_n(0)=0$, we get that $\ti v_n$ is bounded in $C_\loc^1(\R^2\setminus S)$. 
In view of \eqref{65}, we get from \eqref{68-0} that
	$$\begin{aligned}-\Delta\ti v_n=\f{1+o(1)}{v_n(0)^{p_n-1}|\nabla v_n(0)|}\sbr{s_n^2\la_nhf(y_n+s_n\cdot)v_n^{2(p_n-1)}e^{v_n^{p_n}}-s_n^2h(y_n+s_n\cdot)v_n^{p_n}}, \end{aligned}$$
in $A_R$. Then, by \eqref{68}, \eqref{63} and \eqref{64}, we obtain $\nm{\Delta \ti v_n}_{L^s(A_R)}=o(1)$ for any $s>1$ and $R\geq 4$. By the elliptic theory, we get that
\be\lab{69} \lim_{n\to+\iy}\ti v_n=\ti v, \quad\text{in}~C_\loc^1(\R^2\setminus S), \ee
where $\ti v$ satisfies
	$$-\Delta \ti v=0~~\text{in}~\R^2\setminus S,\quad \ti v(0)=0,\quad |\nabla\ti v(0)|=1, $$
and
\be\lab{70} |\nabla\ti v(x)|\le\f{1}{d(x,S)},\quad\forall x\in\R^2\setminus S. \ee

We claim that $\ti v$ is smooth in $\R^2$. Let $y_0\in S$ and $r_0=\f{1}{2}d(y_0,S\setminus\{y_0\})$. 
Let $\zeta_n$ be such that
  $$-\Delta\zeta_n=-s_n^2h(y_n+s_n\cdot)v_n~~\text{in}~B_{r_0}(y_0),\quad \zeta_n=0~~\text{on}~\pa B_{r_0}(y_0).$$
Then, for any $s\in(1,1/|\al|)$, we may take a $t>1$ such that $2+2\al\f{st-1}{t-1}>0$, and by H\"older inequality and \eqref{h0-1}, and by Lemma \ref{bound-0}, we get that
$$\begin{aligned}
  \nm{\Delta\zeta_n}_{L^s(B_{r_0}(y_0))}&=\sbr{\int_{B_{r_0}(y_0)}(s_n^{2}h(y_n+s_n\cdot)v_n)^s\rd x}^{1/s}\\
&=s_n^{2-\f{2}{s}}\sbr{\int_{B_{s_nr_0}(s_ny_0+y_n)}h^su_n^s\rd x}^{1/s}\\
  &\leq C s_n^{2-\f{2}{s}}\sbr{\int_{\Omega}\abs{x}^{2\al\f{st-1}{t-1}}\rd x }^{\sbr{1-\f{1}{t}}\f{1}{s}} 
    \sbr{\int_{\Omega}hu_n^{st}\rd x }^{\f{1}{st}}  \\
  &=o(1).
\end{aligned}$$
By the elliptic theory, we get that $\nm{\zeta_n}_{L^\iy(B_{r_0}(y_0))}=o(1)$. 
Then, observing that $-\Delta(v_n-\zeta_n)\ge0$ in $B_{r_0}(y_0)$ and by applying the maximum principle, and by using \eqref{65}, we get that
\be\lab{515-1} v_n\ge\inf_{\pa B_{r_0}(y_0)} v_n+o(1)\ge(1+o(1))v_n(0), \ee
uniformly in $B_{r_0}(y_0)$ for large $n$. 
Therefore, for any $0<r<r_0$, by \eqref{515-1}, and by using \eqref{bound-1} and Lemma \ref{bound-0}, we get from \eqref{68-0} that 
	$$\begin{aligned}&\left|\int_{B_r(y_0)}v_n(0)^{p_n-1}\Delta v_n\rd x\right|\\
\leq &C\int_{B_r(y_0)}s_n^2\la_np_nhf(y_n+s_n\cdot)v_n^{2(p_n-1)}e^{v_n^{p_n}}+s_n^2h(y_n+s_n\cdot)v_n^{p_n}\rd x\\\leq & C\int_{\Sigma}\la_np_nhfu_n^{2(p_n-1)}e^{u_n^{p_n}}+hu_n^{p_n}\rd x=O(1),\end{aligned}$$
and then, by integrating by parts, we get that
	$$\int_{\pa B_r(y_0)}v_n(0)^{p_n-1}\pa_\nu v_n\rd\sigma=O(1).$$
Independently, \eqref{69} leads to
	$$\int_{\pa B_r(y_0)}v_n(0)^{p_n-1}\pa_\nu v_n\rd\sigma=v_n(0)^{p_n-1}|\nabla v_n(0)|\sbr{\int_{\pa B_r(y_0)}\pa_\nu\ti v\rd\sigma+o(1)}.$$
This together with \eqref{64} gives
	$$\int_{\pa B_r(y_0)}\pa_\nu\ti v\rd\sigma=0,\quad\forall~0<r<r_0,$$
which implies
	$$\f{\rd}{\rd r}\sbr{\f{1}{2\pi r}\int_{\pa B_r(y_0)}\ti v\rd\sigma}=\f{1}{2\pi r}\int_{\pa B_r(y_0)}\pa_\nu\ti v\rd\sigma=0.$$
Hence there exists some constant $c(y_0)$ depending on $y_0$ such that
	$$\f{1}{2\pi r}\int_{\pa B_r(y_0)}\ti v\rd\sigma=c(y_0),\quad\forall~0<r<r_0.$$
It then follows from \eqref{70} that $|\ti v(x)-c(y_0)|=|\ti v(x)-\f{1}{2\pi r}\int_{\pa B_r(y_0)}\ti v\rd\sigma|\le\pi$ for any $x\in \pa B_r(y_0)$. This indicates that $\ti v$ is bounded near $y_0$, so we conclude that $\ti v$ is smooth in $\R^2$. 

By the mean value property, we have $\int_{\pa B_R(0)}\ti v\rd \sigma=\ti v(0)=0$ for any $R>0$, then we see from \eqref{70} that $\ti v$ is bounded in $L^\iy(\R^2)$. Applying the Liouville theorem, we have $\ti v(x)\equiv \ti v(0)=0$, which contradicts with the fact that $|\nabla\ti v(0)|=1$. This finishes the proof of Case 1.

\vskip0.1in
\noindent{\bf Case 2:} All the bubbles are non-singular, i.e., $\DR'=\emptyset$. 

Taking the same notations in Case 1, apart from the non-singular bubble center $x_{n,i}=\Psi(\p_{n,i})$ $=o(1)$, $i=1,\cdots,l$, we denote the singularity $x_{n,0}=\Psi(q_0)=0$, and again we define
	$$s_n:=\min_{i=0,\cdots,l}|y_n-x_{n,i}| \quad\text{and}\quad y_{n,i}:=\f{x_{n,i}-y_n}{s_n},~i=0,\cdots,l,$$
$$S_n:=\{y_{n,i} : i=0,1,\cdots,l\}.$$
Since conclusion (5) of Proposition \ref{bubble-1} says 
$$d_{g_0}(\cdot,\wt\Sigma_n\cup\{q_0\})^2\la_nhfu_n^{2(p_n-1)}e^{u_n^{p_n}}=O(1), \quad\text{uniformly in}\;\Sigma,$$
it follows that, instead of \eqref{68}, there holds
	$$s_n^2\la_nh(y_n+s_n\cdot)v_n^{2(p_n-1)}e^{v_n^{p_n}}\le O\sbr{\f{1}{d(\cdot,S_n)^2}}, $$
uniformly in $\Omega_n\setminus S_n$ for large $n$, so that the same argument as Case 1 leads to a contradiction, and we finish the proof of Case 2.
\end{proof}

Now we are ready to give the proof of Proposition \ref{bubble}. 

\begin{proof}[\bf Proof of Proposition \ref{bubble}]
If a bubble $(\p_{n,i},d_{n,i})$ satisfies conclusion (3-2) of Proposition \ref{bubble-1}, then we may transform the bubble center $\p_{n,i}$ to $\p_i^*=\displaystyle\lim_{n\to+\iy}\p_{n,i}\in\DR'$, and by using $d_{g_0}(\p_{n,i},q_0)=(1+o(1))|x_{n,i}|=o(\mu_{n,i})$, we get the alternative in Proposition \ref{bubble}-(2). Therefore, by using Proposition \ref{bubble-1}, Proposition \ref{gradient}, Remark \ref{326-2} and \eqref{out}, we easily get Proposition \ref{bubble}, except one thing that we lose the maximality of the singular bubble center, but this has no influence on our later proofs (see Remark \ref{515-2}-(2)).
\end{proof}

\vs
\section{Blow-up analysis of single bubbles}\lab{appe2}

In this section, we need to prove some refined estimates for the solution sequence near each bubbles, see Propositions \ref{regular} and \ref{singular} below, and in the proofs we need to use some results from Appendices \ref{appe1} and \ref{appe3}. These refined estimates will be applied in the proof of the quantization result 
Theorem \ref{thm0-2} in later sections. Again we need to discuss the non-singular bubble and the singular bubble separately.

\subsection{Non-singular bubble}\lab{appe2-2}\ 

In this subsection, we deal with the non-singular bubble case. Let us give our general setting, which is a modification of those  in \cite[Section 3]{MT-blowup-7}.
Let $\al\in\sbr{-1,0}$ and $(p_n)_n$ be a sequence of numbers in $[1,2]$, and let $(\mu_n)_n,(\bar r_n)_n$ be sequences of positive numbers. 
Let $(u_n)_n$ be a sequence of smooth functions in $B_{\bar r_n}(0)\subset\R^2$. 
We assume that
\be\lab{4-50} \nabla u_n(0)=0\ee
for all $n$, and
\be\lab{4-51} \ga_n:=u_n(0)\to+\iy \ee
as $n\to+\iy$. Let $(x_n)_n$ be a bounded sequence of points in $\R^2\setminus\{0\}$, and $(\la_n)_n$ be given by
\be\lab{4-52} \mu_n^{2}\la_np_n^2|x_n|^{2\al}h_n(0)f_n(0)\ga_n^{2(p_n-1)}e^{\ga_n^{p_n}}=8, \ee
where $f_n,h_n$ are given positive functions satisfying
\be\lab{4-53} \f{1}{C}\le\nm{f_n}_{L^\iy(B_{\bar r_n}(0))}\le C,\quad \nm{\nabla f_n}_{L^\iy(B_{\bar r_n}(0))}\le C,\ee
\be\lab{4-53-0} \f{1}{C}\le\nm{h_n}_{L^\iy(B_{\bar r_n}(0))}\le C,\quad\nm{\nabla h_n}_{L^\iy(B_{\bar r_n}(0))}\le C, \ee
for some given constant $C\ge1$. 
Let $t_n$ be given by
\be\lab{4-54} t_n(x):=\ln\sbr{1+\f{|x|^{2}}{\mu_n^{2}}}. \ee
Let $\eta\in(0,1)$ be fixed. We assume that 
\be\lab{4-55} \lim_{n\to+\iy}\f{\mu_n}{\bar r_n}=0, \ee
\be\lab{4-56} t_n(\bar r_n)\le \eta\f{p_n\ga_n^{p_n}}{2}, \ee
\be\lab{4-58} \bar r_n\le\f{1}{2}|x_n|=O(1), \ee
\be\lab{4-57} \int_{B_{\bar r_n}(0)}|x-x_n|^{2\al}e^{\ep_0 u_n^{1/3}}\rd x\le \bar C, \ee
for all large $n$, where $\ep_0$ is given in Lemma \ref{bound-0}, and that
\be\lab{4-59} \f{p_n}{2}\ga_n^{p_n-1}(\ga_n-u_n(\mu_nx))\to\ln(1+|x|^{2})\quad\text{in}~C^1_{\loc}(\R^2)\ee
as $n\to+\iy$. By Lemma \ref{bound-0} and Proposition \ref{bubble}, the last two assumptions are natural ones.

We assume that $u_n$ solves
\be\lab{4-61} -\Delta u_n+|x-x_n|^{2\al}h_nu_n=|x-x_n|^{2\al}\la_np_nh_nf_nu_n^{p_n-1}e^{u_n^{p_n}},\; u_n>0,\;\text{in}~B_{\bar r_n}(0), \ee
and the following key gradient estimate holds: there exists $C_G>0$ such that
\be\lab{4-62} |x|u_n(x)^{p_n-1}|\nabla u_n(x)|\le C_G\quad\text{for all}~x\in B_{\bar r_n}(0) \ee
for all $n$. 
Let $(v_n)_n$ be a sequence of functions solving
\be\lab{4-60}\begin{cases}
	-\Delta v_n+|x_n|^{2\al}h_n(0)v_n=|x_n|^{2\al}\la_np_nh_n(0)f_n(0)v_n^{p_n-1}e^{v_n^{p_n}},\\
	v_n(0)=\ga_n,\\
	v_n~\text{is radially symmetric and positive in }~B_{\bar r_n}(0),
\end{cases}\ee
and let $w_n$ be given by
\be\lab{4-63} u_n=v_n+w_n. \ee
Then by \eqref{4-58}, we know that $w_n$ is smooth in $B_{\bar r_n}(0)$. We have the following refined estimates.

\begin{proposition}\lab{regular}
We have that 
\be\lab{regular-0} \ln\ga_n=o\sbr{\ln\f{1}{\bar r_n|x_n|^\al}}, \ee
\be\lab{regular-1} \abs{w_n(x)}\le\f{C_0|x|}{\ga_n^{p_n-1}\bar r_n}~\text{for all}~x\in B_{\bar r_n}(0), \ee
\be\lab{regular-2} \nm{\nabla w_n}_{L^\iy(B_{\bar r_n}(0))}\le\f{C_0}{\ga_n^{p_n-1}\bar r_n}, \ee
for all large $n$, where $C_0$ is any fixed constant bigger than $4+(C_G/(1-\eta))$, for $C_G$ as in \eqref{4-62} and $\eta$ as in \eqref{4-56}. 

Furthermore, up to a subsequence, there exists a harmonic function $\psi_0$ in $B_1(0)$ such that
\be\lab{regular-3} \lim_{n\to+\iy}\ga_n^{p_n-1}w_n(\bar r_n\cdot)=\psi_0\quad\text{in}~C_{\loc}^1(B_1(0)\setminus\{0\}), \ee
\be\lab{regular-4} \nabla\psi_0(0)=\lim_{n\to+\iy}\f{2\al\bar r_n}{p_n|x_n|}\f{x_n}{|x_n|}. \ee
\end{proposition}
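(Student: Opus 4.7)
The plan follows the architecture of \cite[Proposition 3.1]{MT-blowup-7}, modified to accommodate the conical coefficient $|x-x_n|^{2\alpha}$. Since $\bar r_n\leq |x_n|/2$ and $x_n$ stays bounded, this coefficient is smooth and comparable to $|x_n|^{2\alpha}$ on $B_{\bar r_n}(0)$, with Taylor expansion
\[
|x-x_n|^{2\alpha}=|x_n|^{2\alpha}\Bigl(1-2\alpha\,\tfrac{x\cdot x_n}{|x_n|^2}+O\bigl(|x|^{2}/|x_n|^2\bigr)\Bigr).
\]
The dipole term in this expansion is exactly what will produce the nontrivial boundary gradient \eqref{regular-4}. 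The radial model $v_n$ absorbs the $|x_n|^{2\alpha}$-factor as a constant and therefore satisfies a standard (non-singular) Moser--Trudinger equation, so the whole singular structure is transferred to the difference $w_n=u_n-v_n$.

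\textbf{Proof of \eqref{regular-0}.} I would deduce this directly from the normalization \eqref{4-52} together with \eqref{4-55}--\eqref{4-56}. Taking logarithms of \eqref{4-52} and using $\lambda_n\leq C$ yields
\[
-\gamma_n^{p_n}=2\ln\mu_n+2\alpha\ln|x_n|+O(\ln\gamma_n),
\]
while \eqref{4-55}--\eqref{4-56} give $\ln\bar r_n-\ln\mu_n\leq \eta p_n\gamma_n^{p_n}/4+O(1)$. Adding these yields $\ln(\bar r_n|x_n|^{\alpha})\leq -\tfrac12(1-\eta/2)p_n\gamma_n^{p_n}+O(\ln\gamma_n)$, so $\ln(1/(\bar r_n|x_n|^\alpha))$ grows at least linearly in $\gamma_n^{p_n}$, which dominates $\ln\gamma_n$.

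\textbf{Proof of \eqref{regular-1}--\eqref{regular-2}.} I would first use ODE analysis (Appendix \ref{appe1}) to show that $v_n$ is well-defined on $B_{\bar r_n}(0)$ and, after rescaling by $\mu_n$, produces the same Liouville profile as in \eqref{4-59}, together with the uniform bound $v_n\leq \gamma_n$ and the tail expansion $v_n(r)=\gamma_n-\tfrac{2}{p_n\gamma_n^{p_n-1}}[t_n(r)+O(1)]$. Subtracting \eqref{4-60} from \eqref{4-61} and expanding $u_n^{p_n-1}e^{u_n^{p_n}}=v_n^{p_n-1}e^{v_n^{p_n}}\bigl(1+p_nv_n^{p_n-1}w_n+O(\cdots)\bigr)$ yields
\[
-\Delta w_n = L_n w_n + R_n,
\]
where the linearized potential $L_n$ is concentrated at scale $\mu_n$ around $0$ and the residual $R_n$ is bounded by $C\,|x|\,|x_n|^{-1}$ times the full nonlinearity, coming from the Taylor expansions of $|x-x_n|^{2\alpha}$, $h_n$ and $f_n$. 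I would then run a maximum-principle barrier argument with $\Phi(x)=C_0|x|/(\gamma_n^{p_n-1}\bar r_n)$: on $\partial B_{\bar r_n}(0)$, the gradient bound \eqref{4-62} gives $|w_n|\leq (C_G/(1-\eta)+o(1))/\gamma_n^{p_n-1}$, while inside the smallness of $L_n$ away from the core (controlled via \eqref{regular-0}) allows comparison. Any $C_0>4+C_G/(1-\eta)$ then forces $|w_n|\leq \Phi$ by applying the maximum principle to $\pm w_n-\Phi$ on $B_{\bar r_n}(0)\setminus B_{R\mu_n}(0)$ and using \eqref{4-59} on $B_{R\mu_n}(0)$. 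The gradient estimate \eqref{regular-2} follows from interior elliptic regularity applied to the rescaled difference $\widetilde w_n(y):=\gamma_n^{p_n-1}w_n(\bar r_n y)$.

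\textbf{Proof of \eqref{regular-3}--\eqref{regular-4} and main difficulty.} The function $\widetilde w_n$ is uniformly bounded in $C^1(B_1(0))$ by the previous step, and after rescaling, the source on the right-hand side of its equation tends to $0$ in $L^s_{\mathrm{loc}}(B_1(0)\setminus\{0\})$ for $s\in(1,1/|\alpha|)$ (using Lemma \ref{bound-0} for the singular weight). Elliptic estimates then give $\widetilde w_n\to\psi_0$ in $C^1_{\mathrm{loc}}(B_1(0)\setminus\{0\})$ with $\Delta\psi_0=0$; the bound $|\widetilde w_n(y)|\leq C_0|y|$ extends $\psi_0$ harmonically to all of $B_1(0)$ with $\psi_0(0)=0$. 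To identify $\nabla\psi_0(0)$, I would integrate the equation for $w_n$ against a test direction on a shrinking ball around $0$: after dividing by the concentrated Liouville mass $\int_{B_{\bar r_n}(0)}|x_n|^{2\alpha}\lambda_np_nh_n(0)f_n(0)v_n^{p_n-1}e^{v_n^{p_n}}\sim\frac{4\pi}{p_n\gamma_n^{p_n-1}}$, only the dipole contribution $-2\alpha|x_n|^{2\alpha-2}x\cdot x_n$ survives, yielding \eqref{regular-4}. The principal obstacle I anticipate is the barrier construction for \eqref{regular-1}: the linearized potential $L_n$ is a near-delta concentration at $0$, so one must carefully match interior expansions (valid up to $|x|\sim R\mu_n$) with the outer gradient bound \eqref{4-62} while keeping the constant $C_0$ independent of $n$.
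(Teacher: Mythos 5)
Your overall skeleton (compare $u_n$ with the radial model $v_n$, Taylor-expand $|x-x_n|^{2\alpha}$ so that the dipole term drives \eqref{regular-4}) is the right one, but two of your key steps do not hold up. First, your derivation of \eqref{regular-0} is flawed: taking logarithms of \eqref{4-52} gives $2\ln\mu_n+2\alpha\ln|x_n|=-\gamma_n^{p_n}-\ln\lambda_n+O(\ln\gamma_n)$, and when you add \eqref{4-56} the term $\tfrac12\ln(1/\lambda_n)$ enters with a \emph{plus} sign, so the assumption $\lambda_n\le C$ is useless; to conclude you would need an upper bound on $\ln(1/\lambda_n)$ strictly below $(1-\eta p_n/2)\gamma_n^{p_n}$, which is not among the hypotheses \eqref{4-50}--\eqref{4-62} and cannot be extracted at this stage (in the application $\lambda_n\to0$ exponentially fast in $\gamma_n^{p_n}$, and estimates of this kind, e.g. \eqref{5-79}, are proved \emph{after} and by means of Proposition \ref{regular}). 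The paper gets \eqref{regular-0} from a different source: the exponential integral bound \eqref{4-57} combined with $u_n\ge(1-\eta+o(1))\gamma_n$ on $B_{\bar r_n}(0)$, which is exactly the second constraint built into the bootstrap radius \eqref{4-67} and yields \eqref{4-68}. Second, your maximum-principle barrier for \eqref{regular-1}--\eqref{regular-2} does not work as described: in two dimensions $|x|$ is subharmonic, so $\Phi=C_0|x|/(\gamma_n^{p_n-1}\bar r_n)$ is not a supersolution; the operator $-\Delta-L_n$ with the concentrated nonnegative linearized potential has the wrong sign for a comparison principle; your boundary estimate for $|w_n|$ on $\partial B_{\bar r_n}(0)$ "from \eqref{4-62}" is circular, since \eqref{4-62} only controls $|\nabla u_n|$ in terms of $u_n^{1-p_n}$ and converting it requires $u_n\ge(1-\eta)\gamma_n$, i.e. the smallness of $w_n$ you are trying to prove; and the bound \eqref{regular-2} with the explicit constant $C_0$ must hold down to the core scale $\mu_n$, where $|\nabla u_n|,|\nabla v_n|\sim(\gamma_n^{p_n-1}\mu_n)^{-1}\gg(\gamma_n^{p_n-1}\bar r_n)^{-1}$, so it cannot come from interior elliptic regularity at scale $\bar r_n$. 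The paper's route is a continuity argument on the radius $r_n$ defined in \eqref{4-67}: on $\partial B_{r_n}(0)$ one writes $|\nabla w_n|\le|\nabla u_n|+|\nabla v_n|$, uses \eqref{4-62} and the ODE estimate of Proposition \ref{single-2} to get a strict inequality with any $C_0>4+C_G/(1-\eta)$, forcing $r_n=\bar r_n$; \eqref{regular-1} then follows by integrating from $w_n(0)=0$.

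Your sketch of \eqref{regular-4} is also incomplete. Matching the inner and outer behaviour requires identifying the rescaled limit of $w_n$ at scale $\mu_n$: in the paper, $\tilde w_n=\gamma_n^{p_n-1}\tfrac{\bar r_n}{\mu_n}w_n(\mu_n\cdot)$ converges to a solution $w_0$ of the inhomogeneous linearized Liouville problem \eqref{4-71}, and the classification Lemma \ref{appe3-0} pins down $w_0=\tfrac{|\cdot|^2}{1+|\cdot|^2}\langle\cdot,\vec X\rangle$ as in \eqref{4-73}; the Green-representation gradient estimate \eqref{4-77}--\eqref{4-78} then transfers $\nabla w_0$ at infinity to $\nabla\psi_0(0)$, giving both \eqref{regular-3} and \eqref{regular-4}. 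Your proposed first-moment identity on a shrinking ball would still have to control the contribution of the linearized term $L_nw_n$ at scale $\mu_n$, which amounts to knowing this inner limit; without the uniqueness statement of Lemma \ref{appe3-0} (or an equivalent), the dipole of the coefficient alone does not determine $\nabla\psi_0(0)$.
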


Proposition \ref{regular} is a generalization of \cite[Proposition 3.1]{MT-blowup-7}. Remark that there is a significant difference between Proposition \ref{regular} and \cite[Proposition 3.1]{MT-blowup-7}, namely there holds $\nabla\psi_0(0)=0$ in \cite[Proposition 3.1]{MT-blowup-7}, but here we may have $\nabla\psi_0(0)\neq0$ due to the new term $|\cdot|^{2\alpha}$ with $\al<0$. 

\begin{proof}[\bf Proof of Proposition \ref{regular}] We develop further the ideas of \cite[Proposition 3.1]{MT-blowup-7} to prove this proposition. Remark that due to the new term $|\cdot|^{2\alpha}$ with $\al<0$, we need to overcome some new difficulties. For example, we need to solve a new limiting equation \eqref{4-71}.

Let $r_n$ be given by
\be\lab{4-67} r_n:=\sup\lbr{ r\in[0,\bar r_n]~s.t.~\begin{cases}\ga_n^{p_n-1}r\nm{\nabla w_n}_{L^\iy(B_r(0))}\le C_0,\\ r^2|x_n|^{2\al}\exp\sbr{\f{\ep_0}{2}(1-\eta)^{1/3}\ga_n^{1/3}}\le\f{4\bar C}{\pi}, \end{cases} } \ee
for all $n$, with $\ep_0,\bar C$ as in \eqref{4-57}. Then, by choosing $C_0>\f{C_G}{1-\eta}+4$, we can prove as in \cite[Proposition 3.1]{MT-blowup-7} that $r_n=\bar r_n$ for large $n$ (see also the argument between \eqref{4-18} and \eqref{4-23}). This gives \eqref{regular-2}. As a byproduct, we have 
\be\lab{4-68} \bar r_n^2|x_n|^{2\al}\exp\sbr{\f{\ep_0}{2}(1-\eta)^{1/3}\ga_n^{1/3}}=O(1), \ee
which together with $\gamma_n\to+\infty$ gives \eqref{regular-0}. Note also from \eqref{4-58} and \eqref{4-68} that
$$\bar r_n=o(1).$$
By \eqref{4-50}, \eqref{4-51} and \eqref{4-60}, we find
\be\lab{4-69} w_n(0)=0\quad\text{and}\quad\nabla w_n(0)=0. \ee
This together with \eqref{regular-2} gives \eqref{regular-1}.

Now we give proofs of \eqref{regular-3} and \eqref{regular-4}. By applying Proposition \ref{single-2} and \eqref{16-66} to $v_n$ in $B_{\bar r_n}(0)$, we obtain that
\be\lab{4-64} \ga_n\ge v_n\ge (1-\eta+o(1))\ga_n, \ee
and that
\be\lab{4-65} \la_np_n|x_n|^{2\al}h_n(0)f_n(0)v_n^{p_n-1}e^{v_n^{p_n}}=\f{8e^{-2t_n}}{\mu_n^{2}\ga_n^{p_n-1}p_n}\sbr{1+O\sbr{\f{e^{\ti\eta t_n}}{\ga_n^{p_n}}}}, \ee
uniformly in $B_{\bar r_n}(0)$ and for all large $n$, where $\ti\eta$ is any fixed constant in $(\eta,1)$. 
Using \eqref{4-64} and $|w_n|=O\sbr{\ga_n^{1-p_n}}$ by \eqref{regular-1}, we have $u_n^{p_n}=v_n^{p_n}+p_nv_n^{p_n-1}w_n(1+o(1))$ and $u_n^{p_n-1}=v_n^{p_n-1}\sbr{1+O(|w_n|/\ga_n)}$, then
	$$u_n^{p_n-1}e^{u_n^{p_n}}=v_n^{p_n-1}e^{v_n^{p_n}}\mbr{1+p_nv_n^{p_n-1}w_n\sbr{1+O(\ga_n^{p_n-1}|w_n|)+O(\ga_n^{-p_n})}}.$$
Observe from \eqref{4-58} that
	$$|\cdot-x_n|^{2\al}=|x_n|^{2\al}\sbr{1-2\al\f{\abr{~\cdot~,x_n}}{|x_n|^2}+O\sbr{\f{|\cdot|^2}{|x_n|^2}} }, $$
where $\abr{\cdot,\cdot}$ denotes the standard scalar product in $\R^2$, and that $h_nf_n=h_n(0)f_n(0)(1+O(|\cdot|))$ by \eqref{4-53} and \eqref{4-53-0}, and that $w_n=O\sbr{\ga_n^{1-p_n}|\cdot|/\bar r_n}$, and using \eqref{4-61} and \eqref{4-60}, we may write at last
\be\lab{4-66}\begin{aligned} 
	&-\Delta w_{n}\\
	&=O(|x_n|^{2\al}w_n)+O\sbr{|x_n|^{2\al}\f{|\cdot|}{|x_n|}\ga_n}+\la_np_n|x_n|^{2\al}h_n(0)f_n(0)v_n^{p_n-1}e^{v_n^{p_n}}\\
	&\quad\times\mbr{ p_nv_n^{p_n-1}w_n\sbr{1+O\sbr{\f{|\cdot|}{\bar r_n}+\f{1}{\ga_n^{p_n}}}}-2\al\f{\abr{~\cdot~,x_n}}{|x_n|^2}+O\sbr{|\cdot|+\f{|\cdot|^2}{|x_n|\bar r_n}} }, 
\end{aligned}\ee
uniformly in $B_{\bar r_n}(0)$ and for large $n$. Setting now $$\ti w_n:=\ga_n^{p_n-1}\f{\bar r_n}{\mu_n}w_n(\mu_n\cdot),$$ and given any $R\gg1$, and using $v_n=\ga_n+O(\ga_n^{1-p_n})$ in $B_{R\mu_n}(0)\subset B_{\bar r_n}(0)$ by Proposition \ref{single-2}, we get from \eqref{4-54}, \eqref{4-65} and \eqref{4-66} that
\be\lab{4-66-0}\begin{aligned} 
	&-\Delta \ti w_{n}\\
&=\mu\bar{r}_n\gamma_n^{p_n-1}(-\Delta w_n)(\mu_n\cdot)\\
	&=O(\mu_n^2|x_n|^{2\al}\ti w_n)+O\sbr{\mu_n^2\ga_n^{p_n}|x_n|^{2\al}\f{\bar r_n}{|x_n|}}+\f{8e^{-2T_0}}{p_n}\sbr{1+O(\ga_n^{-p_n})}\\
	&\quad\times\mbr{ p_n\ti w_n\sbr{1+O\sbr{\f{\mu_n}{\bar r_n}+\f{1}{\ga_n^{p_n}}}}-2\al\bar r_n\f{\abr{~\cdot~,x_n}}{|x_n|^2}+O\sbr{\bar r_n+\f{\mu_n}{|x_n|}} }
\end{aligned} \ee
uniformly in $B_R(0)$ for all $n$, where $T_0=\ln(1+|\cdot|^2)$. Then, observing that $|\ti w_n|\le C_0|\cdot|$ in $B_R(0)$ by \eqref{regular-1}, and using \eqref{4-55}, \eqref{4-58} and \eqref{4-68}, we get $\mu_n^2\gamma_n^{p_n}|x_n|^{2\al}=o(1)$ and so
	$$-\Delta \ti w_{n}=8e^{-2T_0}\sbr{\ti w_n-\f{2\al\bar r_n}{p_n|x_n|}\abr{~\cdot~,\f{x_n}{|x_n|}}}+o(1)$$
uniformly in $B_R(0)$ for all $n$. Then, by the elliptic theory and \eqref{4-69}, we get that, up to a subsequence,
\be\lab{4-70} \lim_{n\to+\iy}\ti w_n=w_0\quad\text{in}~C_{\loc}^1(\R^2), \ee
where $w_0$ satisfies
\be\lab{4-71}\begin{cases}
	-\Delta w_0=8e^{-2T_0}\sbr{w_0-\abr{~\cdot~,\vec X}}\quad\text{in}~\R^2,\\
	|w_0|\le C_0|\cdot|\quad\text{in}~\R^2,\\
w_0(0)=0, \nabla w_0(0)=0,
\end{cases}\ee
with 
\be\lab{4-72} \vec X:=\lim_{n\to+\iy}\f{2\al\bar r_n}{p_n|x_n|}\f{x_n}{|x_n|} \in\R^2. \ee
Since $\f{|\cdot|^2}{1+|\cdot|^2}\langle\cdot~,\vec X\rangle$ is a special solution of the equation in \eqref{4-71}, we get by using Lemma \ref{appe3-0} that
\be\lab{4-73} w_0=\f{|\cdot|^2}{1+|\cdot|^2}\abr{~\cdot~,\vec X}. \ee

Let $\psi_n$ be given by
\be\lab{4-74} -\Delta \psi_n=0~~\text{in}~B_{\bar r_n}(0),\quad \psi_n=w_n~~\text{on} ~\pa B_{\bar r_n}(0), \ee
By the hamonic property of $\psi_n(\bar r_n\cdot)$ and the fact that 
	$$\nm{\psi_n(\bar r_n\cdot)}_{L^\iy(\pa B_1(0))}=\nm{w_n}_{L^\iy(\pa B_{\bar r_n}(0))}=O(\ga_n^{1-p_n}),$$
we obtain, through the elliptic theory, that
\be\lab{4-75} \lim_{n\to+\iy}\ga_n^{p_n-1}\psi_n(\bar r_n\cdot)=\psi_0\quad\text{in}~C_\loc^1(B_1(0)), \ee
where $\psi_0$ is harmonic in $B_1(0)$, and we conclude from $\frac{\mu_n}{\bar r_n}\to0$ that
\be\lab{4-76} \lim_{n\to+\iy}\nabla\sbr{\ga_n^{p_n-1}\f{\bar r_n}{\mu_n}\psi_n(\mu_n\cdot)}=\nabla\psi_0(0)\quad\text{in}~C^0_\loc(\R^2). \ee

Let $G_n$ be the Green function of $-\Delta$ in $B_{\bar r_n}(0)$ with the Dirichlet boundary condition (for an explicit formula for $G_n$, see for instance Han-Lin \cite[Proposition 1.22]{book-1}). Then (see also for instance \cite[Appendix B]{MT-blowup-6}), there exists $C>0$ independent of $n$ such that
	$$\abs{\nabla_y G_n(x,y)}\le\f{C}{|x-y|},$$
for any $x,y\in B_{\bar r_n}(0)$, $x\neq y$.
Let $y_n$ be any sequence such that $y_n\in B_{\bar r_n}(0)$. By the Green representation formula, and using \eqref{4-64}, \eqref{4-65}, \eqref{4-66}, \eqref{4-74} and \eqref{regular-1}, we get that
\be\lab{4-77} \begin{aligned}
	\abs{\nabla(w_n-\psi_n)(y_n)}=&\left|\int_{B_{\bar r_n}(0)}\nabla_y G(x,y_n)\Delta w_n(x)dx\right|\\=&O\sbr{\sbr{\f{|x_n|^{2\al}}{\ga_n^{p_n-1}\bar r_n}+|x_n|^{2\al-1}\ga_n}\int_{B_{\bar r_n}(0)}\f{|x|}{|x-y_n|} \rd x}\\
	&+O\sbr{ \f{1}{\ga_n^{p_n-1}\bar r_n} \int_{B_{\bar r_n}(0)}\f{|x|e^{(-2+\ti\eta)t_n}}{\mu_n^{2}|x-y_n|}\rd x }.
\end{aligned}\ee
By the change of variable $x=\bar r_ny$, we first deduce
	$$\int_{B_{\bar r_n}(0)}\f{|x|}{|x-y_n|}\rd x=O(\bar r_n^{2}).$$
If we have $|y_n|=O(\mu_n)$,  again by the change of variable $x=\mu_ny$, we get that
	$$\int_{B_{\bar r_n}(0)}\f{|x|e^{(-2+\ti\eta)t_n}}{\mu_n^{2}|x-y_n|} \rd x=O(1);$$
otherwise, up to a subsequence, we have $\mu_n=o(|y_n|)$ and 
$$\begin{aligned}
	\int_{B_{\bar r_n}(0)}\f{|x|e^{(-2+\ti\eta)t_n}}{\mu_n^{2}|x-y_n|} \rd x
	&=\int_{B_{\f{\bar r_n}{|y_n|}}(0)}\f{1}{\ti\mu_n^{2}}\f{|y|}{|y-\ti y_n|}\f{\rd y}{\sbr{1+\f{|y|^{2}}{\ti\mu_n^{2}}}^{2-\ti\eta}}=O(\ti\mu_n^{1-\ti\eta}),
\end{aligned}$$
by the change of variable $x=|y_n|y$, where $\ti y_n=y_n/|y_n|$ and $\ti\mu_n=\mu_n/|y_n|\to 0$, and the last equality is obtained by estimating the integral on $B_{\ti\mu_n}(0)$ and $B_{\f{\bar r_n}{|y_n|}}(0)\setminus B_{\ti\mu_n}(0)$ separately.
Plugging these estimates in \eqref{4-77}, we get in any case
\be\lab{4-78}\begin{aligned} 
	&\abs{\nabla(w_n-\psi_n)(y_n)}\\
	&=O\sbr{\bar r_n^{2} \sbr{\f{|x_n|^{2\al}}{\ga_n^{p_n-1}\bar r_n}+|x_n|^{2\al-1}\ga_n} }+O\sbr{\f{1}{\ga_n^{p_n-1}\bar r_n}\sbr{ 1+\f{|y_n|}{\mu_n} }^{-(1-\ti\eta)}}\\
	&=\f{1}{\ga_n^{p_n-1}\bar r_n}\sbr{ O\sbr{\sbr{ 1+\f{|y_n|}{\mu_n} }^{-(1-\ti\eta)}}+o(1)  },
\end{aligned}\ee
where we use \eqref{4-58} and \eqref{4-68} to obtain the second equality.

Now let $R\gg1$ and $y_n\in\pa B_{R\mu_n}(0)$ for large $n$. We get from \eqref{4-70}, \eqref{4-76} and \eqref{4-78} that
	$$\abs{\nabla w_0\sbr{\lim_{n\to+\iy}\f{y_n}{\mu_n}}-\nabla\psi_0(0)}=O\sbr{\sbr{ 1+\f{|y_n|}{\mu_n} }^{-(1-\ti\eta)}}+o(1).$$
By letting $R\to+\iy$, we get from \eqref{4-73} that
\be\lab{4-79} \nabla\psi_0(0)=\vec X=\lim_{n\to+\iy}\f{2\al\bar r_n}{p_n|x_n|}\f{x_n}{|x_n|}. \ee
Fix $\ep>0$ small, by \eqref{4-78} and \eqref{4-55}, we get that
\be\lab{4-80} \abs{\nabla\sbr{\ga_n^{p_n-1}(w_n-\psi_n)(\bar r_n\cdot)}}=O\sbr{\sbr{ 1+\f{\bar r_n|\cdot|}{\mu_n} }^{-(1-\ti\eta)}}+o(1)=o(1) \ee
uniformly in $B_1(0)\setminus B_\ep(0)$. By using the fact that $(w_n-\psi_n)(\bar r_n\cdot)=0$ on $\pa B_1(0)$ for all $n$, we obtain from \eqref{4-80} that
	$$\lim_{n\to+\iy}\ga_n^{p_n-1}(w_n-\psi_n)(\bar r_n\cdot)=0\quad\text{in}~C^1_\loc(B_1(0)\setminus\{0\}).$$
This together with \eqref{4-75} and \eqref{4-79} gives \eqref{regular-3} and \eqref{regular-4}. The proof is complete.
\end{proof}

\vs
\subsection{Singular bubble}\lab{appe2-1}\ 

In this subsection, we deal with the singular bubble case, which is a completely new case comparing to  \cite{MT-blowup-7}.
Here for brevity, we will use some of the same symbols as in Section \ref{appe2-2}, which may have different definitions from Section \ref{appe2-2}. Let us give our general setting.
Let $\al\in\sbr{-1,0}$ be fixed and $(p_n)_n$ be a sequence of numbers in $[1,2]$, and let $(\mu_n)_n,(\bar r_n)_n$ be sequences of positive numbers. 
Let $(u_n)_n$ be a sequence of continuous functions in $B_{\bar r_n}(0)$. 
We assume that
\be\lab{4-1} \ga_n:=u_n(0)\to+\iy \ee
as $n\to+\iy$. Let $(\la_n)_n$ be given by
\be\lab{4-2} \mu_n^{2(1+\al)}\la_np_n^2h_n(0)f_n(0)\ga_n^{2(p_n-1)}e^{\ga_n^{p_n}}=8(1+\al)^2, \ee
where $f_n,h_n$ are given positive functions satisfying
\be\lab{4-3} \f{1}{C}\le\nm{f_n}_{L^\iy(B_{\bar r_n}(0))}\le C,\quad \nm{\nabla f_n}_{L^\iy(B_{\bar r_n}(0))}\le C\ee
\be\lab{4-3-0} \f{1}{C}\le\nm{h_n}_{L^\iy(B_{\bar r_n}(0))}\le C,\quad\nm{\nabla h_n}_{L^\iy(B_{\bar r_n}(0))}\le C, \ee
for some given constant $C\ge1$. 
Let $t_n$ be given by
\be\lab{4-4} t_n(x):=\ln\sbr{1+\f{|x|^{2(1+\al)}}{\mu_n^{2(1+\al)}}}. \ee
Let $\eta\in(0,1)$ be fixed. We assume that 
\be\lab{4-5} \lim_{n\to+\iy}\f{\mu_n}{\bar r_n}=0, \ee
\be\lab{4-6} t_n(\bar r_n)\le \eta\f{p_n\ga_n^{p_n}}{2}, \ee
\be\lab{4-7} \int_{B_{\bar r_n}(0)}|\cdot|^{2\al}e^{\ep_0 u_n^{1/3}}\rd x\le \bar C, \ee
for all large $n$, where $\ep_0$ is given in Lemma \ref{bound-0}, and that
\be\lab{4-8} \f{p_n}{2}\ga_n^{p_n-1}(\ga_n-u_n(\mu_n\cdot))\to\ln(1+|\cdot|^{2(1+\al)})\quad\text{in}~C^{0,\delta}_{\loc}(\R^2)\cap C^1_\loc(\R^2\setminus\{0\}),\ee
as $n\to+\iy$, for any $\delta\in\sbr{0,\min\{1,2(1+\al)\}}$. By Lemma \ref{bound-0} and Proposition \ref{bubble}, the last two assumptions are natural ones.

We assume that $u_n$ solves
\be\lab{4-10} -\Delta u_n+|\cdot|^{2\al}h_nu_n=|\cdot|^{2\al}\la_np_nh_nf_nu_n^{p_n-1}e^{u_n^{p_n}},\quad u_n>0,\quad\text{in}~B_{\bar r_n}(0), \ee
and the following key gradient estimate holds: there exists $C_G>0$ such that
\be\lab{4-11} |x|u_n(x)^{p_n-1}|\nabla u_n(x)|\le C_G\quad\text{for all}~x\in B_{\bar r_n}(0)\setminus B_{\mu_n}(0) \ee
for all $n$. 
Let $(v_n)_n$ be a sequence of functions solving
\be\lab{4-9}\begin{cases}
	-\Delta v_n+|\cdot|^{2\al}h_n(0)v_n=|\cdot|^{2\al}\la_np_nh_n(0)f_n(0)v_n^{p_n-1}e^{v_n^{p_n}},\\
	v_n(0)=\ga_n,\\
	v_n~\text{is radially symmetric and positive in }~B_{\bar r_n}(0),
\end{cases}\ee
for all $n$. Letting $w_n$ be given by
\be\lab{4-12} u_n=v_n+w_n, \ee
then the following proposition holds:
\begin{proposition}\lab{singular}
We have that 
\be\lab{singular-0} \ln\ga_n=o\sbr{\ln\f{1}{\bar r_n}}, \ee
\be\lab{singular-1} \abs{w_n(x)}\le\f{C_0|x|}{\ga_n^{p_n-1}\bar r_n}+O\sbr{\f{\mu_n^{\delta_0}}{\ga_n^{p_n-1}\bar r_n^{\delta_0}}}~\text{for all}~x\in B_{\bar r_n}(0), \ee
for some small $\delta_0>0$, and that
\be\lab{singular-2} \nm{\nabla w_n}_{L^\iy(B_{\bar r_n}(0)\setminus B_{\mu_n}(0))}\le\f{C_0}{\ga_n^{p_n-1}\bar r_n}, \ee
for all large $n$, where $C_0$ is any fixed constant greater that $4+(C_G/(1-\eta))$, for $C_G$ as in \eqref{4-11} and $\eta$ as in \eqref{4-6}. 
\end{proposition}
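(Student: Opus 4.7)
The proof will adapt the stopping-time strategy of Proposition~\ref{regular}, but with three essential complications: the weight $|\cdot|^{2\al}$ is now placed at the origin of $B_{\bar r_n}(0)$ rather than at a far away point~$x_n$; the gradient estimate~\eqref{4-11} fails inside $B_{\mu_n}(0)$; and the reference profile $v_n$ solving~\eqref{4-9} is a singular radial bubble whose convergence, via~\eqref{4-8}, is only $C^{0,\delta}$ near the origin. Accordingly, the quantitative control on $w_n$ on $\pa B_{\mu_n}(0)$ must come from a H\"older estimate, and this is exactly what produces the extra additive term in~\eqref{singular-1}.

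First I would define the stopping radius
\[
r_n := \sup\Bigl\{\, r \in [\mu_n, \bar r_n]\ :\ \ga_n^{p_n-1}\, r\, \nm{\nabla w_n}_{L^\infty(B_r(0)\setminus B_{\mu_n}(0))} \le C_0,\ \ r^{2(1+\al)} \exp\!\bigl(\tfrac{\ep_0}{2}(1-\eta)^{1/3}\ga_n^{1/3}\bigr) \le \tfrac{4\bar C}{\pi}\,\Bigr\},
\]
and show $r_n = \bar r_n$ for large~$n$ by a bootstrap argument. This uses (a)~the analog of Proposition~\ref{single-2} for $v_n$, yielding $v_n \ge (1-\eta+o(1))\ga_n$ and the sharp expansion of $\la_n |x|^{2\al} h_n(0) f_n(0) v_n^{p_n-1} e^{v_n^{p_n}}$ in $B_{r_n}(0)$; and (b)~the linearised equation for $w_n = u_n - v_n$ obtained by Taylor expansion as in~\eqref{4-66}. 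The decay of the nonlinear term coming from the second stopping condition, combined with the integrability $\int_{B_{r_n}(0)} |x|^{2\al}\,\rd x \sim r_n^{2(1+\al)} \le C$ (which is where $\al \in (-1, 0)$ is crucial), allows a Green's function estimate that strictly improves both stopping conditions, contradicting the supremum definition if $r_n < \bar r_n$. Once this is done, \eqref{singular-2} is exactly the first stopping condition and \eqref{singular-0} follows from rearranging the second.

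The pointwise estimate~\eqref{singular-1} is then obtained in two pieces. On the annulus $B_{\bar r_n}(0)\setminus B_{\mu_n}(0)$, for any $x$ I would integrate $\nabla w_n$ along the radial segment from $\mu_n x/|x|$ to $x$ using~\eqref{singular-2}, producing the first term $C_0|x|/(\ga_n^{p_n-1}\bar r_n)$ up to a boundary contribution at $\pa B_{\mu_n}(0)$. The boundary term, and the values of $w_n$ inside $B_{\mu_n}(0)$, would be controlled using the $C^{0,\delta_0}_{\loc}(\R^2)$ convergence~\eqref{4-8} together with the corresponding H\"older expansion of the radial profile $v_n(\mu_n\cdot)$ from Appendix~\ref{appe1} and the fact that $w_n(0)=0$; this yields a H\"older bound of the form $|w_n(y)| \le o(\ga_n^{1-p_n})(|y|/\mu_n)^{\delta_0}$ on $B_{2\mu_n}(0)$, which after combining with~\eqref{singular-0} is absorbed into the additive term $O(\mu_n^{\delta_0}/(\ga_n^{p_n-1}\bar r_n^{\delta_0}))$ of~\eqref{singular-1}.

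The principal obstacle is twofold. First, the absence of any gradient control inside $B_{\mu_n}(0)$, together with the loss of the maximality condition $\nabla u_n(0)=0$ that drove the analogous step in Proposition~\ref{regular}, means the boundary datum for $w_n$ on $\pa B_{\mu_n}(0)$ must be extracted from the weaker $C^{0,\delta}$ convergence at the singular scale; this is the source of the extra H\"older term in~\eqref{singular-1}. Second, closing the bootstrap on $r_n$ relies on the explicit integrability of $|\cdot|^{2\al}$ at the origin, so the argument is inherently tied to the restriction $\al \in (-1, 0)$, in line with Remark~\ref{remark1-1}.
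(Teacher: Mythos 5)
There is a genuine gap in your treatment of \eqref{singular-1}. Your plan controls $w_n$ on $B_{2\mu_n}(0)$ only through the qualitative convergences \eqref{4-8} and \eqref{7-1} at the bubble scale, which give $|w_n(y)|\le o(\ga_n^{1-p_n})(|y|/\mu_n)^{\delta}$ there; but an $o(\ga_n^{1-p_n})$ error with no rate cannot be ``absorbed into'' the term $O\big(\mu_n^{\delta_0}\ga_n^{1-p_n}/\bar r_n^{\delta_0}\big)$, since $\mu_n/\bar r_n$ is typically exponentially small in $\ga_n^{p_n}$ (compare \eqref{4-6}, \eqref{4-13}). What your argument actually yields is the weaker bound $\ga_n^{p_n-1}|w_n|\le C_0|\cdot|/\bar r_n+o(1)$, which the paper explicitly points out (Remark \ref{515-2}-(2)) is insufficient for Section \ref{sec6}. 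The missing ingredient is the second stopping-time argument at the level of the H\"older \emph{seminorm}: one defines $\hat r_n$ by \eqref{4-23}, decomposes $w_n=w_{n,1}+w_{n,2}+w_{n,3}$ as in \eqref{4-24}--\eqref{4-26}, bounds $[w_{n,1}]$ by harmonic estimates and $[w_{n,2}]$ by elliptic $L^s$ theory (here $\al\in(-1,0)$ enters), and rules out concentration of $[w_{n,3}]$ by a rescaling/contradiction argument that uses the classification Lemma \ref{appe3-1} (this is exactly how the loss of $\nabla u_n(0)=0$ is compensated) together with H\"older-quotient estimates of the Green kernel as in \eqref{4-39}--\eqref{4-41}, with $\delta_0<(1+\al)(1-\ti\eta)$. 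Only the resulting seminorm bound $[w_n]_{C^{0,\delta_0}(B_{\bar r_n/2}(0))}=O\big(\ga_n^{1-p_n}\bar r_n^{-\delta_0}\big)$, evaluated on $B_{\mu_n}(0)$ using $w_n(0)=0$, produces the precise additive term in \eqref{singular-1}; your proposal contains no mechanism that propagates H\"older control from scale $\mu_n$ up to scale $\bar r_n$.

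A secondary inaccuracy concerns the closure of the stopping time $r_n$ in \eqref{4-18}: you assert that a Green's function estimate for the linearized equation ``strictly improves both stopping conditions,'' but such a representation does not by itself deliver a constant strictly below the prescribed $C_0$ (and the linearization $-\Delta+8(1+\al)^2|\cdot|^{2\al}e^{-2T_\al}$ has an approximate kernel, so no contraction is available without Lemma \ref{appe3-1}). The paper closes the gradient condition differently: the stopping condition itself gives $|w_n|=O(\ga_n^{1-p_n})$, hence $u_n\ge(1-\eta+o(1))\ga_n$ on $B_{r_n}(0)$ by \eqref{4-20-0}--\eqref{4-20}, and then the triangle inequality $|\nabla w_n|\le|\nabla u_n|+|v_n'|$ combined with the assumed gradient estimate \eqref{4-11} and the ODE bound \eqref{4-19} yields the strict bound \eqref{4-21} with constant $C_G/(1-\eta)^{p_n-1}+4<C_0$; this is precisely where the hypothesis $C_0>4+C_G/(1-\eta)$ is used, and your write-up should route the improvement through \eqref{4-11} rather than through a Green's representation.
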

\begin{remark}\lab{515-2}
We give some remarks. 
\begin{itemize}[fullwidth,itemindent=1em] 
\item[(1)] In general, we can not expect a similar convergence result like \eqref{regular-3} and \eqref{regular-4}. Indeed, if $\al\in\sbr{-\f{1}{2},0}$, then by the elliptic theory, we have that the convergence in \eqref{4-8} also holds in $C^1_\loc(\R^2)$, so that we may have similar results like \eqref{regular-3} and \eqref{regular-4} by the same argument. However, if $\al\in\left(-1,-\f{1}{2}\right]$, then the converge in \eqref{4-8} should only hold in the  H\"older sense $C^{0,\delta}_\loc(\R^2)$, and we have no information of $\nabla u_n$ near 0, and worse there may be
	 $$\nm{\nabla w_n}_{L^\iy(B_{\bar r_n}(0)\setminus\{0\})}=+\iy,$$
which leads to that the argument in the proof of Proposition \ref{regular} fails to apply. To overcome this difficulty for the latter case, we use the H\"older seminorm instead of the gradient norm, and we succeed to prove a weaker control on $w_n$ (comparing \eqref{singular-1} with \eqref{regular-1}), which is enough to our discussion in Section \ref{sec6}, and particularly, it permits us to use the {\it local Pohozaev identity} technique in the proof of Lemma \ref{lemma6-1}.
\item[(2)] We point out that the exact form of the last term in \eqref{singular-1} is importmant for our proof in Section \ref{sec6}. By easy computations, one can get a weaker estimate
  $$\ga_n^{p_n-1}\abs{w_n}\le\f{C_0|\cdot|}{\bar r_n}+o\sbr{1}.$$
However, this is not enough for our discussion in Section \ref{sec6}, because we need to compare the rate of the right term with $\f{1}{\ga_n^{p_n}}$, see for example the argument before \eqref{s6-31}, and the equality \eqref{s6-31-1}. This is the motivation of the following complicated argument by using the H\"older seminorm.
\item[(3)] Comparing with Proposition \ref{regular}, we need no the assumption $\nabla u_n(0)=0$ (see \eqref{4-50}), due to the classification result for the singular Liouville equation in Appendix \ref{appe3}. 
\end{itemize}
\end{remark}

\begin{proof}[\bf Proof of Proposition \ref{singular}]
By \eqref{4-5}, \eqref{4-6} and \eqref{4-8} , we may take a sequence $\ti r_n\le \bar r_n$ such that $\mu_n=o(\ti r_n)$, 
\be\lab{4-13} t_n(\ti r_n)=\ln\sbr{1+\f{|\ti r_n|^{2(1+\al)}}{\mu_n^{2(1+\al)}}}=o(\ga_n^{p_n}), \ee 
and that
\be\lab{4-14} \nm{\f{p_n}{2}\ga_n^{p_n-1}(\ga_n-u_n(\mu_n\cdot))-\ln\sbr{1+|\cdot|^{2(1+\al)}} }_{C^{0,\delta}(B_{\ti r_n/\mu_n}(0))}=o(1), \ee
for any $\delta\in\sbr{0,\min\{1,2(1+\al)\}}$, and that
\be\lab{4-14-1} \nm{\f{p_n}{2}\ga_n^{p_n-1}(\ga_n-u_n(\mu_n\cdot))-\ln\sbr{1+|\cdot|^{2(1+\al)}} }_{C^1(B_{\ti r_n/\mu_n}(0)\setminus B_1(0))}=o(1). \ee
By using \eqref{4-14} with \eqref{4-13}, we get that $$u_n=\ga_n(1+o(1))\quad \text{uniformly in $B_{\ti r_n}(0)$},$$
so we get from \eqref{4-7} that 
\be\lab{4-15-0} \ti r_n^{2+2\al}\exp\sbr{\f{\ep_0}{2}\ga_n^{1/3}}\le\f{\bar C}{\pi},\ee
for all large $n$. This implies $\ga_n^{2p_n}\ti r_n^{2(1+\al)}=o(1)$, so we can apply Propositon \ref{single-1} and \eqref{7-1}  to $v_n$ in $B_{\ti r_n}(0)$, to get that
\be\lab{4-15} \nm{\f{p_n}{2}\ga_n^{p_n-1}(\ga_n-v_n(\mu_n\cdot))-\ln\sbr{1+|\cdot|^{2(1+\al)}} }_{C^{0,\delta}(B_{\ti r_n/\mu_n}(0))}=o(1), \ee
for any $\delta\in\sbr{0,\min\{1,2(1+\al)\}}$, and that
\be\lab{4-15-1} \nm{\f{p_n}{2}\ga_n^{p_n-1}(\ga_n-v_n(\mu_n\cdot))-\ln\sbr{1+|\cdot|^{2(1+\al)}} }_{C^1(B_{\ti r_n/\mu_n}(0)\setminus B_1(0))}=o(1). \ee
By using \eqref{4-14} and \eqref{4-15}, we immediately get that
\be\lab{4-17} |w_n|=o(\ga_n^{1-p_n}),\quad\text{uniformly in $B_{\ti r_n}(0)$ for large $n$,}\ee
while \eqref{4-14-1} and \eqref{4-15-1} implies that
\be\lab{4-17--3} \frac{p_n}{2}\gamma_n^{p-1}\mu_n\|\nabla w_n\|_{L^{\infty}(B_{\ti r_n}(0)\setminus B_{\mu_n}(0))}=o(1),\quad\text{for large $n$.}\ee

Let $r_n$ be given by
\be\lab{4-18} r_n:=\sup\lbr{ r\in[\mu_n,\bar r_n]~s.t.~\begin{cases}\ga_n^{p_n-1}r\nm{\nabla w_n}_{L^\iy(B_r(0)\setminus B_{\mu_n}(0))}\le C_0,\\ r^{2+2\al}\exp\sbr{\f{\ep_0}{2}(1-\eta)^{1/3}\ga_n^{1/3}}\le\f{4\bar C}{\pi}, \end{cases} }, \ee
for all $n$, with $\ep_0,\bar C$ as in \eqref{4-57}, and  $C_0>\f{C_G}{1-\eta}+4$. We want to prove $r_n=\bar r_n$ for large $n$.

By $\mu_n=o(\ti r_n)$, \eqref{4-15-0} and \eqref{4-17--3}, we see that $r_n\ge R\mu_n$ for any given $R>1$, and hence $\mu_n=o(r_n)$. Then we may use Propositon \ref{single-1} to $v_n$ in $B_{r_n}(0)$ to get that $v_n'(s)=-\frac{2}{p_n}\gamma_n^{1-p_n}t_n'(s)(1+o(1))$ and so
\be\lab{4-19} \sup_{s\in[\mu_n,r_n]}\f{p_n}{2}\ga_n^{p_n-1}s\abs{v_n'(s)}\le 2(1+\al)+o(1) \ee
for large $n$. 
Using \eqref{4-17}, we get from the first condition in \eqref{4-18} that $|w_n|=O(\ga_n^{1-p_n})$ so that 
\be\lab{4-20-0} u_n=v_n+O(\ga_n^{1-p_n}) \ee 
uniformly in $B_{r_n}(0)$ for large $n$. Independently, we get from Proposition \ref{single-1} and \eqref{7-1-1-1} that
\be\lab{4-20} \ga_n\ge v_n\ge(1-\eta+o(1))\ga_n\quad\text{in}~[0,r_n]. \ee
Then, writing $\abs{\nabla w_n}\le\abs{\nabla u_n}+\abs{\nabla v_n}$, using first \eqref{4-11} and \eqref{4-19}, and then \eqref{4-20-0}-\eqref{4-20} together with $p_n\in[1,2]$, we get that
\be\lab{4-21} \nm{\nabla w_n}_{L^\iy(\pa B_{r_n}(0))}\le\f{1+o(1)}{\ga_n^{p_n-1}r_n}\sbr{\f{C_G}{(1-\eta)^{p_n-1}}+4}<\f{C_0}{\ga_n^{p_n-1}r_n} \ee
for large $n$. Independently, by using \eqref{4-20-0}-\eqref{4-20}, we get from \eqref{4-7} that 
\be\lab{4-22} r_n^{2+2\al}\exp\sbr{\f{\ep_0}{2}(1-\eta)^{1/3}\ga_n^{1/3}}\le\f{\bar C}{\pi}.\ee
Thus, by the continuity of $\nabla w_n$ with \eqref{4-21}, and by using \eqref{4-22}, we have that both inequalities in \eqref{4-18} is strict for $r=r_n$ and for large $n$, so that we conclude that $r_n=\bar r_n$ for large $n$. This gives \eqref{singular-0} and \eqref{singular-2}.

Now we turn to the proof of \eqref{singular-1}. Let $\delta_0\in\sbr{0,\min\{1,2(1+\al)\}}$ be fixed, and let $\hat r_n$ be given by
\be\lab{4-23} \hat r_n:=\sup\lbr{ r\in[0,\bar r_n]~s.t.~\ga_n^{p_n-1}r^{\delta_0}\mbr{w_n}_{C^{0,\delta_0}(B_r(0))}\le \hat C_0 }, \ee
where $\hat C_0\ge 4(C_0+2)$ and $\mbr{~\cdot~}_{C^{0,{\delta_0}}(B_r(0))}$ denotes the H\"older seminorm
	$$\mbr{u}_{C^{0,{\delta_0}}(B_r(0))}:=\sup_{x,y\in B_r(0), x\neq y}\f{|u(x)-u(y)|}{|x-y|^{{\delta_0}}}. $$ 
By \eqref{4-14} and \eqref{4-15}, we get that $\ga_n^{p_n-1}\mu_n^{\delta_0}\mbr{w_n}_{C^{0,\delta_0}(B_{R\mu_n}(0))}=o(1)$ for any given $R>1$ and large $n$, so that we get $\mu_n=o(\hat r_n)$.
We aim to show that for proper choice of $\delta_0$ and for large $n$, 
\be\lab{4-23-1} \hat r_n\ge\f12\bar r_n. \ee 

Assume by contradiction that $2\hat r_n<\bar r_n$. We consider  a decomposition $w_n=w_{n,1}+w_{n,2}+w_{n,3}$ of $w_n$ in $B_{2\hat r_n}(0)$, where $w_{n,i}$ are given by
\be\lab{4-24} -\Delta w_{n,1}=0~~\text{in}~B_{2\hat r_n}(0),\quad w_{n,1}=w_n~~\text{on} ~\pa B_{2\hat r_n}(0), \ee
\be\lab{4-25} -\Delta w_{n,2}=|\cdot|^{2\al}\sbr{h_n(0)v_n-h_nu_n}~~\text{in}~B_{2\hat r_n}(0),\quad w_{n,2}=0~~\text{on} ~\pa B_{2\hat r_n}(0), \ee
and 
\be\lab{4-26} -\Delta w_{n,3}=-\Delta(w_n-w_{n,2})~~\text{in}~B_{2\hat r_n}(0),\quad w_{n,3}=0~~\text{on} ~\pa B_{2\hat r_n}(0). \ee
To estimate $w_{n,1}$, we get first from \eqref{4-17} with \eqref{singular-2} that 
	$$\nm{w_{n,1}(\hat r_n\cdot)}_{L^\iy(\pa B_2(0))}=\nm{w_n}_{L^\iy(\pa B_{2\hat r_n}(0))}\le \f{C_0+1}{\ga_n^{p_n-1}}$$
for large $n$, and then, by using the gradient estimate for harmonic functions, and using the maximum principle, we get that
	$$\nm{\nabla w_{n,1}(\hat r_n\cdot)}_{L^\iy(B_1(0))}\le 2\nm{w_{n,1}(\hat r_n\cdot)}_{L^\iy(B_{2}(0))}\le \f{2(C_0+1)}{\ga_n^{p_n-1}},$$
so that we have
\be\lab{4-27} \ga_n^{p_n-1}\hat r_n\nm{\nabla w_{n,1}}_{L^\iy(B_{\hat r_n}(0))}\le2(C_0+1), \ee
for large $n$. By the definition of the H\"older seminorm, we clearly get from \eqref{4-27} that
\be\lab{4-28} \ga_n^{p_n-1}\hat r_n^{\delta_0}\mbr{w_{n,1}}_{C^{0,\delta_0}(B_{\hat r_n}(0))}\leq 2\ga_n^{p_n-1}\hat r_n\nm{\nabla w_{n,1}}_{L^\iy(B_{\hat r_n}(0))}\le4(C_0+1), \ee
for large $n$.
To estimate $w_{n,2}$, we know that both \eqref{4-20-0} and \eqref{4-20} hold on $B_{2\hat r_n}(0)$ since $r_n=\bar r_n>2\hat r_n$, so we see from \eqref{4-3-0} that $h_n(0)v_n-h_nu_n=O(\gamma_n)$ in $B_{2\hat r_n}(0)$. From here, \eqref{4-25} and \eqref{4-22}, we get that
$$\begin{aligned}
	\nm{\Delta w_{n,2}(\hat r_n\cdot)}_{L^s(B_2(0))}&=\hat r_n^{2-\f{2}{s}}\nm{\Delta w_{n,2}}_{L^s(B_{2\hat r_n}(0))}
=O\sbr{ \ga_n\hat r_n^{2-\f{2}{s}}\nm{|\cdot|^{2\al}}_{L^s(B_{2\hat r_n}(0))} }\\&=O\left(\gamma_n\hat r_n^{2+2\al}\right)=o\sbr{\f{1}{\ga_n^{p_n-1}}}
\end{aligned}$$
for any $s\in(1,1/|\al|)$, so by the elliptic theory, we get that 
\be\lab{4-36} \ga_n^{p_n-1}\hat r_n^{\delta_0}\mbr{w_{n,2}}_{C^{0,\delta_0}(B_{2\hat r_n}(0))}=\ga_n^{p_n-1}\mbr{w_{n,2}(\hat r_n\cdot)}_{C^{0,\delta_0}(B_2(0))}=o(1),\ee
for large $n$. 
For $w_{n,3}$, we claim that 
\be\lab{4-30} \ga_n^{p_n-1}\hat r_n^{\delta_0}\mbr{w_{n,3}}_{C^{0,\delta_0}(B_{2\hat r_n}(0))}=o(1), \ee
for large $n$. We assume by contradiction with \eqref{4-30} that there exists a $c_0>0$ such that
\be\lab{4-31} \tau_n:=\mbr{w_{n,3}}_{C^{0,\delta_0}(B_{2\hat r_n}(0))}\ge \f{c_0}{\ga_n^{p_n-1}\hat r_n^{\delta_0}}. \ee
Let 
\be\lab{4-32} \ti w_n:=\f{1}{\tau_n\mu_n^{\delta_0}}w_n(\mu_n\cdot) \quad\text{and}\quad  \ti w_{n,i}:=\f{1}{\tau_n\mu_n^{\delta_0}}w_{n,i}(\mu_n\cdot),~i=1,2,3. \ee
At first, by using \eqref{4-23} with $w_n(0)=0$ to estimate $w_n$ in $B_{\mu_n}(0)$, and using \eqref{singular-2} to estimate $w_n$ in $B_{2\hat r_n}(0)\setminus B_{\mu_n}(0)$, we get that
\be\lab{4-29} \abs{w_n(x)}=O\sbr{\f{|x|^{\delta_0}}{\ga_n^{p_n-1}\hat r_n^{\delta_0}}}\quad\text{uniformly in $B_{2\hat r_n}(0)$ for large $n$. }  \ee
From here and the assumption \eqref{4-31}, we have 
$$\ti w_n(x)=O(|x|^{\delta_0})\quad\text{ uniformly in $B_{2\hat r_n/\mu_n}(0)$ for large $n$.} $$
Then by repeating the argument involving \eqref{4-66-0}, and using Proposition \ref{single-1}, we get that
	$$-\Delta\ti w_n=8(1+\al)^2|\cdot|^{2\al}e^{-2T_\al}(\ti w_n+o(1))+o(1)$$
uniformly locally in $\R^2$ for large $n$, where $T_\al:=\ln(1+|\cdot|^{2(1+\al)})$. By the elliptic theory, and by using $\ti w_n(0)=0$ and Lemma \ref{appe3-1}, we get that
\be\lab{4-33} \lim_{n\to+\iy}\ti w_n=0\quad\text{in}~C^{0,\delta_0}_\loc(\R^2). \ee
Then, we have
\be\lab{4-34} \mbr{w_{n}}_{C^{0,\delta_0}(B_{R\mu_n}(0))}=\tau_n\mbr{\ti w_{n}}_{C^{0,\delta_0}(B_{R}(0))}=o(\tau_n),\ee
for any given $R\gg 1$ and for large $n$. Independently, we get from \eqref{4-27} that
\be\lab{4-35} \mbr{w_{n,1}}_{C^{0,\delta_0}(B_{R\mu_n}(0))}=O\sbr{\f{1}{\ga_n^{p_n-1}\hat r_n}\mu_n^{1-\delta_0}}=o(\tau_n), \ee
for any given $R\gg 1$ and for large $n$, where in the last estimate we use $\mu_n=o(\hat r_n)$ and the assumption \eqref{4-31}. Similarly, we see from \eqref{4-36} and \eqref{4-31} that
\be\label{4-36--2}\mbr{w_{n,2}}_{C^{0,\delta_0}(B_{2\hat r_n}(0))}=o\sbr{\f{1}{\ga_n^{p_n-1}\hat r_n^{\delta_0}}}=o(\tau_n), \ee
Plugging \eqref{4-34}-\eqref{4-36--2} into the decomposition $w_n=w_{n,1}+w_{n,2}+w_{n,3}$, we get that
\be\lab{4-37} \mbr{w_{n,3}}_{C^{0,\delta_0}(B_{R\mu_n}(0))}=o(\tau_n),\ee
for large $n$ and for any $R\gg1$. By using the fact that
	$$\begin{aligned} \tau_n&=\mbr{w_{n,3}}_{C^{0,\delta_0}(B_{2\hat r_n}(0))}\\
	&\le\max\lbr{\mbr{w_{n,3}}_{C^{0,\delta_0}(B_{R\mu_n}(0))},\mbr{w_{n,3}}_{C^{0,\delta_0}(B_{2\hat r_n}(0)\setminus B_{R\mu_n}(0))} }\sup_{t\in(0,+\iy)}\f{1+t^{\delta_0}}{(1+t)^{\delta_0}},\end{aligned}$$
for large $n$ and for any $R\gg1$, we get from \eqref{4-37} that there exist two point sequences $(x_n)_n$ and $(y_n)_n$ such that $x_n,y_n\in B_{2\hat r_n}(0)$, $\mu_n=o(|x_n|)$, $\mu_n=o(|y_n|)$ and 
\be\lab{4-38} \tau_n=O\sbr{\f{\abs{w_{n,3}(x_n)-w_{n,3}(y_n)}}{|x_n-y_n|^{\delta_0}}} \ee
for large $n$. 
Let $G_n$ be the Green function of $-\Delta$ in $B_{2\hat r_n}(0)$ with the Dirichlet boundary condition (for an explicit formula for $G_n$, see for instance Han-Lin \cite[Proposition 1.22]{book-1}). 
By repeating the argument involving \eqref{4-66-0}, and using \eqref{4-26}, \eqref{4-29}, \eqref{4-31} and Proposition \ref{single-1}, we get that
	$$\abs{\Delta w_{n,3}}=O\sbr{\f{\tau_n}{\mu_n^{2+2\al}}|\cdot|^{\delta_0+2\al}e^{(-2+\ti\eta)t_n}},$$
uniformly in $B_{2\hat r_n}(0)$ for large $n$, for some $\ti\eta\in(0,1)$. 
Then, by the Green representation formula, we have that
\be\lab{4-39}\begin{aligned} 
	&\f{\abs{w_{n,3}(x_n)-w_{n,3}(y_n)}}{|x_n-y_n|^{\delta_0}}\\
	&=O\sbr{ \int_{B_{2\hat r_n}(0)}\f{\abs{G_{n}(x_n,z)-G_{n}(y_n,z)}}{|x_n-y_n|^{\delta_0}}\f{\tau_n}{\mu_n^{2+2\al}}|z|^{\delta_0+2\al}e^{(-2+\ti\eta)t_n(z)}\rd z } \\
	&= O\sbr{ \tau_n\int_{B_{\rho_n}(0)}\f{\abs{\wt G_{n}(\ti x_n,z)-\wt G_{n}(\ti y_n,z)}}{|\ti x_n-\ti y_n|^{\delta_0}}\f{|z|^{\delta_0+2\al}}{(1+|z|^{2(1+\al)})^{(2-\ti\eta)}}\rd z },
\end{aligned}\ee
by using the change of variable $z\mapsto\mu_nz$, where $\rho_n=\f{2\hat r_n}{\mu_n}$, $\ti x_n=\f{x_n}{\mu_n}$, $\ti y_n=\f{y_n}{\mu_n}$, and $\wt G_n$ is the Green function of $-\Delta$ in $B_{\rho_n}(0)$ with the Dirichlet boundary condition
	$$\wt G_n(x,z)=\f{1}{2\pi}\ln\f{1}{|x-z|}+\f{1}{2\pi}\ln\sbr{\f{|x|}{\rho_n}\abs{z-\f{\rho_n^2}{|x|^2}x}}.$$
We now estimate the last term in \eqref{4-39}. Clearly, we have that $\ti x_n,\ti y_n\in B_{\rho_n}(0)$ and $$\lim_{n\to+\iy}|\ti x_n|=+\iy,\quad\lim_{n\to+\iy}|\ti y_n|=+\iy.$$ On the one hand, if we have $|\ti x_n|\sim |\ti y_n|$, then using the fact that (see for instance \cite[Appendix B]{MT-blowup-6})
	$$\abs{\nabla \wt G_n(x,z)}=O\sbr{\f{1}{|x-z|}},$$
for any $x,z\in B_{\rho_n}(0)$, and using the mean value theorem, we get that 
\be\lab{4-40}\begin{aligned}
&\int_{B_{\rho_n}(0)}\f{\abs{\wt G_{n}(\ti x_n,z)-\wt G_{n}(\ti y_n,z)}}{|\ti x_n-\ti y_n|^{\delta_0}}\f{|z|^{\delta_0+2\al}}{(1+|z|^{2(1+\al)})^{(2-\ti\eta)}}\rd z\\
&=O\sbr{ |\ti x_n-\ti y_n|^{1-\delta_0}\int_{B_{\rho_n}(0)}\f{1}{|z-\xi_n|}\f{|z|^{\delta_0+2\al}}{(1+|z|^{2(1+\al)})^{(2-\ti\eta)}}\rd z }\;(\text{for some}~\xi_n\sim |\ti x_n|)\\
&=O\sbr{|\xi_n|^{-(1+\al)(1-\ti\eta)}}=o(1),
\end{aligned}\ee
for large $n$, where the second equality is obtained by choosing $\delta_0\in(0,(1+\al)(1-\ti\eta))$ and computing the integral  in $B_{|\xi_n|/2}(\xi_n)$, $B_{2|\xi_n|}(\xi_n)\setminus B_{|\xi_n|/2}(\xi_n)$ and $B_{\rho_n}(0)\setminus B_{2|\xi_n|}(\xi_n)$ separately. On the other hand, if we have $|\ti x_n|=o(|\ti y_n|)$ (or $|\ti y_n|=o(|\ti x_n|)$), then by using the fact that 
$$\begin{aligned}
	&\abs{\wt G_n(\ti x_n,\cdot)-\wt G_n(\ti y_n,\cdot)}\\
	&=\begin{cases}O\sbr{\abs{\ln|\cdot-\ti x_n|}+\abs{\ln|\cdot-\ti y_n|}}+o(1),\quad&\text{if}~|y_n|=o(\rho_n),\\ 
	O\sbr{\abs{\ln|\cdot-\ti x_n|}+\abs{\ln|\cdot-\ti y_n|}+\abs{\ln|\cdot-\f{\rho_n^2}{|\ti y_n|^2}\ti y_n|}}+O(\ln\rho_n),\quad&\text{if}~|y_n|\sim\rho_n, \end{cases}
\end{aligned}$$
uniformly in $B_{\rho_n}(0)$, and by choosing $\delta_0\in(0,(1+\al)(1-\ti\eta))$, we get that
\be\lab{4-41} \int_{B_{\rho_n}(0)}\f{\abs{\wt G_{n}(\ti x_n,z)-\wt G_{n}(\ti y_n,z)}}{|\ti x_n-\ti y_n|^{\delta_0}}\f{|z|^{\delta_0+2\al}}{(1+|z|^{2(1+\al)})^{(2-\ti\eta)}}\rd z=O\sbr{\f{\ln|\ti y_n|}{|\ti y_n|^{\delta_0}}}=o(1), \ee
for large $n$. Plugging \eqref{4-40} and \eqref{4-41} in \eqref{4-39}, we get in any case that
	$$\f{\abs{w_{n,3}(x_n)-w_{n,3}(y_n)}}{|x_n-y_n|^{\delta_0}}=o(\tau_n),$$
which leads to a contradiction with \eqref{4-38}. This proves \eqref{4-30}. 

Therefore, plugging \eqref{4-28}, \eqref{4-36} and \eqref{4-30} in the decomposition $w_n=w_{n,1}+w_{n,2}+w_{n,3}$, we get that
	$$\ga_n^{p_n-1}\hat r_n^{\delta_0}\mbr{w_{n}}_{C^{0,\delta_0}(B_{\hat r_n}(0))}\le4(C_0+1)+o(1)<\hat C_0,$$
for large $n$, using our assumption on $\hat C_0$. This means the inequality in \eqref{4-23} for $r=\hat r_n$ is strict, and then conclude the proof of \eqref{4-23-1}. 

Finally, by using $w_n(0)=0$ and \eqref{4-23}-\eqref{4-23-1} to estimate $w_n$ in $B_{\mu_n}(0)$, and using \eqref{singular-2} to estimate $w_n$ in $B_{\bar r_n}(0)\setminus B_{\mu_n}(0)$, we obtain the desired estimate \eqref{singular-1}. This completes the proof.
\end{proof}

\vs
\section{Compactness for Type I}\lab{sec5}
In this section, we prove the compactness result at a singularity for {\bf Type I} defined in Section \ref{sec2-3}; see Theorem \ref{type1}. Precisely, recalling Proposition \ref{bubble}, we let $q_0\in\Sigma_\iy\cap\DR$ be fixed, and we assume that {\bf Type I} happens at $q_0$ and so $q_0\notin \DR'$, that is, there exists a positive integer $1\le N'\le N$ such that
	$$\lim_{n\to+\iy}q_{n,i}=q_0,~\forall~i\in\{1,\cdots,N'\},$$ 
and
	$$\lim_{n\to+\iy}q_{n,i}\neq q_0,~\forall~i\in\{N'+1,\cdots,N\},$$
where $q_{n,i}$ and $N$ are given by Proposition \ref{bubble}, and such that 
\be\lab{5-0} \text{the alternative in conclusion (2) of Proposition \ref{bubble} doesn't hold for $q_0$.}\ee
The purpose of this section is to prove the following result.
\begin{theorem}\lab{type1}
Let $q_0\in\Sigma_\iy\cap\DR$ and $\kappa_0>0$ such that $B_{2\kappa_0}(q_0)\cap(\DR\cup\Sigma_\iy)=\{q_0\}$. Assume that {\bf Type I} happens at $q_0$, i.e., $q_0\notin \DR'$. Then $\displaystyle\lim_{n\to+\iy}\la_n=0$ and 
\be\lab{conclusion-1} \lim_{n\to+\iy}\beta_n(B_{\kappa_0}(q_0))=4\pi N', \ee
where $\beta_n(\cdot)$ is given by \eqref{betadomain} and $N'$ is given as above.

Moreover, if $p_n\equiv p\in(1,2]$ for all $n$, then
\be\lab{conclusion-2} \beta_n(B_{\kappa_0}(q_0))>4\pi N' \ee
for large $n$.
\end{theorem}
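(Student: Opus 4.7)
The plan is to transpose the non-singular bubble analysis of \cite[Section 4]{MT-blowup-7} to the conical setting and then show that the singular weight $h$ does not contribute to the energy outside the bubble scales, crucially using $\al_{q_0}\in(-1,0)$. First, I will work in the isothermal chart $(B_{\kappa_1}(q_0),\phi_{n,0},U_{n,0})$ fixed in Proposition \ref{bubble}. Since Type I holds, for every $i\in\{1,\dots,N'\}$ the bubble $(q_{n,i},d_{n,i})$ is non-singular, and because $q_{n,i}\to q_0$ while no singular bubble forms at $q_0$, we have $|x_{n,i}|/\mu_{n,i}\to +\infty$ (where $x_{n,i}=\phi_{n,0}(q_{n,i})$). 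Consequently, on small balls $B_{r_n}(x_{n,i})$ with $r_n=o(|x_{n,i}|)$ the weight $h$ looks locally like $|x-x_{n,i}|^{0}$ times a smooth function, up to absorbing $|x_{n,i}|^{2\al_{q_0}}$ into constants, so the PDE \eqref{equ-5} fits exactly the setting \eqref{4-61} of Proposition \ref{regular} (with the specific $x_n=-x_{n,i}+0$ playing the role of the distance to the singularity). Apply Proposition \ref{regular} on each bubble to obtain the comparison $u_n=v_{n,i}+w_{n,i}$ together with the sharp control $|w_{n,i}|\lesssim \gamma_{n,i}^{1-p_n}|x-x_{n,i}|/\bar r_{n,i}$, and the harmonic limit of $\gamma_{n,i}^{p_n-1}w_{n,i}(\bar r_{n,i}\cdot)$.

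Secondly, I will use the classification of \eqref{Liouville-1} together with Proposition \ref{single-2} applied to the radial comparison $v_{n,i}$ on $B_{\bar r_{n,i}}(x_{n,i})$ to compute the local energies
\[
\lim_{R\to+\iy}\lim_{n\to+\iy}\tfrac{\la_n p_n^2}{2}\int_{B_{R\mu_{n,i}}(q_{n,i})}hf\bigl(e^{u_n^{p_n}}-1\bigr)\,\rd v_{g_0}
  =\tfrac{1}{2}\int_{\R^2}8e^{-2\ln(1+|x|^2)}\,\rd x=4\pi,
\]
and similarly for the $u_n^{p_n}e^{u_n^{p_n}}$ integral, thus showing that each bubble contributes exactly $4\pi$ to $\beta_n(B_{\kappa_0}(q_0))$. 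Using H\"older and the fact that $\tfrac{2-p_n}{p_n}+\tfrac{2(p_n-1)}{p_n}=1$, this amounts to the fact that the quantity inside the product $(\cdot)^{\f{2-p_n}{p_n}}(\cdot)^{\f{2(p_n-1)}{p_n}}$ satisfies the same $4\pi$ quantization at each bubble location, as in \cite[Theorem 4.1]{MT-blowup-7}. That $\la_n\to 0$ then follows from \eqref{mun-1} together with $\mu_{n,i}\to0$ and $\gamma_{n,i}\to+\iy$.

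The third and main step is to rule out energy in the complement $\Omega_n:=B_{\kappa_0}(q_0)\setminus\bigcup_{i=1}^{N'} B_{R\mu_{n,i}}(q_{n,i})$ as $R\to+\iy$ and then $n\to+\iy$. This is where the assumption $\al_{q_0}\in(-1,0)$ is essential (cf.\ Remark \ref{614-1}): a standard covering of $\Omega_n$ by small annuli, together with the pointwise bounds \eqref{est-1}--\eqref{est-2} of Proposition \ref{bubble} (which use $\al_a<0$ through Proposition \ref{gradient}), gives $\la_n h f u_n^{2(p_n-1)}e^{u_n^{p_n}}\le C d_{g_0}(\cdot,\DR\cup\Sigma_n)^{-2}$; note that since $q_0\notin\DR'$ and $\mu_n(q_0)=0$, no ball around $q_0$ needs to be removed. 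On each such annulus $A_{r,2r}(q_{n,i})$ with $R\mu_{n,i}\le r\le \kappa_0$, one integrates the ODE for the spherical average of $u_n$, as in \cite[pp.~1198--1204]{MT-blowup-7}, using the harmonic limit from Proposition \ref{regular} and the fact that the singular weight $|x|^{2\al_{q_0}}$ is integrable near $0$ (thanks to $\al_{q_0}>-1$) to absorb the contribution of the conical factor into constants. This yields the decay $\beta_n(\Omega_n)\to 0$ as $n\to+\iy$ and then $R\to+\iy$, which combined with the $4\pi$ per bubble gives \eqref{conclusion-1}.

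The main obstacle I anticipate is precisely this neck analysis: the combined Pohozaev--ODE argument of \cite{MT-blowup-7} must be carried out while keeping track of the new term $\nabla\psi_0(0)=\lim\frac{2\al_{q_0}\bar r_n}{p_n|x_n|}\frac{x_n}{|x_n|}$ appearing in \eqref{regular-4}, which is nonzero in the conical case and reflects the influence of the singularity on the harmonic correction. One must verify that this contribution is matched between adjacent annuli and does not produce a spurious nontrivial energy in the limit; this relies on summing the contributions of $\nabla\psi_0$ over all bubbles and using the absence of a singular bubble at $q_0$ (i.e.\ \eqref{5-0}) to balance them. Finally, for the strict inequality \eqref{conclusion-2} when $p_n\equiv p\in(1,2]$, I will import the refined Thizy-type energy expansion of \cite[Section 5]{MT-blowup-7}, which shows that at each non-singular bubble the subcritical functional $\beta_n$ lies strictly above its limit $4\pi N'$ for large $n$; since Proposition \ref{regular} provides exactly the same local expansion as in the smooth case (the singular factor $|x_n|^{2\al_{q_0}}$ only enters as a multiplicative constant at the bubble scale), the argument of \cite[Theorem 5.1]{MT-blowup-7} applies verbatim to deliver \eqref{conclusion-2}.
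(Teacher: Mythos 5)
Your overall architecture (compare $u_n$ with the radial profile via Proposition \ref{regular} on each non-singular bubble, get $4\pi$ per bubble, show the complement carries no energy, then import the sharp bubble expansion for the strict inequality) is the same as the paper's, and your last step does match Lemma \ref{lemma5-7}: there the weight is frozen at the tiny radius defined by $t_{n,i}(\bar r_{n,i})=\sqrt{\ga_{n,i}}$, so $|x_{n,i}|^{2\al}$ indeed enters only as a constant, and Proposition \ref{single-3} applies. But two steps of your proposal have genuine gaps. First, the claim that $\lim_{n\to+\iy}\la_n=0$ ``follows from \eqref{mun-1} together with $\mu_{n,i}\to0$ and $\ga_{n,i}\to+\iy$'' is false: those facts only give $\la_n\ga_{n,i}^{2(p_n-1)}e^{\ga_{n,i}^{p_n}}\to+\iy$, which is compatible with $\la_n\equiv 1$. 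In the paper this is Corollary \ref{lemma5-3}, and it is obtained only after the neck upper bound of Lemma \ref{lemma5-2} (evaluated at $r=\kappa\ga_{n,i}^{2(1-p_n)/3}$, see \eqref{5-79}), which in turn rests on the key Lemma \ref{lemma5-1}; moreover the conclusion uses $\al<0$ through the factor $|x_{n,i}|^{2\al}\to+\iy$.

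Second, the heart of the proof — showing the neck contributes nothing — is not actually carried out. The pointwise bound \eqref{est-1} you invoke cannot do this by itself: integrating $d_{g_0}(\cdot,\Sigma_n)^{-2}$ over the annuli between scale $R\mu_{n,i}$ and $\kappa_0$ only yields a logarithmically divergent bound. What the paper needs is the quantitative statement of Lemma \ref{lemma5-1}, $\liminf_n 2t_{n,i}(r_{n,i})/(p_n\ga_{n,i}^{p_n})\ge1$, then the average bound of Lemma \ref{lemma5-2}, then in the subcritical regime the pointwise smallness $u_n\le(1-\eta')\ga_{n,i}$ in the neck (\eqref{5-90}) combined with \eqref{5-79} to make the exponential term negligible, and in the critical regime $p_n\to2$ (which your proposal does not treat separately, although the subcritical bound $u_n\le(1-\eta')\ga_{n,i}$ is useless there) the entire $\nu_{n,j}$/$\ti\Ga_n$ machinery of Lemmas \ref{lemma5-5}--\ref{lemma5-6} with the Green-representation estimates and the choice $\eta_0>1-\tfrac{1}{201(1+N')^2}$. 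The obstacle you flag — the nonzero vector $\nabla\psi_0(0)$ in \eqref{regular-4} — is precisely where Lemma \ref{lemma5-1} is won, but not by ``matching contributions between adjacent annuli'': in the paper it produces the vectors $\vec X_j$ in \eqref{5-65}--\eqref{5-67}, and the contradiction is reached at an extreme point of the convex hull of the rescaled concentration set, using the sign $\al<0$ (so that $\vec X_{j_0}$ points away from $\hat x_0$) against the harmonic expansion $\vec X_{j_0}=\sum_{j\neq j_0}\iota_j(\hat x_j-\hat x_{j_0})/|\hat x_j-\hat x_{j_0}|^2$, together with the ``hole-filling'' near $q_0$ (\eqref{5-45}) which uses the assumption $q_0\notin\DR'$ through \eqref{est-3}. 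Without these steps your proposal assumes exactly the quantization it is supposed to prove.
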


The proof of Theorem \ref{type1} is quite long, and let us give the preliminary setting.
Let $q_{n,0}\equiv q_0$ for all $n$, and let the isothermal coordinates $(B_{\kappa_0}(q_{n,i}),\phi_{n,i},U_{n,i})$ for $i\in\{0,1,\cdots,N'\}$ be given in the paragraph before Proposition \ref{bubble}. 
Thanks to \eqref{equ-5}, \eqref{ufh} and  $\Delta_{g_0}=e^{-2\vp_{n,i}}\Delta$, we get 
\be\lab{equ-6} -\Delta u_{n,i}+h_{n,i}u_{n,i}=\la_np_nh_{n,i}f_{n,i}u_{n,i}^{p_n-1}e^{u_{n,i}^{p_n}},\quad u_{n,i}>0, \quad\text{in}~B_{2\kappa}(0),\ee
for all $i\in\{0,1,\cdots,N'\}$ and $n$, where $\Delta$ is the Laplace operator in $\R^2$. By conclusion (1) of Proposition \ref{bubble}, we have $$\nabla u_{n,i}(0)=0,\quad\forall i\in\{1,\cdots,N'\}.$$ Denote
\be\lab{xn} x_{n,i}:=\phi_{n,i}(q_0),~~\forall~i\in\{0,1,\cdots,N'\}. \ee
Then $x_{n,0}=0$. 
Recalling the assumption \eqref{hhh} on $h$, we get that
\be\lab{hi} h_{n,i}=\ti h_{n,i}~|\cdot-x_{n,i}|^{2\al}\quad\text{in}~B_{2\kappa}(0),\ee
where $\ti h_{n,i}$ is positive $C^1$ function for all $i\in\{0,\cdots,N'\}$ and $\al=\alpha_{q_0}$.

For all $i\in\{1,\cdots,N'\}$, we set
\be\lab{5-1} r_{n,i}:=\f{1}{3}d_{g_0}\big(q_{n,i},(\QR_n\cup\{q_0\})\setminus\{q_{n,i}\}\big), \ee
for all $n$, where $\QR_n$ is given by
	$$\QR_n:=\{q_{n,1},\cdots,q_{n,N'}\}.$$  
By \eqref{vpn}, we get that 
$$r_{n,i}\leq \frac13d_{g_0}(q_{n,i},q_0)=\frac13(1+o(1))|\phi_{n,i}(q_{n,i})-\phi_{n,i}(q_0)|=\frac13(1+o(1))|x_{n,i}|$$
and so\be\lab{5-2} r_{n,i}\le\f{1}{2}|x_{n,i}|,\quad\forall~i\in\{1,\cdots,N'\}, \ee
for large $n$. Let $\ga_{n,i}:=u_n(q_{n,i})$ and $\mu_{n,i}$ be given by \eqref{mun-1}, i.e.,
$$\mu_{n,i}^2\lambda_np_n^2|x_{n,i}|^{2\alpha}\ti h_{n,i}(0)f_{n,i}(0)\gamma_{n,i}^{2(p_n-1)}e^{\gamma_{n,i}^{p_n}}=8.$$
 Then by \eqref{dist-1}, we get that $\ga_{n,i}\to+\infty$, $\mu_{n,i}\to 0$, 
\be\lab{5-3} \lim_{n\to+\iy}\f{\mu_{n,i}}{r_{n,i}}=0,\quad\lim_{n\to+\iy}\f{\mu_{n,i}}{|x_{n,i}|}=0,\quad\forall~i\in\{1,\cdots,N'\}, \ee
and by \eqref{conv-1}, we get that
\be\lab{5-4} \lim_{n\to+\iy}\f{p_n}{2}\ga_{n,i}^{p_n-1}(\ga_{n,i}-u_{n,i}(\mu_{n,i}\cdot))=T_0:=\ln(1+|\cdot|^2)~\text{in}~\CR_{\loc}^1(\R^2),\ee
for every $i\in\{1,\cdots,N'\}$. By conclusion (3) of Proposition \ref{bubble}, we get that there exist $C_1,C_2>0$ such that
\be\lab{est-3} d_{g_0}(~\cdot~,\QR_n\cup\{q_0\})^2\la_nhfu_n^{2(p_n-1)}e^{u_n^{p_n}}\le C_1 \quad\text{in}~B_{\kappa_0}(q_0), \ee
\be\lab{est-4} d_{g_0}(~\cdot~,\QR_n\cup\{q_0\})u_n^{p_n-1}|\nabla_{g_0} u_n|\le C_2 \quad\text{in}~B_{\kappa_0}(q_0), \ee
for all $n$. 
We set $$t_{n,i}:=\ln\sbr{1+|\cdot|^2/\mu_{n,i}^2}\quad\text{ in }\; \R^2,$$
and
\be\lab{5-5} v_{n,i}:=\BR_{n}, \ee
where $\BR_n$ is as in Appendix \ref{appe1-2} for $\ga_{n}=\ga_{n,i}$, $\nu_n=h_{n,i}(0)$, $\tau_n=h_{n,i}(0)f_{n,i}(0)$, $\mu_n=\mu_{n,i}$ and $x_{n}=x_{n,i}$, for all $i\in\{1,\cdots,N'\}$ and all $n$.

To use the estimates of Section \ref{appe2-2} and Appendix \ref{appe1-2}, we need some preliminary observations. Let $l\in\{1,\cdots,N'\}$ be given. Given a parameter $\eta\in(0,1)$ that is going to take several values in the proofs below, we let $r_{n,l}^{(\eta)}$ be given by
\be\lab{5-6} t_{n,l}(r_{n,l}^{(\eta)})=\ln\sbr{1+\frac{|r_{n,l}^{(\eta)}|^2}{\mu_{n,i}^2}}=\eta\f{p_n\ga_{n,l}^{p_n}}{2},\ee
and, for $r_{n,l}$ as in \eqref{5-1}, we set
\be\lab{5-7} \bar r_{n,l}^{(\eta)}:=\min\lbr{r_{n,l},r_{n,l}^{(\eta)}}\leq \frac12|x_{n,l}|. \ee 
By \eqref{5-3} and \eqref{5-6}, we obtain
\be\label{5-6--3}\lim_{n\to+\infty}\frac{\mu_{n,i}}{\bar r_{n,l}^{(\eta)}}=0,\quad  t_{n,l}(\bar r_{n,l}^{(\eta)})\leq\eta\f{p_n\ga_{n,l}^{p_n}}{2}.\ee
By collecting the above preliminary information, we can check that Proposition \ref{regular} applies with $\bar r_n=\bar r_{n,l}^{(\eta)}$, $f_n=f_{n,l}$, $h_n=\ti h_{n,l}$, $x_n=x_{n,l}$, $u_{n}=u_{n,l}$, $\ga_n=\ga_{n,l}$ and $v_n=v_{n,l}$. As a result, we get from Proposition \ref{regular} that
\be\lab{5-10} \abs{u_{n,l}-v_{n,l}}=O\sbr{\f{|\cdot|}{\ga_{n,l}^{p_n-1}\bar r_{n,l}^{(\eta)} }},\ee 
\be\lab{5-11} \abs{\nabla(u_{n,l}-v_{n,l})}=O\sbr{\f{1}{\ga_{n,l}^{p_n-1}\bar r_{n,l}^{(\eta)} }},\ee 
uniformly in $B_{\bar r_{n,l}^{(\eta)}}(0)$ for $n$ large, and that 
\be\lab{5-8} \ln\ga_{n,l}=o\sbr{\ln\f{1}{\bar r_{n,l}^{(\eta)}|x_{n,l}|^\al}},\ee 
which gives that $\ga_{n,l}^s(\bar r_{n,l}^{(\eta)})^2|x_{n,l}|^{2\al}=O(1)$ for any $s>1$, so that Proposition \ref{single-2} also applies, and we get
\be\lab{5-9}\begin{aligned} 
	\ga_{n,l}\ge v_{n,l}&=\ga_{n,l}\sbr{1-\f{2t_{n,l}(1+O(\ga_{n,l}^{-p_n}))}{p_n\ga_{n,l}^{p_n}}}\\
		&\ge(1-\eta)\ga_{n,l}+O(\ga_{n,l}^{1-p_n}), 
\end{aligned} \ee
uniformly in $[0,\bar r_{n,l}^{(\eta)}]$ for $n$ large, using \eqref{5-6--3}.

\vs
\subsection{A first estimate}\lab{sec5-1}\ 

At first, inspired by \cite[Section 4]{MT-blowup-7},  we prove the following result:
\begin{lemma}\lab{lemma5-1}
For all $i\in\{1,\cdots,N'\}$, we have that
\be\lab{5-12} \liminf_{n\to+\iy}\f{2t_{n,i}(r_{n,i})}{p_n\ga_{n,i}^{p_n}}\ge1. \ee 
\end{lemma}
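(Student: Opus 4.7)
The plan is to argue by contradiction. Assume along a subsequence that
\[
\sigma_n := \frac{2 t_{n,i}(r_{n,i})}{p_n \gamma_{n,i}^{p_n}} \longrightarrow \sigma_\infty \in [0,1).
\]
Fix any $\eta\in(\sigma_\infty,1)$. By the definition \eqref{5-6} of $r_{n,i}^{(\eta)}$, one then has $t_{n,i}(r_{n,i})<\eta\,p_n\gamma_{n,i}^{p_n}/2 = t_{n,i}(r_{n,i}^{(\eta)})$ for $n$ large, so $r_{n,i}<r_{n,i}^{(\eta)}$ and hence $\bar r_{n,i}^{(\eta)}=r_{n,i}$ by \eqref{5-7}. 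Consequently the refined bubble estimates \eqref{5-9}--\eqref{5-11} apply on $B_{r_{n,i}}(0)$. Evaluating \eqref{5-9} at $r=r_{n,i}$, combining with \eqref{5-10} and the radial monotonicity of $v_{n,i}$, and transferring back to $\Sigma$ via $\phi_{n,i}$, I expect to obtain the uniform lower bound
\[
u_n \;\geq\; (1-\sigma_\infty)\gamma_{n,i}\,(1+o(1)) \quad \text{on } B_{r_{n,i}}(q_{n,i})
\]
for all large $n$.

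Next I plan to exploit the configuration of nearby concentration points. By the definition \eqref{5-1} of $r_{n,i}$, there exists $q_{n,*}\in(\QR_n\cup\{q_0\})\setminus\{q_{n,i}\}$ with $d_{g_0}(q_{n,i},q_{n,*})=3r_{n,i}$, and I treat two cases. In case (i), $q_{n,*}=q_0$: the Type I hypothesis (failure of the alternative of Proposition \ref{bubble}-(2) at $q_0$, i.e.\ $q_0\notin\DR'$) together with the fact that a non-singular bubble centered exactly at $q_0$ is ruled out by the degeneracy $\phi_{n,0}(q_0)=0$ in the scaling \eqref{mun-1} should force $u_n(q_0)=O(1)$ as $n\to\infty$; indeed, otherwise the selection procedure of Proposition \ref{bubble-1} would flag $q_0$ as a singular bubble center. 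Propagating the large lower bound on $u_n$ from $\partial B_{r_{n,i}}(q_{n,i})$ toward $q_0$ using the gradient estimate \eqref{est-4}, and comparing with $u_n(q_0)=O(1)$, should yield the desired contradiction. In case (ii), $q_{n,*}=q_{n,j}$ for some $j\neq i$, so $r_{n,j}\leq r_{n,i}$; iterating the same argument at $q_{n,j}$, with the bubbles of $\{q_{n,1},\ldots,q_{n,N'}\}$ ordered by decreasing $\gamma_{n,\ell}$, should eventually reduce to case (i).

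The principal obstacle will be the quantitative propagation in case (i). Integrating the gradient bound $|\nabla u_n^{p_n}|\leq p_nC_2/d_{g_0}(\cdot,\QR_n\cup\{q_0\})$ from $\partial B_{r_{n,i}}(q_{n,i})$ toward $q_0$ produces a logarithmic loss in $u_n^{p_n}$ of order $\ln(r_{n,i}/\rho)$ which diverges as one approaches $q_0$ at scale $\rho\to 0$, so a naive bridge from the large lower bound to the bounded value $u_n(q_0)$ fails. To close this gap I plan to combine the gradient control with the pointwise estimate \eqref{est-3} on an intermediate annulus around $q_0$ (where both the distance to $\QR_n\cup\{q_0\}$ and the singular weight $h\sim d_{g_0}(\cdot,q_0)^{2\alpha_{q_0}}$ are quantitatively controlled), together with the $L^s$-integral bounds of Lemma \ref{bound-0}; the sign condition $\alpha_{q_0}\in(-1,0)$ (cf.\ Remark \ref{remark1-1}) is precisely what renders these integrals finite for a suitable range of $s>1$, and is expected to be the decisive ingredient that converts the quantitative imbalance into a contradiction.
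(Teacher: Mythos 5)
Your proposal has a genuine gap at its central step. In your case (i) (the nearest point of $\QR_n\cup\{q_0\}$ to $q_{n,i}$ is $q_0$ itself, so $|x_{n,i}|=O(r_{n,i})$), you claim that the Type I hypothesis \eqref{5-0} forces $u_n(q_0)=O(1)$ and that the contradiction comes from propagating the large lower bound on $\partial B_{r_{n,i}}(q_{n,i})$ toward this bounded value. This is false: the failure of the alternative in Proposition \ref{bubble}-(2) only excludes the singular Liouville profile at the scale $\mu_{n,0}$ of \eqref{mun-2}; it does not prevent pointwise blow-up at $q_0$. In fact, since $q_{n,i}\to q_0$, the point $q_0$ lies in the blow-up set, and the paper's own analysis of exactly this configuration (the ``hole-filling'' estimate \eqref{5-45}, proved via a harmonic replacement in $B_{\tau r_{n,i}}(x_{n,i})$ together with the observation that a new bubble there would have to be singular) shows $u_{n,i}\ge(1-\eta)\ga_{n,i}+O(1)$ on all of $B_{\tau r_{n,i}}(x_{n,i})\ni\phi_{n,i}(q_0)$, so $u_n(q_0)\to+\iy$. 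Thus the imbalance you hope to exploit does not exist, and no refinement of the gradient bound \eqref{est-4} with \eqref{est-1} and Lemma \ref{bound-0} can create it: Type I is used in the paper to show the region near $q_0$ carries the \emph{same} large level as the neck, not a bounded one.

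The second gap is the reduction ``case (ii) eventually reduces to case (i)''. When the nearest concentration point is another bubble and $r_{n,i}=o(|x_{n,i}|)$, the singularity is irrelevant at scale $r_{n,i}$ and the contradiction must come from the mutual interaction of several non-singular bubbles. In the paper this is the heart of the proof: after establishing comparability of the heights $\ga_{n,j}$ for $j\in J_i$ (via the Green-representation estimate \eqref{5-31}, itself nontrivial and using the induction on the ordered radii, not on heights), one passes to the limit $\ti u_0$ of $\ga_{n,i}^{p_n-1}(u_{n,i}(r_{n,i}\cdot)-\bar u_{n,i}(r_{n,i}))$, a harmonic function with logarithmic poles of positive weights $\iota_{\hat x_j}$, computes the gradient of its regular part at each pole through Proposition \ref{regular} (formula \eqref{regular-4}, where $\al<0$ fixes the direction of $\vec X_j$), and derives a contradiction at an extreme point of the convex hull of $\wt\SR_i$. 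Your sketch contains no substitute for this balancing argument (nor for the height-comparability estimates it needs), so even granting case (i), the proof would not close. The overall structure you propose is therefore not a viable alternative to the paper's argument.
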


\begin{remark}\lab{614-1}
The basic idea of proving \eqref{5-12} comes from \cite[Step 4.1]{MT-blowup-7}, but the singular potential $h_{n,i}$ bring new difficulties. To prove \eqref{5-12}, we assume that it does not hold for some $i$, and then lead to a contradiction. If $r_{n,i}=o(|x_{n,i}|)$, the proof is very similar to that of \cite[Step 4.1]{MT-blowup-7}, since the bubble is so far from the singularity $q_0$ (recall \eqref{xn} that $x_{n,i}=\phi_{n,i}(q_0)$); otherwise, we have $r_{n,i}\sim|x_{n,i}|$ by \eqref{5-2}. In the latter case, there seems a ``hole'' near $x_{n,i}$, since all the estimate are given outside the singularity. Here, by using the assumption \eqref{5-0} with the pointwise estimate \eqref{est-3}, we may complete the ``hole'', see \eqref{5-45} for details. 

To get a contradiction, on the one hand, we get by careful analysis that there is a vector $\vec{X}$, starting from some vertex point of a convex hull containing $0$ and $x_{n,i}$, directing to the inner domain of this convex hull; on the other hand, we get from Proposition \ref{regular}-\eqref{regular-4} that the same vector $\vec X$ directs to an opposite direction due to $\al<0$ although $\nabla\psi_0(0)\neq0$, which leads to the desired contradiction. This is the first place where our method highly and nontrivially relies on $\al<0$. 
\end{remark}

\begin{proof}[\bf Proof of Lemma \ref{lemma5-1}]
Up to renumbering, we may assume that 
	$$r_{n,1}\le r_{n,2}\le\cdots\le r_{n,N'}.$$
We apply the induction method on $i\in\{1,\cdots,N'\}$. In particular, we assume that \eqref{5-12} holds true at steps $1,\cdots,i-1$ if $i\ge2$. Note that for $i=1$ we assume nothing. By contradiction, assume in addition that \eqref{5-12} does not hold at step $i$. Thus, by \eqref{5-6} and \eqref{5-7}, up to a subsequence, we may choose and fix $\eta\in(0,1)$ sufficiently close to $1$ such that $t_{n,i}(r_{n,l}^{(\eta)})>t_{n,i}(r_{n,i})$  and so
\be\lab{5-13} \bar r_{n,i}^{(\eta)}=r_{n,i} \quad\text{for large $n$}.\ee
Set $$J_i=\lbr{j\in\{1,\cdots,N'\}:~d_{g_0}(q_{n,j},q_{n,i})=O(r_{n,i})}.$$ Obviously, we get from \eqref{5-1} that 
\be\lab{5-14} r_{n,l}=O(r_{n,i})\quad\forall~l\in J_i. \ee
Since $r_{n,i}=o(1)$ by \eqref{5-8}, we get from \eqref{vpn} that (see \cite[(4.42)]{MT-blowup-7})
\be\lab{5-15} \lim_{n\to+\iy}\mbr{(\phi_{n,l})_*g_0}(r_{n,i}\cdot)=|\rd x|^2~\text{in}~C_\loc^2(\R^2), \ee
and from \eqref{5-14} that
\be\lab{5-15-1} |x_{n,l}|=\begin{cases}(1+o(1))|x_{n,i}|,&\quad\text{if}~r_{n,i}=o(|x_{n,i}|),\\
	O(|x_{n,i}|),&\quad\text{otherwise},\end{cases}\ee
for all $l\in J_i$.
We may assume that
	$$\lim_{n\to+\iy}\f{\phi_{n,i}(q_{n,l})}{r_{n,i}}=\hat x_l\in\R^2$$
for all $l\in J_i$ and we also assume
	$$\lim_{n\to+\iy}\f{\phi_{n,i}(q_{0})}{r_{n,i}}=\lim_{n\to+\iy}\f{x_{n,i}}{r_{n,i}}=\hat x_0\in\R^2,\quad\text{if}~|x_{n,i}|=O(r_{n,i}).$$
By \eqref{5-2}, we get $\abs{\hat x_0}\ge 2$. Let $\SR_i=\lbr{\hat x_l:~l\in J_i}$. We see that if $r_{n,i}=o(|x_{n,i}|)$, i.e., $\hat x_0$ does not exist, then by \eqref{5-1}, we have that $\SR_i$ contains at least two distinct points. Then, we set
\be\lab{5-15-12} \wt\SR_i:=\begin{cases}\SR_i\cup\{\hat x_0\}&\text{if }\;|x_{n,i}|=O(r_{n,i}),\\
\SR_i&\text{if }\;r_{n,i}=o(|x_{n,i}|),\end{cases}\ee
and we may choose and fix $\tau\in(0,1)$ small enough such that
	$$3\tau<\displaystyle\min\lbr{|x-y|:~x,y\in\wt\SR_i,x\neq y},\quad\text{and }\;\wt\SR_i\subset B_{\f{1}{3\tau}}(0).$$
We can check  that there exists $C>0$ such that any point in
	$$\Omega_{n,i}:=\sbr{B_{\f{r_{n,i}}{\tau}}(0)\setminus B_{\tau r_{n,i}}(x_{n,i})} \setminus \bigcup_{j\in J_i}B_{\tau r_{n,i}}(\phi_{n,i}(q_{n,l}))$$
may be joined to $\pa B_{\tau r_{n,i}}(0)$ by  a $C^1$ path in $\Omega_{n,i}$ of length at most $Cr_{n,i}$, for all $n$. Therefore, by \eqref{5-13} with \eqref{5-10} and \eqref{5-9} for $l=i$, we may first estimate $u_{n,i}$ on $\pa B_{\tau r_{n,i}}(0)$, and then get from \eqref{est-4} and \eqref{5-15} that
\be\lab{5-16} u_{n,i}=\bar u_{n,i}(\tau r_{n,i})+O(\ga_{n,i}^{1-p_n})\ge(1-\eta)\ga_{n,i}+O(1), \ee
uniformly in $\Omega_{n,i}$ and for all large $n$, where $\bar u_{n,i}$ is the average function given by
\be\lab{average} \bar u_{n,i}(r):=\f{1}{2\pi r}\int_{\pa B_r(0)}u_{n,i}\rd\sigma, \ee
for all $r\in(0,2\kappa_0)$. 
Independently, we get from \eqref{16}, \eqref{5-10} and \eqref{5-13} that
\be\lab{5-17} \bar u_{n,i}(\tau r_{n,i})=-\sbr{\f{2}{p_n}-1}\ga_{n,i}+\f{2}{p_n\ga_{n,i}^{p_n-1}}\sbr{\ln\f{1}{\la_n\ga_{n,i}^{2(p_n-1)}r_{n,i}^2|x_{n,i}|^{2\al}}+O(1)}. \ee

\begin{itemize}[fullwidth,itemindent=0em]
\vskip0.1in
\item We prove that
\be\lab{5-19} \nm{w_{n,l}}_{L^\iy(B_{r_{n,i}/(2\tau)}(0))}=o(\ga_{n,i}^{1-p_n}). \ee
for all $l\in J_i$, where $w_{n,l}$, associated to the linear term of \eqref{equ-6}, is given by
\be\lab{5-18}\begin{cases}
	-\Delta w_{n,l}=-h_{n,l}u_{n,l}\quad\text{in}~B_{\f{r_{n,i}}{2\tau}}(0),\\
	w_{n,l}=0\quad\text{on}~\pa B_{\f{r_{n,i}}{2\tau}}(0).\\
\end{cases}\ee
Let $s\in(1,1/|\al|)$, we get that
	$$\nm{\Delta w_{n,l}(r_{n,i}\cdot)}_{L^s(B_{1/(2\tau)}(0))}=r_{n,i}^{2(1-1/s)}\sbr{\int_{B_{r_{n,i}/(2\tau)}(0)}h_{n,l}^su_{n,l}^s\rd x}^{1/s}.$$
If $r_{n,i}=o(|x_{n,i}|)$, by \eqref{5-15-1}, \eqref{hi}, \eqref{5-8} and Lemma \ref{bound-0}, we get that
	$$\nm{\Delta w_{n,l}(r_{n,i}\cdot)}_{L^s(B_{1/(2\tau)}(0))}=O\sbr{ \sbr{r_{n,i}^2|x_{n,i}|^{2\al}}^{1-\f{1}{s}} }=o(\ga_{n,i}^{1-p_n}),$$
so, by the elliptic theory, we get \eqref{5-19}. 
If $|x_{n,i}|=O(r_{n,i})$, we may take a $r>1$ such that $2+2\al\f{sr-1}{r-1}>0$, then by H\"older inequality and \eqref{hi}, and by Lemma \ref{bound-0} we get
$$\begin{aligned}
	&\sbr{\int_{B_{r_{n,i}/(2\tau)}(0)}h_{n,l}^su_{n,l}^s\rd x}^{1/s}\\
	&=O\sbr{ \sbr{\int_{B_{r_{n,i}/(2\tau)}(0)}\abs{x-x_{n,l}}^{2\al\f{sr-1}{r-1}}\rd x }^{\sbr{1-\f{1}{r}}\f{1}{s}} 
		\sbr{\int_{B_{r_{n,i}/(2\tau)}(0)}h_{n,l}u_{n,l}^{sr}\rd x }^{\f{1}{sr}}  }\\
	&=O\sbr{\sbr{r_{n,i}+|x_{n,i}|}^{-2+\f{2}{s}+(2+2\al)(1-\f{1}{sr})}},
\end{aligned}$$
where in the last equality we use \eqref{5-15-1} and $-2+\f{2}{s}+(2+2\al)(1-\f{1}{sr})>0$. This together with \eqref{5-8} gives that
	$$\nm{\Delta w_{n,l}(r_{n,i}\cdot)}_{L^s(B_{1/(2\tau)}(0))}=O\sbr{(r_{n,i}+|x_{n,i}|)^{(2+2\al)(1-\f{1}{sr})}}=o(\ga_{n,i}^{1-p_n}),$$
so, by the elliptic theory, we finish the proof of \eqref{5-19}.

\vskip0.1in
\item We prove that
\be\lab{5-20} \lim_{n\to+\iy}\f{\ga_{n,i}}{\ga_{n,j}}=0\quad\text{for any}~j\in J_i~\text{and}~j<i. \ee
Let $j\in J_i$ such that $j<i$. Then, by the induction assumption, we know from \eqref{5-12} at $j$ and from \eqref{5-6} and \eqref{5-7} that, given $\eta_2\in(\eta,1)$,
\be\lab{5-21} \bar r_{n,j}^{(\eta_2)}=r_{n,j}^{(\eta_2)}, \ee
for large $n$. By \eqref{5-9}, by \eqref{5-10} for $l=j$ and $\eta=\eta_2$, and by the definition \eqref{5-6}  of $r_{n,j}^{(\eta_2)}$,  we have that
\be\lab{5-24} \bar u_{n,j}(r_{n,j}^{(\eta_2)})\le(1-\eta_2)\ga_{n,j}(1+o(1)).\ee
Let $w_{n,j}$ be given by \eqref{5-18} with $l=j$. By observing that $-\Delta(u_{n,j}-w_{n,j})\ge0$, the maximum principle yields that
\be\lab{5-22} u_{n,j}\ge\inf_{\pa B_{r_{n,i}/(2\tau)}(0)}u_{n,j}+o(\ga_{n,i}^{1-p_n})\ge(1-\eta)\ga_{n,i}+O(1), \ee
uniformly in $B_{r_{n,i}/(2\tau)}(0)$, where in the last inequality we use \eqref{5-16} and the fact that
\be\lab{5-23} \phi_{n,i}\circ\phi_{n,l}^{-1}\sbr{\pa B_{r_{n,i}/(2\tau)}(0)}\subset \Omega_{n,i}, \ee 
by \eqref{5-15} and choosing $\tau>0$ small enough from the begining. Then, by $r_{n,j}\le r_{n,i}$ due to $j<i$, we get from \eqref{5-24} and \eqref{5-22} that
	$$(1-\eta)\ga_{n,i}\le (1-\eta_2)\ga_{n,j}(1+o(1)),$$
for any $\eta_2\in(\eta,1)$, so that letting $\eta_2\to1$, we get \eqref{5-20}.

\vskip0.1in
\item We prove that
\be\lab{5-25} \ga_{n,j}=O(\ga_{n,i})\quad\text{for any}~j\in J_i. \ee
By contradiction, if \eqref{5-25} does not hold, we choose $j\in J_i$ such that
\be\lab{5-26} \lim_{n\to+\iy}\f{\ga_{n,i}}{\ga_{n,j}}=0. \ee 
In particular, we have $j\neq i$. If $j>i$, we may write that
\be\lab{5-27}\begin{aligned}
	t_{n,j}(r_{n,j})&=\ln\f{r_{n,j}^2}{\mu_{n,j}^2}+o(1)\\
		&=\ln\f{r_{n,j}^2}{r_{n,i}^2}+t_{n,i}(r_{n,i})+\ln\f{\mu_{n,i}^2}{\mu_{n,j}^2}+o(1)\\
		&=O(1)+\eta\f{p_n}{2}\ga_{n,i}^{p_n}+\ga_{n,j}^{p_n}-\ga_{n,i}^{p_n}+O(\ln\ga_{n,j}+\ln\ga_{n,i})+2\al\ln\f{|x_{n,j}|}{|x_{n,i}|}\\
		&\ge \f{p_n}{2}\ga_{n,j}(1+o(1)).
\end{aligned}\ee
The first two equalities use \eqref{5-3}; the third one uses first \eqref{5-14} and the fact $r_{n,j}\ge r_{n,i}$ due to the assumption $j>i$, then the definition \eqref{5-6} for $\eta$, and at last the definition \eqref{mun-1} of $\mu_{n,i}$ with \eqref{hi}; the last equality uses \eqref{5-15-1}, \eqref{5-26} and $\al<0$. 
Thus, given any $\eta_2\in(0,1)$, we get that \eqref{5-21} holds also if $j>i$. As a first consequence, for all given $\eta_2'\in(0,\eta_2]$, we get that
\be\lab{5-28} \lim_{n\to+\iy}\f{r_{n,j}^{(\eta_2')}}{r_{n,i}}=0, \ee 
using \eqref{5-14} and the fact by \eqref{5-6} that
\be\lab{5-29} \ln\f{r_{n,j}^{(\eta_2')}}{r_{n,j}^{(\eta_2)}}=-\f{p_n\ga_{n,j}^{p_n}}{4}(\eta_2-\eta_2')+o(1). \ee
We get from \eqref{16}, \eqref{5-10} and \eqref{5-21} that
\be\lab{5-30} \bar u_{n,j}(r_{n,j}^{(\eta_2')})=-\sbr{\f{2}{p_n}-1}\ga_{n,j}+\f{2}{p_n\ga_{n,j}^{p_n-1}}\sbr{\ln\f{1}{\la_n\ga_{n,j}^{2(p_n-1)}(r_{n,j}^{(\eta_2')})^2|x_{n,j}|^{2\al}}+O(1)}. \ee

To get the desired contradiction with \eqref{5-26}, fixing $\eta_2\in(0,1)$, we prove now the following estimate
\be\lab{5-31} \bar u_{n,j}(r_{n,j}^{(\eta_2)})\ge \bar u_{n,i}(\tau r_{n,i})+\f{4}{p_n\ga_{n,j}^{p_n-1}}\ln\f{r_{n,i}}{r_{n,j}^{(\eta_2)}}+O(\ga_{n,i}^{1-p_n}). \ee
Let $\psi_n$ be given by 
\be\lab{5-32}\begin{cases}
	-\Delta \psi_n=0\quad\text{in}~B_{\f{r_{n,i}}{2\tau}}(0),\\
	\psi_n=u_{n,j}\quad\text{on}~\pa B_{\f{r_{n,i}}{2\tau}}(0).\\
\end{cases}\ee
By \eqref{5-16} and \eqref{5-23}, we get first that $u_{n,i}=\bar u_{n,i}(\tau r_{n,i})+O(\ga_{n,i}^{1-p_n})$ uniformly on $\pa B_{r_{n,i}/(2\tau)}(0)$, and then, by the harmonicity of $\psi_n$, we get that
\be\lab{5-33} \psi_n=\bar u_{n,i}(\tau r_{n,i})+O(\ga_{n,i}^{1-p_n})\ee 
uniformly in $B_{r_{n,i}/(2\tau)}(0)$ for large $n$. 
Let $(z_n)_n$ be any sequence of points in $\pa B_{r_{n,j}^{(\eta_2)}}(0)$. 
Let $G_n$ be the Green function of $-\Delta$ in $B_{r_{n,j}^{(\eta_2)}/(2\tau)}(0)$ with the Dirichlet boundary condition. We know that $G_n>0$. Let $\eta_1\in(0,\eta_2)$ be fixed. Let $w_{n,j}$ be given by \eqref{5-18}. 
By applying the Green representation formula to $u_{n,j}-w_{n,j}-\psi_n$ with \eqref{equ-6}, and by using \eqref{5-19} and \eqref{5-33}, we get that
\be\lab{5-34} u_{n,j}(z_n)\ge\bar u_{n,i}(\tau r_{n,i})+O(\ga_{n,i}^{1-p_n})+\la_np_n\int_{B_{r_{n,j}^{(\eta_1)}}(0)}G_n(z_n,y)h_{n,j}f_{n,j}u_{n,j}^{p_n-1}e^{u_{n,j}^{p_n}}\rd y.\ee
By \cite[Appendix B]{MT-blowup-6}, there exists $C>0$ such that
	$$\abs{G_n(z,y)-\f{1}{2\pi}\ln\f{r_{n,i}}{|z-y|}}\le C,$$
for any $y\in B_{r_{n,i}/(2\tau)}(0)$, $z\in B_{5r_{n,i}/(12\tau)}(0)$, $y\neq z$ and all $n$. Then, since $|z_n|=r_{n,j}^{(\eta_2)}$, by using \eqref{5-29}, we get that 
\be\lab{5-35} G_n(z_n,\cdot)=\f{1}{2\pi}\ln\f{r_{n,i}}{r_{n,j}^{(\eta_2)}}+O(1)\ee 
uniformly in $B_{r_{n,j}^{(\eta_1)}}(0)$. Now, by computing as in the argument involving \eqref{4-66} and using Proposition \ref{single-2}, we get that
\be\lab{5-36} \la_np_nh_{n,j}f_{n,j}u_{n,j}^{p_n-1}e^{u_{n,j}^{p_n}}=\f{8e^{-2t_{n,j}}}{\mu_{n,j}^2\ga_{n,j}^{p_n-1}p_n}\sbr{1+O\sbr{e^{\ti\eta t_{n,j}} \sbr{\f{|\cdot|}{r_{n,j}^{(\eta_2)}}+|\cdot|+\f{1}{\ga_{n,j}^{p_n}} } } } \ee
uniformly in $B_{r_{n,j}^{(\eta_1)}}(0)$, where $\ti\eta\in(\eta_2,1)$.  
We claim that
\be\lab{5-37} \ln\f{r_{n,i}}{r_{n,j}^{(\eta_2)}}=O(\ga_{n,j}^{p_n}). \ee 
Indeed, resuming the arguments in \eqref{5-27}, we have that
\be\lab{5-38}\begin{aligned} 
	0<2\ln\f{r_{n,i}}{r_{n,j}^{(\eta_2)}}&\le \ln\f{r_{n,i}^2}{\mu_{n,i}^2}+\ln\f{\mu_{n,i}^2}{\mu_{n,j}^2}	=(1+o(1))\ga_{n,j}^{p_n}+2\al\ln\f{|x_{n,j}|}{|x_{n,i}|}.
\end{aligned}\ee 
If $r_{n,i}=o(|x_{n,i}|)$, by \eqref{5-15-1} and \eqref{5-38}, we easily get \eqref{5-37}. If $|x_{n,i}|=O(r_{n,i})$, then we have
\be\lab{5-39} \begin{aligned}
	2\al\ln\f{|x_{n,j}|}{|x_{n,i}|}&\le \al\ln\f{r_{n,j}^2}{r_{n,i}^2}+O(1)\\
	&=\al\sbr{t_{n,j}(r_{n,j})-t_{n,i}(r_{n,i})+\ln\f{\mu_{n,j}^2}{\mu_{n,i}^2}}+O(1)\\
	&\le \al\sbr{\f{p_n}{2}-1+o(1)}\ga_{n,j}^{p_n}-2\al^2\ln\f{|x_{n,j}|}{|x_{n,i}|}+O(1),
\end{aligned}\ee
where the first inequality uses $\al<0$, $|x_{n,i}|=O(r_{n,i})$ and \eqref{5-2}; the second equality uses \eqref{5-3}; the last inequality first uses \eqref{5-27}, then \eqref{5-6} with \eqref{5-13},  and at last the assumption \eqref{5-26}. Then, since $\al\in(-1,0)$, we get from \eqref{5-39} that $2\al\ln\f{|x_{n,j}|}{|x_{n,i}|}=O(\ga_{n,j}^{p_n})$. This together with \eqref{5-38} finishes the proof of \eqref{5-37}.
By \eqref{5-35}, \eqref{5-36} and \eqref{5-37}, we get that
$$\begin{aligned}
	&\la_np_n\int_{B_{r_{n,j}^{(\eta_1)}}(0)}G_n(z_n,y)h_{n,j}f_{n,j}u_{n,j}^{p_n-1}e^{u_{n,j}^{p_n}}\rd y\\
	&=\sbr{\f{1}{2\pi}\ln\f{r_{n,i}}{r_{n,j}^{(\eta_2)}}+O(1) }\f{8\pi}{\ga_{n,j}^{p_n-1}p_n}\mbr{ 1+O\sbr{\f{\mu_{n,j}^2}{(r_{n,j}^{(\eta_1)})^2}}+O\sbr{\f{r_{n,j}^{(\eta_1)}}{r_{n,j}^{(\eta_2)}}+\f{1}{\ga_{n,j}^{p_n}}} }\\
	&=\f{4}{p_n\ga_{n,j}^{p_n-1}}\ln\f{r_{n,i}}{r_{n,j}^{(\eta_2)}}+O(\ga_{n,j}^{1-p_n}),
\end{aligned}$$
using \eqref{5-29}. By plugging this last estimate in \eqref{5-34}, we conclude the proof of \eqref{5-31}.

We now plug \eqref{5-17} and \eqref{5-30} in \eqref{5-31}, and by using \eqref{boundla}, \eqref{5-15-1} and \eqref{5-26}, we get
	$$\ln\f{1}{r_{n,i}|x_{n,i}|^{\al}}\le O\sbr{\ln\ga_{n,i}},$$
which is an obvious contradiction with \eqref{5-8} and \eqref{5-13}. This finishes the proof of \eqref{5-25}.

\vskip0.1in
\item We prove that there exists a constant $C\ge1$ such that
\be\lab{5-40} \f{1}{C}\le \f{r_{n,j}}{r_{n,i}}\le C, \quad \f{1}{C}\le\f{\ga_{n,j}}{\ga_{n,i}}\le C,\quad\forall~j\in J_i, \ee
for large $n$, and there exists $\eta_3\in(\eta,1)$ such that
\be\lab{5-41} \bar r_{n,j}^{(\eta_3)}=r_{n,j} \ee 
for all $j\in J_i$ and large $n$.
By \eqref{5-20} and \eqref{5-25}, we see that
\be\lab{5-42} r_{n,j}\ge r_{n,i},\quad \ga_{n,j}=O(\ga_{n,i}),\quad\forall~j\in J_i. \ee
This together with \eqref{5-14} gives the first assertion of \eqref{5-40}.
We now assume by contradiction with \eqref{5-41} that there exists $j\in J_i$ such that
	$$\f{2t_{n,j}(r_{n,j})}{p_n\ga_{n,j}^{p_n}}\ge1+o(1).$$
As a remark, we must have $j\neq i$ by \eqref{5-13}. Then, for all given $\eta_2\in(0,1)$, \eqref{5-21} holds and the argument between \eqref{5-21} and \eqref{5-25} gives \eqref{5-20}, which does not occur by \eqref{5-42} and proves \eqref{5-41}. For $j\in J_i$, by \eqref{vpn} and the first assertion of \eqref{5-40}, and by choosing $\tau>0$ small enough in the begining, we first get that
	$$\phi_{n,i}\circ\phi_{n,j}^{-1}\sbr{\pa B_{r_{n,j}/2}(0)}\subset \Omega_{n,i},$$
then, by \eqref{5-16}, we get
	$$\bar u_{n,j}(r_{n,j}/2)\ge(1-\eta)\ga_{n,i}+O(1),$$
so that we eventually have $\ga_{n,i}=O(\ga_{n,j})$, using the fact that $\bar u_{n,j}(r_{n,j}/2)\le 2\ga_{n,j}$ by \eqref{5-9}, \eqref{5-10} and \eqref{5-41}. This last estimate proves the second assertion of \eqref{5-40}, by recalling \eqref{5-42}.

\vskip0.1in
\item We prove that if $|x_{n,i}|=O(r_{n,i})$, then there holds
\be\lab{5-45} u_{n,i}=\bar u_{n,i}(\tau r_{n,i})+O(\ga_{n,i}^{1-p_n})\ge(1-\eta)\ga_{n,i}+O(1) \ee 
uniformly in $B_{\tau r_{n,i}}(x_{n,i})$ for large $n$. Assuming $|x_{n,i}|=O(r_{n,i})$, by using \eqref{5-2}, \eqref{5-15-1} and the first assertion of \eqref{5-40}, we get the existence of a constant $C\ge1$ such that
\be\lab{5-46} \f{1}{C}\le \f{|x_{n,j}|}{r_{n,i}}\le C, \quad\forall~j\in J_i  \ee
for large $n$. This means that $\SR_i\cap\{\hat x_0\}=\emptyset$, where $\SR_i$ and $\hat x_0$ are given below \eqref{5-15-1}. 
Then, by \eqref{5-16}, \eqref{5-15} and the choice of $\tau$, we get that 
\be\lab{5-47} u_{n,i}=\bar u_{n,i}(\tau r_{n,i})+O(\ga_{n,i}^{1-p_n})\ge(1-\eta)\ga_{n,i}+O(1) \ee 
on $\pa B_{\tau r_{n,i}}(x_{n,i})$.  
Let $\ga_n,x_n,\mu_n$ be given by
\be\lab{5-50} \ga_n:=u_{n,i}(x_n):=\max_{B_{\tau r_{n,i}}(x_{n,i})}u_{n,i}\ge(1-\eta)\ga_{n,i}+O(1), \ee
and
\be\lab{5-51} \mu_{n}:=\sbr{\f{8(1+\al)^2}{\la_np_n^2\ti h_{n,i}(x_n)f_{n,i}(x_n)\ga_{n}^{2(p_n-1)}e^{\ga_{n}^{p_n}}}}^{\f{1}{2(1+\al)}},\ee
where $\ti h_{n,i}$ is given by \eqref{hi}. We get from \eqref{est-3} that
\be\lab{5-48} \abs{~\cdot-x_{n,i}}^{2+2\al}\la_nu_{n,i}^{2(p_n-1)}e^{u_{n,i}^{p_n}}=O(1)\ee 
uniformly in $B_{2\tau r_{n,i}}(x_{n,i})$ for large $n$. This gives that $|x_n-x_{n,i}|=O(\mu_n)$, and hence gives that $r_{n,i}=O(\mu_n)$. Indeed, if $r_{n,i}\gg\mu_n$, then $x_n$ is a good candidate to be another concentration point for $u_n$, and a standard process leads to a singular bubble at $q_0$, which is a contradiction with the assumption \eqref{5-0}. Then, we get from \eqref{equ-6}, \eqref{5-50} and \eqref{5-51} that
\be\lab{5-55} \abs{\Delta\sbr{\f{u_{n,i}(x_{n,i}+r_{n,i}\cdot)}{\ga_n}}}=O\sbr{r_{n,i}^{2(1+\al)}|\cdot|^{2\al}}+O\sbr{\f{r_{n,i}^{2(1+\al)}}{\ga_n^{p_n}\mu_n^{-2(1+\al)}}|\cdot|^{2\al}}=o(|\cdot|^{2\al})\ee
uniformly in $B_{2\tau}(0)$, so that, by the elliptic theory, we obtain that
\be\lab{5-52} \lim_{n\to+\iy}\f{u_{n,i}(x_{n,i}+r_{n,i}\cdot)}{\ga_n}=1\quad\text{in}~C^0(B_{2\tau}(0)). \ee
By using Lemma \ref{bound-0} and \eqref{5-52}, we get that
\be\lab{5-53} r_{n,i}^{2+2\al}\exp(\ep_0\ga_n^{1/3}/2)=O\sbr{\int_{B_{2\tau r_{n,i}}(x_{n,i})}h_{n,i}e^{\ep_0u_{n,i}^{1/3}}\rd x}=O(1), \ee
for large $n$. 
We set $$\hat u_n:=\ga_n^{p_n-1}u_{n,i}(x_{n,i}+r_{n,i}\cdot)\quad\text{ in }\;B_{2\tau}(0),$$ and by using \eqref{5-55} and \eqref{5-53}, we get that
\be\lab{5-54} \abs{\Delta\hat u_n}=O(|\cdot|^{2\al}) \ee
uniformly in $B_{2\tau}(0)$ for large $n$. Let $\hat\psi_n$ be given by 
	$$-\Delta\hat\psi_n=-\Delta\hat u_n~~\text{in}~B_{2\tau}(0),\quad \hat\psi_n=0~~\text{on}~\pa B_{2\tau}(0).$$
Then, by \eqref{5-54} and the elliptic theory, we get that $|\hat\psi_n|=O(1)$ uniformly in $B_{2\tau}(0)$. 
Since $\hat u_n-\hat\psi_n$ is harmonic in $B_{2\tau}(0)$ and $\hat u_n$ has bounded oscillations on $\pa B_{2\tau}(0)$ by \eqref{5-47}, we have
\be\lab{5-54-1} \hat u_n=\ga_{n}^{p_n-1}\bar u_{n,i}(\tau r_{n,i})+O\sbr{1+\f{\ga_{n}^{p_n-1}}{\ga_{n,i}^{p_n-1}}}\ee
uniformly in $B_{2\tau}(0)$ for large $n$. This, returning back to $u_{n,i}$, clearly gives \eqref{5-45}.

\vskip0.1in
\item We are now in position to conclude the proof of \eqref{5-12}. 
Setting 
	$$\ti u_n:=\ga_{n,i}^{p_n-1}\sbr{u_{n,i}(r_{n,i}\cdot)-\bar u_{n,i}(r_{n,i})},$$
with an argument similar to the proof of \eqref{5-16}, one deduces from \eqref{est-4} and \eqref{5-15} that $\ti u_n$ is uniformly locally bounded in $\R^2\setminus \wt\SR_i$ for all $n$, where $\wt\SR_i$ is given by \eqref{5-15-12}. Then, using \eqref{5-9} and \eqref{5-10} with \eqref{5-13}, we get from \eqref{equ-6} and \eqref{hi} that
$$\begin{aligned}
	-\Delta\ti u_n &=O\sbr{\ga_{n,i}^{p_n}r_{n,i}^2\abs{x_{n,i}-r_{n,i}~\cdot~}^{2\al}}\\
	&\quad+O\sbr{r_{n,i}^2\la_n\ga_{n,i}^{2(p_n-1)}e^{\ga_{n,i}^{p_n}}\abs{x_{n,i}-r_{n,i}~\cdot~}^{2\al} }\\
	&=o(1)
\end{aligned}$$
unifromly locally in $\R^2\setminus\wt\SR_i$ for large $n$, where in the last estimate we use \eqref{5-8} and \eqref{5-3}, and also use $\abs{x_{n,i}-r_{n,i}~\cdot~}^{2\al}=O(|x_{n,i}|^{2\al})$ unifromly locally in $\R^2\setminus\wt\SR_i$ by our definition of $\wt\SR_i$.
Hence, there exists a harmonic function $\ti u_0$ such that $\ti u_n\to\ti u_0$ in $C^1_\loc(\R^2\setminus\wt\SR_i)$ as $n\to+\iy$. Now, observe that \eqref{est-4} and $r_{n,i}\le\f{1}{2}|x_{n,i}|$ imply the existence of $C>0$ such that
	$$|\nabla\ti u_0|\le C\sum_{x\in\wt\SR_i}\f{1}{|\cdot-x|}\quad\text{in}~\R^2\setminus\wt\SR_i.$$
Then, by harmonic function's theory, there exists real numbers $\iota_x$ and $\Lambda$ such that
\be\lab{5-60} \ti u_0=\Lambda+\sum_{x\in\wt\SR_i}\iota_x\ln\f{1}{|\cdot-x|}\quad\text{in}~\R^2\setminus\wt\SR_i.\ee
If $\hat x_0\in\wt\SR_i$, then by the definition of $\wt\SR_i$, we have $|x_{n,i}|=O(r_{n,i})$, and hence by \eqref{5-45} and \eqref{5-16}, we get $\ti u_n=O(1)$ near $\hat x_0$, which together with \eqref{5-60} gives $\iota_{\hat x_0}=0$.  
For any $j\in J_i$, by noting \eqref{5-41}, we  get first from Proposition \ref{single-2} that
	$$\ga_{n,j}^{p_n-1}r_{n,j}\nabla v_{n,j}(r_{n,j}\cdot)=\f{4}{p_n}\nabla\sbr{\ln\f{1}{|\cdot|}}+o(1)~\text{in}~C^0_\loc(B_1(0)\setminus\{0\}),$$
for large $n$, and then, we get from Proposition \ref{regular} that
\be\lab{5-62}\ga_{n,j}^{p_n-1}r_{n,j}\nabla u_{n,j}(r_{n,j}\cdot)=\nabla\psi_j+\f{4}{p_n}\nabla\sbr{\ln\f{1}{|\cdot|}}+o(1)~\text{in}~C^0_\loc(B_1(0)\setminus\{0\}),  \ee
for large $n$, where $\psi_j$ is a harmonic function in $B_1(0)$ satisfying
\be\lab{5-63} \nabla\psi_j(0)=\lim_{n\to+\iy}\f{2\al r_{n,j}}{p_n|x_{n,j}|}\f{x_{n,j}}{|x_{n,j}|}. \ee
Then, by \eqref{vpn}, \eqref{5-15}, \eqref{5-40} and \eqref{5-62}, we conclude that
\be\lab{5-64} \nabla\ti u_n=\f{\ga_{n,i}^{p_n-1}r_{n,i}}{\ga_{n,j}^{p_n-1}r_{n,j}}\sbr{\nabla\sbr{\psi_j+\f{4}{p_n}\ln\f{1}{|\cdot|}}}\circ\sbr{\f{\phi_{n,j}\circ\phi_{n,i}^{-1}(r_{n,i}\cdot)}{r_{n,j}}} +o(1)\ee
in $C^0_\loc(B_{\delta_j}(0)\setminus\{0\})$ for some small $\delta_j>0$. Thus, by \eqref{5-60}, \eqref{5-64} and \eqref{5-63}, we decuce that
\be\lab{5-66} \iota_{\hat x_j}=\lim_{n\to+\iy}\f{4\ga_{n,i}^{p_n-1}r_{n,i}}{p_n\ga_{n,j}^{p_n-1}r_{n,j}}>0 \ee
and 
\be\lab{5-65} \nabla\sbr{\ti u_0-\iota_{\hat x_j}\ln\f{1}{|\cdot-\hat x_j|} }(\hat x_j)=\f{\al\iota_{\hat x_j}}{2}\lim_{n\to+\iy}\f{r_{n,j}}{|x_{n,j}|}\f{x_{n,j}}{|x_{n,j}|}=:\vec X_j. \ee
Moreover, for all $j\in J_i$, we have that
\be\lab{5-67} \vec X_j=\begin{cases} 0,&\quad\text{if $\hat x_0$ does not exist},\\ c_j(\hat x_j-\hat x_0),&\quad\text{if $\hat x_0$ does exist}. \end{cases}\ee 
where $c_j>0$. Indeed, if $\hat x_0$ does not exist, then we have $r_{n,i}=o(|x_{n,i}|)$, and then by \eqref{5-15-1} and \eqref{5-40}, we get that $r_{n,j}=o(|x_{n,j}|)$, so that $\vec X_j=0$; if $\hat x_0$ exists, then we have $|x_{n,i}|=O(r_{n,i})$, and by \eqref{5-40} and \eqref{5-46}, we get that $\displaystyle\lim_{n\to+\iy}\f{r_{n,j}}{|x_{n,j}|}>0$, so that by \eqref{xn} and \eqref{5-15}, we get
	 $$\vec X_j=\f{\al\iota_{\hat x_j}}{2}\f{\hat x_0-\hat x_j}{|\hat x_0-\hat x_j|}\lim_{n\to+\iy}\f{r_{n,j}}{|x_{n,j}|},$$
which gives \eqref{5-67} due to $\al<0$.
Recalling that $\wt\SR_i$ has at least two distinct points, we may pick $\hat x_{j_0}$ as an extreme point of the convex hull of $\wt\SR_i$, such that $\hat x_{j_0}\neq\hat x_0$ if $\hat x_0$ exists. Then, by plugging \eqref{5-60} in \eqref{5-65}, and by using $\iota_{\hat x_0}=0$, we get that
	$$\vec X_{j_0}=\sum_{j\in J_i\setminus\{j_0\}}\iota_j\f{\hat x_j-\hat x_{j_0}}{|\hat x_j-\hat x_{j_0}|^2},$$
which leads to a contradiction with \eqref{5-67}, and hence concludes the proof of \eqref{5-12}.
\end{itemize}

This completes the proof of Lemma \ref{lemma5-1}.
\end{proof}

Once Lemma \ref{lemma5-1} is proven, we have a first estimate as follows:
\begin{lemma}\lab{lemma5-2}
There exists $C>0$ such that for all $i\in\{1,\cdots,N'\}$,
\be\lab{5-70} 0<\bar u_{n,i}(r)\le-\sbr{\f{2}{p_n}-1}\ga_{n,i}+\f{2}{p_n\ga_{n,i}^{p_n-1}}\ln\f{C}{\la_n\ga_{n,i}^{2(p_n-1)}|x_{n,i}|^{2\al}r^2}+O(r^{3/2}), \ee 
uniformly in $(0,\kappa]$ and for all $n$, where $\bar u_{n,i}$ is the average function given by \eqref{average}.
\end{lemma}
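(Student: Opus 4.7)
The plan is to split the interval $(0,\kappa]$ into an inner piece $(0,\tau r_{n,i}]$ and an outer piece $[\tau r_{n,i},\kappa]$ for some fixed small $\tau\in(0,1)$, and argue differently on each. The positivity $\bar u_{n,i}(r)>0$ is immediate, so only the upper bound needs work.

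On the inner piece, I would rely on the precise bubble asymptotics already available: Lemma \ref{lemma5-1} ensures that for $\eta$ close to $1$ one has $\bar r_{n,i}^{(\eta)} = r_{n,i}^{(\eta)}\ge \tau r_{n,i}$, so Proposition \ref{single-2} applies on $[0,\tau r_{n,i}]$ and gives
\[
v_{n,i}(r) \;=\; \ga_{n,i} - \f{2\, t_{n,i}(r)}{p_n\ga_{n,i}^{p_n-1}}\sbr{1+O(\ga_{n,i}^{-p_n})},
\]
while \eqref{5-10} supplies $u_{n,i}-v_{n,i}=O(|x|/(\ga_{n,i}^{p_n-1} r_{n,i}))$. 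Taking the spherical average and substituting the defining identity $\mu_{n,i}^2 e^{\ga_{n,i}^{p_n}} = 8/\bigl(\la_n p_n^2 |x_{n,i}|^{2\al}\ti h_{n,i}(0) f_{n,i}(0)\ga_{n,i}^{2(p_n-1)}\bigr)$ rewrites the right-hand side as
\[
-\sbr{\f{2}{p_n}-1}\ga_{n,i} + \f{2}{p_n\ga_{n,i}^{p_n-1}}\ln\f{C}{\la_n\ga_{n,i}^{2(p_n-1)}|x_{n,i}|^{2\al}(\mu_{n,i}^2+r^2)} + o(\ga_{n,i}^{1-p_n});
\]
since $\mu_{n,i}^2+r^2\ge r^2$, the desired bound follows on the inner interval.

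For the outer piece, I would integrate the ODE for the spherical average: from \eqref{equ-6},
\[
r\,\bar u_{n,i}'(r) \;=\; -\f{1}{2\pi}\int_{B_r(0)} \sbr{\la_n p_n h_{n,i} f_{n,i} u_{n,i}^{p_n-1} e^{u_{n,i}^{p_n}} - h_{n,i} u_{n,i}}\rd x.
\]
For $r\ge\tau r_{n,i}$ the nonlinear integral is bounded below by the Liouville mass $(4/p_n)\ga_{n,i}^{1-p_n}(1+o(1))$ coming from the bubble at $q_{n,i}$ alone (a direct computation using Proposition \ref{single-2}); additional bubbles $\phi_{n,i}(q_{n,j})$ captured by $B_r(0)$ only contribute nonnegatively and hence improve the upper bound. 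The linear integral is controlled by Lemma \ref{bound-0} and H\"older inequality with exponent slightly above $1/|\al|$ to absorb the weight $h_{n,i}\sim|\cdot-x_{n,i}|^{2\al}$. Starting from the initial value \eqref{5-17} and integrating from $\tau r_{n,i}$ to $r$ produces the decrement $-(4/(p_n\ga_{n,i}^{p_n-1}))\ln(r/(\tau r_{n,i}))$, which combines with \eqref{5-17} to replace $r_{n,i}^2$ by $r^2$ in the argument of the logarithm and yields \eqref{5-70}.

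The main obstacle I foresee is keeping the error strictly $O(r^{3/2})$ uniformly in $n$ as $r$ grows through the singularity $x_{n,i}$ and through neighboring bubble centers: the extra nonlinear mass only strengthens the upper bound, but the linear integral near $x_{n,i}$ needs the full integrability $h_{n,i}\in L^s$ with $s\in(1,1/|\al|)$, available since $\al\in(-1,0)$, to produce the clean power $r^{3/2}$ after radial integration. The comparability $\ga_{n,j}\sim\ga_{n,i}$ and $r_{n,j}\sim r_{n,i}$ for $j\in J_i$, established within the proof of Lemma \ref{lemma5-1}, ensures that no cluster-bubble crossing corrupts the constant hidden in the $O(r^{3/2})$.
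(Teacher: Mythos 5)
Your outer argument (drop the positive nonlinear mass in the annulus to get monotonicity of the flux $r\bar u_{n,i}'(r)$, control the linear term $h_{n,i}u_{n,i}$ by H\"older using $h_{n,i}\in L^s$ to produce the $O(r^{3/2})$, then integrate the resulting differential inequality) is exactly the paper's mechanism. But the splitting radius $\tau r_{n,i}$ and the input data you feed into it contain a genuine gap. Lemma \ref{lemma5-1} does \emph{not} give $\bar r_{n,i}^{(\eta)}=r_{n,i}^{(\eta)}\ge \tau r_{n,i}$; combined with \eqref{5-3} and the definition \eqref{5-6} it gives the opposite: for every fixed $\eta\in(0,1)$ one has $\ln\big(r_{n,i}^{(\eta)}/r_{n,i}\big)\le -(1-\eta+o(1))\,p_n\ga_{n,i}^{p_n}/4$, so $r_{n,i}^{(\eta)}$ is exponentially smaller than $r_{n,i}$ (this is precisely \eqref{5-100} later in the paper). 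Hence Proposition \ref{single-2} and the comparison \eqref{5-10} are only available on $(0,r_{n,i}^{(\eta)}]$, not on $(0,\tau r_{n,i}]$, and your inner estimate leaves the annulus $(r_{n,i}^{(\eta)},\tau r_{n,i}]$ uncovered. For the same reason \eqref{5-17} is not at your disposal as an initial value at $\tau r_{n,i}$: in the paper it is derived under the contradiction hypothesis \eqref{5-13} (i.e.\ $\bar r_{n,i}^{(\eta)}=r_{n,i}$) inside the proof of Lemma \ref{lemma5-1}, and once that lemma is proved the profile formula \eqref{16} does not reach $\tau r_{n,i}$; assuming the stated upper bound at $\tau r_{n,i}$ as your starting value would be circular, since that is an instance of \eqref{5-70} itself.

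The repair is the paper's choice of matching point: start the flux argument at $r=r_{n,i}^{(\eta_1)}$, where \eqref{5-71} gives $\bar u_{n,i}(r_{n,i}^{(\eta_1)})=(1-\eta_1)\ga_{n,i}+O(\ga_{n,i}^{1-p_n})$ and Proposition \ref{single-2} together with \eqref{5-11} (i.e.\ \eqref{5-72}) gives the flux $r_{n,i}^{(\eta_1)}\bar u_{n,i}'(r_{n,i}^{(\eta_1)})=-\tfrac{4}{p_n\ga_{n,i}^{p_n-1}}+O(\ga_{n,i}^{1-2p_n})$; your monotonicity-plus-H\"older step then yields \eqref{5-70} on all of $[r_{n,i}^{(\eta_1)},\kappa]$ after rewriting $\ln(r_{n,i}^{(\eta_1)}/r)$ via \eqref{5-6} and \eqref{mun-1}, while \eqref{16} and \eqref{5-10} cover $(0,r_{n,i}^{(\eta_2)}]$. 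A minor further point: the H\"older exponent must be taken \emph{below} $1/|\al|$ (the paper uses a three-factor H\"older with $s=\tfrac23(\tfrac1{|\al|}-1)$), since $h_{n,i}\notin L^{s}$ for $s\ge 1/|\al|$; ``slightly above $1/|\al|$'' would not work.
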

\begin{proof}
Let $\eta_1<\eta_2$ be two given numbers in $(0,1)$. Then, by \eqref{5-6}, \eqref{5-7} and \eqref{5-12}, we get
	$$\bar r_{n,i}^{(\eta_1)}=r_{n,i}^{(\eta_1)},\quad \bar r_{n,i}^{(\eta_2)}=r_{n,i}^{(\eta_2)}$$
for large $n$. Then \eqref{5-70} holds uniformly in $(0,r_{n,i}^{(\eta_2)}]$ by using \eqref{16} and \eqref{5-10}. We get also from \eqref{5-9}, \eqref{5-6} and \eqref{5-10} that
\be\lab{5-71} \bar u_{n,i}(r_{n,i}^{(\eta_1)})=v_{n,i}(r_{n,i}^{(\eta_1)})+O(\ga_{n,i}^{1-p_n})=(1-\eta_1)\ga_{n,i}+O(\ga_{n,i}^{1-p_n}), \ee
and from \eqref{5-11} that
\be\lab{5-72} \nm{\nabla(u_{n,i}-v_{n,i})}_{L^\iy\sbr{\pa B_{r_{n,i}^{(\eta_1)}}(0)}}=O\sbr{\f{1}{\ga_{n,i}^{p_n-1}r_{n,i}^{(\eta_2)}}}. \ee
We write by \eqref{equ-6} that
\be\lab{5-73} \int_{B_r(0)}-\Delta u_{n,i}\rd x\ge \int_{B_{r_{n,i}^{(\eta_1)}}(0)}-\Delta u_{n,i}\rd x+O\sbr{\int_{B_r(0)\setminus B_{r_{n,i}^{(\eta_1)}}(0)}h_{n,i}u_{n,i}\rd x },\ee
uniformly in $r\in(r_{n,i}^{(\eta_1)},\kappa)$. Since $\al\in(-1,0)$, we may take $s=\f{2}{3}\sbr{\f{1}{|\al|}-1}$ and $t=\sbr{\f{1}{4}-\f{1}{s}}^{-1}$, then by H\"older inequality, and using \eqref{hi} and Lemma \ref{bound-0}, we get 
$$\begin{aligned}
	\int_{B_r(0)\setminus B_{r_{n,i}^{(\eta_1)}}(0)}h_{n,i}u_{n,i}\rd x
	&\le \sbr{\int_{B_r(0)}1\rd x}^{\f{3}{4}} \sbr{\int_{B_r(0)}h_{n,i}^{(1-1/t)s}\rd x}^{\f{1}{s}} \sbr{\int_{B_r(0)}h_{n,i}u_{n,i}^t\rd x}^{\f{1}{t}}\\
	&=O(r^{3/2}). 
\end{aligned}$$
Then, we get from \eqref{5-73} that
\be\lab{5-74} \bar u_{n,i}'(r)\le \f{r_{n,i}^{(\eta_1)}}{r}\bar u_{n,i}'(r_{n,i}^{(\eta_1)})+O(r^{1/2}) \ee
uniformly in $r\in(r_{n,i}^{(\eta_1)},\kappa)$. 
Using the fact by \eqref{5-6} that
	$$ \ln\f{r_{n,i}^{(\eta_1)}}{r_{n,i}^{(\eta_2)}}=-\f{p_n\ga_{n,i}^{p_n}}{4}(\eta_2-\eta_1)+o(1),$$
and using \eqref{5-72} and Proposition \ref{single-2}, we now conclude that
	$$r_{n,i}^{(\eta_1)}\bar u_{n,i}'(r_{n,i}^{(\eta_1)})=-\f{4}{p_n\ga_{n,i}^{p_n-1}}+O(\ga_{n,i}^{1-2p_n}).$$
Then, integrating \eqref{5-74} in $[r_{n,i}^{(\eta_1)},r]$ and using the fundamental theorem of calculus, we get that
\be\lab{5-75} \bar u_{n,i}'(r)-\bar u_{n,i}'(r_{n,i}^{(\eta_1)})\le -\f{4}{p_n\ga_{n,i}^{p_n-1}}\ln\f{r}{r_{n,i}^{(\eta_1)}}\sbr{1+O(\ga_{n,i}^{-p_n})}+O(r^{3/2}), \ee 
uniformly in $r\in[r_{n,i}^{(\eta_1)},\kappa]$. Observing by \eqref{5-3}, \eqref{5-6} and \eqref{mun-1} that
$$\begin{aligned}
	2\ln\f{r_{n,i}^{(\eta_1)}}{r} &=2\ln\f{r_{n,i}^{(\eta_1)}}{\mu_{n,i}}+\ln\f{\mu_{n,i}^2}{r^2}
	\le -\sbr{1-\eta_1\f{p_n}{2}}\ga_{n,i}^{p_n}+\ln\f{C}{\la_n\ga_{n,i}^{2(p_n-1)}|x_{n,i}|^{2\al}r^2},
\end{aligned}$$
and by using \eqref{5-75} and \eqref{5-71}, we conclude the proof of \eqref{5-70}.
\end{proof}

\begin{corollary}\lab{lemma5-3}
There holds 
\be\lab{5-80} \lim_{n\to+\iy}\la_n=0. \ee
\end{corollary}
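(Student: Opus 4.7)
The plan is to apply the sharp estimate \eqref{5-70} of Lemma \ref{lemma5-2} at an adapted scale and combine the positivity of the average $\bar u_{n,i}$ with the blow-up of the singular weight $|x_{n,i}|^{2\alpha}$ to force $\lambda_n$ to zero. More precisely, I would fix some $i\in\{1,\dots,N'\}$ and substitute $r=r_n:=1/\gamma_{n,i}$ into \eqref{5-70}, which is admissible for large $n$ since $\gamma_{n,i}\to+\infty$ and so $r_n\in(0,\kappa]$. Because $u_n>0$ in $\Sigma$, one has $\bar u_{n,i}(r_n)>0$, and after multiplying \eqref{5-70} by $p_n\gamma_{n,i}^{p_n-1}/2$, the correction term becomes $O(\gamma_{n,i}^{p_n-5/2})=o(1)$ uniformly for $p_n\in[1,2]$, yielding
\begin{equation*}
\frac{(2-p_n)\gamma_{n,i}^{p_n}}{2}\;\le\;\ln\frac{C\,\gamma_{n,i}^{4-2p_n}}{\lambda_n\,|x_{n,i}|^{2\alpha}}+o(1),
\end{equation*}
that is,
\begin{equation*}
\lambda_n\;\le\;\frac{C\,\gamma_{n,i}^{4-2p_n}}{|x_{n,i}|^{2\alpha}}\,\exp\!\left(-\frac{(2-p_n)\gamma_{n,i}^{p_n}}{2}\right).
\end{equation*}

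I would then conclude $\lambda_n\to 0$ from this bound by a short case analysis along subsequences, the two essential inputs being (i) $\gamma_{n,i}\to+\infty$ and (ii) $|x_{n,i}|\to 0$ because $q_{n,i}\to q_0$, so that $|x_{n,i}|^{2\alpha}\to+\infty$ thanks to $\alpha\in(-1,0)$. When $p_n$ stays bounded away from $2$, $(2-p_n)\gamma_{n,i}^{p_n}$ diverges at a super-polynomial rate in $\gamma_{n,i}$, so the exponential factor on the right-hand side crushes $\gamma_{n,i}^{4-2p_n}\leq\gamma_{n,i}^{2}$, and combined with $|x_{n,i}|^{2\alpha}\to+\infty$ this forces $\lambda_n\to 0$. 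When $p_n\to 2$ I would split further: if $(2-p_n)\gamma_{n,i}^{p_n}\ge 3\ln\gamma_{n,i}$ for large $n$ the previous argument still applies; otherwise $(4-2p_n)\ln\gamma_{n,i}\to 0$, whence $\gamma_{n,i}^{4-2p_n}\to 1$ and the bound reduces to $\lambda_n\le C/|x_{n,i}|^{2\alpha}\to 0$.

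The delicate point of the argument is the critical regime $p_n\to 2$: the leading negative term $-\tfrac{2-p_n}{p_n}\gamma_{n,i}$ in \eqref{5-70} degenerates, so the estimate cannot by itself absorb the full logarithm on the right-hand side as in the smooth case. It is precisely the presence of the conical weight $|x_{n,i}|^{2\alpha}$ with $\alpha<0$, whose divergence is forced by the blow-up configuration of Type I, that compensates for this loss. In this way the conclusion mirrors the spirit of Remark \ref{614-1}: the restriction $\alpha_a\in(-1,0)$ plays an indispensable role even at this preliminary step of the quantization analysis.
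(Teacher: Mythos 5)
Your proof is correct and follows essentially the same route as the paper: you evaluate the averaged bound \eqref{5-70} of Lemma \ref{lemma5-2} at a radius which is a negative power of $\ga_{n,i}$ (the paper takes $r=\kappa\ga_{n,i}^{2(1-p_n)/3}$, you take $r=1/\ga_{n,i}$), multiply by $\f{p_n}{2}\ga_{n,i}^{p_n-1}$, and use positivity of $\bar u_{n,i}$ together with $|x_{n,i}|\to 0$ and $\al\in(-1,0)$, which is exactly the mechanism the paper invokes. Your case analysis in $p_n$ is just an expanded version of the paper's one-line conclusion from \eqref{5-79}, where nonnegativity of $(1-\f{p_n}{2})\ga_{n,i}^{p_n}$ and of the $\ga_{n,i}$-logarithm reduces everything to the divergence of $\ln|x_{n,i}|^{2\al}$.
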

\begin{proof}
For any $i\in\{1,\cdots,N'\}$, by evaluating \eqref{5-70} at $r=\kappa\ga_{n,i}^{2(1-p_n)/3}\le\kappa$, we get that
\be\lab{5-79} \sbr{1-\f{p_n}{2}}\ga_{n,i}^{p_n}+\ln\sbr{\ga_{n,i}^{\f{4}{3}(p_n-1)}|x_{n,i}|^{2\al}}\le\ln\f{1}{\la_n}+O(1). \ee
This clearly proves \eqref{5-80} since $\al\in(-1,0)$.
\end{proof}
\begin{remark}
In the proof of \eqref{5-80} we use $\al<0$. This is the second place where our method highly and nontrivially relies on $\al<0$. 
\end{remark}

\vs
\subsection{Quantization for subcritical exponent \texorpdfstring{$p_n\not\to 2$}{}}\lab{sec5-2}\ 

Up to a subsequence, we may assume 
\be\lab{5-81}\lim_{n\to+\iy}p_n=p_0\ee
for some $p_0\in[1,2]$. In this subsection, we prove that
\begin{lemma}\lab{lemma5-4}
Let $p_0\in[1,2)$, then there holds
	$$\lim_{n\to+\iy}\beta_n(B_{\kappa_0}(q_0))=4\pi N',$$ 
where $N'$ is given above \eqref{5-0} and $\beta_n(\cdot)$ is given by \eqref{betadomain}.
\end{lemma}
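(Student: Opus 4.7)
The plan is to obtain sharp asymptotic formulas for the two local integrals
\[A_n := \int_{B_{\kappa_0}(q_0)} hf(e^{u_n^{p_n}}-1)\rd v_{g_0},\quad B_n := \int_{B_{\kappa_0}(q_0)} hf u_n^{p_n} e^{u_n^{p_n}}\rd v_{g_0}\]
entering the definition \eqref{betadomain} of $\beta_n(B_{\kappa_0}(q_0))$, by decomposing $B_{\kappa_0}(q_0)$ into the $N'$ bubble neighborhoods $B_{R\mu_{n,i}}(q_{n,i})$ (with $R\to+\iy$ slowly) and their complement, computing the bubble contributions explicitly from the Liouville profile, showing that the complement contributes negligibly, and substituting into \eqref{betadomain}.

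\textbf{Bubble contribution.} In each $B_{R\mu_{n,i}}(q_{n,i})$, the convergence \eqref{conv-1} together with Propositions \ref{regular} and \ref{single-2} gives the uniform expansion $u_n(q_{n,i}+\mu_{n,i}\cdot)^{p_n} = \ga_{n,i}^{p_n} - 2\ln(1+|\cdot|^2) + o(1)$ on $B_R(0)$. Since $\mu_{n,i}/|x_{n,i}|\to 0$ in Type I, one has $h_{n,i}(q_{n,i}+\mu_{n,i}\cdot)\to |x_{n,i}|^{2\al}\ti h_{n,i}(0)$ on this ball, and combined with the defining relation \eqref{mun-1} for $\mu_{n,i}$ and $\int_{\mathbb R^2}(1+|y|^2)^{-2}\rd y=\pi$, we deduce, as $n\to+\iy$ followed by $R\to+\iy$,
\[\la_n p_n^2\ga_{n,i}^{2(p_n-1)}\int_{B_{R\mu_{n,i}}(q_{n,i})} hf e^{u_n^{p_n}}\rd v_{g_0}\to 8\pi,\]
and analogously (with an extra $\ga_{n,i}^{p_n}$ factor) for $\int_{B_{R\mu_{n,i}}(q_{n,i})} hf u_n^{p_n} e^{u_n^{p_n}}\rd v_{g_0}$. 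The complement is handled using the ODE bound of Lemma \ref{lemma5-2} together with the pointwise estimates \eqref{est-3} and \eqref{est-4}, which force $u_n$ to drop rapidly below $\ga_{n,i}$ in the annular region so that its contribution to both $A_n$ and $B_n$ is of strictly lower order. Summing over $i$ and substituting into \eqref{betadomain} yields
\[\beta_n(B_{\kappa_0}(q_0)) = 4\pi\Bigl(\sum_{i=1}^{N'}\ga_{n,i}^{-2(p_n-1)}\Bigr)^{\f{2-p_n}{p_n}}\Bigl(\sum_{i=1}^{N'}\ga_{n,i}^{2-p_n}\Bigr)^{\f{2(p_n-1)}{p_n}}+o(1).\]

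\textbf{Sharpness and conclusion.} H\"older's inequality with conjugate exponents $\f{p_n}{2-p_n}$ and $\f{p_n}{2(p_n-1)}$, applied to the identity $\ga_{n,i}^{-2(p_n-1)(2-p_n)/p_n}\cdot\ga_{n,i}^{(2-p_n)\cdot 2(p_n-1)/p_n}=1$, bounds the right-hand side below by $4\pi N'$, with equality iff all $\ga_{n,i}$ are asymptotically equal. To establish this equality I would match $\bar u_{n,i}(\kappa_0)$ for different $i$ to a common $O(1)$ limit: since $\pa B_{\kappa_0}(q_0)\subset\Sigma\setminus(\DR\cup\Sigma_\iy)$, the convergence \eqref{conv-0} gives $\bar u_{n,i}(\kappa_0)\to\bar u_0(\kappa_0)$ independent of $i$, while \eqref{5-70} at $r=\kappa_0$, multiplied by $p_n\ga_{n,i}^{p_n-1}/2$ and rearranged using the boundedness of $|x_{n,i}|$, yields
\[\f{2-p_n}{2}\ga_{n,i}^{p_n}\leq \ln(1/\la_n)+O(\ln\ga_{n,i})+O(1).\]
In the subcritical regime $p_0<2$, the coefficient $(2-p_n)/2$ is bounded away from zero, so this forces $\ga_{n,i}^{p_n}\leq\f{2}{2-p_n}\ln(1/\la_n)(1+o(1))$, manifestly independent of $i$ at leading order. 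A matching lower bound, obtained by propagating the bubble-region asymptotics outward through the radial ODE argument of Lemma \ref{lemma5-2} run in reverse, closes the gap and forces $\ga_{n,j}/\ga_{n,i}\to 1$, rendering the H\"older inequality asymptotically sharp and giving $\beta_n(B_{\kappa_0}(q_0))\to 4\pi N'$. \textbf{The main obstacle} is producing this matching lower bound uniformly in $i$; the subcriticality $p_0<2$ is essential here, keeping $(2-p_n)/2$ bounded away from zero so that the asymptotic $\ga_{n,i}^{p_n}\sim\f{2}{2-p_n}\ln(1/\la_n)$ is non-degenerate and independent of $i$.
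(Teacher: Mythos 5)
Your overall architecture is the same as the paper's (bubble contributions from the Liouville profile, negligible complement, sharpness reduced to $\ga_{n,j}/\ga_{n,i}\to1$ using $p_0<2$), but two steps are genuinely incomplete. The first is your treatment of the complement of $\bigcup_i B_{R\mu_{n,i}}(q_{n,i})$. You propose to control it by Lemma \ref{lemma5-2} together with \eqref{est-3}--\eqref{est-4}, but both of those estimates carry the weight $d_{g_0}(\cdot,\DR\cup\Sigma_n)$, which vanishes at the conical point $q_0$, and Lemma \ref{lemma5-2} only controls circle averages centered at the bubble points $q_{n,i}$; these averages cannot be upgraded to pointwise bounds near $q_0$ precisely because \eqref{est-4} degenerates there. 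So in Type I there is a genuine ``hole'' at $q_0$ where, a priori, $u_n$ could remain of size $\ga_{n,i}$ and contribute extra energy. Ruling this out is exactly where the hypothesis \eqref{5-0} (no singular bubble at $q_0$, i.e.\ $q_0\notin\DR'$) must be used, and your argument never invokes it — which is a red flag, since the conclusion is false in Type II, where the limit is $4\pi(1+\al_{q_0})+4\pi N'$. The paper's proof of the neck bound \eqref{5-90} does precisely this: after controlling $u_n$ on circles around the bubbles, it fills the hole near $q_0$ by the harmonic-decomposition/Harnack argument between \eqref{5-47} and \eqref{5-54-1}, where a too-large value near $q_0$ would generate a singular bubble there, contradicting \eqref{5-0}. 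Without this step your claim that the complement contributes ``strictly lower order'' (the analogue of \eqref{5-99}) is unproved.

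The second gap is the ``matching lower bound'' forcing the heights to be comparable. ``Running the radial ODE argument of Lemma \ref{lemma5-2} in reverse'' from the bubbles out to $r=\kappa_0$ does not work as stated: the outward slope of $\bar u_{n,i}$ at radii beyond the separation scale picks up the masses of \emph{all} the bubbles concentrating at $q_0$, so a reverse inequality of the form \eqref{5-70} is not available unless one already knows the heights are comparable — the argument is circular. The paper avoids this by working at the radius $r_{n,l}^{(\eta)}$ from \eqref{5-6}, which lies below the separation scale $r_{n,l}$; the fact that the comparison with the radial bubble is valid all the way up to $r_{n,l}^{(\eta)}$ is exactly the content of Lemma \ref{lemma5-1}, which your proposal never uses. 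Evaluating $u_{n,l}$ there in two ways (as $(1-\eta)\ga_{n,l}+O(\ga_{n,l}^{1-p_n})$ and via the explicit formula \eqref{16}) gives \eqref{5-85}, hence \eqref{5-85-1}, i.e.\ $\ln(1/\la_n)\le(1-\eta p_0/2+o(1))\ga_{n,l}^{p_n}$ for every $l$; combined with the upper bound \eqref{5-79} for the maximal height and letting $\eta\to1$ this yields \eqref{5-86}, $\ga_{n,l}=(1+o(1))\ga_{n,i}$, which is what your H\"older step needs. Your upper bound from \eqref{5-70} at $r=\kappa_0$ is fine (it is essentially \eqref{5-79}), but the lower bound requires this mechanism, not a reversed ODE estimate at a fixed macroscopic radius.
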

\begin{proof}
Up to a renumbering, we fix $i\in\{1,\cdots,N'\}$ such that 
\be\lab{5-82} \ga_{n,i}=\max\{\ga_{n,j}:~j=1,\cdots,N'\}, \ee
for all $n$. Given any $\eta\in(0,1)$, setting $r_{n,l}^{(\eta)}$ as in \eqref{5-6}, we know from \eqref{5-12} that $\bar r_{n,l}^{(\eta)}=r_{n,l}^{(\eta)}$ for all $l$ and $n$. 
By the argument to get \eqref{4-66}, and using \eqref{5-9}, \eqref{5-10} and Proposition \ref{single-2}, we get that
\be\lab{5-83} \f{p_n}{2}\ga_{n,l}^{p_n-2}\int_{B_{r_{n,l}^{(\eta)}}(0)} \la_np_nh_{n,l}f_{n,l}u_{n,l}^{p_n}e^{u_n^{p_n}}\rd x=4\pi+o(1)\ee
and
\be\lab{5-84} \f{p_n}{2}\ga_{n,l}^{2(p_n-1)}\int_{B_{r_{n,l}^{(\eta)}}(0)} \la_np_nh_{n,l}f_{n,l}e^{u_n^{p_n}}\rd x=4\pi+o(1)\ee
for all $l$ and large $n$.
Since $u_n\gg1$ uniformly in $B_{r_{n,l}^{(\eta)}}(0)$ by \eqref{5-9} and \eqref{5-10}, we get from \eqref{5-84} that
\be\lab{5-84-1} \f{p_n}{2}\ga_{n,l}^{2(p_n-1)}\int_{B_{r_{n,l}^{(\eta)}}(0)} \la_np_nh_{n,l}f_{n,l}\rd x=o(1) \ee
for all $l$ and large $n$.
We get also from \eqref{5-9}, \eqref{5-6} and \eqref{5-10} that
\be\lab{5-85}\begin{aligned} 
	u_{n,l}(r_{n,l}^{(\eta)})&=(1-\eta)\ga_{n,l}+O(\ga_{n,l}^{1-p_n})\\
	&=-\sbr{\f{2}{p_n}-1}\ga_{n,l}+\f{2}{p_n\ga_{n,l}^{p_n-1}}\ln \f{1}{\la_n\ga_{n,l}^{2(p_n-1)}|x_{n,l}|^{2\al}(r_{n,l}^{(\eta)})^{2}} +O(\ga_{n,l}^{1-p_n})
\end{aligned}\ee
uniformly on $\pa B_{r_{n,l}^{(\eta)}}(0)$ for large $n$ and all $l$, where in the second equality we use \eqref{16}. 
By comparing the two terms of \eqref{5-85}, and by using \eqref{5-2}, we first get
\be\lab{5-85-1} \ln\f{1}{\la_n}\le\sbr{1-\eta\f{p_0}{2}+o(1)}\ga_{n,l}^{p_n}, \ee
so that by \eqref{5-79} we have
	$$\sbr{1-\f{p_0}{2}+o(1)}\ga_{n,l}\le\sbr{1-\eta\f{p_0}{2}+o(1)}\ga_{n,i},$$
for all $l$ and $n$, which gives, by \eqref{5-82} and $p_0<2$, and by letting $\eta\to1$, that
\be\lab{5-86} \ga_{n,l}=(1+o(1))\ga_{n,i}\ee
for all $l$.

We prove that
\be\lab{5-90} u_n\le (1-\eta')\ga_{n,i} \quad\text{in}~\Omega_n:=B_{\kappa_0}(q_0)\setminus\bigcup_{l=1}^{N'}\phi_{n,l}^{-1}\sbr{B_{r_{n,l}^{(\eta)}}(0)} \ee 
for large $n$  and any $\eta'\in(0,\eta)$. 
By contradiction, we assume that there holds
\be\lab{5-91} u_n(q_n):=\max_{\Omega_n}u_n>(1-\eta')\ga_{n,i}, \ee
for some $q_n\in\Omega_n$. By \eqref{conv-0}, we have $\lim_{n\to+\iy}q_n=q_0$. Let the isothermal coordinate $(B_{\kappa_0}(q_{n,0}),\phi_{n,0},U_{n,0})$ be given as above Proposition \ref{bubble}. Setting $x_n=\phi_{n,0}(q_n)$ and 
$$\ti\Omega_n=B_{\kappa}(0)\setminus\cup_{l=1}^{N'}\phi_{n,0}\circ\phi_{n,l}^{-1}\sbr{B_{r_{n,l}^{(\eta)}}(0)},$$ we get from \eqref{5-91} that
\be\lab{5-92} u_{n,0}(x_n):=\max_{\wt\Omega_n}u_{n,0}>(1-\eta')\ga_{n,i} .\ee
By \eqref{vpn}, \eqref{5-92} and \eqref{5-85}, we get that
\be\lab{5-93} \abs{x_n-\phi_{n,0}(q_{n,l})}\ge\f{1}{2}r_{n,l}^{(\eta)}\gg r_{n,l}^{(\eta')}\ee 
for large $n$ and all $l$. 
We set 
\be\lab{5-94} d_n=d_{g_0}(q_n,\QR_n)\ee 
with $\QR_n$ be given below \eqref{5-1}. 
We first assume $d_{g_0}(q_n,q_0)=o(d_n)$. Let $I_0$ be given by
\be\lab{5-95} I_0=\lbr{j\in\{1,\cdots,N'\}:~d_{g_0}(q_n,q_{n,j})=O(d_n)} \ee
By \eqref{5-93} and \eqref{vpn}, we get that
\be\lab{5-96} r_{n,l}^{(\eta)}=O(d_n)\ee 
for all $l\in I_0$. 
By the assumption $d_{g_0}(q_n,q_0)=o(d_n)$, we have $|x_n|=o(d_n)$, and we may assume that
	$$\lim_{n\to+\iy}\f{\phi_{n,0}(q_{n,l})}{d_n}=\check x_l\in\R^2$$
for all $l\in I_0$. 
Let $\SR_0=\lbr{\check x_l:~l\in I_0}$. Then, we may choose and fix $\tau_0\in(0,1)$ small enough such that
	$$3\tau_0<\displaystyle\min\lbr{|x-y|:~x,y\in\SR_0\cup\{0\},x\neq y},$$
and $R_0\gg1$ such that $\SR_0\subset B_{\f{1}{3}R_0}(0)$. We can check  that there exists $C>0$ such that any point in
	$$\Omega_{n,0}:=\sbr{B_{R_0d_n}(x_n)\setminus B_{\tau_0d_n}(0)} \setminus \bigcup_{l\in I_0}B_{\tau_0d_n}(\phi_{n,0}(q_{n,l}))$$
may be joined to $\pa B_{\tau_0d_n}(\phi_{n,0}(q_{n,l}))$ by  a $C^1$ path in $\Omega_{n,0}$ of length at most $Cd_n$, for some $l\in I_0$ and all $n$. 
Therefore, by applying \eqref{5-70} with $r=\tau_0d_n$, and by \eqref{5-96} and \eqref{5-85}, we may first get
	$$\bar u_{n,l}(\tau_0d_n)\le(1-\eta)\ga_{n,l}+O(\ga_{n,l}^{1-p_n})$$
for all $l\in I_0$, and then get from \eqref{est-4} and \eqref{5-86} that
\be\lab{5-97} u_{n,0}=\bar u_{n,l}(\tau_0d_n)+O\sbr{\f{1}{1+\bar u_{n,l}(\tau_0d_n)^{p_n-1}}}\le(1-\eta+o(1))\ga_{n,i}, \ee
uniformly on $\pa B_{\tau_0d_n}(0)$ for all $n$.
Then, by the argument between \eqref{5-47} and \eqref{5-54-1}, and using \eqref{5-97}, we get that 
	$$u_{n,0}\le(1-\eta+o(1))\ga_{n,i}$$
uniformly in $B_{\tau_0d_n}(0)$ for all $n$. This together with \eqref{5-92} and $|x_n|=o(d_n)$ gives 
	$$(1-\eta')\ga_{n,i}<(1-\eta+o(1))\ga_{n,i},$$
for large $n$, which is a contradiction with $\eta'<\eta$.  
As a result, we must have $d_n=O(d_{g_0}(q_n,q_0))$. Let $\ti d_n=\min\{|x_n|,d_n\}$, and $\ti I_0$ be given by
	$$\ti I_0=\lbr{j\in\{1,\cdots,N'\}:~d_{g_0}(q_n,q_{n,j})=O(\ti d_n)}.$$
We must have $\ti I_0\neq\emptyset$ since $d_n=O(d_{g_0}(q_n,q_0))$. Then, by a similar argument to get \eqref{5-97}, we have that
	$$u_{n,0}(x_n)\le(1-\eta+o(1))\ga_{n,i}$$
for all $l\in \ti I_0$ and all $n$, which together with \eqref{5-92} again leads to a contradiction, so that we finish the proof of \eqref{5-90}.

By \eqref{5-90} and \eqref{5-79}, we get that 
\be\lab{5-99} \begin{aligned} 
	&\int_{\Omega_n}\la_nhf\sbr{1+u_n^{p_n}}e^{u_n^{p_n}}\rd v_{g_0}\\
	&\quad=O\sbr{|x_{n,i}|^{-2\al}\exp\sbr{(1-\eta')^{p_n}\ga_{n,i}^{p_n}-\sbr{1-\f{p_0}{2}}\ga_{n,i}^{p_n}+o(\ga_{n,i}^{p_n})} },
\end{aligned}\ee
for all $n$. Noting that $p_0<2$, by choosing $\eta'\in(0,\eta)$ sufficiently close to 1 from the begining, we may plug \eqref{5-99}, \eqref{5-83}, \eqref{5-84}  and \eqref{5-84-1} in \eqref{betadomain} to conclude the proof of this lemma, using also \eqref{5-86}.
\end{proof}

\vs
\subsection{Quantization for critical exponent \texorpdfstring{$p_n\to2$}{}}\lab{sec5-3}\ 

In this subsection, we assume that $p_0=2$ in \eqref{5-81}. Recall the notations $r_{n,l}$, $r_{n,l}^{(\eta)}$ and $\bar r_{n,l}^{(\eta)}$ given by \eqref{5-1}, \eqref{5-6} and \eqref{5-7}.

For all $\eta\in(0,1)$, by \eqref{5-12} and \eqref{5-6}, we have that
\be\lab{5-100} r_{n,l}^{(\eta)}=o(r_{n,l}),\quad \bar r_{n,l}^{(\eta)}=r_{n,l}^{(\eta)}\ee 
for all $n$ and $l\in\{1,\cdots,N'\}$.
For all $\eta'\in(0,\eta)$, as a consequence of \eqref{5-11}, we get that
\be\lab{5-100-1} \abs{\nabla(u_{n,l}-v_{n,l})}=o\sbr{\f{1}{\ga_{n,l}^{p_n-1}\bar r_{n,l}^{(\eta')} }}.\ee
Then, since we also have 
	$$\abs{v_{n,l}-\bar v_{n,l}(r)}\le\f{2+o(1)}{\ga_{n,l}^{p_n-1}}\ln\f{2r}{|\cdot|},$$
using the estimate in Proposition \ref{appe1-2}, we eventually get that
\be\lab{5-101} \abs{u_{n,l}-\bar u_{n,l}(r)}\le \f{2+o(1)}{\ga_{n,l}^{p_n-1}}\ln\f{2r}{|\cdot|},\ee 
uniformly in $B_{r}(0)\setminus\{0\}$ for all large $n$ and $r\in\big(0,r_{n,l}^{(\eta')}\big]$. 

Now, let $\eta_0\in(0,1)$ be fixed and will be chosen later. For any $j\in\{1,\cdots,N'\}$, we define 
\be\lab{5-102} 
\nu_{n,j}:=\sup\lbr{ r_0\in\left(r_{n,j}^{(\eta_0)},\kappa\right]:~
\left\{\begin{aligned} &\abs{u_{n,j}-\bar u_{n,j}(r)}\le 5\pi C_2\bar u_{n,j}(r)^{1-p_n}\\
&\quad\quad\quad+10\sum_{l\in I_{n,j}(r)}\ga_{n,l}^{1-p_n}\ln\f{6r}{\abs{\cdot-\phi_{n,j}(q_{n,l})}} \\
&\text{in}~B_r(0)\setminus\cup_{l\in I_{n,j}(r)}B_{r_{n,l}^{(\eta_0)}}\sbr{\phi_{n,j}(q_{n,l})}\\
&\text{for all}~r\in(0,r_0]
\end{aligned} \right. }  
\ee
for large $n$, where $C_2$ is as in \eqref{est-4} and $I_{n,j}(r)$ is given by
\be\lab{5-103} I_{n,j}(r):=\lbr{l\in\{1,\cdots,N'\}:~\phi_{n,j}(q_{n,l})\in B_{\f{3r}{2}}(0)}. \ee
As a first remark, it follows from \eqref{5-101} that, for all given $\eta_2\in[\eta_0,1)$, we have
\be\lab{5-104} \nu_{n,j}\ge r_{n,j}^{(\eta_2)} \ee 
for large $n$ and any $j$.
Our main goal now is to show that

\begin{lemma}\lab{lemma5-5}
By  choosing $\eta_0$ such that
	$$\eta_0\in\sbr{1-\f{1}{201(1+N')^2},1}$$
we have that 
\be\lab{5-105} \bar u_{n,j}(\nu_{n,j})=O(1), \ee 
for all $j\in\{1,\cdots,N'\}$.
\end{lemma}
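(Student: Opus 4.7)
The plan is to argue by contradiction. Suppose that along a subsequence $\bar u_{n,j}(\nu_{n,j})\to +\infty$ for some $j\in\{1,\dots,N'\}$. Since by \eqref{5-100}--\eqref{5-104} we already know $\nu_{n,j}\geq r_{n,j}^{(\eta_2)}$ for every $\eta_2\in[\eta_0,1)$ and $\bar u_{n,j}(r_{n,j}^{(\eta_2)})=(1-\eta_2+o(1))\gamma_{n,j}$, the radii $\nu_{n,j}$ have to sit well above the bubble scales. The goal is then to establish the oscillation inequality defining $\nu_{n,j}$ strictly on a slightly larger ball, contradicting maximality. Concretely, I would fix $r\in(\nu_{n,j},(1+\delta)\nu_{n,j}]$ for some small $\delta>0$ and produce an estimate of the form
\[
|u_{n,j}-\bar u_{n,j}(r)|\leq \bigl(5\pi C_2-\varepsilon\bigr)\bar u_{n,j}(r)^{1-p_n}+\bigl(10-\varepsilon\bigr)\sum_{l\in I_{n,j}(r)}\gamma_{n,l}^{1-p_n}\ln\frac{6r}{|\cdot-\phi_{n,j}(q_{n,l})|}
\]
on $B_r(0)\setminus\bigcup_{l\in I_{n,j}(r)}B_{r_{n,l}^{(\eta_0)}}(\phi_{n,j}(q_{n,l}))$, for some $\varepsilon>0$ depending only on $N'$.

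To produce such an estimate I would split $u_{n,j}=H_n+R_n$ on the annular domain $B_r(0)\setminus\bigcup_{l}B_{r_{n,l}^{(\eta_0)}}(\phi_{n,j}(q_{n,l}))$, where $H_n$ is harmonic with $H_n=u_{n,j}$ on the boundary and $R_n$ absorbs $-\Delta u_{n,j}=\lambda_n p_n h_{n,j}f_{n,j}u_{n,j}^{p_n-1}e^{u_{n,j}^{p_n}}-h_{n,j}u_{n,j}$. For the harmonic part, the estimate \eqref{est-4} applied on circles yields $|H_n-\bar H_n(r)|\leq 2\pi C_2 \bar u_{n,j}(r)^{1-p_n}$ via the standard oscillation bound for harmonic functions with controlled radial derivative, with the additional $\pi$ factor absorbing the change of center; the matching of boundary values on the inner spheres is controlled via \eqref{5-101} and Proposition \ref{regular}. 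For $R_n$, the nonlinear part contributes, through the Green's representation and the sharp estimate \eqref{5-83}, exactly $4\gamma_{n,l}^{1-p_n}\ln(6r/|\cdot-\phi_{n,j}(q_{n,l})|)+o(\gamma_{n,l}^{1-p_n})$ from each bubble $l\in I_{n,j}(r)$, leaving a comfortable margin against the constant $10$ appearing in \eqref{5-102}. The linear term $h_{n,j}u_{n,j}$, despite the singularity $|x-x_{n,j}|^{2\alpha}$ with $\alpha\in(-1,0)$, is controlled using Lemma \ref{bound-0}: since $h_{n,j}\in L^s$ for some $s>1$, the elliptic $L^s$-estimate gives an $o(\bar u_{n,j}(r)^{1-p_n})$ contribution.

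The main obstacle will be making the bookkeeping of constants close. The admissible multiplicative deficit from each bubble and from the harmonic envelope must combine into an $\varepsilon$ after summation over at most $N'+1$ bubbles and one singularity, which is exactly why $\eta_0$ has to be taken so close to $1$ --- each increment from $r_{n,l}^{(\eta_0)}$ outward uses $\eta_0$ to gain a small factor $(1-\eta_0)^{1/2}$ in the bubble logarithmic tail, and the factor $201(1+N')^2$ reflects squaring $N'+1$ bubbles together with the constants $5\pi C_2$ and $10$ arising in the boundary contributions. A secondary delicate point is that $I_{n,j}(r)$ can strictly grow as $r$ crosses certain thresholds; at such $r$ a new bubble enters with factor $\ln(6r/|\cdot-\phi_{n,j}(q_{n,l})|)$ starting from $O(\ln(1/\eta_0))$, and I would argue (as in Step 4.2 of \cite{MT-blowup-7}) that the contribution of such a new bubble is controlled by its own analogous estimate at its own scale, propagated via Lemma \ref{lemma5-1} and \eqref{5-86}. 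Iterating this structure across the finitely many bubbles inside $B_{\nu_{n,j}}(0)$ and then extending past $\nu_{n,j}$ would contradict the supposition $\bar u_{n,j}(\nu_{n,j})\to +\infty$ and conclude the proof.
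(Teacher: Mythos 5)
Your skeleton (contradiction with the maximality in \eqref{5-102}, a harmonic part whose oscillation is $2\pi C_2\,\bar u^{\,1-p_n}$ via \eqref{est-4}, a Green representation giving roughly $\tfrac{4}{p_n}\ga_{n,l}^{1-p_n}\ln\tfrac{Cr}{|\cdot-\phi_{n,j}(q_{n,l})|}$ from each bubble core, and an $L^s$ bound for the linear term) does match the shape of the paper's argument, but the two ingredients that carry the real weight are missing. First, you run the contradiction for an arbitrary index $j$ whose $\bar u_{n,j}(\nu_{n,j})$ blows up, whereas the paper selects the index $i$ realizing the \emph{minimal} such $\nu_{n,i}$ (see \eqref{5-106}). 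This minimality is what gives $\nu_{n,j}\ge\tau\nu_{n,i}$ for every nearby bubble $j\in J_i$ (\eqref{5-122}), and only through it can the defining inequality \eqref{5-102} be invoked at scale $\tau\nu_{n,i}$ around each such bubble to control $u_{n,i}$ pointwise in the neck regions $D_{n,j}$ (\eqref{5-123}). Your substitute, ``propagate via Lemma \ref{lemma5-1} and \eqref{5-86}'', does not work here: \eqref{5-86} is proved only in the subcritical regime $p_0<2$ of Section \ref{sec5-2}, while Lemma \ref{lemma5-5} lives in the critical regime $p_n\to2$; moreover what is actually needed is not comparability of the heights $\ga_{n,l}$ but the statement $\Ga_n:=\bar u_{n,i}(\tau\nu_{n,i})=o(\ga_{n,l})$ (\eqref{5-114}), which is also what ultimately turns the final bound \eqref{5-136}--\eqref{5-138} into one with strictly smaller constants than in \eqref{5-102} (your ``comfortable margin'' is exactly this unproved step).

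Second, the heart of the proof is the estimate of the Green-representation integral over the neck region and near the singular point, not over the bubble cores, and this is precisely where the hypothesis on $\eta_0$ enters; your explanation of its role is not the actual mechanism. In the paper the oscillation bound \eqref{5-123} is exponentiated (\eqref{5-127}), converting the logarithmic corrections into power-law singularities whose exponent is at most $201(1+N')^2(1-\eta_0)$ by \eqref{5-128}--\eqref{5-129}; the choice $\eta_0>1-\frac{1}{201(1+N')^2}$ keeps this exponent below $1$, so the neck contribution is $O\sbr{\la_n\Ga_n^{p_n-1}e^{\Ga_n^{p_n}}\nu_{n,i}^2(\nu_{n,i}+|x_{n,i}|)^{2\al}}$ (\eqref{5-130}), and this quantity is then shown to be $o(\Ga_n^{1-p_n})$ by evaluating the a priori bound \eqref{5-70} of Lemma \ref{lemma5-2} at $r=\tau\nu_{n,i}$ (\eqref{5-135}). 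None of this appears in your proposal, and without it the exponential nonlinearity in the region where $u_{n,i}\approx\Ga_n$ is simply not controlled. Finally, for Type I there is also the ``hole'' near the singular image point $x_{n,i}$, where no bubble sits but $u_{n,i}$ can still be of size $\Ga_n$: the paper treats it with the harmonic-comparison argument of \eqref{5-47}--\eqref{5-54-1}, which relies on assumption \eqref{5-0}; your $L^s$ estimate of the linear term $h_{n,j}u_{n,j}$ says nothing about the nonlinear term there.
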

\begin{proof}
The proof is quite similar to that of \cite[(4.79)]{MT-blowup-7}, except that we need to complete the ``hole'' near the singularity like in the proof of Proposition \ref{lemma5-1}. Here, we write it out for reader's convenience.

We assume by contradiction with \eqref{5-105} that there is an $i\in\{1,\cdots,N'\}$ such that
\be\lab{5-106} \nu_{n,i}=\min\lbr{\nu_{n,j}:~\lim_{n\to+\iy}\bar u_{n,j}(\nu_{n,j})=+\iy }. \ee
Since $\lim_{n\to+\iy}\bar u_{n,i}(\nu_{n,i})=+\iy$, by \eqref{conv-0}, we have that $\nu_{n,i}=o(1)$. 
Let $J_i$ be defined by
\be\lab{5-107} J_i:=\lbr{l\in\{1,\cdots,N'\}:~d_{g_0}(q_{n,l},q_{n,i})=O(\nu_{n,i})}. \ee
Then, using \eqref{vpn} and $\nu_{n,i}=o(1)$, we get that
\be\lab{5-108} \lim_{n\to+\iy}\sbr{(\phi_{n,l})_*g_0)}(\nu_{n,i}\cdot)=|\rd x|^2~\text{in}~C_\loc^2(\R^2)\ee
for all $l\in J_i$. 
We may assume that
	$$\lim_{n\to+\iy}\f{\phi_{n,i}(q_{n,l})}{\nu_{n,i}}=\ti x_l\in\R^2$$
for all $l\in J_i$, and we also assume that
	$$\lim_{n\to+\iy}\f{\phi_{n,i}(q_{0})}{\nu_{n,i}}=\lim_{n\to+\iy}\f{x_{n,i}}{\nu_{n,i}}=\ti x_0\in\R^2,\quad\text{if}~|x_{n,i}|=O(\nu_{n,i}).$$
Let $\SR_i=\lbr{\ti x_l:~l\in J_i}$ and $$\wt\SR_i=\begin{cases}\SR_i\cup\{\ti x_0\} &\text{if}\; |x_{n,i}|=O(\nu_{n,i})\\ \SR_i &\text{if}\; \nu_{n,i}=o(|x_{n,i}|).\end{cases}$$ We may choose and fix $\tau\in(0,1)$ small and $R\ge1$ large, such that
	$$3\tau<\begin{cases} 1,&\quad\text{if}~\wt\SR_i=\{0\},\\ \min\lbr{|x-y|:~x,y\in\wt\SR_i,x\neq y}, &\quad\text{otherwise}, \end{cases}$$
and $\wt\SR_i\subset B_{\f{1}{3}R}(0)$.
We set
	$$D_{n}:=\sbr{B_{R\nu_{n,i}}(0)\setminus B_{\tau\nu_{n,i}/3}(x_{n,i})} \setminus \bigcup_{l\in J_i}B_{\tau\nu_{n,i}/3}(\phi_{n,i}(q_{n,l})),$$
for all $n$.

Let $w_{n}$ be given by
\be\lab{5-109}\begin{cases}
	-\Delta w_{n}=-h_{n,i}u_{n,i}\quad\text{in}~B_{R\nu_{n,i}}(0),\\
	w_{n}=0\quad\text{on}~\pa B_{R\nu_{n,i}}(0).\\
\end{cases}\ee
By the arguments below \eqref{5-18}, we get that
\be\lab{5-110} 
	\nm{w_{n}}_{L^\iy(B_{R\nu_{n,i}}(0))}=\begin{cases} O\sbr{ \sbr{\nu_{n,i}^2|x_{n,i}|^{2\al}}^{1-\f{1}{s}} }, &\quad\text{if}~\nu_{n,i}=o(|x_{n,i}|),\\O\sbr{\nu_{n,i}^{(2+2\al)(1-\f{1}{st})}}, &\quad\text{if}~|x_{n,i}|=O(\nu_{n,i}), \end{cases}
\ee
for some $s,t>1$. This gives $\nm{w_{n}}_{L^\iy(B_{R\nu_{n,i}}(0))}=o(1)$ in any case. Then, observing that $-\Delta(u_{n,i}-w_{n})\ge0$ in $B_{R\nu_{n,i}}(0)$ so that $\ov{u_{n,i}-w_{n}}$ is radially decreasing in $[0,R\nu_{n,i}]$, we get that $\bar u_{n,i}(\tau\nu_{n,i})\ge\bar u_{n,i}(\nu_{n,i})+o(1)$, which together with \eqref{5-106} leads to
\be\lab{5-111} \Ga_n:=\bar u_{n,i}(\tau\nu_{n,i})\to+\iy \ee
as $n\to+\iy$. By the argument to get \eqref{5-16}, and by using \eqref{est-4} and \eqref{5-108}, we get that
\be\lab{5-112} u_{n,i}=\Ga_n+O(\Ga_n^{1-p_n})\ee 
uniformly in $D_n$ for large $n$. Further, by applying the maximum principle to $u_{n,i}-w_{n}$, and using the fact $\nm{w_{n}}_{L^\iy(B_{R\nu_{n,i}}(0))}=o(1)$, we also get 
\be\lab{5-113} u_{n,i}\ge\Ga_n+o(1)\ee
uniformly in $B_{R\nu_{n,i}}(0)$ for large $n$.

We prove now that
\be\lab{5-114} \Ga_n=o(\ga_{n,j}),\quad\text{for all}~j\in J_i. \ee
For all given $\eta_2\in(\eta_0,1)$, we get from \eqref{5-6}, \eqref{5-100}, \eqref{5-9} and \eqref{5-10} that
\be\lab{5-115} \bar u_{n,j}\sbr{r_{n,j}^{(\eta_2)}}=(1-\eta_2+o(1))\ga_{n,j}\ee 
for large $n$ and $j\in J_i$. Using now  \eqref{5-100}, \eqref{5-1} and the definition of $J_i$, we obtain that
\be\lab{5-116} r_{n,j}^{(\eta_2)}=o(\nu_{n,i}),\quad\text{for all}~j\in J_i. \ee
Then, by \eqref{vpn}, we get
	$$\phi_{n,i}\circ\phi_{n,j}^{-1}\sbr{\pa B_{r_{n,j}^{(\eta_2)}}(0)}\subset B_{R\nu_{n,i}}(0),$$
for large $n$, so that by \eqref{5-113}, we eventually get that
\be\lab{5-117} \bar u_{n,j}\sbr{r_{n,j}^{(\eta_2)}}\ge\Ga_n+o(1)\ee
for all $j\in J_i$. Pluging \eqref{5-115} in \eqref{5-117}, and by letting $\eta_2\to1$ in \eqref{5-115}, we conclude the proof of \eqref{5-114}.

Now we can improve the estimate in \eqref{5-110}. In fact, by \eqref{5-113}, \eqref{hi} and  Lemma \ref{bound-0}, we get that
\be\lab{5-118-0} \nu_{n,i}^2\sbr{\nu_{n,i}+|x_{n,i}|}^{2\al}\exp\sbr{\f{\ep_0}{2}\Ga_n^{1/3}}=O\sbr{\int_{B_{R\nu_{n,i}}(0)} h_{n,i}e^{\ep_0u_{n,i}^{1/3}}\rd x}=O(1). \ee
This together with \eqref{5-110} clearly gives 
\be\lab{5-118} \nm{w_{n}}_{L^\iy(B_{R\nu_{n,i}}(0))}=o(\Ga_n^{1-p_n}), \ee
where $w_{n}$ is given by \eqref{5-109}. Let $\psi_n$ be given by
\be\lab{5-119}\begin{cases}
	-\Delta \psi_n=0\quad\text{in}~B_{R\nu_{n,i}}(0),\\
	\psi_n=u_{n,i}\quad\text{on}~\pa B_{R\nu_{n,i}}(0).\\
\end{cases}\ee
By keeping track of the constant $C_2$ of \eqref{est-4} and choosing $R\gg1$ large enough, using \eqref{5-108} and \eqref{5-111}, we may get a slightly more precise version of \eqref{5-112} on $\pa B_{R\nu_{n,i}}(0)$, namely we have that
\be\lab{5-120} \sup_{B_{R\nu_{n,i}}(0)}\abs{\psi_n-\bar u_{n,i}(R\nu_{n,i})}\le  \sup_{\pa B_{R\nu_{n,i}}(0)}\abs{u_{n,i}-\bar u_{n,i}(R\nu_{n,i})}\le\f{2\pi C_2}{\Ga_n^{p_n-1}} \ee 
for large $n$. 
Let $(z_n)_n$ be any sequence of points such that
\be\lab{5-120-1} z_n\in B_{R\nu_{n,i}}(0)\setminus\cup_{l\in J_i}B_{r_{n,l}^{(\eta_0)}}\sbr{\phi_{n,i}(q_{n,l})}.\ee
Let $G_n$ be the Green function of $-\Delta$ in $B_{R\nu_{n,i}}(0)$ with the Dirichlet boundary condition. 
Then, we have that
	$$0<G_n(z_n,\cdot)\le\f{1}{2\pi}\ln\f{2R\nu_{n,i}}{|\cdot-z_n|}$$
uniformly in $B_{R\nu_{n,i}}(0)\setminus\{z_n\}$ for large $n$. Thus, by the Green representation formula and \eqref{equ-6}, we get that
\be\lab{5-121} 0\le\sbr{u_{n,i}-w_{n}-\psi_n}(z_n)\le \IR_n(B_{R\nu_{n,i}}(0)),\ee
where $w_{n}$ is given by \eqref{5-109}, $\psi_n$ is given by \eqref{5-119} and $\IR_n(\Omega)$ is given by
\be\lab{5-121-1} \IR_n(\Omega):=
\f{\la_np_n}{2\pi}\int_{\Omega}\ln\f{2R\nu_{n,i}}{|y-z_n|} h_{n,i}f_{n,i}u_{n,i}^{p_n-1}e^{u_{n,i}^{p_n}}(y)\rd y. \ee 

We now estimate the last term of \eqref{5-121}. 
We have that
\be\lab{5-122} \nu_{n,j}\ge\tau\nu_{n,i},\quad\text{for all}~j\in J_i. \ee 
Indeed, for $j\in J_i$, if $\lim_{n\to+\iy}\bar u_{n,j}(\nu_{n,j})=+\iy$, then \eqref{5-106} gives \eqref{5-122}; otherwise, we have $\bar u_{n,j}(\nu_{n,j})=O(1)$, and by the choice of $R$ and \eqref{5-113}, we get that
	$$u_{n,j}\ge\Ga_n+o(1)$$
uniformly in $B_{R\nu_{n,i}/3}(0)$ for large $n$, so that we also get \eqref{5-122} by choosing $\tau$ small enough in the begining. Using \eqref{5-11} and Proposition \eqref{appe1-2} with \eqref{5-100}, we have that, for all $l\in \{1,\cdots,N'\}$,
	$$\abs{\nabla u_{n,l}}=O\sbr{\f{1}{\ga_{n,l}^{p_n-1}r_{n,l}^{(\eta_0)}}}\quad\text{uniformly in}~B_{3r_{n,l}^{(\eta_0)}}(0)\setminus B_{r_{n,l}^{(\eta_0)}/3}(0),$$
so that, for all $j\in J_i$, we get as a byproduct of \eqref{5-102} and \eqref{5-122} that
	$$\abs{u_{n,j}-\bar u_{n,j}(\tau\nu_{n,i})}\le 5\pi C_2\bar u_{n,j}(\tau\nu_{n,i})^{1-p_n}+10\sum_{l\in I_{n,j}(\tau\nu_{n,i})}\ga_{n,l}^{1-p_n}\ln\f{6\tau\nu_{n,i}}{\abs{\cdot-\phi_{n,j}(q_{n,l})}}$$
uniformly in $B_{\tau\nu_{n,i}}(0)\setminus\cup_{l\in I_{n,j}(\tau\nu_{n,i})}B_{r_{n,l}^{(\eta_0)}}\sbr{\phi_{n,j}(q_{n,l})}$,  and then we eventually obtain from \eqref{5-112} and the definition of $\tau$ that
\be\lab{5-123} \abs{u_{n,i}-\Ga_n}=6\pi C_2\Ga_n^{1-p_n}+10\sum_{l\in I_{n,j}(\tau\nu_{n,i})}\ga_{n,l}^{1-p_n}\ln\f{6\tau\nu_{n,i}}{\abs{\cdot-\phi_{n,i}(q_{n,l})}} \ee
uniformly in $D_{n,j}$ given by
	$$D_{n,j}:=B_{\tau\nu_{n,i}/2}(\phi_{n,i}(q_{n,j}))\setminus\bigcup_{l\in I_{n,j}(\tau\nu_{n,i})}B_{r_{n,l}^{(\eta_0)}/2}\sbr{\phi_{n,i}(q_{n,l})}.$$
Independently, for any $l\in J_i$, using that $|z_n-\phi_{n,i}(q_{n,l})|\ge r_{n,l}^{(\eta_0)}$ since $I_{n,j}(\tau\nu_{n,i})\subset J_i$, we have
\be\lab{5-124} \ln\f{2R\nu_{n,i}}{|\cdot-z_n|}=\ln\f{2R\nu_{n,i}}{|\phi_{n,i}(q_{n,l})-z_n|}+O(1) \ee
uniformly in $B_{r_{n,l}^{(\eta_0)}}(\phi_{n,i}(q_{n,l})$ for large $n$.
By \eqref{5-100-1} for some given $\eta'\in(\eta_0,1)$ and since $u_{n,l}(0)=v_{n,l}(0)$, we get $u_{n,l}-v_{n,l}=o(\ga_{n,l}^{1-p_n})$, so we eventually get that
\be\lab{5-125} \la_np_nh_{n,l}f_{n,l}u_{n,l}^{p_n-1}e^{u_{n,l}^{p_n}}=\f{8e^{-2t_{n,l}}(1+o(e^{\ti\eta t_{n,l}}))}{\mu_{n,l}^{2}\ga_{n,l}^{p_n-1}p_n} \ee 
uniformly in $B_{r_{n,l}^{(\eta_0)}}(0)$ for large $n$ and $l\in J_i$, still by applying the argument involving \eqref{4-66} and using Proposition \ref{single-2}. Then, using \eqref{5-124} and \eqref{5-125}, we get that
\be\lab{5-126} \IR_n\sbr{B_{r_{n,l}^{(\eta_0)}/2}(\phi_{n,i}(q_{n,l}))}=\f{4+o(1)}{p_n\ga_{n,l}^{p_n-1}}\ln\f{2R\nu_{n,i}}{|\phi_{n,i}(q_{n,l})-z_n|}+O\sbr{\ga_{n,l}^{1-p_n}}  \ee
for all $l\in J_i$. 
Using the basic inequalities $|(1+t)^p-1|\le 4(|t|+|t|^p)$ for all $t>-1$, and $\sbr{\sum_{t=1}^{N'}a_t}^p\le (N')^2\sum_{t=1}^{N'}a_t^p$ for all $a_t\ge0$ and all $p\in[1,2]$, we get first from \eqref{5-123} that
\be\lab{5-127}\begin{aligned} 
	u_{n,i}^{p_n} \le \Ga_n^{p_n}&+25\pi C_2 +40\sum_{l\in I_{n,j}(\tau\nu_{n,i})}\sbr{\f{\Ga_n}{\ga_{n,l}}}^{p_n-1}\ln\f{6\tau\nu_{n,i}}{|\cdot-\phi_{n,i}(q_{n,l})|} \\
	&+400(1+N')^2\sum_{l\in I_{n,j}(\tau\nu_{n,i})}\sbr{\ga_{n,l}^{1-p_n}\ln\f{6\tau\nu_{n,i}}{|\cdot-\phi_{n,i}(q_{n,l})|}}^{p_n}
\end{aligned}\ee
uniformly in $D_{n,j}$ for large $n$ and $j\in J_i$. Independently, we first get from \eqref{5-12}, \eqref{5-2} and \eqref{mun-1} with \eqref{hi} that
	$$\sbr{\f{p_n}{2}+o(1)}\ga_{n,l}\le t_{n,l}(|x_{n,l}|)=\ln\sbr{\la_n|x_{n,l}|^{2(1+\al)}}+(1+o(1))\ga_{n,l}^{p_n}+O(1),$$
so that, by \eqref{5-80} and $p_0=2$ in \eqref{5-81}, we get $\ln|x_{n,l}|\ge o(\ga_{n,l}^{p_n})$, and then we eventually get from \eqref{5-6} and \eqref{5-100} that
\be\lab{5-128}\begin{aligned} 
	2\ln\f{1}{r_{n,l}^{(\eta_0)}}&=-t_{n,l}(r_{n,l}^{(\eta_0)})+O(1)+(1+o(1))\ga_{n,l}^{p_n}+2\al\ln|x_{n,l}|\\
		&\le(1-\eta_0+o(1))\ga_{n,l}^{p_n},
\end{aligned}\ee
for large $n$ and all $l$. 
Then, we may get from \eqref{5-116} and \eqref{5-128} that
\be\lab{5-129}\begin{aligned}  
	\sbr{\ga_{n,l}^{1-p_n}\ln\f{6\tau\nu_{n,i}}{|\cdot-\phi_{n,i}(q_{n,l})|}}^{p_n}
	&\le\sbr{\ga_{n,l}^{-p_n}\ln\f{12\tau\nu_{n,i}}{r_{n,l}^{(\eta_0)}}}^{p_n-1}\ln\f{6\tau\nu_{n,i}}{|\cdot-\phi_{n,i}(q_{n,l})|}\\
	&\le \f{1}{2}(1-\eta_0+o(1))\ln\f{6\tau\nu_{n,i}}{|\cdot-\phi_{n,i}(q_{n,l})|},
\end{aligned}\ee
uniformly in $D_{n,j}$ for large $n$ and $l\in I_{n,j}(\tau\nu_{n,i})$. We compute and then get from \eqref{5-127}, \eqref{5-129}, \eqref{5-114} and \eqref{hi} that
\be\lab{5-130}\begin{aligned} 
	\IR_n(D_{n,j})&=O\Bigg( \la_n\Ga_n^{p_n-1}e^{\Ga_n^{p_n}}\int_{B_{\tau\nu_{n,i}/2}\sbr{\phi_{n,i}(q_{n,j})}} \ln\f{2R\nu_{n,j}}{|y-z_n|}|y-x_{n,i}|^{2\al}\\
	&\quad\quad\quad \prod_{l\in I_{n,j}(\tau\nu_{n,i})}\sbr{\f{6\tau\nu_{n,i}}{|\cdot-\phi_{n,i}(q_{n,l})|}}^{201(1+N')^2(1-\eta_0)} \rd x \Bigg)\\
	&=O\sbr{\la_n\Ga_n^{p_n-1}e^{\Ga_n^{p_n}}\nu_{n,i}^2\sbr{\nu_{n,i}+|x_{n,i}|}^{2\al}},
\end{aligned}\ee
for large $n$ and $j\in J_i$, using that $201(1+N')^2(1-\eta_0)<1$ to get the last estimate.
At last, it readily follows from \eqref{5-112} that
\be\lab{5-131} \IR_n(D_n)=O\sbr{\la_n\Ga_n^{p_n-1}e^{\Ga_n^{p_n}}\nu_{n,i}^2\sbr{\nu_{n,i}+|x_{n,i}|}^{2\al}}, \ee
for large $n$.
We are now in position to deduce that
\be\lab{5-132}\begin{aligned} 
	\IR_n\sbr{B_{R\nu_{n,i}}(\phi_{n,i}(q_{n,l}))}\le&\sum_{l\in J_i}\f{4+o(1)}{p_n\ga_{n,l}^{p_n-1}}\ln\f{6\tau\nu_{n,i}}{|\phi_{n,i}(q_{n,l})-z_n|}  +o\sbr{\Ga_n^{1-p_n}},
\end{aligned}\ee
for large $n$. We discuss in three cases. 

Firstly, if $\nu_{n,i}=o(|x_{n,i}|)$, then we have $B_{R\nu_{n,i}}(0)\cap B_{\tau\nu_{n,i}/3}(x_{n,i})=\emptyset$ for large $n$, so that, recalling that $I_{n,j}(\tau\nu_{n,i})\subset J_i$ for any $j\in J_i$, we have that
\be\lab{5-134} B_{R\nu_{n,i}}(0)\subset D_n\bigcup\sbr{\bigcup_{j\in J_i}D_{n,j}}\bigcup\sbr{\bigcup_{l\in J_i}B_{r_{n,l}^{(\eta_0)}}(\phi_{n,i}(q_{n,l}))}, \ee
for large $n$. Then, we get from \eqref{5-126}, \eqref{5-130} and \eqref{5-131} and $\nu_{n,i}=o(|x_{n,i}|)$ that
\be\lab{5-133}\begin{aligned} 
	\IR_n\sbr{B_{R\nu_{n,i}}(\phi_{n,i}(q_{n,l}))}\le&\sum_{l\in J_i}\f{4+o(1)}{p_n\ga_{n,l}^{p_n-1}}\sbr{\ln\f{6\tau\nu_{n,i}}{|\phi_{n,i}(q_{n,l})-z_n|}+O(1)} \\
	& +O\sbr{\la_n\Ga_n^{p_n-1}e^{\Ga_n^{p_n}}\nu_{n,i}^2|x_{n,i}|^{2\al}},
\end{aligned}\ee
for large $n$. By the estimate \eqref{5-118-0}, we get that $\nu_{n,i}^{3/2}=o(\Ga_n^{1-p_n})$, and then, evaluating \eqref{5-70} at $\tau\nu_{n,i}$ and by \eqref{5-111}, we get that
	$$\Ga_n\le\f{2}{p_n\ga_{n,i}^{p_n-1}}\sbr{\ln\f{1}{\la_n\ga_{n,i}^{2(p_n-1)}|x_{n,i}|^{2\al}\nu_{n,i}^2}+O(1)}+o(\Ga_n^{1-p_n}),$$
then with \eqref{5-114} that
	$$e^{\Ga_n^{p_n}}\le \exp\sbr{\f{2\Ga_n^{p_n-1}}{p_n\ga_{n,i}^{p_n-1}}\ln\f{1}{\la_n\ga_{n,i}^{2(p_n-1)}|x_{n,i}|^{2\al}\nu_{n,i}^2}+o(1)},$$
that
	$$\la_n\ga_{n,i}^{2(p_n-1)}|x_{n,i}|^{2\al}\le\exp\sbr{-(1+o(1))\Ga_n\ga_{n,i}^{p_n-1}},$$
and eventually that
\be\lab{5-135} \la_n\Ga_n^{p_n-1}e^{\Ga_n^{p_n}}\nu_{n,i}^2|x_{n,i}|^{2\al}= o(\Ga_n^{1-p_n}),\ee
for large $n$. Plugging \eqref{5-135} and \eqref{5-114} in \eqref{5-133} gives \eqref{5-132} in the first case. 

Secondly, if $|x_{n,i}|=O(\nu_{n,i})$ and $d_{g_0}(q_0,\lbr{q_{n,l}:~l\in J_i})=o(\nu_{n,i})$,  then $B_{\tau\nu_{n,i}/3}(x_{n,i})\subset B_{\tau\nu_{n,i}/2}(\phi_{n,i}(q_{n,j_0}))$ for some $j_0\in J_i$, so that we still have \eqref{5-134}. Then, the same argument between \eqref{5-133} and \eqref{5-135} gives \eqref{5-132} in the second case. 

Finally, we assume that $|x_{n,i}|=O(\nu_{n,i})$ but $\nu_{n,i}=O(d_{g_0}(q_0,\lbr{q_{n,l}:~l\in J_i}))$ for large $n$.  This means that $B_{\tau\nu_{n,i}/3}(x_{n,i})\subset B_{R\nu_{n,i}}(0)$ and $B_{\tau\nu_{n,i}/3}(x_{n,i})\cap B_{\tau\nu_{n,i}/3}(\phi_{n,i}(q_{n,l}))=\emptyset$ for all $j\in J_i$ and large $n$. Noting that, by \eqref{5-112} and by the choice of $\tau$, we have that $u_{n,i}=\Ga_n+O(\Ga_n^{1-p_n})$ on $\pa B_{\tau\nu_{n,i}/2}(x_{n,i})$. Then, by the argument between \eqref{5-47} and \eqref{5-54-1}, and using \eqref{5-97} and the assumption \ref{5-0}, we get that 
	$$u_{n,i}=\Ga_n+O(\Ga_n^{1-p_n})$$
uniformly in $B_{\tau\nu_{n,i}/2}(x_{n,i})$ for large $n$. This gives that
\be\lab{5-136-0} \IR_n\sbr{B_{\tau\nu_{n,i}/2}(x_{n,i})}=O\sbr{\la_n\Ga_n^{p_n-1}e^{\Ga_n^{p_n}}\nu_{n,i}^2\sbr{\nu_{n,i}+|x_{n,i}|}^{2\al}}, \ee
so that the same argument between \eqref{5-133} and \eqref{5-135} gives again \eqref{5-132} in the final case.

We are now in position to conclude the proof of \eqref{5-105}. Plugging \eqref{5-118}, \eqref{5-120} and \eqref{5-132} in \eqref{5-121}, and by the choice \eqref{5-120-1} of $z_n$, we get that
\be\lab{5-136}\begin{aligned} 
	\abs{u_{n,i}-\bar u_{n,i}(R\nu_{n,i})}&\le(2\pi C_2+o(1))\Ga_n^{1-p_n}+\sum_{l\in J_i}\f{4+o(1)}{p_n\ga_{n,l}^{p_n-1}}\ln\f{6\tau\nu_{n,i}}{|\cdot-\phi_{n,i}(q_{n,l})|} ,
\end{aligned}\ee
in $B_{R\nu_{n,i}}(0)\setminus\cup_{l\in J_i}B_{r_{n,l}^{(\eta_0)}}\sbr{\phi_{n,i}(q_{n,l})}$. Then, using \eqref{5-114}, we get from \eqref{5-136} and \eqref{5-101} that
\be\lab{5-137} \abs{\bar u_{n,i}(\nu_{n,i})-\bar u_{n,i}(R\nu_{n,i})}\le(2\pi C_2+o(1))\Ga_n^{1-p_n}, \ee
for large $n$. Then, $p_0=2$, \eqref{5-136} and \eqref{5-137} together imply that
\be\lab{5-138} \abs{u_{n,i}-\bar u_{n,i}(\nu_{n,i})}\le\f{9}{4}\pi C_2\Ga_n^{1-p_n}+\sum_{l\in I_{n,i}(\nu_{n,i})}\f{2+o(1)}{\ga_{n,l}^{p_n-1}}\ln\f{6\tau\nu_{n,i}}{|\cdot-\phi_{n,i}(q_{n,l})|}, \ee
in $B_{\nu_{n,i}}(0)\setminus\cup_{l\in I_{n,i}(\nu_{n,i})}B_{r_{n,l}^{(\eta_0)}}\sbr{\phi_{n,i}(q_{n,l})}$ for large $n$. But by \eqref{5-104}, the assumption \eqref{5-106} with \eqref{conv-0}, the inequality in \eqref{5-102} for $j=i$ and $r=\nu_{n,i}$ should be an equality somewhere on $\pa B_{\nu_{n,i}}(0)$, which leads to a contradiction to \eqref{5-138} and concludes the proof of \eqref{5-105}. 
\end{proof}

Now, we prove the quantization result.
\begin{lemma}\lab{lemma5-6}
Let $p_0=2$ in \eqref{5-81}, then there holds
	$$\lim_{n\to+\iy}\beta_n(B_{\kappa_0}(q_0))=4\pi N',$$ 
where $N'$ is given above \eqref{5-0} and $\beta_n(\cdot)$ is given by \eqref{betadomain}.
\end{lemma}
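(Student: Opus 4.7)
The plan is to use Lemma~\ref{lemma5-5} to reduce the integrals in \eqref{betadomain} to contributions from the $N'$ bubble cores, where the sharp quantization \eqref{5-83}--\eqref{5-84} applies, and then extract the limit from the algebraic structure of \eqref{betadomain}.

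First, combining \eqref{5-105} with the oscillation bound \eqref{5-102} and the gradient control \eqref{est-4}, and noting that the log tails $\gamma_{n,l}^{1-p_n}\ln(\cdot)$ are $o(1)$ outside small annular neighborhoods of each bubble center, we deduce $u_n = O(1)$ uniformly on the exterior
$$\Sigma_{n,\mathrm{ext}} := B_{\kappa_0}(q_0) \setminus \bigcup_{l=1}^{N'}\phi_{n,l}^{-1}\bigl(B_{r_{n,l}^{(\eta_0)}}(0)\bigr),$$
using a maximum-principle argument as in \eqref{5-45}--\eqref{5-54-1} to fill the possible ``hole'' at $q_0$ (the Type I assumption \eqref{5-0} rules out a singular bubble there). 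Corollary~\ref{lemma5-3} ($\lambda_n \to 0$) together with the integrability of $h$ near $q_0$ from Lemma~\ref{bound-0} then gives
$$\lambda_n \int_{\Sigma_{n,\mathrm{ext}}} hf(1 + u_n^{p_n}) e^{u_n^{p_n}} \rd v_{g_0} = o(1).$$
Summing the bubble quantizations \eqref{5-83}--\eqref{5-84} over $l$, plugging into \eqref{betadomain}, and using $(2-p_n)/p_n + 2(p_n-1)/p_n = 1$ to cancel the $\lambda_n$ powers (and a mirrored identity in the $\gamma_{n,i}$ powers where $\gamma_{n,i} := \max_l \gamma_{n,l}$), we obtain
$$\beta_n(B_{\kappa_0}(q_0)) = 4\pi(1+o(1))\Bigl(\sum_{l=1}^{N'} a_{n,l}^{-2(p_n-1)}\Bigr)^{\!(2-p_n)/p_n}\Bigl(\sum_{l=1}^{N'} a_{n,l}^{2-p_n}\Bigr)^{\!2(p_n-1)/p_n},$$
where $a_{n,l} := \gamma_{n,l}/\gamma_{n,i} \in (0,1]$. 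H\"older's inequality with the conjugate exponents $p_n/(2-p_n)$ and $p_n/(2(p_n-1))$ applied to the discrete measure on $\{1,\ldots,N'\}$ implies that this expression is at least $4\pi N'$, with asymptotic equality if and only if all $a_{n,l} \to 1$.

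The remaining step, which is also the hardest, is to establish $a_{n,l} \to 1$ for each $l$. In the case $p_n \equiv 2$ the exponent $(2-p_n)/p_n$ vanishes and $a_{n,l}^{2-p_n} = 1$, so the formula reduces immediately to $4\pi N'$. For $p_n \to 2^-$, assuming by contradiction that $a_{n,l_0} \to 0$ along a subsequence, one combines the defining relation \eqref{mun-1} for $\mu_{n,l}$ (which yields $\gamma_{n,l_0}^{p_n} - \gamma_{n,i}^{p_n} = 2\ln(\mu_{n,i}/\mu_{n,l_0}) + 2(1-p_n)\ln(\gamma_{n,l_0}/\gamma_{n,i}) - 2\alpha\ln(|x_{n,l_0}|/|x_{n,i}|) + O(1)$) with the averaged estimate \eqref{5-70} at both bubbles and with Lemma~\ref{lemma5-5}. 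Propagating the height $\gamma_{n,i}$ via the Green-function/maximum-principle argument of \eqref{5-119}--\eqref{5-137} across the exterior region, where $u_n$ is uniformly bounded, forces $a_{n,l_0}$ to remain bounded below, contradicting $a_{n,l_0} \to 0$. The main obstacle is exactly this comparability: unlike the subcritical setting of Lemma~\ref{lemma5-4} where \eqref{5-86} followed from \eqref{5-85-1} using $p_0 < 2$, here we must carefully iterate Lemma~\ref{lemma5-5} across pairs of bubbles to produce the needed height comparisons.
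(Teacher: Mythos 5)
There are two genuine gaps. First, your claim that $u_n=O(1)$ uniformly on $\Sigma_{n,\mathrm{ext}}=B_{\kappa_0}(q_0)\setminus\bigcup_l\phi_{n,l}^{-1}(B_{r_{n,l}^{(\eta_0)}}(0))$ is false: on $\partial B_{r_{n,l}^{(\eta_0)}}(0)$ one has $u_{n,l}=\bar u_{n,l}(r_{n,l}^{(\eta_0)})+O(\gamma_{n,l}^{1-p_n})=(1-\eta_0+o(1))\gamma_{n,l}\to+\infty$, and Lemma \ref{lemma5-5} only bounds the average at the much larger radius $\nu_{n,l}$. The neck annuli between $r_{n,l}^{(\eta_0)}$ and (roughly) $\nu_{n,l}$ are exactly where the work lies: the paper controls the nonlinearity there not by a uniform bound on $u_n$ but by the oscillation estimate \eqref{5-102} fed into the expansion \eqref{5-127}--\eqref{5-129}, which is where the choice $\eta_0>1-\tfrac{1}{201(1+N')^2}$ is used, and by introducing a slowly growing threshold $\ti\Gamma_n$ with radii $\ti\nu_{n,j}$ and the bound $u_n\le 2\ti\Gamma_n$ outside the $R\ti\nu_{n,j}$-balls (see \eqref{5-140}--\eqref{5-144}). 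Moreover, even granting an exterior estimate, your bound ``$\lambda_n\int_{\Sigma_{n,\mathrm{ext}}}hf(1+u_n^{p_n})e^{u_n^{p_n}}\,\rd v_{g_0}=o(1)$'' is too weak for the first factor of \eqref{betadomain}: the bubble contribution to $\int hf(e^{u_n^{p_n}}-1)$ is of size $\gamma_{n,j}^{-2(p_n-1)}\approx\gamma_{n,j}^{-2}\to0$, so an additive $o(1)$ error can dominate it, and after raising to the power $(2-p_n)/p_n$ one must control $(2-p_n)\ln(\text{relative error})$. This is precisely why the paper proves the sharper bound $O(\lambda_ne^{(2\ti\Gamma_n)^{p_n}})$ in \eqref{5-145} and tunes $\ti\Gamma_n$ so that \eqref{5-144} holds.

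Second, your endgame targets the wrong statement. You assert that the limit equals $4\pi N'$ only if all ratios $a_{n,l}=\gamma_{n,l}/\gamma_{n,i}\to1$, and you propose to prove this comparability of heights. In the critical regime $p_n\to2$ this comparability is neither available nor needed: the subcritical argument \eqref{5-85-1}--\eqref{5-86} uses $p_0<2$ in an essential way, and the paper never proves $\gamma_{n,l}/\gamma_{n,j_0}\to1$ here. Because the exponent $(2-p_n)/p_n$ vanishes, asymptotic equality in your H\"older step only requires the much weaker fact that the height ratios raised to the power $2-p_n$ tend to $1$, i.e. $(2-p_n)\ln(\gamma_{n,j}/\gamma_{n,j_0})\to0$; this is what the paper extracts from \eqref{5-85-1} (applied with $\eta=\eta_0$) and \eqref{5-79}, yielding $(2-p_n)\gamma_{n,j}^{p_n}\le2(1-\eta_0+o(1))\gamma_{n,j_0}^{p_n}$ and hence $(\gamma_{n,j}/\gamma_{n,j_0})^{2-p_n}=1+o(1)$ in \eqref{5-147}. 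Your sketched Green-function contradiction argument for $a_{n,l}\to1$ is therefore aimed at a statement that is stronger than necessary and very likely unprovable, and as written it does not substitute for the actual computation \eqref{5-145}--\eqref{5-147}.
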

\begin{proof}
Picking a sequence $(\ti\Ga_n)_n$ such that $\lim_{n\to+\iy}\ti\Ga_n=+\iy$ and $\ti\Ga_n=o(\ga_{n,j})$, and setting
	$$\ti\nu_{n,j}:=\sup\lbr{r\in[0,\kappa]:~\bar u_{n,j}\ge\ti\Ga_n~\text{in}~[0,r]},$$
we get from \eqref{5-105} that (letting $\eta_0$ be close to 1)
\be\lab{5-140} \ti\nu_{n,j}\le\nu_{n,j} \ee
for all $j\in\{1,\cdots,N'\}$ and large $n$.
By \eqref{conv-0}, $\ti\nu_{n,j}=o(1)$, and hence $\bar u_{n,j}(\ti\nu_{n,j})=\ti\Ga_n$ for all $j$. 
By the argument below \eqref{5-106}, we may fix a large $R\gg1$ such that 
	$$d\sbr{\pa B_{R\ti\nu_{n,j}}(0),\{x_{n,j}\}\cup\lbr{\phi_{n,j}(q_{n,l}):~l=1,\cdots,N'}}\ge\f{\ti\nu_{n,j}}{R},$$
then, estimating $\bar u_{n,j}(\ti\nu_{n,j}/R)$ by applying \eqref{5-102} with $r=\ti\nu_{n,j}$, and using \eqref{est-4}, we get that
\be\lab{5-141} u_n=\ti\Ga_n+o(1),\ee
uniformly in $\phi_{n,j}^{-1}\sbr{\pa B_{R\ti\nu_{n,j}}(0)}$ for large $n$ and all $j$. Arguing as below \eqref{5-90}, we get from \eqref{5-141} and the assumption \eqref{5-0} that
\be\lab{5-142} u_n\le 2\ti\Ga_n \quad\text{unifomly in}~B_{\kappa_0}(q_0)\setminus\bigcup_{j=1}^{N'}\phi_{n,j}^{-1}\sbr{B_{R\ti\nu_{n,j}}(0)}\ee
for large $n$. 
We claim that
\be\lab{5-143} (2-p_n)\ln\sbr{1+\la_n\ga_{n,j}^{2(p_n-1)}}=o(1) \ee
for all $j$. Indeed, if $(2-p_n)\ga_{n,j}^{p_n}\le 4(p_n-1)\ln\ga_{n,j}$, then by \eqref{5-80} we have  that 
	$$(2-p_n)\ln\sbr{1+\la_n\ga_{n,j}^{2(p_n-1)}}\le \f{16(\ln\ga_{n,j})^2}{\ga_{n,j}^{p_n}}=o(1);$$
otherwise, we have $(2-p_n)\ga_{n,j}^{p_n}\ge 4(p_n-1)\ln\ga_{n,j}$, and then by \eqref{5-79} we get that $\la_n\ga_{n,j}^{2(p_n-1)}=O(1)$, which concludes the proof of \eqref{5-143}.

Then, by using \eqref{5-80} and \eqref{5-143}, we may choose and fix $\ti\Ga_n$ growing slowly to $+\iy$, such that
\be\lab{5-144}\begin{aligned} 
	\la_n\ti\Ga_n^{p_n}&e^{(2\ti\Ga_n)^{p_n}}=o(\ga_{n,j}^{2-p_n}) \quad\text{and}\\
	&(2-p_n)\ln\sbr{1+\la_n\ga_{n,j}^{2(p_n-1)}e^{(2\ti\ga_n)^{p_n}} }=o(1) 
\end{aligned}\ee
for large $n$ and all $j$. 

We may now compute and use either \eqref{5-142} in $B_{\kappa_0}(q_0)\setminus\bigcup_{j=1}^{N'}\phi_{n,j}^{-1}\sbr{B_{R\ti\nu_{n,j}}(0)}$, or the controls given by the inequality in \eqref{5-2} for $r=\ti\nu_{n,j}$ thanks to \eqref{5-140}, allowing to estimate the nonlinearity as in \eqref{5-127}-\eqref{5-130} and \eqref{5-136-0}, and to estimate the $\ti\nu_{n,j}$ as in \eqref{5-118-0}. This leads to the following estimates:
\be\lab{5-145}\begin{aligned}
	&\f{\la_np_n^2}{2}\int_{B_{\kappa_0}(q_0)\setminus\bigcup_{j=1}^{N'}\phi_{n,j}^{-1}\sbr{B_{r_{n,j}^{(\eta_0)}}(0)}}hfu_n^{p_n}e^{u_n^{p_n}}\rd v_{g_0}=o(\ga_{n,j}^{2-p_n}),\\
	&\f{\la_np_n^2}{2}\int_{B_{\kappa_0}(q_0)\setminus\bigcup_{j=1}^{N'}\phi_{n,j}^{-1}\sbr{B_{r_{n,j}^{(\eta_0)}}(0)}} hf\sbr{e^{u_n^{p_n}}-1}\rd v_{g_0}=O\sbr{\la_ne^{(2\ti\Ga_n)^{p_n}}},
\end{aligned}\ee
while, computing as in \eqref{5-126}, we get that
\be\lab{5-146}\begin{aligned}
	&\f{\la_np_n^2}{2}\ga_{n,j}^{p_n-2}\int_{\phi_{n,j}^{-1}\sbr{B_{r_{n,j}^{(\eta_0)}}(0)}}hfu_n^{p_n}e^{u_n^{p_n}}\rd v_{g_0}=4\pi+o(1),\\
	&\f{\la_np_n^2}{2}\ga_{n,j}^{2(p_n-1)}\int_{\phi_{n,j}^{-1}\sbr{B_{r_{n,j}^{(\eta_0)}}(0)}} hf\sbr{e^{u_n^{p_n}}-1}\rd v_{g_0}=4\pi+o(1),
\end{aligned}\ee
for large $n$. Thus, by plugging \eqref{5-125} and \eqref{5-126} in \eqref{betadomain}, and by using the condition \eqref{5-144} on $\ti\Ga_n$, we get that 
\be\lab{5-147}\begin{aligned} 
	&\beta_n(B_{\kappa_0}(q_0))\\
	&=\sbr{\sum_{j=1}^{N'}\f{4\pi+o(1)}{\ga_{n,j}^{2(p_n-1)}}+\la_ne^{(2\ti\Ga_n)^{p_n}}}^{\f{2-p_n}{p_n}} \sbr{\sum_{j=1}^{N'}(4\pi+o(1))\ga_{n,j}^{p_n-2}}^{\f{2(p_n-1)}{p_n}}\\
	&=(4\pi+o(1))\sbr{1+\sum_{j\neq j_0}\sbr{\f{\ga_{n,j_0}}{\ga_{n,j}}}^{2(p_n-1)} }^{\f{2-p_n}{p_n}} \sbr{ 1+\sum_{j\neq j_0}\sbr{\f{\ga_{n,j}}{\ga_{n,j_0}}}^{2-p_n} }^{\f{2(p_n-1)}{p_n}}\\
	&=(4\pi+o(1))\sbr{1+\sum_{j\neq j_0}\sbr{\f{\ga_{n,j}}{\ga_{n,j_0}}}^{2-p_n} }^{\f{2(p_n-1)}{p_n}}
\end{aligned}\ee
where $j_0\in\{1,\cdots,N'\}$ is chosen such that $\ga_{n,j_0}=\min\lbr{\ga_{n,j}:~j=1,\cdots,N'}$ for large $n$. 
Noting that \eqref{5-85-1} still holds for $l=j_0$ and $\eta=\eta_0$, and by \eqref{5-79}, we get that
	$$(2-p_n)\ga_{n,j}^{p_n}\le 2(1-\eta_0+o(1))\ga_{n,j_0}^{p_n},$$
so that $\sbr{\ga_{n,j}/\ga_{n,j_0}}^{2-p_n}=1+o(1)$, for all $j$. Plugging this estimate in the last line of \eqref{5-147}, we conclude the proof of Lemma \ref{lemma5-6}.
\end{proof}

\vs
\subsection{Compactness for critical level \texorpdfstring{$4\pi N'$}{}}\lab{sec5-4}\ 

In this subsection, we assume that 
\be\lab{5-150} p_n\equiv p\in(1,2] \ee 
for all $n$. 
By Lemma \ref{lemma5-4} and \ref{lemma5-6}, we see that 
  $$\lim_{n\to+\iy}\beta_n(B_{\kappa_0}(q_0))=4\pi N',$$ 
where $N'$ is given above \eqref{5-0} and $\beta_n(\cdot)$ is given by \eqref{betadomain}.
Here, we prove a more accurate estimates of the energy.

\begin{lemma}\lab{lemma5-7}
Let \eqref{5-150} hold, then we have that
\be\lab{5-151} \beta_n(B_{\kappa_0}(q_0))\ge 4\pi N'+\f{4(p-1)}{p^2}(4\pi+o(1))\sum_{i=1}^{N'}\f{1}{\ga_{n,i}^{2p}} \ee 
for large $n$. 
\end{lemma}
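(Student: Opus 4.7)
The plan is to revisit the proof of Lemma \ref{lemma5-6} and sharpen the asymptotic expansions on each bubble region. Writing $\beta_n(B_{\kappa_0}(q_0)) = (\lambda_n p^2/2)\,A_n^{(2-p)/p} B_n^{2(p-1)/p}$ with $A_n := \int_{B_{\kappa_0}(q_0)} hf(e^{u_n^p}-1)\,\mathrm dv_{g_0}$ and $B_n := \int_{B_{\kappa_0}(q_0)} hf\,u_n^p e^{u_n^p}\,\mathrm dv_{g_0}$, the identities \eqref{5-145}--\eqref{5-146} provide the leading-order asymptotics with $o(1)$ errors and already yield $\lim_{n\to+\infty}\beta_n = 4\pi N'$ by Lemmas \ref{lemma5-4} and \ref{lemma5-6}. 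The strategy is to replace each $o(1)$ error by an explicit $\gamma_{n,j}^{-p}$ term plus an $O(\gamma_{n,j}^{-2p})$ remainder, then combine the two factors in \eqref{5-147} to see that the $\gamma^{-p}$ contributions cancel, leaving a positive $\gamma^{-2p}$ correction with the precise coefficient $4(p-1)/p^2$.

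The first step is to Taylor-expand the maps $s\mapsto s^p e^{s^p}$ and $s\mapsto e^{s^p}-1$ around $s=\gamma_{n,j}$, using the second-order radial expansion of $v_{n,j}$ from Appendix \ref{appe1-2} (Proposition \ref{single-2}, with \eqref{16-66}) together with the pointwise control $u_n = v_n + w_n$ of Proposition \ref{regular} and the bounds \eqref{5-10}--\eqref{5-11}. After rescaling $x=\mu_{n,j}y$ on each ball $\phi_{n,j}^{-1}(B_{r_{n,j}^{(\eta_0)}}(0))$ and reducing to classical integrals of the form $\int_{\R^2}(1+|y|^2)^{-2}\,\mathrm dy$ and $\int_{\R^2}(1+|y|^2)^{-2}\ln(1+|y|^2)\,\mathrm dy$, this should produce explicit refinements
\[
\tfrac{\lambda_n p^2}{2}\gamma_{n,j}^{2(p-1)}A_{n,j}=4\pi\bigl(1+\alpha_A(p)\gamma_{n,j}^{-p}\bigr)+O(\gamma_{n,j}^{-2p}),\quad \tfrac{\lambda_n p^2}{2}\gamma_{n,j}^{p-2}B_{n,j}=4\pi\bigl(1+\alpha_B(p)\gamma_{n,j}^{-p}\bigr)+O(\gamma_{n,j}^{-2p}),
\]
for explicit rational functions $\alpha_A(p),\alpha_B(p)$. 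The exterior contribution stays $o(\gamma_{n,j_0}^{-2p})$ by a quantitative version of \eqref{5-145}, choosing $\widetilde\Gamma_n$ to grow sufficiently slowly. Using \eqref{5-79} and \eqref{5-85} with $p_n\equiv p$ fixed, one deduces $\gamma_{n,j}=\gamma_{n,j_0}(1+O(\gamma_{n,j_0}^{-p}))$ for all $j$, and plugging the refinements into \eqref{5-147} and expanding the product to second order gives
\[
\beta_n(B_{\kappa_0}(q_0))=4\pi N'+4\pi N'\!\left(\tfrac{2-p}{p}\alpha_A(p)+\tfrac{2(p-1)}{p}\alpha_B(p)\right)\!\gamma_{n,j_0}^{-p}+C(p)\,\gamma_{n,j_0}^{-2p}+o(\gamma_{n,j_0}^{-2p}).
\]

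The main obstacle is the precise identification of $\alpha_A(p)$ and $\alpha_B(p)$ and the verification that the combined $\gamma^{-p}$ coefficient vanishes while $C(p)\cdot N'=4\pi\cdot 4(p-1)/p^2\cdot N'$, so that replacing $N'\gamma_{n,j_0}^{-2p}$ by $\sum_{i=1}^{N'}\gamma_{n,i}^{-2p}$ (the error incurred is of strictly lower order than the correction itself) yields \eqref{5-151}. I expect the cancellation of the $\gamma^{-p}$ coefficient to reflect the Pohozaev identity applied to the limiting Liouville profile $T_0=\ln(1+|y|^2)$, following the template of \cite[Section 5]{MT-blowup-7} which is cited in the paper as the origin of this kind of sharp lower bound. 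The singular weight $|x-x_{n,j}|^{2\alpha}$ should not cause additional difficulty here because in {\bf Type I} the non-singular bubbles sit away from the conical point $q_0$ (recall $\mu_{n,j}=o(|x_{n,j}|)$ by \eqref{5-3}), so that after rescaling the weight is uniformly $C^1$-close to the constant $|x_{n,j}|^{2\alpha}$ already absorbed into the normalization \eqref{mun-1}; the computations of \cite{MT-blowup-7} then transfer verbatim, modulo this harmless multiplicative factor.
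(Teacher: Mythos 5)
Your proposal has the right phenomenon in mind (a second-order expansion in which the $\ga_{n,j}^{-p}$ terms disappear and the $\ga_{n,j}^{-2p}$ term carries the coefficient $4(p-1)/p^2$, ultimately coming from the model bubble computation of \cite{MT-blowup-7}), but the concrete route has two genuine gaps. First, you expand the product of the \emph{global} sums as in \eqref{5-147}, and to collapse it to $4\pi N'\big(1+\cdots\ga_{n,j_0}^{-p}+C\ga_{n,j_0}^{-2p}\big)$ you invoke $\ga_{n,j}=\ga_{n,j_0}\big(1+O(\ga_{n,j_0}^{-p})\big)$ for all $j$. Nothing in \eqref{5-79} or \eqref{5-85} gives this: for $p<2$ one only has $\ga_{n,j}=(1+o(1))\ga_{n,i}$ (see \eqref{5-86}), and for $p=2$ the bubble heights need not be comparable at all (only $(\ga_{n,j}/\ga_{n,j_0})^{2-p}=1+o(1)$, which is vacuous at $p=2$). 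At the precision $\ga^{-2p}$ you need, the cross terms between bubbles of different heights cannot be controlled this way. The paper avoids the issue entirely by \emph{decoupling} the bubbles before expanding: in \eqref{5-155} it drops the exterior (harmless for a lower bound) and applies the vector H\"older inequality $(\sum_i a_i)^{\f{2-p}{p}}(\sum_i b_i)^{\f{2(p-1)}{p}}\ge\sum_i a_i^{\f{2-p}{p}}b_i^{\f{2(p-1)}{p}}$, reducing \eqref{5-151} to a per-bubble product estimate, which is exactly why the bound comes out with the individual $\ga_{n,i}^{-2p}$ and no comparability of heights is ever needed.

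Second, the expansions you plan to use are not accurate enough on the region $B_{r_{n,j}^{(\eta_0)}}(0)$. Proposition \ref{single-2} is a first-order result: its error $O(e^{\ti\eta t_n}/\ga_n^{p_n})$ in the nonlinearity is of the same size as the first correction after integration, so it cannot identify your coefficients $\alpha_A(p),\alpha_B(p)$, much less the $\ga^{-2p}$ term; likewise $w_n=u_n-v_n$ is only $O(\ga_n^{1-p})$ near $\pa B_{r_{n,j}^{(\eta_0)}}(0)$ by \eqref{regular-1}, which perturbs $e^{u_n^p}$ by $O(1)$ factors there. The paper's proof sidesteps both problems by working on much smaller balls $B_{\bar r_{n,i}}(0)$ with $t_{n,i}(\bar r_{n,i})=\sqrt{\ga_{n,i}}$ (see \eqref{5-152}--\eqref{5-154}), so that $\bar r_{n,i}=o(e^{-\ga_{n,i}})$, $|u_{n,i}-v_{n,i}|=o(e^{-\ga_{n,i}})$, and hence $e^{u_{n,i}^p}=e^{v_{n,i}^p}(1+o(\ga_{n,i}^{-2p}))$ pointwise; on each such ball the sharp \emph{product} estimate for the radial bubble, Proposition \ref{single-3} (which packages \cite[Corollary 5.1]{MT-blowup-7}, including the cancellation of the $\ga^{-p}$ term), applies verbatim. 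If you want to salvage your route, you would have to (i) insert the H\"older decoupling step, and (ii) either shrink the balls as the paper does or import the genuinely second-order bubble expansion of \cite[Proposition 5.1]{MT-blowup-7} and redo the cancellation computation yourself -- the "main obstacle" you flag is precisely the content you cannot leave open.
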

\begin{proof}
Let now $\bar r_{n,i}$ be given by
\be\lab{5-152} t_{n,i}(\bar r_{n,i})=\sqrt{\ga_{n,i}} \ee
for any $i\in\{1,\cdots,N'\}$ and all $n$. Choosing $\eta=\f12$ in \eqref{5-6} and \eqref{5-7}, we get from Lemma \ref{lemma5-1} that $\bar r_{n,i}^{(1/2)}=r_{n,l}^{(1/2)}$ for any $i\in\{1,\cdots,N'\}$ and large $n$.  
Moreover,  since $|x_{n,i}|=o(1)$ according to \eqref{xn} and \eqref{vpn}, we get from \eqref{5-2}  that
\be\lab{5-153} r_{n,i}^{(1/2)}=O(r_{n,i})=O(|x_{n,i}|)=o(1) \ee
for large $n$ and all $i\in\{1,\cdots,N'\}$. By \eqref{5-6} and \eqref{5-152}, we first deduce that
\be\lab{5-153-1} 2\ln\f{\bar r_{n,i}}{r_{n,i}^{(1/2)}}=t_{n,i}(\bar r_{n,i})-t_{n,i}(r_{n,i}^{(1/2)})+o(1)\le-3\ga_{n,i},\ee
and then, we find from \eqref{5-153} that
\be\lab{5-154} \bar r_{n,i}=O(|x_{n,i}|e^{-\ga_{n,i}})=o(e^{-\ga_{n,i}})  \ee
for large $n$ and all $i\in\{1,\cdots,N'\}$.
An easy consequence of \eqref{vpn}, \eqref{5-1} and \eqref{5-153} is that any two of the domains $\phi_{n,i}^{-1}(B_{\bar r_{n,i}}(0))$, $i=1,\cdots, N'$, are disjoint for large $n$. Then, by \eqref{betadomain}, we may write that
\be\lab{5-155} \begin{aligned}
  \beta_n(B_{\kappa_0}(q_0))
  &\ge \sbr{\sum_{i=1}^{N'}\f{\la_np^2}{2}\int_{B_{\bar r_{n,i}}(0)}h_{n,i}f_{n,i}(e^{u_{n,i}^p}-1)\rd v_{g_0}}^{\f{2-p}{p}} \\
    &\qquad\qquad \times\sbr{\sum_{i=1}^{N'}\f{\la_np^2}{2}\int_{B_{\bar r_{n,i}}(0)}h_{n,i}f_{n,i}u_{n,i}^pe^{u_{n,i}^p}\rd v_{g_0}}^{\f{2(p-1)}{p}}\\
  &\ge \sum_{i=1}^{N'} \left[ \sbr{\f{\la_np^2}{2}\int_{B_{\bar r_{n,i}}(0)}h_{n,i}f_{n,i}(e^{u_{n,i}^p}-1)\rd v_{g_0}}^{\f{2-p}{p}} \right. \\
    &\qquad\qquad \left. \times\sbr{\f{\la_np^2}{2}\int_{B_{\bar r_{n,i}}(0)}h_{n,i}f_{n,i}u_{n,i}^pe^{u_{n,i}^p}\rd v_{g_0}}^{\f{2(p-1)}{p}}\right]
\end{aligned}\ee
for large $n$, where in the second inequaltiy we use H\"older inequality for vectors in $\R^{N'}$. 
By using \eqref{5-153-1} and \eqref{5-10} with $\eta=\f12$, we get that
  $$\abs{u_{n,i}-v_{n,i}}=O\sbr{\f{\bar r_{n,i}}{\ga_{n,i}^{p-1}r_{n,i}^{(1/2)}}}=o(e^{-\ga_{n,i}})$$
uniformly in $B_{\bar r_{n,i}}(0)$ for large $n$ and all $i$. 
Then, using Proposition \ref{single-2} to get that $v_{n,i}=(1+o(1))\ga_{n,i}$, we obtain that
\be\lab{5-156}     u_{n,i}^{p}=v_{n,i}^p\sbr{1+O\sbr{\f{e^{-\ga_{n,i}}}{\ga_{n,i}}} } \quad\text{and}\quad  e^{u_{n,i}^{p}}=e^{v_{n,i}^p}\sbr{1+o\sbr{\f{1}{\ga_{n,i}^{2p}}} } \ee
uniformly in $B_{\bar r_{n,i}}(0)$ for large $n$ and all $i$. 
By noting that $\bar r_{n,i}=o(|x_{n,i}|)$ by \eqref{5-153} and \eqref{5-153-1}, and by using \eqref{5-154}, we get first from \eqref{hi} that
  $$h_{n,i}f_{n,i}=\ti h_{n,i}(0)f_{n,i}(0)|x_{n,i}|^{2\al}(1+o(e^{-\ga_{n,i}}))$$
uniformly in $B_{\bar r_{n,i}}(0)$, and then, by using \eqref{5-156}, we get that
\be\lab{5-157} \begin{aligned}  
  &\f{\la_np^2}{2}\int_{B_{\bar r_{n,i}}(0)}h_{n,i}f_{n,i}(e^{u_{n,i}^p}-1)\rd v_{g_0}\\
  &=\f{\la_np^2}{2}\int_{B_{\bar r_{n,i}}(0)}\ti \tau_{n,i}|x_{n,i}|^{2\al}e^{v_{n,i}^p}\rd v_{g_0}\sbr{1+o(\ga_{n,i}^{-2p})}+O\sbr{\la_n\bar r_{n,i}^2|x_{n,i}|^{2\al}}\\
  &=\f{\la_np^2}{2}\int_{B_{\bar r_{n,i}}(0)}\ti \tau_{n,i}|x_{n,i}|^{2\al}e^{v_{n,i}^p}\rd v_{g_0}\sbr{1+o(\ga_{n,i}^{-2p})}
\end{aligned}\ee
with $\ti \tau_{n,i}=\ti h_{n,i}(0)f_{n,i}(0)$ for large $n$ and all $i$, where in the second equality we use \eqref{boundla}, the first equality in \eqref{5-154} and the fact that
  $$\f{\la_np^2}{2}\ga_{n,i}^{2(p-1)}\int_{B_{\bar r_{n,i}}(0)}\ti \tau_{n,i}|x_{n,i}|^{2\al}e^{v_{n,i}^p}\rd v_{g_0}=O(1)$$
by the argument to get \eqref{5-146}. Similar arguments give that
\be\lab{5-158} \begin{aligned}  
  &\f{\la_np^2}{2}\int_{B_{\bar r_{n,i}}(0)}h_{n,i}f_{n,i}u_{n,i}^pe^{u_{n,i}^p}\rd v_{g_0}\\
  &=\f{\la_np^2}{2}\int_{B_{\bar r_{n,i}}(0)}\ti \tau_{n,i}|x_{n,i}|^{2\al}v_{n,i}^pe^{v_{n,i}^p}\rd v_{g_0}\sbr{1+o(\ga_{n,i}^{-2p})}
\end{aligned}\ee
for large $n$ and all $i$.
By plugging \eqref{5-157} and \eqref{5-158} in \eqref{5-155}, and by applying Proposition \ref{single-3} to $v_{n,i}$, we conclude the proof of \eqref{5-151}.
\end{proof}

\vs

We are ready to prove Theorem \ref{type1}.
\begin{proof}[\bf Proof of Theorem \ref{type1}]
By Corollary \ref{lemma5-3}, we get $\lim_{n\to+\iy}\la_n=0$. The conclusion \eqref{conclusion-1} is given by Lemma \ref{lemma5-4} and \ref{lemma5-6}, and conclusion \eqref{conclusion-2} is given by Lemma \ref{lemma5-7}. 
The proof is complete.
\end{proof}

\vs
\section{Compactness for Type II}\lab{sec6}
In this section, we prove the compactness result at a singularity for {\bf Type II} defined in Section \ref{sec2-3}; see Theorem \ref{type2}. Precisely, let $q_0\in\Sigma_\iy\cap\DR$ be fixed, and we assume that {\bf Type II} happens at $q_0$ and so $q_0\in\DR'$, that is, there exists a non-negative integer $0\le N'\le N$, such that
	$$\lim_{n\to+\iy}q_{n,i}=q_0,~\forall~i\in\{1,\cdots,N'\}\;(\text{if}~N'\ge1),$$ 
and
	$$\lim_{n\to+\iy}q_{n,i}\neq q_0,~\forall~i\in\{N'+1,\cdots,N\},$$
where $q_{n,i}$ and $N$ are given by Proposition \ref{bubble}, and such that 
\be\lab{6-0} \text{the alternative in conclusion (2) of Proposition \ref{bubble} does hold.}\ee

\begin{theorem}\lab{type2}
Let $q_0\in\Sigma_\iy\cap\DR$ be a singularity with order $\al\in(-1,0)$, and $\kappa_0>0$ such that $B_{2\kappa_0}(q_0)\cap(\DR\cup\Sigma_\iy)=\{q_0\}$. Assume that {\bf Type II} happens at $q_0$, i.e., $q_0\in\DR'$. Then $\displaystyle\lim_{n\to+\iy}\la_n=0$ and 
\be\lab{conclusion-3} \lim_{n\to+\iy}\beta_n(B_{\kappa_0}(q_0))=4\pi(1+\al)+4\pi N', \ee
where $\beta_n(\cdot)$ is given by \eqref{betadomain} and $N'$ is given as above.

Moreover, if $p_n\equiv p\in(1,2]$ for all $n$, then
\be\lab{conclusion-4} \beta_n(B_{\kappa_0}(q_0))>4\pi(1+\al)+4\pi N' \ee
for large $n$.
\end{theorem}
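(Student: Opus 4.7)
\emph{Strategy.} The plan is to mirror the four-step structure of Section \ref{sec5} used for Type I, treating the singular bubble at $q_{n,0}=q_0$ as an additional ``bubble of index $0$'': I set $\gamma_{n,0}=u_n(q_0)$, use the scale $\mu_{n,0}$ from \eqref{mun-2}, and compare $u_n$ on small balls around $q_0$ with the radial singular profile $v_{n,0}$ afforded by Proposition \ref{singular}. The local mass of $v_{n,0}$ integrates to $4\pi(1+\alpha)$ by the classification \eqref{Liouville-2} of \cite{clas-1}, whereas each of the $N'$ non-singular bubbles contributes $4\pi$ by \cite{clas-2}, which matches the target value $4\pi(1+\alpha)+4\pi N'$ in \eqref{conclusion-3}.

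\emph{First main step.} I would first prove the analog of Lemma \ref{lemma5-1}: for every $i\in\{0,1,\dots,N'\}$,
\[
  \liminf_{n\to+\infty} \frac{2\,t_{n,i}(r_{n,i})}{p_n\gamma_{n,i}^{p_n}} \ge 1,
\]
with $r_{n,i}$ redefined via $\mathcal{Q}_n\cup\{q_0\}$. The induction on $i$ and the overall contradiction scheme of Section \ref{sec5-1} go through whenever the cluster $J_i$ consists only of non-singular bubbles. The genuinely new situation is when the singular index $0$ belongs to $J_i$: the pointwise estimate \eqref{est-3} can no longer ``fill the hole'' at the singularity as in \eqref{5-45} (since the singular bubble is itself concentrating there), and Proposition \ref{singular} delivers only the H\"older bound \eqref{singular-1} on $w_{n,0}$, so the gradient-convergence argument based on \eqref{regular-4} used in \eqref{5-62}--\eqref{5-67} breaks down at index $0$.

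\emph{Main obstacle: the local Pohozaev identity.} This is the hardest part. To bypass the gradient-vector argument, I would multiply \eqref{equ-5} by $(x-x_{n,i})\cdot\nabla u_n$ in the isothermal chart around $q_{n,i}$, integrate over $B_{r_{n,i}}(x_{n,i})$, and rescale by the natural weight $\tfrac{p_n}{2}u_n(x_{n,i})^{p_n-1}$ dictated by the blow-up scalings \eqref{conv-1}--\eqref{conv-2}. The neck boundary contributions are controlled by the gradient estimate \eqref{est-2} of Proposition \ref{bubble} (which extends up to the singularity precisely because $\alpha<0$), while the interior contributions concentrate at the bubble centers by the refined bubble expansions of Propositions \ref{regular} and \ref{singular}. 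Passing to the limit yields the algebraic identity
\[
  \Bigl((1+\alpha)+\sum_{j\in J_i\setminus\{i\}}\theta_j\Bigr)^{\!2}\;=\;(1+\alpha)\Bigl((1+\alpha)+\sum_{j\in J_i\setminus\{i\}}\theta_j^2\Bigr),
\]
where $\theta_j>0$ records the limit ratio of the bubble heights $u_n(x_{n,j})^{p_n-1}/u_n(x_{n,i})^{p_n-1}$ (no a priori information forces $\theta_j=1$). Expanding gives
\[
  \text{LHS}-\text{RHS}\;=\;2(1+\alpha)\sum_{j}\theta_j\;-\;\alpha\sum_{j}\theta_j^2\;+\;2\!\sum_{j<k}\!\theta_j\theta_k\;>\;0
\]
as soon as one $\theta_j>0$, because $1+\alpha>0$ and $-\alpha>0$. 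This strict positivity contradicts the equality produced by Pohozaev, and is the crucial place where $\alpha\in(-1,0)$ is used. Once the $\liminf$ estimate is established, the average upper bound on $\bar u_{n,i}$ (analog of Lemma \ref{lemma5-2}) and $\lim_n\lambda_n=0$ (analog of Corollary \ref{lemma5-3}) follow by the same integration-from-inside-out argument as in Section \ref{sec5-1}, with the only change that at $i=0$ the factor $|x_{n,0}|^{2\alpha}$ is absent and the leading coefficient becomes $1+\alpha$ rather than $1$.

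\emph{Quantization and strict inequality.} With the preliminary estimates in place, I would split the argument into $p_0<2$ and $p_0=2$ following Sections \ref{sec5-2} and \ref{sec5-3}. The subcritical case is essentially unchanged: the $L^\infty$ bound $u_n\le(1-\eta')\gamma_{n,i_\ast}$ outside the union of bubble balls still holds, and summing the local contributions gives exactly $4\pi(1+\alpha)+4\pi N'$. In the critical case, the iterative neck radius $\nu_{n,j}$ of \eqref{5-102} must be redefined for $j=0$ through the H\"older seminorm of $w_{n,0}$ rather than its gradient; the analog of Lemma \ref{lemma5-5} then closes because the remainder in \eqref{singular-1} has the precise two-term form $\tfrac{C_0|x|}{\gamma^{p_n-1}\bar r}+O\!\bigl(\mu^{\delta_0}/(\gamma^{p_n-1}\bar r^{\delta_0})\bigr)$, as highlighted in Remark \ref{515-2}(2). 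Finally, the strict inequality \eqref{conclusion-4} when $p_n\equiv p\in(1,2]$ follows by the analog of Lemma \ref{lemma5-7}: one chooses $\bar r_{n,i}$ by $t_{n,i}(\bar r_{n,i})=\sqrt{\gamma_{n,i}}$ and applies Proposition \ref{single-3} at each non-singular bubble, together with its singular counterpart (obtained by the same ODE analysis underlying Proposition \ref{single-3}, adapted to the model \eqref{Liouville-2}) at $i=0$; each of the $N'+1$ bubbles then contributes a strictly positive remainder of order $\gamma_{n,i}^{-2p}$, yielding \eqref{conclusion-4}.
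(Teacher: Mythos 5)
Your proposal is correct and follows essentially the same route as the paper's Section \ref{sec6}: the analog of Lemma \ref{lemma5-1} is proved exactly as you describe, with the new case handled by a local Pohozaev identity centered at the singular bubble that yields the identity $\bigl((1+\al)+\sum_j\theta_j\bigr)^2=(1+\al)\bigl((1+\al)+\sum_j\theta_j^2\bigr)$ (Lemma \ref{lemma6-1}), and your expansion showing $\mathrm{LHS}-\mathrm{RHS}>0$ is an equivalent way of reaching the contradiction the paper draws from $\al\in(-1,0)$. The remaining steps you outline — the average bound and $\la_n\to0$, the subcritical and critical quantization with the H\"older-controlled singular remainder, and the strict inequality via the sharp bubble expansions (the ``singular counterpart'' you posit is precisely the paper's Proposition \ref{single-4}, obtained by the radial change of variable $t=r^{1+\al}/(1+\al)$) — correspond to Lemmas \ref{lemma6-2}–\ref{lemma6-7}.
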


In this section, for brevity, we use some of the same symbols as in Section \ref{sec5}, which may have different definitions.
Let $q_{n,0}\equiv q_0$ for all $n$, and for $i\in\{0,\cdots,N'\}$, let the isothermal coordinates $(B_{\kappa_0}(q_{n,i}),\phi_{n,i},U_{n,i})$ be given in the paragraph before Proposition \ref{bubble}. 
Thanks to \eqref{equ-5}, \eqref{ufh} and  $\Delta_{g_0}=e^{-2\vp_{n,i}}\Delta$, we get 
\be\lab{equ-7} -\Delta u_{n,i}+h_{n,i}u_{n,i}=\la_np_nh_{n,i}f_{n,i}u_{n,i}^{p_n-1}e^{u_{n,i}^{p_n}} \quad\text{in}~B_{2\kappa}(0),\ee
for all $i\in\{0,\cdots,N'\}$ and $n$, where $\Delta$ is the Laplace operator in $\R^2$. Denote
\be\lab{6-xn} x_{n,i}:=\phi_{n,i}(q_0),~~\forall~i\in\{0,\cdots,N'\}. \ee
Then $x_{n,0}=0$. 
Recalling the assumption \eqref{hhh} on $h$, we get that
\be\lab{6-hi} h_{n,i}=\ti h_{n,i}~|\cdot-x_{n,i}|^{2\al}\quad\text{in}~B_{2\kappa}(0),\ee
where $\ti h_{n,i}$ is positive $C^1$ function for all $i\in\{0,\cdots,N'\}$.

For all $i\in\{0,\cdots,N'\}$, we set
\be\lab{6-1} \begin{aligned}
  r_{n,0}&:=\begin{cases}\kappa_0,\quad&\text{if}~N'=0,\\
    \f{1}{3}d_{g_0}(q_{n,0},(\QR_n\cup\{q_{n,0}\})\setminus\{q_{n,0}\}),\quad&\text{if}~N'\ge1, \end{cases} \qquad\text{and} \\
  r_{n,i}&:=\f{1}{3}d_{g_0}(q_{n,i},(\QR_n\cup\{q_{n,0}\})\setminus\{q_{n,i}\}),\quad\text{for}~i\in\{1,\cdots,N'\}(\text{if}~N'\ge1),
\end{aligned}\ee
for all $n$, where $\QR_n$ is given by
	$$\QR_n:=\begin{cases} \emptyset,\quad&\text{if}~N'=0,\\ \{q_{n,1},\cdots,q_{n,N'}\},\quad&\text{if}~N'\ge1. \end{cases}$$  
By \eqref{vpn}, the same argument as \eqref{5-2} implies
\be\lab{6-2} r_{n,i}\le\f{1}{2}|x_{n,i}|,\quad\forall~i\in\{1,\cdots,N'\}\;(\text{if}~N'\ge1), \ee
for large $n$. Let $\ga_{n,i}:=u_n(q_{n,i})$, and let $\mu_{n,0}$ be given by \eqref{mun-2} and $\mu_{n,i}$ for $i\ge1$ be given by \eqref{mun-1}. Then by \eqref{dist-1} and \eqref{dist-2}, we get that $\gamma_{n,i}\to+\infty$, $\mu_{n,i}\to 0$,
\be\lab{6-3}\begin{aligned} 
	\lim_{n\to+\iy}\f{\mu_{n,i}}{r_{n,i}}&=0,~~\forall~i\in\{0,\cdots,N'\},\quad\text{and}\\
	\lim_{n\to+\iy}\f{\mu_{n,i}}{|x_{n,i}|}&=0,~~\forall~i\in\{1,\cdots,N'\}\;(\text{if}~N'\ge1). 
\end{aligned}\ee 
Furthermore, we get from \eqref{conv-2} that
\be\lab{6-4-0} \lim_{n\to+\iy}\f{p_n}{2}\ga_{n,0}^{p_n-1}(\ga_{n,0}-u_{n,0}(\mu_{n,0}\cdot))=T_\al:=\ln(1+|\cdot|^{2(1+\al)}),\ee
in $C_{\loc}^{0,\delta}(\R^2)\cap C^1_\loc(\R^2\setminus\{0\})$ for any $\delta\in(0,\min\{1,2(1+\al)\})$, 
and from \eqref{conv-1} that
\be\lab{6-4} \lim_{n\to+\iy}\f{p_n}{2}\ga_{n,i}^{p_n-1}(\ga_{n,i}-u_{n,i}(\mu_{n,i}\cdot))=T_0:=\ln(1+|\cdot|^2)~\text{in}~C_{\loc}^1(\R^2),\ee
for every $i\in\{1,\cdots,N'\}$ (if $N'\ge1$).
By conclusion (3) of Proposition \ref{bubble}, we get that there exist $C_1,C_2>0$ such that
\be\lab{est-5} d_{g_0}(~\cdot~,\QR_n\cup\{q_{n,0}\})^2\la_nhfu_n^{2(p_n-1)}e^{u_n^{p_n}}\le C_1 \quad\text{in}~B_{\kappa_0}(q_0)\setminus B_{\mu_{n,0}(q_0)}, \ee
\be\lab{est-6} d_{g_0}(~\cdot~,\QR_n\cup\{q_{n,0}\})u_n^{p_n-1}|\nabla_{g_0} u_n|\le C_2 \quad\text{in}~B_{\kappa_0}(q_0)\setminus B_{\mu_{n,0}(q_0)}, \ee
for all $n$. 
We set 
	$$t_{n,i}:=\begin{cases} \ln\sbr{1+\f{|\cdot|^{2(1+\al)}}{\mu_{n,i}^{2(1+\al)}}},\quad&\text{if}~i=0,\\ \ln\sbr{1+\f{|\cdot|^{2}}{\mu_{n,i}^{2}}},\quad&\text{if}~i\ge1. \end{cases}$$
We set also
\be\lab{6-5} v_{n,0}:=\BR_{n}, \ee
where $\BR_n$ is as in Appendix \ref{appe1-1} for $\tau_{n,0}=\ti h_{n,0}(0)f_{n,0}(0)$, $\nu_{n,0}=\ti h_{n,0}(0)$, $\ga_{n}=\ga_{n,0}$ and $\mu_n=\mu_{n,0}$ for all $n$; we set also
\be\lab{6-5-1} v_{n,i}:=\BR_n, \quad\text{for}~i\in\{1,\cdots,N'\} \;(\text{if}~N'\ge1), \ee
where $\BR_n$ is as in Appendix \ref{appe1-2} for $\tau_{n,i}=h_{n,i}(0)f_{n,i}(0)$, $\nu_{n,i}=h_{n,i}(0)$, $\ga_{n}=\ga_{n,i}$, $\mu_n=\mu_{n,i}$ and $x_{n}=x_{n,i}$ for all $n$.

To use the estimates of Section \ref{appe2} and Appendix \ref{appe1}, we need some preliminary observations. Let $l\in\{0,\cdots,N'\}$ be given. Given a parameter $\eta\in(0,1)$ that is going to take several values in the proofs below, we let $r_{n,l}^{(\eta)}$ be given by
\be\lab{6-6} t_{n,l}(r_{n,l}^{(\eta)})=\eta\f{p_n\ga_{n,l}^{p_n}}{2},\ee
and, for $r_{n,l}$ as in \eqref{6-1}, we set
\be\lab{6-7} \bar r_{n,l}^{(\eta)}:=\min\lbr{r_{n,l},r_{n,l}^{(\eta)}}. \ee 
By collecting the above preliminary information, we can check that Proposition \ref{singular} applies with $\bar r_n=\bar r_{n,0}^{(\eta)}$, $f_n=f_{n,0}$, $h_n=\ti h_{n,0}$, $u_{n}=u_{n,0}$, $\ga_n=\ga_{n,0}$ and $v_n=v_{n,0}$. As a result, we get from Proposition \ref{singular} that
\be\lab{6-10-0}  \abs{u_{n,0}-v_{n,0}}=O\sbr{\f{|\cdot|}{\ga_{n,0}^{p_n-1}\bar r_{n,0}^{(\eta)} }}+O\sbr{\f{\mu_{n,0}^{\delta_0}}{\ga_{n,0}^{p_n-1}(\bar r_{n,0}^{(\eta)})^{\delta_0} }}\ee
uniformly in $B_{\bar r_{n,0}^{(\eta)}}(0)$, and
\be\lab{6-11-0} \abs{\nabla(u_{n,0}-v_{n,0})}=O\sbr{\f{1}{\ga_{n,0}^{p_n-1}\bar r_{n,0}^{(\eta)} }},\ee 
uniformly in $B_{\bar r_{n,0}^{(\eta)}}(0)\setminus B_{\mu_{n,0}}(0)$ for large $n$. 
In particular, we also get from \eqref{singular-0} that 
\be\lab{6-8-0} \ln\ga_{n,0}=o\sbr{\ln\f{1}{\bar r_{n,0}^{(\eta)}} },\ee 
which gives that $\ga_{n,0}^s(\bar r_{n,l}^{(\eta)})^{2+2\al}=O(1)$ for any $s>1$, so that Proposition \ref{single-1} also applies, and we get
\be\lab{6-9-0}\begin{aligned} 
	\ga_{n,0}\ge v_{n,0}&=\ga_{n,l}\sbr{1-\f{2t_{n,0}(1+O(\ga_{n,0}^{-p_n}))}{p_n\ga_{n,0}^{p_n}}}\\
		&\ge(1-\eta)\ga_{n,0}+O(\ga_{n,0}^{1-p_n}), 
\end{aligned} \ee
uniformly in $[0,\bar r_{n,l}^{(\eta)}]$ and for $n$ large, using \eqref{6-6}.
 
Meanwhile, for $l\ge1$, we can check that Proposition \ref{regular} applies with $\bar r_n=\bar r_{n,l}^{(\eta)}$, $f_n=f_{n,l}$, $h_n=h_{n,l}$, $x_n=x_{n,l}$, $u_{n}=u_{n,l}$, $\ga_n=\ga_{n,l}$ and $v_n=v_{n,l}$. 
As a result, we get from Proposition \ref{regular} that
\be\lab{6-10}  \abs{u_{n,l}-v_{n,l}}=O\sbr{\f{|\cdot|}{\ga_{n,l}^{p_n-1}\bar r_{n,l}^{(\eta)} }}, \ee 
\be\lab{6-11} \abs{\nabla(u_{n,l}-v_{n,l})}=O\sbr{\f{1}{\ga_{n,l}^{p_n-1}\bar r_{n,l}^{(\eta)} }},\ee 
uniformly in $B_{\bar r_{n,l}^{(\eta)}}(0)$ for $n$ large, and that 
\be\lab{6-8} \ln\ga_{n,l}=o\sbr{\ln\f{1}{\bar r_{n,l}^{(\eta)}|x_{n,l}|^\al}},\ee 
which gives that $\ga_{n,l}^s(\bar r_{n,l}^{(\eta)})^2|x_{n,l}|^{2\al}=O(1)$ for any $s>1$, so that Proposition \ref{single-2} also applies, and we get
\be\lab{6-9}\begin{aligned} 
	\ga_{n,l}\ge v_{n,l}&=\ga_{n,l}\sbr{1-\f{2t_{n,l}(1+O(\ga_{n,l}^{-p_n}))}{p_n\ga_{n,l}^{p_n}}}\\
		&\ge(1-\eta)\ga_{n,l}+O(\ga_{n,l}^{1-p_n}), 
\end{aligned} \ee
uniformly in $[0,\bar r_{n,l}^{(\eta)}]$ for $n$ large, using \eqref{6-6}.

\vs
\subsection{A first estimate}\lab{sec6-1}\ 

At first, similarly as Section \ref{sec5-1}, we need to prove the following result:
\begin{lemma}\lab{lemma6-1}
For all $i\in\{0,\cdots,N'\}$, we have that
\be\lab{6-12} \liminf_{n\to+\iy}\f{2t_{n,i}(r_{n,i})}{p_n\ga_{n,i}^{p_n}}\ge1. \ee 
\end{lemma}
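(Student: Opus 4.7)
The plan is to adapt the induction framework of Lemma \ref{lemma5-1} to the Type II setting, where besides the non-singular bubbles at $q_{n,i}$ ($i = 1,\ldots,N'$) there is a singular bubble at $q_{n,0} \equiv q_0$ of order $\alpha \in (-1,0)$. After relabeling so that $r_{n,1} \leq \cdots \leq r_{n,N'}$, I would induct on $i \in \{0,1,\ldots,N'\}$, assuming \eqref{6-12} at all smaller indices and supposing by contradiction that $\bar r_{n,i}^{(\eta)} = r_{n,i}$ for some $\eta$ close to $1$. Rescaling at scale $r_{n,i}$, form the limit set $\widetilde{\mathcal{S}}_i$ of bubble positions (which now may include the rescaled singularity $0$ when $|x_{n,i}| = O(r_{n,i})$) and the index set $J_i := \{j : d_{g_0}(q_{n,j}, q_{n,i}) = O(r_{n,i})\}$, then perform the comparison in the standard annular regions of Lemma \ref{lemma5-1}.

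When the singular bubble is not visible in the rescaling, i.e.\ $i \geq 1$ with $0 \notin \widetilde{\mathcal{S}}_i$, the Lemma \ref{lemma5-1} argument transfers nearly verbatim, using \eqref{6-10}--\eqref{6-11} and the harmonic-limit relation \eqref{regular-4}. The genuinely new situations are $i = 0$ and $i \geq 1$ with $0 \in \widetilde{\mathcal{S}}_i$. In these, the only control near the singular bubble center is the Hölder estimate \eqref{6-10-0} together with \eqref{singular-1}; in particular there is no uniform gradient bound (see Remark \ref{515-2}), so the gradient-based comparison underlying Lemma \ref{lemma5-1} collapses.

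To handle these cases I would invoke the local Pohozaev identity outlined in the introduction. Multiply the rescaled equation \eqref{equ-7} by $(x - x_{n,j}) \cdot \nabla u_n$, weight by the blow-up factor $\frac{p_n}{2} u_n(x_{n,j})^{p_n-1}$ to balance the heights of different bubbles, sum over $j \in J_i$, and integrate over a ball of radius comparable to $r_{n,i}$. Boundary and neck contributions are controlled via \eqref{est-5}--\eqref{est-6}, \eqref{6-11-0}, and Propositions \ref{regular}--\ref{singular}. The bulk concentrates around each rescaled bubble: the singular bubble contributes $(1+\alpha)^{2}$ and $(1+\alpha)$, while each non-singular bubble contributes the corresponding power of the height ratio $\theta_j := \lim_n u_n(x_{n,j})/u_n(x_{n,i}) > 0$. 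Matching the two sides yields
\begin{equation*}
\Bigl((1+\alpha) + \sum_{j \in J_i \setminus \{i\}} \theta_j\Bigr)^{2} \;=\; (1+\alpha)\Bigl((1+\alpha) + \sum_{j \in J_i \setminus \{i\}} \theta_j^{2}\Bigr).
\end{equation*}

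The main obstacle is extracting a contradiction without knowing the $\theta_j$ are equal (no maximality assumption is available at $q_{n,0}$). Writing $A := 1+\alpha \in (0,1)$, $S := \sum_{j} \theta_j$, $Q := \sum_{j} \theta_j^{2}$ and using $S^{2} = Q + 2\sum_{j<k} \theta_j \theta_k$, the identity above rearranges to
\begin{equation*}
2A\,S \;+\; (1-A)\,Q \;+\; 2\!\!\!\sum_{\substack{j,k \in J_i \setminus \{i\} \\ j<k}}\!\!\! \theta_j \theta_k \;=\; 0.
\end{equation*}
All three terms are strictly positive as soon as $J_i \setminus \{i\} \neq \emptyset$, since $\alpha \in (-1,0)$ forces both $A$ and $1-A$ to be positive. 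This is the precise point where $\alpha < 0$ is essential: if $\alpha \geq 0$ the coefficient $1-A$ becomes nonpositive and no obstruction arises. The contradiction forces $J_i \setminus \{i\} = \emptyset$, which conflicts with the breakdown of \eqref{6-12} at step $i$ in the first place, closing the induction.
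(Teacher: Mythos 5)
Your treatment of the hardest case $i=0$ is essentially the paper's argument: a single local Pohozaev identity centered at the singularity over $B_{Rr_{n,0}}(0)$, normalized by $\gamma_{n,0}^{p_n-1}$, whose boundary term is computed via the Green representation (cf. \eqref{6-56} and \eqref{6-69}) and whose bulk term produces the factor $(1+\alpha)$; your rearranged relation $2AS+(1-A)Q+2\sum_{j<k}\theta_j\theta_k=0$ is algebraically the same contradiction the paper draws from its identity below \eqref{6-69}, and it is indeed the place where $\alpha\in(-1,0)$ is used. You also correctly note that bubbles living at scales where the singularity is invisible reduce to Lemma \ref{lemma5-1}.

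The genuine gap is in the remaining configuration, $i\ge 1$ with $|x_{n,i}|=O(r_{n,i})$ (your ``$0\in\widetilde{\mathcal S}_i$''). First, the identity you write down for that case is not the right one: the bubble at the center of the rescaling is non-singular, so it contributes $1$ (not $1+\alpha$) to the ``degree'' and the mass, while the singular bubble enters with its own relative height $\theta_0$ weighted by $1+\alpha$; moreover $\theta_0$ need not lie in $(0,+\infty)$ a priori, and to make the normalized masses converge one needs height-comparability statements of the type \eqref{6-40}, together with the neck and boundary estimates (the analogues of \eqref{6-46-0}--\eqref{6-52}), none of which are available at step $i$ without further argument — in the paper these are established only under the contradiction hypothesis at $i=0$. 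Second, your description of the identity (multiplying by $(x-x_{n,j})\cdot\nabla u_n$ and summing over $j\in J_i$) is not what can be carried out: centering the Pohozaev multiplier at a point other than the singularity produces the uncontrolled term coming from $\langle x-x_{n,j},\nabla\rangle|x|^{2\alpha}$, which is why the paper uses one identity centered exactly at $q_0$. The paper avoids all of this by a different route in its Part 3: assuming $0\in J_i$, it uses the already-proved case $i=0$ (i.e. \eqref{6-13}) plus a Green-function comparison to reach the impossible bound $\ln\frac{1}{r_{n,i}|x_{n,i}|^{\alpha}}\le O(\ln\gamma_{n,i})$, concluding \eqref{s6-18} that $0\notin J_i$; only then does the Lemma \ref{lemma5-1} convex-hull argument apply. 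This is also why the order of the induction matters (small bubbles first, then $i=0$, then the rest), a point your relabeling $r_{n,1}\le\cdots\le r_{n,N'}$ with an unspecified position for $i=0$ leaves open. A corrected mixed-case identity would still yield a contradiction, so your plan is likely repairable, but as written it both misstates the identity and omits the substantive preliminary estimates that make the Pohozaev limit meaningful.
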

\begin{remark}\lab{611-1}
We compare the proof of Lemma \ref{lemma6-1} with that of Lemma \ref{lemma5-1}. Due to the assumption \eqref{6-0}, we have a singular bubble at the singularity $q_0$, instead of a ``hole'' in Lemma \ref{lemma5-1}. However, since we have no similar convergence results like \eqref{regular-3} and \eqref{regular-4} for the singular bubble (see Remark \ref{515-2}), the strategy from the last paragraph in the proof of Lemma \ref{lemma5-1} fails, and here we use the so-called {\it local Pohozaev identity} technique. To get the desired contradiction, we consider the local Pohozaev identity over a small ball centered at the singulartiy $q_0$, such that all bubbles contained in this ball are comparable (see \eqref{6-40}), and then, by computing each term carefully, we get an energy identity 
  $$\sbr{(1+\al)+\sum_{j\in J_0\setminus\{0\}}\theta_j }^2=(1+\al)\sbr{(1+\al)+\sum_{j\in J_0\setminus\{0\}}\theta_j^2},\quad \theta_j\in(0,+\iy),$$
which is impossible due to $\al\in(-1,0)$. This is the final place where our method highly and nontrivially relies on $\al<0$.

It is worth pointing out that  we can also use the local Pohozaev identity technique to give a different proof of Lemma \ref{lemma5-1}, and by this way, we would obtain the energy identity: either for $r_{n,i}=o(|x_{n,i}|)$,
  $$\sbr{1+\sum_{j\in J_i\setminus\{i\}}\theta_j }^2=\sbr{1+\sum_{j\in J_i\setminus\{i\}}\theta_j^2},\quad \theta_j\in(0,+\iy);$$
or for $r_{n,i}\sim|x_{n,i}|$,
  $$\sbr{1+\sum_{j\in J_i\setminus\{i\}}\theta_j }^2=(1+\al)\sbr{1+\sum_{j\in J_i\setminus\{i\}}\theta_j^2},\quad \theta_j\in(0,+\iy).$$
The second identity gives a contradiction again due to $\al\in(-1,0)$. Furthermore, the second identity also gives a contradiction if we have $1+\al<|J_i|$, and we think that this might be useful for positive $\al$, for example when $0<\beta<4\pi(1+\min_{a\in\DR}\al_a)$ with $\al_a>0$ for each $a\in\DR$.

\end{remark}
\begin{proof}[\bf Proof of Lemma \ref{lemma6-1}]
Up to renumbering, when $N'\ge1$, we may assume that
	$$r_{n,1}\le r_{n,2}\le \cdots\le r_{n,N'},$$
and assume that there exists a non-negative integer $N''\le N'$ such that $r_{n,N''}=o(r_{n,0})$ (if $N''\ge1$) and that $r_{n,0}=O(r_{n,N''+1})$ (if $N''\le N'-1$), for all large $n$.
We divide into three parts to prove \eqref{6-12}.

\vskip0.1in
\noindent{\it Part 1.} We first prove that \eqref{6-12} holds for all $i\in\{1,\cdots,N''\}$ (if $N''\ge1$). 

Observing the fact that for any $i\in\{1,\cdots,N''\}$, there hold that
	$$\big\{ j\in\{0,\cdots,N'\}:~d_{g_0}(q_{n,j},q_{n,i})=O(r_{n,i}) \big\}\subset\{1,\cdots,N''\} $$
and that $r_{n,i}=o(|x_{n,i}|)$ by the definition \eqref{6-1} of $r_{n,0}$, and by using the same arguments in the proof of Lemma \ref{lemma5-1}, and using \eqref{equ-7}, \eqref{6-4}, \eqref{est-6}, \eqref{6-6}, \eqref{6-7} and \eqref{6-10}-\eqref{6-9},  and using Proposition \ref{regular}, we can prove that \eqref{6-12} holds for all $i\in\{1,\cdots,N''\}$ (if $N''\ge1$). 

\vskip0.1in
\noindent{\it Part 2.} We next prove that
\be\lab{6-13} \liminf_{n\to+\iy}\f{2t_{n,0}(r_{n,0})}{p_n\ga_{n,0}^{p_n}}\ge1. \ee

By contradiction, we assume that \eqref{6-13} does not hold. Thus, by \eqref{6-6} and \eqref{6-7}, we may choose and fix $\eta\in(0,1)$ sufficiently close to 1 such that
\be\lab{6-14} \bar r_{n,0}^{(\eta)}=r_{n,0} \ee
for large $n$. Set $J_0=\lbr{ j\in\{0,\cdots,N'\}:~d_{g_0}(q_{n,j},q_{n,0})=O(r_{n,0}) }$.  Obviously, we get from \eqref{6-1} that
\be\lab{6-15} r_{n,l}=O(r_{n,0})\quad\forall~l\in J_0, \ee
for large $n$.
We also find from \eqref{6-8-0} and from \eqref{6-14} that $r_{n,0}=o(1)$, so we get from \eqref{vpn} that
\be\lab{6-16} \lim_{n\to+\iy}\mbr{(\phi_{n,l})_*g_0}(r_{n,0}\cdot)=|\rd x|^2~\text{in}~C_\loc^2(\R^2) \ee
for all $l\in J_0$. Up to a subsequence, we may assume that
	$$\lim_{n\to+\iy}\f{\phi_{n,0}(q_{n,l})}{r_{n,0}}=\hat x_l\in\R^2$$
for all $l\in J_0$, and we have that $\SR_0:=\lbr{\hat x_l,~l\in J_0}$ contains at least two distinct points and that $|J_0|\ge2$, by \eqref{6-1}, since $r_{n,0}=o(1)$. We may now choose and fix $\tau\in(0,1)$ small such that
	$$3\tau<\min\big\{|x-y|:~x,y\in\SR_0,x\neq y\big\},$$
and such that $\SR_0\subset B_{\f{1}{3\tau}}(0)$. 
We can check  that there exists $C>0$ such that any point in
	$$\Omega_{n,0}:=B_{\f{r_{n,0}}{\tau}}(0) \setminus \bigcup_{l\in J_0}B_{\tau r_{n,0}}(\phi_{n,0}(q_{n,l}))$$
may be joined to $\pa B_{\tau r_{n,0}}(0)$ by  a $C^1$ path in $\Omega_{n,0}$ of length at most $Cr_{n,0}$, for large $n$. 
Therefore, by \eqref{6-14} with \eqref{6-10-0} and \eqref{6-9-0} for $l=0$, and using $\mu_{n,0}=o(r_{n,0})$ by \eqref{6-3}, we may first estimate $u_{n,0}$ on $\pa B_{\tau r_{n,0}}(0)$, and then get from \eqref{est-6} and \eqref{6-16} that
\be\lab{6-17} u_{n,0}=\bar u_{n,0}(\tau r_{n,0})+O(\ga_{n,0}^{1-p_n})\ge(1-\eta)\ga_{n,0}+O(1), \ee
uniformly in $\Omega_{n,0}$ and for all $n$, where $\bar u_{n,0}$ is the average function given by \eqref{average}. 
Independently, we get from \eqref{7-2}, \eqref{6-10-0} and \eqref{6-14} that
\be\lab{6-18} \bar u_{n,0}(\tau r_{n,0})=-\sbr{\f{2}{p_n}-1}\ga_{n,0}+\f{2}{p_n\ga_{n,0}^{p_n-1}}\sbr{\ln\f{1}{\la_n\ga_{n,0}^{2(p_n-1)}r_{n,0}^{2+2\al}}+O(1)}. \ee

\begin{itemize}[fullwidth,itemindent=0em]
\vskip0.1in
\item By the argument below \eqref{5-18}, and using \eqref{6-8-0}, we have that
\be\lab{6-19} \nm{w_{n,l}}_{L^\iy(B_{r_{n,0}/(2\tau)}(0))}=o(\ga_{n,0}^{1-p_n}). \ee
for all $l\in J_0$, where $w_{n,l}$ is given by
\be\lab{6-19-1}\begin{cases}
	-\Delta w_{n,l}=-h_{n,l}u_{n,l}\quad\text{in}~B_{\f{r_{n,0}}{2\tau}}(0),\\
	w_{n,l}=0\quad\text{on}~\pa B_{\f{r_{n,0}}{2\tau}}(0).\\
\end{cases}\ee

\vskip0.1in
\item We prove that
\be\lab{6-20} \lim_{n\to+\iy}\f{\ga_{n,0}}{\ga_{n,j}}=0\quad\text{for all}~j\in J_0~\text{and}~1\le j\le N''. \ee
Let $j\in J_0$ such that $1\le j\le N''$. By Part 1, we have that \eqref{6-12} holds for $i=j$, and we get from \eqref{6-7} that, given $\eta_2\in(\eta,1)$,
\be\lab{6-21} \bar r_{n,j}^{(\eta_2)}=r_{n,j}^{(\eta_2)}, \ee
for large $n$.
By \eqref{6-9}, by \eqref{6-10} for $l=j$ and $\eta=\eta_2$, and by the definition \eqref{6-6}  of $r_{n,j}^{(\eta_2)}$,  we have that
\be\lab{6-24} \bar u_{n,j}(r_{n,j}^{(\eta_2)})\le(1-\eta_2)\ga_{n,j}(1+o(1)).\ee 
Let $w_{n,j}$ be given by \eqref{6-19-1} with $l=j$. By observing that $-\Delta(u_{n,j}-w_{n,j})\ge0$, the maximum principle yields that
\be\lab{6-22} u_{n,j}\ge\inf_{\pa B_{r_{n,0}/(2\tau)}(0)}u_{n,j}+o(\ga_{n,0}^{1-p_n})\ge(1-\eta)\ga_{n,i}+O(1), \ee
uniformly in $B_{r_{n,0}/(2\tau)}(0)$, where in the last inequality we use \eqref{6-17} and the fact that
\be\lab{6-23} \phi_{n,0}\circ\phi_{n,j}^{-1}\sbr{\pa B_{r_{n,0}/(2\tau)}(0)}\subset \Omega_{n,0}, \ee 
by \eqref{6-16} and choosing $\tau>0$ small enough from the begining. Then, by $r_{n,j}=o(r_{n,0})$ due to $1\le j\le N''$, we get from \eqref{6-24} and \eqref{6-22} that
	$$(1-\eta)\ga_{n,0}\le (1-\eta_2)\ga_{n,j}(1+o(1)),$$
for any $\eta_2\in(\eta,1)$, so that letting $\eta_2\to1$, we get \eqref{6-20}.

\vskip0.1in
\item We prove that
\be\lab{6-25} \ga_{n,j}=O(\ga_{n,0})\quad\text{for any}~j\in J_0. \ee
By contradiction, if \eqref{6-25} does not hold, we choose $j\in J_0$ such that
\be\lab{6-26} \lim_{n\to+\iy}\f{\ga_{n,0}}{\ga_{n,j}}=0. \ee 
In particular, we have $j\neq 0$. Using \eqref{vpn} and \eqref{6-15}, and by the definition of $r_{n,0}$, we get first that 
\be\lab{6-27-0} \f{1}{C}r_{n,0}\le|x_{n,l}|=(1+o(1))d_{g_0}(q_{n,l},q_{n,0})\le Cr_{n,0}, \quad\forall~l\in J_0\setminus\{0\}, \ee
for some $C>0$ and large $n$, and then, by using \eqref{6-26}, we get similarly to \eqref{5-27} that if $N''<j\le N'$,
\be\lab{6-27}\begin{aligned}
	t_{n,j}(r_{n,j})&=\f{p_n}{2}\ga_{n,j}(1+o(1)).
\end{aligned}\ee
Thus, given any $\eta_2\in(0,1)$, we get that \eqref{6-21} holds also if $N''<j\le N'$. As a first consequence, for all given $\eta_2'\in(0,\eta_2]$, we get that
\be\lab{6-28} \lim_{n\to+\iy}\f{r_{n,j}^{(\eta_2')}}{r_{n,0}}=0, \ee 
using \eqref{6-15} and the fact by \eqref{6-6} that
\be\lab{6-29} \ln\f{r_{n,j}^{(\eta_2')}}{r_{n,j}^{(\eta_2)}}=-\f{p_n\ga_{n,j}^{p_n}}{4}(\eta_2-\eta_2')+o(1). \ee
We get from \eqref{16}, \eqref{6-10} and \eqref{6-21} that
\be\lab{6-30} \bar u_{n,j}(r_{n,j}^{(\eta_2')})=-\sbr{\f{2}{p_n}-1}\ga_{n,j}+\f{2}{p_n\ga_{n,j}^{p_n-1}}\sbr{\ln\f{1}{\la_n\ga_{n,j}^{2(p_n-1)}(r_{n,j}^{(\eta_2')})^2|x_{n,j}|^{2\al}}+O(1)}. \ee
Repeating the argument in the paragraph below \eqref{5-31}, and using \eqref{6-27-0} to get a similar estimate as \eqref{5-37} with $i=0$, we get the following estimate
\be\lab{6-31} \bar u_{n,j}(r_{n,j}^{(\eta_2)})\ge \bar u_{n,0}(\tau r_{n,0})+\f{4}{p_n\ga_{n,j}^{p_n-1}}\ln\f{r_{n,0}}{r_{n,j}^{(\eta_2)}}+O(\ga_{n,0}^{1-p_n}). \ee
We now plug \eqref{6-18} and \eqref{6-30} in \eqref{6-31}, and by using \eqref{boundla}, \eqref{6-27-0} and \eqref{6-26}, we get
	$$\ln\f{1}{r_{n,0}}\le O\sbr{\ln\ga_{n,0}},$$
which is an obvious contradiction with \eqref{6-8-0} and \eqref{6-14}. This finish the proof of \eqref{6-25}.

\vskip0.1in
\item We prove that
\be\lab{6-40} \f{1}{C}\le \f{r_{n,j}}{r_{n,0}}\le C, \quad \f{1}{C}\le\f{\ga_{n,j}}{\ga_{n,0}}\le C,\quad\forall~j\in J_0, \ee
for large $n$, and there exists $\eta_3\in(\eta,1)$ such that
\be\lab{6-41} \bar r_{n,j}^{(\eta_3)}=r_{n,j} \ee 
for all $j\in J_0$ and large $n$.
By \eqref{6-20} and \eqref{6-25}, we see that
\be\lab{6-42} r_{n,0}=O(r_{n,j}),\quad \ga_{n,j}=O(\ga_{n,0}),\quad\forall~j\in J_0. \ee
This together with \eqref{6-15} gives the first assertion of \eqref{6-40}.
We now assume by contradiction with \eqref{6-41} that there exists $j\in J_0$ such that
	$$\f{2t_{n,j}(r_{n,j})}{p_n\ga_{n,j}^{p_n}}\ge1+o(1).$$
As a remark, we must have $j\neq 0$ by \eqref{6-13}. Then, for all given $\eta_2\in(0,1)$, \eqref{6-21} holds and the argument between \eqref{6-21} and \eqref{6-25} gives $\ga_{n,0}=o(\ga_{n,j})$, which does not occur by \eqref{6-42} and proves \eqref{6-41}. For $j\in J_0$, by \eqref{vpn} and the first assertion of \eqref{6-40}, and by choosing $\tau>0$ small enough in the begining, we first get that
	$$\phi_{n,0}\circ\phi_{n,j}^{-1}\sbr{\pa B_{r_{n,j}/2}(0)}\subset \Omega_{n,0},$$
then, by \eqref{6-17}, we get
\be\lab{6-43} u_{n,j}\ge(1-\eta)\ga_{n,0}+O(1), \ee
uniformly on $\pa B_{r_{n,j}/2}(0)$ for large $n$, so that we eventually have $\ga_{n,0}=O(\ga_{n,j})$, using the fact that $\bar u_{n,j}(r_{n,j}/2)\le 2\ga_{n,j}$ by \eqref{6-9}, \eqref{6-10} and \eqref{6-41}. This last estimate proves the second assertion of \eqref{6-40}, by recalling \eqref{6-42}.

\vskip0.1in
\item We are now in position to conclude the proof of \eqref{6-13}. Since for the singular bubble we have no convergence results like \eqref{regular-3} and \eqref{regular-4}, the proof is totally different from that of the last paragraph in the proof of Lemma \ref{lemma5-1}, and here we use the local Pohozaev identity technique.

Let $R\gg1$ be fixed. By multiplying equation \eqref{equ-7} for $i=0$ with $\abr{x,\nabla u_{n,0}}$ and integrating on $B_{Rr_{n,0}}(0)$, computing by integration by parts, we get the local Pohozaev identity,
\be\lab{6-45}\begin{aligned}
	&Rr_{n,0}\int_{\pa B_{Rr_{n,0}}(0)}\f{1}{2}|\nabla u_{n,0}|^2-|\pa_\nu u_{n,0}|^2~\rd\sigma_x\\
	&=Rr_{n,0}\int_{\pa B_{Rr_{n,0}}(0)}\la_nh_{n,0}f_{n,0}e^{u_{n,0}^{p_n}}-\f{1}{2}h_{n,0}u_{n,0}^2~\rd\sigma_x\\
	&\quad -(1+\al)\int_{B_{Rr_{n,0}}(0)}2\la_nh_{n,0}f_{n,0}e^{u_{n,0}^{p_n}}-h_{n,0}u_{n,0}^2~\rd x\\
	&\quad -\int_{B_{Rr_{n,0}}(0)}\la_n\abr{x,\nabla(\ti h_{n,0}f_{n,0})}|x|^{2\al}e^{u_{n,0}^{p_n}}-\f{1}{2}\abr{x,\nabla \ti h_{n,0}}|x|^{2\al}u_{n,0}^2~\rd x,
\end{aligned}\ee
where $\ti h_{n,0}$ is given by \eqref{6-hi}, and $\abr{\cdot,\cdot}$ denotes the inner product in $\R^2$. 
Note that, since $\al\in(-1,0)$ and so $\nabla u_{n,0}$ is not continuous near the origin, nevertheless the validity of \eqref{6-45} can be still ensured by the available integrability for $\nabla u_{n,0}$, see for example \cite[(4.24)]{BT}. 
For any $j\in J_0$, let $\ti w_{n,j}$ be given by \eqref{6-19-1} with $l=j$ and radius $\f{r_{n,0}}{2\tau}$ replaced by $\f{r_{n,j}}{2}$, we still have $|\ti w_{n,j}|=o(\ga_{n,0}^{1-p_n})$ uniformly in $B_{r_{n,j}/2}(0)$ by \eqref{6-40}, and then by applying maximum principle to $u_{n,j}-w_{n,j}$, we get from \eqref{6-43} that
\be\lab{6-46-0} u_{n,j}\ge \inf_{\pa B_{r_{n,j}/2}(0)}u_{n,j}+o(\ga_{n,0}^{1-p_n})\ge (1-\eta+o(1))\ga_{n,0} \ee
uniformly in $B_{r_{n,j}/2}(0)$ for large $n$, so that by choosing $\tau$ small in the begining, we get from \eqref{6-16} and \eqref{6-40} that
\be\lab{6-46} u_{n,0}\ge\f{1-\eta}{2}\ga_{n,0} \ee 
uniformly in $B_{\tau r_{n,0}}\sbr{\phi_{n,0}(q_{n,l})}$ for large $n$.
By the argument to get \eqref{6-17}, we see that \eqref{6-17} still holds in $B_{Rr_{n,0}}(0)\setminus B_{r_{n,0}/\tau}(0)$ for fixed $R\gg1$, so by \eqref{6-46}, we have that
\be\lab{6-47} u_{n,0}\ge\f{1-\eta}{2}\ga_{n,0} \ee
uniformly in $B_{Rr_{n,0}}(0)$ for large $n$. 
By using \eqref{6-47} and $r_{n,0}=o(1)$, and recalling \eqref{bound-1} and Lemma \ref{bound-0}, we get that
\be\lab{6-48}\begin{aligned} 
	&\abs{\ga_{n,0}^{2(p_n-1)}\int_{B_{Rr_{n,0}}(0)}h_{n,0}u_{n,0}^2\rd x}=o(1),  \\
	&\abs{\ga_{n,0}^{2(p_n-1)}\int_{B_{Rr_{n,0}}(0)}\abr{x,\nabla \ti h_{n,0}}|x|^{2\al}u_{n,0}^2\rd x}=o(1), \quad\text{and}\\
	&\abs{\ga_{n,0}^{2(p_n-1)}\int_{B_{Rr_{n,0}}(0)}\la_n\abr{x,\nabla(\ti h_{n,0}f_{n,0})}|x|^{2\al}e^{u_{n,0}^{p_n}}\rd x}=o(1).
\end{aligned}\ee
Now, by computing as in the argument involving \eqref{4-66} and using Proposition \ref{single-1} or \ref{single-2}, and by using \eqref{6-10-0}-\eqref{6-9} with \eqref{6-41}, we get that
\be\lab{6-49} \la_np_nh_{n,0}f_{n,0}e^{u_{n,0}^{p_n}}=\f{8(1+\al)^2e^{-2t_{n,0}}|\cdot|^{2\al}}{\mu_{n,0}^{2(1+\al)}\ga_{n,0}^{2(p_n-1)}p_n}\sbr{1+o\sbr{e^{\ti\eta t_{n,0}}}},  \ee
uniformly in $B_{\tau r_{n,0}}(0)$, and that
\be\lab{6-50} \la_np_nh_{n,0}f_{n,0}e^{u_{n,0}^{p_n}}=\f{8e^{-2t_{n,j}}}{\mu_{n,j}^{2}\ga_{n,j}^{2(p_n-1)}p_n}\sbr{1+o\sbr{e^{\ti\eta t_{n,j}}}},  \ee
uniformly in $B_{\tau r_{n,0}}(\phi_{n,0}(q_{n,j}))$ for large $n$ and all $j\in J_0\setminus\{0\}$, where $\ti\eta$ is any constant in $(\eta,1)$.  
By using the first equality in \eqref{6-17} (holds in $B_{Rr_{n,0}}(0)\setminus \cup_{l\in J_0}B_{\tau r_{n,0}}(\phi_{n,0}(q_{n,l}))$), we get from \eqref{6-49} that
\be\lab{6-51} \la_np_nh_{n,0}f_{n,0}e^{u_{n,0}^{p_n}}=O\sbr{ \f{r_{n,0}^{2\al}}{\mu_{n,0}^{2(1+\al)}\ga_{n,0}^{2(p_n-1)}}\sbr{\f{r_{n,0}}{\mu_{n,0}}}^{-2(1+\al)(2-\ti\eta)} }    \ee
uniformly on $B_{Rr_{n,0}}(0)\setminus \cup_{l\in J_0}B_{\tau r_{n,0}}(\phi_{n,0}(q_{n,l}))$ for large $n$. Then, we get that
\be\lab{6-52}\begin{aligned} 
	&\abs{r_{n,0}\ga_{n,0}^{2(p_n-1)}\int_{\pa B_{Rr_{n,0}}(0)}\la_nh_{n,0}f_{n,0}e^{u_{n,0}^{p_n}}\rd\sigma_x}=o(1),\\
	&\abs{r_{n,0}\ga_{n,0}^{2(p_n-1)}\int_{\pa B_{Rr_{n,0}}(0)}h_{n,0}u_{n,0}^2~\rd\sigma_x}=o(1),
\end{aligned} \ee 
by using $\mu_{n,0}=o(r_{n,0})$ and $\ti\eta\in(\eta,1)$. 
Plugging \eqref{6-48} and \eqref{6-52} in \eqref{6-45}, we get that
\be\lab{6-53}\begin{aligned} 
	&\lim_{n\to+\iy}Rr_{n,0}\sbr{\f{p_n}{2}\ga_{n,0}^{p_n-1}}^2\int_{\pa B_{Rr_{n,0}}(0)}\f{1}{2}|\nabla u_{n,0}|^2-|\pa_\nu u_{n,0}|^2~\rd\sigma_x\\
	&=-(1+\al)\lim_{n\to+\iy}\f{p_n}{2}\ga_{n,0}^{2(p_n-1)}\int_{B_{Rr_{n,0}}(0)}\la_np_nh_{n,0}f_{n,0}e^{u_{n,0}^{p_n}}\rd x
\end{aligned}\ee
for any fixed $R\gg1$.
By using \eqref{6-49} and \eqref{6-50}, we get that
\be\lab{6-54}\begin{aligned} 
	&\f{p_n}{2}\ga_{n,0}^{2(p_n-1)}\int_{B_{\tau r_{n,0}}(0)}\la_np_nh_{n,0}f_{n,0}e^{u_{n,0}^{p_n}}\rd x=4\pi(1+\al)+o(1),~~\text{and}\\
	&\f{p_n}{2}\ga_{n,0}^{2(p_n-1)}\int_{B_{\tau r_{n,0}}(\phi_{n,0}(q_{n,j}))}\la_np_nh_{n,0}f_{n,0}e^{u_{n,0}^{p_n}}\rd x=4\pi\theta_j^2+o(1)
\end{aligned} \ee
where $\theta_j$ is given by
\be\lab{6-55} \theta_j:=\lim_{n\to+\iy}\f{\ga_{n,0}^{p_n-1}}{\ga_{n,j}^{p_n-1}}\in(0,+\iy), \ee
thanks to the second assertion of \eqref{6-40} and $p_n\in[1,2]$, for all $j\in J_0\setminus\{0\}$ and large $n$.
Then, we get from \eqref{6-54} that
\be\lab{6-56}  \lim_{n\to+\iy}\f{p_n}{2}\ga_{n,0}^{2(p_n-1)}\int_{B_{Rr_{n,0}}(0)}\la_np_nh_{n,0}f_{n,0}e^{u_{n,0}^{p_n}}\rd x=4\pi\sbr{(1+\al)+\sum_{j\in J_0\setminus\{0\}}\theta_j^2},   \ee
by using \eqref{6-51} to estimating the integral in $B_{Rr_{n,0}}(0)\setminus \cup_{l\in J_0}B_{\tau r_{n,0}}(\phi_{n,0}(q_{n,l}))$, for any fixed $R\gg1$.

Now, we compute the left hand side of \eqref{6-53}. 
Let $\ti\psi_n$ be defined by
	$$-\Delta\ti\psi_n=0~\text{in}~B_{\kappa}(0),\quad \ti\psi_n=u_{n,0}~\text{on}~\pa B_\kappa(0).$$
Then, by \eqref{conv-0}, we clearly have that
\be\lab{6-57} \nm{\ti\psi_n}_{C^1_\loc(B_\kappa(0))}=O(1). \ee
Let $G$ be the Green's function of $\Delta$ in $B_{\kappa}(0)$ with zero Dirichlet boundary condition. Then, we have that
\be\lab{6-58} \nabla_x G(x,y)=-\f{1}{2\pi}\f{x-y}{|x-y|^2}+O(1), \ee
uniformly for $x\in \pa B_{Rr_{n,0}}(0)$ and $y\in B_\kappa(0)$.
By applying the Green's representation formula to $u_{n,0}-\ti\psi_n$, and using \eqref{6-57} and \eqref{6-58}, we get that
\be\lab{6-59}\begin{aligned} 
	&\f{p_n}{2}\ga_{n,0}^{p_n-1}\nabla u_{n,0}(x)\\
	&=-\f{p_n}{4\pi}\ga_{n,0}^{p_n-1}\int_{B_\kappa(0)}\f{x-y}{|x-y|^2}\la_np_nh_{n,0}f_{n,0}u_{n,0}^{p_n-1}e^{u_{n,0}^{p_n}}(y)\rd y \\
	&\quad +\f{p_n}{4\pi}\ga_{n,0}^{p_n-1}\int_{B_\kappa(0)}\f{x-y}{|x-y|^2}h_{n,0}u_{n,0}(y)\rd y+O\sbr{\ga_{n,0}^{p_n-1}},
\end{aligned}\ee
uniformly for $x\in \pa B_{Rr_{n,0}}(0)$.
We split $B_\kappa(0)$ into three subdomains
	$$B_\kappa(0)=B_{\f{1}{2}Rr_{n,0}}(0)\cup \sbr{B_{\f{3}{2}Rr_{n,0}}(0)\setminus B_{\f{1}{2}Rr_{n,0}}(0)}\cup \sbr{B_\kappa(0)\setminus B_{\f{3}{2}Rr_{n,0}}(0)}.$$
Taking a sequence $L_n$ such that $\lim_{n\to+\iy}L_n=+\iy$ and $L_n\mu_{n,0}=o(r_{n,0})$ since $\mu_{n,0}=o(r_{n,0})$, we have that
	$$\f{x-y}{|x-y|^2}=\f{x}{|x|^2}+o\sbr{\f{1}{r_{n,0}}}$$
uniformly for $x\in  \pa B_{Rr_{n,0}}(0)$ and $y\in B_{L_n\mu_{n,0}}(0)$, and then, by the argument involving \eqref{6-54},
we get that 
$$\begin{aligned} 
	&\f{p_n}{4\pi}\ga_{n,0}^{p_n-1}\int_{B_{\tau r_{n,0}}(0)}\f{x-y}{|x-y|^2}\la_np_nh_{n,0}f_{n,0}u_{n,0}^{p_n-1}e^{u_{n,0}^{p_n}}(y)\rd y\\
	&\quad =2(1+\al)\f{x}{|x|^2}+o\sbr{\f{1}{r_{n,0}}}, 
\end{aligned}$$  
uniformly for $x\in \pa B_{Rr_{n,0}}(0)$ for large $n$, by computing the integral seperately in $B_{L_n\mu_{n,0}}(0)$ and $B_{\tau r_{n,0}}(0)\setminus B_{L_n\mu_{n,0}}(0)$. 
By similar argument, we get also that
$$\begin{aligned} 
	&\f{p_n}{4\pi}\ga_{n,0}^{p_n-1}\int_{B_{\tau r_{n,0}}(\phi_{n,0}(q_{n,j}))}\f{x-y}{|x-y|^2}\la_np_nh_{n,0}f_{n,0}u_{n,0}^{p_n-1}e^{u_{n,0}^{p_n}}(y)\rd y\\
	&\quad =2\theta_j\f{x-\phi_{n,0}(q_{n,j})}{|x-\phi_{n,0}(q_{n,j})|^2}+o\sbr{\f{1}{r_{n,0}}},
\end{aligned}$$
uniformly for $x\in \pa B_{Rr_{n,0}}(0)$ and for any $j\in J_0\setminus\{0\}$, where $\theta_j$ is given by \eqref{6-55}. Then, by the argument involving \eqref{6-56}, we eventually get that
\be\lab{6-60} \begin{aligned} 
	&\f{p_n}{4\pi}\ga_{n,0}^{p_n-1}\int_{B_{\f{1}{2}Rr_{n,0}}(0)}\f{x-y}{|x-y|^2}\la_np_nh_{n,0}f_{n,0}u_{n,0}^{p_n-1}e^{u_{n,0}^{p_n}}(y)\rd y\\
	&\quad =2(1+\al)\f{x}{|x|^2}+\sum_{j\in J_0\setminus\{0\}}2\theta_j\f{x-\phi_{n,0}(q_{n,j})}{|x-\phi_{n,0}(q_{n,j})|^2}+o\sbr{\f{1}{r_{n,0}}},
\end{aligned}\ee
uniformly for $x\in \pa B_{Rr_{n,0}}(0)$ as $n\to+\iy$, and for any fixed $R\gg1$.
Independently, by Lemma \ref{bound-0} and \eqref{6-47}, we get that
\be\lab{6-61} 
	\f{p_n}{4\pi}\ga_{n,0}^{p_n-1}\int_{B_{\f{1}{2}Rr_{n,0}}(0)}\f{x-y}{|x-y|^2}h_{n,0}u_{n,0}(y)\rd y=o\sbr{\f{1}{r_{n,0}}}, 
\ee
uniformly for $x\in \pa B_{Rr_{n,0}}(0)$. 
Noting that 
	$$B_{\f{3}{2}Rr_{n,0}}(0)\setminus B_{\f{1}{2}Rr_{n,0}}(0)\subset \bigcup_{k=1}^K B_{\f{3}{4}Rr_{n,0}}(z_k),$$
for some points $z_k\in\pa B_{Rr_{n,0}}(0)$ and a universal constant $K$, we get from \eqref{6-51} and \eqref{6-47} that
\be\lab{6-62}\begin{aligned} 
	&\f{p_n}{4\pi}\ga_{n,0}^{p_n-1}\int_{B_{\f{3}{2}Rr_{n,0}}(0)\setminus B_{\f{1}{2}Rr_{n,0}}(0)}\f{x-y}{|x-y|^2}\la_np_nh_{n,0}f_{n,0}u_{n,0}^{p_n-1}e^{u_{n,0}^{p_n}}(y)\rd y\\
	&=O\sbr{ \f{r_{n,0}^{2\al}}{\mu_{n,0}^{2(1+\al)}}\sbr{\f{r_{n,0}}{\mu_{n,0}}}^{-2(1+\al)(2-\ti\eta)} \sum_{k=1}^K\int_{B_{\f{3}{4}Rr_{n,0}}(z_k)}\f{1}{|x-y|}\rd y }=o\sbr{\f{1}{r_{n,0}}},
\end{aligned}\ee 
and that
\be\lab{6-63}
	\f{p_n}{4\pi}\ga_{n,0}^{p_n-1}\int_{B_{\f{3}{2}Rr_{n,0}}(0)\setminus B_{\f{1}{2}Rr_{n,0}}(0)}\f{x-y}{|x-y|^2}h_{n,0}u_{n,0}(y)\rd y=o\sbr{\f{1}{r_{n,0}}},
\ee
uniformly for $x\in \pa B_{Rr_{n,0}}(0)$.  
Finally, we prove that
\be\lab{6-64} \f{p_n}{4\pi}\ga_{n,0}^{p_n-1}\int_{B_\kappa(0)\setminus B_{\f{3}{2}Rr_{n,0}}(0)}\f{x-y}{|x-y|^2}\la_np_nh_{n,0}f_{n,0}u_{n,0}^{p_n-1}e^{u_{n,0}^{p_n}}(y)\rd y=o\sbr{\f{1}{r_{n,0}}}, \ee
and that
\be\lab{6-64-1} \f{p_n}{4\pi}\ga_{n,0}^{p_n-1}\int_{B_\kappa(0)\setminus B_{\f{3}{2}Rr_{n,0}}(0)}\f{x-y}{|x-y|^2} h_{n,0}u_{n,0}(y)\rd y=o\sbr{\f{1}{r_{n,0}}}, \ee
uniformly for $x\in \pa B_{Rr_{n,0}}(0)$. 
Let 
	$$\XR_n:=\lbr{q_{n,0}, q_{n,1},\cdots,q_{n,N'}}.$$
Noting that $q_{n,0}=q_0$ for all $n$, we define $\XR_n^{(0)}=\{q_{0}\}$ and $r_{n}^{(0)}=r_{n,0}$, and by induction, for $k=1,2,\cdots$,
$$\begin{aligned}
	\XR_n^{(k)}&:=\lbr{q_{n,l}\in\XR_n:~d_{g_0}(q_0,q_{n,l})=O(r_{n,0}^{(k-1)}) }\\
	r_{n}^{(k)}&:=\begin{cases} \f{1}{3}d_{g_0}\sbr{q_0,\XR_n\setminus\XR_n^{(k)}}, \quad&\text{if}~\XR_n\setminus\XR_n^{(k)}\neq\emptyset,\\\kappa_0,\quad&\text{if}~\XR_n\setminus\XR_n^{(k)}\neq\emptyset.\end{cases}
\end{aligned}$$
Thanks to $|J_0|\ge2$, we have $N'\ge1$ and that the above induction terminates at a maximal index $k_{\max}\ge1$ such that $r_n^{(k_{\max})}=\kappa_0$. Moreover, we have that $r_n^{(k)}=o(r_n^{(k+1)})$, and that there exists a constant $R_0>1$ such that $d_{g_0}(q_0,\XR_n^{(k+1)})\le\f{1}{2}R_0r_n^{(k)}$ for any $k\in\{0,\cdots,k_{\max}-1\}$. Then, for any $k\in\{0,\cdots,k_{\max}-1\}$, and any sequences $t_n,R_n$ satisfying $R_0r_n^{(k)}\le t_n\le R_nt_n\le r_n^{(k+1)}$, by taking a curve connecting any two points in $B_{R_nt_n}(0)\setminus B_{t_n}(0)$, such that the length is at most $(\pi+1)R_nt_n$, we get from \eqref{est-6} and \eqref{vpn} that
\be\lab{6-65} \abs{u_{n,0}(y_1)^{p_n}-u_{n,0}(y_2)^{p_n}}\le CR_n,\quad\text{for any}~y_1,y_2\in B_{R_nt_n}(0)\setminus B_{t_n}(0), \ee
for a universal constant $C>0$ and all large $n$. By \eqref{6-47}, and noting that $u_{n,0}=O(1)$ on $\pa B_\kappa(0)$ by \eqref{conv-0}, for any constant $\ep_0\in(0,(1-\eta)/2)$, up to a subsequence, there exists a $k_0\in\{0,\cdots,k_{\max}-1\}$ such that
\be\lab{6-66} \bar u_{n,0}(R_0r_{n}^{(k_0)})\ge\ep_0\ga_{n,0} \quad\text{and}\quad \bar u_{n,0}(r_{n}^{(k_0+1)})=o(\ga_{n,0}), \ee
for large $n$. 
Let $T_n$ be given by
	$$T_n:=\sup\lbr{ t\in\big[R_0r_n^{(k_0)},r_n^{(k_0+1)}\big):~\bar u_{n,0}\ge \f{1}{2}\ep_0\ga_{n,0}~\text{in}~[R_0r_{n}^{(k_0)},t] }.$$
Then, we have $T_n=o(1)$ and that
\be\lab{6-67} \f{\ga_{n,0}^{p_n-1}}{T_n}=o\sbr{\f{1}{r_{n}^{(k_0)}}},\ee
for large $n$. Indeed, if $T_n=O(r_{n}^{(k_0)}\ga_{n,0}^{p_n-1})$, then by applying \eqref{6-65} with $t_n=R_0r_n^{(k_0)}$ and $R_n=T_n/t_n$, and using the first assertion in \eqref{6-66}, we get that
	$$\bar u_{n,0}(T_n)\ge\ep_0\ga_{n,0}+O(1),$$
for large $n$, which is a contradiction with the definition of $T_n$, so that \eqref{6-67} holds.
Thus, by using \eqref{bound-1} and \eqref{6-67}, we get that
	$$\f{p_n}{4\pi}\ga_{n,0}^{p_n-1}\int_{B_\kappa(0)\setminus B_{T_n}(0)}\f{x-y}{|x-y|^2}\la_np_nh_{n,0}f_{n,0}u_{n,0}^{p_n-1}e^{u_{n,0}^{p_n}}(y)\rd y=O\sbr{\f{\ga_{n,0}^{p_n-1}}{T_n}}=o\sbr{\f{1}{r_{n,0}}},$$
and that
	$$\f{p_n}{4\pi}\ga_{n,0}^{p_n-1}\int_{B_\kappa(0)\setminus B_{T_n}(0)}\f{x-y}{|x-y|^2}h_{n,0}u_{n,0}(y)\rd y=o\sbr{\f{1}{r_{n,0}}},$$
uniformly for $x\in \pa B_{Rr_{n,0}}(0)$. 
By the argument involving \eqref{6-46-0}, and by using \eqref{6-65}, we get that
	$$u_{n,0}\ge \inf_{\pa B_{T_n}(0)}u_{n,0}+O(1)\ge \f{1}{3}\ep_0\ga_{n,0}, $$
uniformly in $B_{T_n}(0)$ for large $n$, so that we get  that
	$$\f{p_n}{4\pi}\ga_{n,0}^{p_n-1}\int_{B_{T_n}(0)\setminus B_{\f{3}{2}Rr_{n,0}}(0)}\f{x-y}{|x-y|^2}\la_np_nh_{n,0}f_{n,0}u_{n,0}^{p_n-1}e^{u_{n,0}^{p_n}}(y)\rd y=o\sbr{\f{1}{r_{n,0}}},$$
where we estimate the integral seperately in $B_{T_n}(0)\setminus B_{Lr_{n,0}}(0)$ by using \eqref{bound-1}, and in $B_{Lr_{n,0}}(0)\setminus B_{\f{3}{2}Rr_{n,0}}(0)$ by using \eqref{6-51}, and then letting $L\to+\iy$, 
and from Lemma \ref{bound-0} that
	$$\f{p_n}{4\pi}\ga_{n,0}^{p_n-1}\int_{B_{T_n}(0)\setminus B_{\f{3}{2}Rr_{n,0}}(0)}\f{x-y}{|x-y|^2}h_{n,0}u_{n,0}(y)\rd y=o\sbr{\f{1}{r_{n,0}}},$$
uniformly for $x\in \pa B_{Rr_{n,0}}(0)$, and for large $n$. Combining these estimates, we conclude \eqref{6-64} and \eqref{6-64-1}.
Plugging \eqref{6-60}-\eqref{6-64-1} in \eqref{6-59}, and using \eqref{6-8-0} with \eqref{6-14}, we get that
\be\lab{6-68}\begin{aligned} 
	\f{p_n}{2}\ga_{n,0}^{p_n-1}\nabla u_{n,0}(x)=2(1+\al)\f{x}{|x|^2}+\sum_{j\in J_0\setminus\{0\}}2\theta_j\f{x-\phi_{n,0}(q_{n,j})}{|x-\phi_{n,0}(q_{n,j})|^2}+o\sbr{\f{1}{r_{n,0}}},
\end{aligned}\ee
uniformly for $x\in \pa B_{Rr_{n,0}}(0)$, and for large $n$. 
Therefore, by observing that
	$$\f{x-\phi_{n,0}(q_{n,j})}{|x-\phi_{n,0}(q_{n,j})|^2}=\f{x}{|x|^2}+O\sbr{\f{\abs{\phi_{n,0}(q_{n,j})}}{|x||x-\phi_{n,0}(q_{n,j})|}}=\f{x}{|x|^2}\sbr{1+O\sbr{\f{1}{R}}}$$
uniformly for $x\in \pa B_{Rr_{n,0}}(0)$, and for all $j\in J_0\setminus\{0\}$ and any fixed $R\gg1$, we get that
\be\lab{6-69}\begin{aligned}
	&\lim_{n\to+\iy}Rr_{n,0}\sbr{\f{p_n}{2}\ga_{n,0}^{p_n-1}}^2\int_{\pa B_{Rr_{n,0}}(0)}\f{1}{2}|\nabla u_{n,0}|^2-|\pa_\nu u_{n,0}|^2~\rd\sigma_x\\
	&=-4\pi\mbr{(1+\al)+\sbr{1+O\sbr{\f{1}{R}}}\sum_{j\in J_0\setminus\{0\}}\theta_j }^2.
\end{aligned}\ee

Plugging \eqref{6-56} and \eqref{6-69} in \eqref{6-53}, and letting $R\to+\iy$, we deduce that
	$$\sbr{(1+\al)+\sum_{j\in J_0\setminus\{0\}}\theta_j }^2=(1+\al)\sbr{(1+\al)+\sum_{j\in J_0\setminus\{0\}}\theta_j^2},$$
which leads to
	$$\sbr{\sum_{j\in J_0\setminus\{0\}}\theta_j }^2=(1+\al)\sbr{\sum_{j\in J_0\setminus\{0\}}\theta_j^2-2\sum_{j\in J_0\setminus\{0\}}\theta_j }.$$
This is a contradiction with $|J_0|\ge2$ due to $\al\in(-1,0)$. Then we conclude the proof of \eqref{6-13}.
\end{itemize}

\vskip0.1in
\noindent{\it Part 3.} We finally prove that \eqref{6-12} holds for all $i\in\{N''+1,\cdots,N'\}$ (if $N''\le N'-1$). 

We apply the induction method on $i\in\{N''+1,\cdots,N'\}$. In particular, we assume that \eqref{5-12} holds true at steps $N''+1,\cdots,i-1$ if $i\ge N''+2$. Note that for $i=N''+1$ we assume nothing. By contradiction, assume in addition that \eqref{6-12} does not hold at step $i$. Thus, by \eqref{6-6} and \eqref{6-7}, we may choose and fix $\eta\in(0,1)$ sufficiently close to $1$ such that
\be\lab{s6-13} \bar r_{n,i}^{(\eta)}=r_{n,i} \ee
for large $n$. Set $J_i=\lbr{j\in\{0,\cdots,N'\}:~d_{g_0}(q_{n,j},q_{n,i})=O(r_{n,i})}$. Obviously, we get from \eqref{6-1} that 
\be\lab{s6-14} r_{n,l}=O(r_{n,i})\quad\forall~l\in J_i, \ee
for large $n$. Since $r_{n,i}=o(1)$ by \eqref{6-8} and \eqref{s6-13}, we get from \eqref{vpn} that 
\be\lab{s6-15} \lim_{n\to+\iy}\mbr{(\phi_{n,l})_*g_0}(r_{n,i}\cdot)=|\rd x|^2~\text{in}~C_\loc^2(\R^2). \ee
for all $l\in J_i$.
We may assume that
	$$\lim_{n\to+\iy}\f{\phi_{n,i}(q_{n,l})}{r_{n,i}}=\check x_l\in\R^2$$
for all $l\in J_i$, and we  have that $\SR_i:=\lbr{\check x_l:~l\in J_i}$ contains at least two distinct points. 
Then, we may now choose and fix $\tau\in(0,1)$ small enough such that
	$$3\tau<\displaystyle\min\lbr{|x-y|:~x,y\in\SR_i,x\neq y},$$
and such that $\SR_i\subset B_{\f{1}{3\tau}}(0)$. We can check  that there exists $C>0$ such that any point in
	$$\Omega_{n,i}:=B_{\f{r_{n,i}}{\tau}}(0) \setminus \bigcup_{j\in J_i}B_{\tau r_{n,i}}(\phi_{n,i}(q_{n,l}))$$
may be joined to $\pa B_{\tau r_{n,i}}(0)$ by  a $C^1$ path in $\Omega_{n,i}$ of length at most $Cr_{n,i}$, for all $n$. Therefore, by \eqref{s6-13} with \eqref{6-10} and \eqref{6-9} for $l=i$, we may first estimate $u_{n,i}$ on $\pa B_{\tau r_{n,i}}(0)$, and then get from \eqref{est-6} and \eqref{s6-15} that
\be\lab{s6-16} u_{n,i}=\bar u_{n,i}(\tau r_{n,i})+O(\ga_{n,i}^{1-p_n})\ge(1-\eta)\ga_{n,i}+O(1), \ee
uniformly in $\Omega_{n,i}$ and for all $n$, where $\bar u_{n,i}$ is the average function given by \eqref{average}. 
Independently, we get from \eqref{16}, \eqref{6-10} and \eqref{s6-13} that
\be\lab{s6-17} \bar u_{n,i}(\tau r_{n,i})=-\sbr{\f{2}{p_n}-1}\ga_{n,i}+\f{2}{p_n\ga_{n,i}^{p_n-1}}\sbr{\ln\f{1}{\la_n\ga_{n,i}^{2(p_n-1)}r_{n,i}^2|x_{n,i}|^{2\al}}+O(1)}. \ee

We now prove that 
\be\lab{s6-18} 0\not\in J_i. \ee
Assume by contradiction to \eqref{s6-18} that $0\in J_i$. Then, by \eqref{vpn} and \eqref{6-2}, we get that 
\be\lab{s6-18-1} 2r_{n,i}\le|x_{n,i}|=O(d_{g_0}(q_{n,0},q_{n,i}))=O(r_{n,i}) \ee
for large $n$. 
Given $\eta_2\in(\eta,1)$, we get from \eqref{6-13} that,
\be\lab{s6-21} \bar r_{n,0}^{(\eta_2)}=r_{n,0}^{(\eta_2)}, \ee
for large $n$. Then, since \eqref{6-13} holds, following the line between \eqref{5-21} and \eqref{5-25}, we get that
\be\lab{s6-20} \lim_{n\to+\iy}\f{\ga_{n,0}}{\ga_{n,i}}=0. \ee
For any $\eta_2'\in(0,\eta_2]$, we get from \eqref{7-2}, \eqref{6-10-0} and \eqref{s6-21} that
\be\lab{s6-30} \bar u_{n,0}(r_{n,0}^{(\eta_2')})=-\sbr{\f{2}{p_n}-1}\ga_{n,0}+\f{2}{p_n\ga_{n,0}^{p_n-1}}\sbr{\ln\f{1}{\la_n\ga_{n,0}^{2(p_n-1)}(r_{n,0}^{(\eta_2')})^{2+2\al}}+O(1)}. \ee
By repeating the argument in the paragraph below \eqref{5-31}, and by replacing \eqref{5-36} with the fact that
	$$\la_np_nh_{n,0}f_{n,0}u_{n,0}^{p_n-1}e^{u_{n,0}^{p_n}}=\f{8(1+\al)^2e^{-2t_{n,0}}}{\mu_{n,0}^{2+2\al}\ga_{n,0}^{p_n-1}p_n}\sbr{1+O\sbr{e^{\ti\eta t_{n,0}} \sbr{\f{|\cdot|}{r_{n,0}^{(\eta_2)}}+|\cdot|+\f{1}{\ga_{n,0}^{p_n}} } } }$$
uniformly in $B_{r_{n,0}^{(\eta_1)}}(0)$ for any $\eta_1\in(0,\eta_2)$ and any $\ti\eta\in(\eta_2,1)$, which is obtained by computing as in the argument involving \eqref{4-66} (to estimate the last term in \eqref{6-10-0} we use \eqref{s6-21} and the fact that $\mu_{n,0}/r_{n,0}^{(\eta_2)}=o(\ga_{n,0}^{-p_n})$ thanks to \eqref{6-6}), and by using \eqref{s6-18-1} to get a similar estimate as \eqref{5-37} with $j=0$, we get the following estimate
\be\lab{s6-31} \bar u_{n,0}(r_{n,0}^{(\eta_2)})\ge \bar u_{n,i}(\tau r_{n,i})+\f{4(1+\al)}{p_n\ga_{n,0}^{p_n-1}}\ln\f{r_{n,i}}{r_{n,0}^{(\eta_2)}}+O(\ga_{n,i}^{1-p_n}). \ee
We now plug \eqref{s6-17} and \eqref{s6-30} in \eqref{s6-31}, and by using \eqref{boundla}, \eqref{s6-18-1} and \eqref{s6-20}, we get 
	$$\ln\f{1}{r_{n,i}|x_{n,i}|^\al}\le O\sbr{\ln\ga_{n,i}},$$
which is an obvious contradiction with \eqref{6-8} and \eqref{s6-13}. This finish the proof of \eqref{s6-18}.

Once \eqref{s6-18} is proven, we get from \eqref{vpn} and the definition of $J_i$ that $r_{n,i}=o(|x_{n,i}|)$, and then, by using the fact that 
	$$\big\{ j\in\{0,\cdots,N'\}:~d_{g_0}(q_{n,j},q_{n,i})=O(r_{n,i}) \big\}\subset\{1,\cdots,N'\} $$
and by using the same arguments in the proof of Lemma \ref{lemma5-1}, and using \eqref{equ-7}, \eqref{6-4}, \eqref{est-6}, \eqref{6-6}, \eqref{6-7} and \eqref{6-10}-\eqref{6-9}, and using Proposition \ref{regular}, we may lead to a contradiction exactly like in the final line in the proof of Lemma \ref{lemma5-1}. Thus, we conclude the proof of \eqref{6-12} for all $i\in\{N''+1,\cdots,N'\}$, and finish the proof of Lemma \ref{lemma6-1}.
\end{proof}

Then, exactly as in the proof of Lemma \ref{lemma5-2}, by using Lemma \ref{lemma6-1}, we get a first estimate as follows:
\begin{lemma}\lab{lemma6-2}
There exists $C>0$ such that 
\be\lab{s6-70-0} 0<\bar u_{n,0}(r)\le-\sbr{\f{2}{p_n}-1}\ga_{n,0}+\f{2}{p_n\ga_{n,0}^{p_n-1}}\ln\f{C}{\la_n\ga_{n,0}^{2(p_n-1)}r^{2(1+\al)}}+O(r^{3(1+\al)/2}), \ee 
and that for all $i\in\{1,\cdots,N'\}$ (if $N'\ge1$),
\be\lab{s6-70} 0<\bar u_{n,i}(r)\le-\sbr{\f{2}{p_n}-1}\ga_{n,i}+\f{2}{p_n\ga_{n,i}^{p_n-1}}\ln\f{C}{\la_n\ga_{n,i}^{2(p_n-1)}|x_{n,i}|^{2\al}r^2}+O(r^{3/2}), \ee 
uniformly in $(0,\kappa]$ and for all $n$, where $\bar u_{n,i}$ is the average function given by \eqref{average}.
\end{lemma}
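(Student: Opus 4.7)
The plan is to mimic the proof of Lemma \ref{lemma5-2}, running the argument in parallel for the singular bubble ($i=0$) and each non-singular bubble ($i\in\{1,\ldots,N'\}$), with Lemma \ref{lemma6-1} supplying the crucial fact that $\bar r_{n,i}^{(\eta)}=r_{n,i}^{(\eta)}$ for any $\eta\in(0,1)$ sufficiently close to $1$. Concretely, I would fix $\eta_1<\eta_2$ in $(0,1)$ close to $1$, so that by Lemma \ref{lemma6-1} together with \eqref{6-6}--\eqref{6-7} we have $\bar r_{n,i}^{(\eta_j)}=r_{n,i}^{(\eta_j)}$ for $j=1,2$. On the interval $(0,r_{n,i}^{(\eta_2)}]$ the desired bound is already available: for $i\ge1$ from \eqref{16} and \eqref{6-10}, and for $i=0$ from \eqref{7-2} together with \eqref{6-10-0}, the extra term $O\bigl(\mu_{n,0}^{\delta_0}/((\bar r_{n,0}^{(\eta)})^{\delta_0}\ga_{n,0}^{p_n-1})\bigr)$ in \eqref{6-10-0} being absorbed into the error because $\mu_{n,0}=o(\bar r_{n,0}^{(\eta)})$ by \eqref{6-3}.

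Next, on the sphere $\partial B_{r_{n,i}^{(\eta_1)}}(0)$ I would evaluate $\bar u_{n,i}(r_{n,i}^{(\eta_1)})$ via \eqref{6-9-0}--\eqref{6-10-0} (resp.\ \eqref{6-9}--\eqref{6-10}), and the radial derivative $\bar u_{n,i}'(r_{n,i}^{(\eta_1)})$ using the gradient control \eqref{6-11-0} (resp.\ \eqref{6-11}) together with the explicit profile of $v_{n,i}$ from Proposition \ref{single-1} (resp.\ \ref{single-2}); this gives $r_{n,i}^{(\eta_1)}\bar u_{n,i}'(r_{n,i}^{(\eta_1)})=-\tfrac{4(1+\al)}{p_n\ga_{n,0}^{p_n-1}}+O(\ga_{n,0}^{1-2p_n})$ in the singular case and the analogue of \eqref{5-74} in the non-singular case. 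For $r\in(r_{n,i}^{(\eta_1)},\kappa]$, integrating \eqref{equ-7} over $B_r(0)$, discarding the nonnegative nonlinear contribution and using Stokes, we obtain
\[
2\pi r\,\bar u_{n,i}'(r)\le 2\pi r_{n,i}^{(\eta_1)}\,\bar u_{n,i}'(r_{n,i}^{(\eta_1)})+O\!\left(\int_{B_r(0)\setminus B_{r_{n,i}^{(\eta_1)}}(0)}h_{n,i}u_{n,i}\,\rd x\right).
\]
Dividing by $2\pi r$ and integrating from $r_{n,i}^{(\eta_1)}$ to $r$, combined with the bound at $r_{n,i}^{(\eta_1)}$, will yield \eqref{s6-70-0} and \eqref{s6-70} once the linear-term integral is estimated with the correct power of $r$.

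The one genuinely new piece of computation is the Hölder estimate for the error term in the singular case, where $h_{n,0}\sim|x|^{2\al}$ at the origin itself. Writing $h_{n,0}u_{n,0}=\sbr{|x|^{2\al}}^{1-1/q_3}\ti h_{n,0}^{1-1/q_3}\cdot h_{n,0}^{1/q_3}u_{n,0}$ and applying three-term Hölder with conjugate exponents $(q_1,q_2,q_3)$,
\[
\int_{B_r(0)}h_{n,0}u_{n,0}\,\rd x\le |B_r(0)|^{1/q_1}\sbr{\int_{B_r(0)}|x|^{2\al q_2(1-1/q_3)}\rd x}^{1/q_2}\sbr{\int_{B_r(0)}h_{n,0}u_{n,0}^{q_3}\rd x}^{1/q_3},
\]
the last factor is bounded by Lemma \ref{bound-0}, the middle one is integrable provided $2\al q_2(1-1/q_3)>-2$, and the product contributes a power $r^{2/q_1+2/q_2+2\al(1-1/q_3)}$. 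A short algebraic computation (taking e.g.\ $q_3=4$) shows one can choose the exponents, compatibly with $\al\in(-1,0)$, so that this power equals $3(1+\al)/2$, giving the error $O(r^{3(1+\al)/2})$ in \eqref{s6-70-0}. For the non-singular bubble the singularity of $h_{n,i}$ sits at $x_{n,i}\neq 0$, and the same three-term Hölder as in the proof of Lemma \ref{lemma5-2} (with $s=\tfrac{2}{3}(1/|\al|-1)$, $t=(1/4-1/s)^{-1}$) produces the error $O(r^{3/2})$ in \eqref{s6-70}.

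The main potential obstacle is purely technical: verifying that the correct boundary value $\bar u_{n,0}(r_{n,0}^{(\eta_1)})$ in the singular case is exactly $(1-\eta_1)\ga_{n,0}+O(\ga_{n,0}^{1-p_n})$, since \eqref{6-10-0} carries the extra Hölder-type defect $O(\mu_{n,0}^{\delta_0}/(\ga_{n,0}^{p_n-1}(\bar r_{n,0}^{(\eta)})^{\delta_0}))$ absent in the non-singular case. Using $\mu_{n,0}=o(r_{n,0}^{(\eta_1)})$ from \eqref{6-3} shows this defect is $o(\ga_{n,0}^{1-p_n})$, which is harmless, so the rest of the integration argument goes through verbatim and concludes the proof.
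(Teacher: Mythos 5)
Your proposal is correct and follows essentially the same route as the paper, whose proof of this lemma is literally ``exactly as in the proof of Lemma \ref{lemma5-2}, by using Lemma \ref{lemma6-1}'': fix $\eta_1<\eta_2$, use Lemma \ref{lemma6-1} to identify $\bar r_{n,i}^{(\eta)}$ with $r_{n,i}^{(\eta)}$, read off the value and radial derivative at $r_{n,i}^{(\eta_1)}$ from \eqref{6-9-0}--\eqref{6-11-0} (resp. \eqref{6-9}--\eqref{6-11}) and Proposition \ref{single-1} (resp. \ref{single-2}), integrate the averaged equation outward discarding the nonlinear term, and control the linear term by Hölder plus Lemma \ref{bound-0}. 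The only genuinely new ingredient beyond Lemma \ref{lemma5-2} — the Hölder estimate at the singular weight, where your choice $q_3=4$ yields the power $2(1+\al)(1-1/q_3)=3(1+\al)/2$ with the integrability constraint satisfiable for all $\al\in(-1,0)$, and the absorption of the extra defect $O(\mu_{n,0}^{\delta_0}/(\ga_{n,0}^{p_n-1}(\bar r_{n,0}^{(\eta)})^{\delta_0}))$ — is handled correctly.
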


\begin{corollary}\lab{lemma6-3}
There holds 
\be\lab{6-80} \lim_{n\to+\iy}\la_n=0. \ee
\end{corollary}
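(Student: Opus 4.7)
The proof will follow the same blueprint as that of Corollary \ref{lemma5-3}: we will evaluate the average estimate of Lemma \ref{lemma6-2} at a carefully chosen small radius and use the positivity $\bar u_{n,i}>0$ to read off a lower bound for $\ln(1/\lambda_n)$ that blows up. The crucial difference from the Type I setting is that in Type II we always have a singular bubble at $q_{n,0}=q_0$ with $\gamma_{n,0}\to+\infty$, while the extra non-singular bubbles (when $N'\ge1$) may or may not be present. Because $x_{n,0}=\phi_{n,0}(q_0)=0$, one cannot exploit the factor $|x_{n,i}|^{2\alpha}$ that drove the proof of Corollary \ref{lemma5-3}; instead one uses the factor $r^{2(1+\alpha)}$ sitting inside the logarithm of \eqref{s6-70-0}, which diverges as $r\to 0$ since $1+\alpha>0$.

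\textbf{Key step.} Starting from \eqref{s6-70-0} and multiplying by $p_n\gamma_{n,0}^{p_n-1}/2>0$, the positivity $\bar u_{n,0}(r)>0$ yields, for any $r\in(0,\kappa]$,
\begin{equation*}
\frac{2-p_n}{2}\gamma_{n,0}^{p_n}+2(p_n-1)\ln\gamma_{n,0}+2(1+\alpha)\ln r\le\ln\frac{1}{\lambda_n}+O\bigl(1+r^{3(1+\alpha)/2}\gamma_{n,0}^{p_n-1}\bigr).
\end{equation*}
Specialising to $r_n:=\kappa\,\gamma_{n,0}^{-2(p_n-1)/(3(1+\alpha))}$, which lies in $(0,\kappa]$ because $p_n\ge1$ and $1+\alpha>0$, the balance is designed so that $r_n^{3(1+\alpha)/2}\gamma_{n,0}^{p_n-1}=\kappa^{3(1+\alpha)/2}=O(1)$, and the logarithmic term contributes $-\frac{4(p_n-1)}{3}\ln\gamma_{n,0}+O(1)$. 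Combining, one obtains
\begin{equation*}
\frac{2-p_n}{2}\gamma_{n,0}^{p_n}+\frac{2(p_n-1)}{3}\ln\gamma_{n,0}\le \ln\frac{1}{\lambda_n}+O(1).
\end{equation*}

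\textbf{Conclusion.} Both summands on the left are nonnegative since $p_n\in[1,2]$. If $p_n\to p_0\in[1,2)$ along a subsequence, then $(2-p_n)/2$ stays bounded away from $0$ and $\gamma_{n,0}^{p_n}\to+\infty$, so the first summand diverges; if instead $p_n\to 2$, then eventually $p_n-1>1/2$ and $\ln\gamma_{n,0}\to+\infty$ by $\gamma_{n,0}\to+\infty$, so the second summand diverges. In every case the left-hand side tends to $+\infty$, forcing $\ln(1/\lambda_n)\to+\infty$, i.e.\ \eqref{6-80}.

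\textbf{Main obstacle.} There is essentially no obstacle; the work has all been done inside Lemma \ref{lemma6-2}, and this corollary is a one-line consequence. The only point requiring a moment's care is the choice of $r_n$: it must simultaneously kill the $O(r^{3(1+\alpha)/2})$ error on the right (which is why the exponent involves $1+\alpha$) and leave a divergent quantity on the left once $p_n$ is allowed to approach either endpoint of $[1,2]$.
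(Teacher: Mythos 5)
Your proof is correct and follows essentially the same route as the paper: it evaluates the bound \eqref{s6-70-0} from Lemma \ref{lemma6-2} at exactly the paper's radius $r=\kappa\,\gamma_{n,0}^{2(1-p_n)/(3(1+\alpha))}$, uses positivity of $\bar u_{n,0}$, and then reads off $\ln(1/\lambda_n)\to+\infty$ (the paper states this last step as "clear", while you spell out the two-case dichotomy in $p_n$, which is a welcome clarification). The only cosmetic difference is the coefficient of $\ln\gamma_{n,0}$ — your $\tfrac{2}{3}(p_n-1)$ versus the paper's displayed $\tfrac{4}{3}(p_n-1)$ in \eqref{6-79} — which is immaterial for the conclusion, and your bookkeeping is in fact the more careful one.
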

\begin{proof}
By evaluating \eqref{s6-70-0} at $r=\kappa\ga_{n,0}^{\f{2(1-p_n)}{3(1+\al)}}\le\kappa$, we get that
\be\lab{6-79} \sbr{1-\f{p_n}{2}}\ga_{n,0}^{p_n}+\ln\ga_{n,0}^{\f{4}{3}(p_n-1)}\le\ln\f{1}{\la_n}+O(1). \ee
This clearly prove \eqref{6-80}.
\end{proof}

\vs
\subsection{Quantization for subcritical exponent \texorpdfstring{$p_n\not\to2$}{}}\lab{sec6-2}\ 

Up to a subsequence, we may assume 
\be\lab{6-81}\lim_{n\to+\iy}p_n=p_0\ee
for some $p_0\in[1,2]$. In this subsection, we prove that
\begin{lemma}\lab{lemma6-4}
Let $p_0\in[1,2)$, then there holds
	$$\lim_{n\to+\iy}\beta_n(B_{\kappa_0}(q_0))=4\pi(1+\al)+4\pi N',$$ 
where $N'$ is given above \eqref{6-0} and $\beta_n(\cdot)$ is given by \eqref{betadomain}.
\end{lemma}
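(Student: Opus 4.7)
The plan is to closely follow the strategy of Lemma \ref{lemma5-4}, but now with an additional singular bubble at $q_0$ that will contribute the extra mass $4\pi(1+\al)$. By Lemma \ref{lemma6-1} together with \eqref{6-6}-\eqref{6-7}, for every fixed $\eta\in(0,1)$ we have $\bar r_{n,i}^{(\eta)}=r_{n,i}^{(\eta)}$ for all $i\in\{0,1,\dots,N'\}$ and large $n$, so that the refined local expansions of Section \ref{appe2} apply at each bubble. Repeating the argument leading to \eqref{5-83}-\eqref{5-84}, but now replacing Proposition \ref{single-2} with Proposition \ref{single-1} in the singular bubble $i=0$, we would obtain
\begin{equation*}
\tfrac{p_n}{2}\ga_{n,0}^{p_n-2}\!\!\int_{B_{r_{n,0}^{(\eta)}}(0)}\!\!\la_np_nh_{n,0}f_{n,0}u_{n,0}^{p_n}e^{u_{n,0}^{p_n}}\rd x=4\pi(1+\al)+o(1),
\end{equation*}
\begin{equation*}
\tfrac{p_n}{2}\ga_{n,0}^{2(p_n-1)}\!\!\int_{B_{r_{n,0}^{(\eta)}}(0)}\!\!\la_np_nh_{n,0}f_{n,0}e^{u_{n,0}^{p_n}}\rd x=4\pi(1+\al)+o(1),
\end{equation*}
while the analogous identities with right hand side $4\pi+o(1)$ hold at every non-singular center $i\in\{1,\dots,N'\}$.

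Next I would establish the height-balance relation
\begin{equation*}
\ga_{n,l}=(1+o(1))\ga_{n,i}\quad\text{for all }l\in\{0,1,\dots,N'\},
\end{equation*}
where $i$ is an index realizing $\max_l\ga_{n,l}$. This is obtained as in \eqref{5-85}-\eqref{5-86}: comparing the two sides of the averaged expansion at $r=r_{n,l}^{(\eta)}$ (either \eqref{s6-70-0} for $l=0$ or \eqref{s6-70} for $l\geq 1$) with the matching value coming from \eqref{6-9-0}/\eqref{6-9}, we read off an upper bound for $\ln(1/\la_n)$ in terms of $\ga_{n,l}^{p_n}$, and then the lower bound \eqref{6-79} forces $\ga_{n,l}^{p_n}\ge(1-\eta\,p_0/2+o(1))\ga_{n,i}^{p_n}$; since $p_0<2$, letting $\eta\to1$ yields the claim.

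The third step is the neck estimate: I would prove that for every $\eta'\in(0,\eta)$,
\begin{equation*}
u_n\le(1-\eta')\ga_{n,i}\quad\text{on }\;\Omega_n:=B_{\kappa_0}(q_0)\setminus\Big(\bigcup_{l=1}^{N'}\phi_{n,l}^{-1}(B_{r_{n,l}^{(\eta)}}(0))\cup\phi_{n,0}^{-1}(B_{r_{n,0}^{(\eta)}}(0))\Big),
\end{equation*}
for large $n$. The proof runs exactly as in \eqref{5-90}-\eqref{5-97} once one replaces the Type~I assumption \eqref{5-0} by the present situation: if the bound fails, a maximum point $q_n\to q_0$ and either $q_n$ lies close to some non-singular $q_{n,j}$ (the argument of Section \ref{sec5-2} applies verbatim, using the isothermal chart at $q_{n,j}$) or $q_n$ is close to $q_0$ itself, in which case the bubble at $q_0$ is already a singular bubble by hypothesis, and Proposition \ref{singular} together with \eqref{6-10-0}-\eqref{6-9-0} prevents the creation of another concentration in a neighborhood of $q_0$. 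Combined with \eqref{6-79}, this gives the pointwise decay
\begin{equation*}
\int_{\Omega_n}\la_nhf(1+u_n^{p_n})e^{u_n^{p_n}}\rd v_{g_0}=o\big(\ga_{n,i}^{p_n-2}\big).
\end{equation*}
Plugging the three contributions (singular bubble, $N'$ non-singular bubbles, neck) into the definition \eqref{betadomain} of $\beta_n(B_{\kappa_0}(q_0))$, factoring out $\ga_{n,i}^{p_n-2}$ and using the height-balance to replace every $(\ga_{n,j}/\ga_{n,i})^{2-p_n}$ by $1+o(1)$, one finishes exactly as in \eqref{5-147}, obtaining the clean limit $4\pi(1+\al)+4\pi N'$.

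The main obstacle I anticipate is the neck step, specifically the sub-case where a new would-be maximum approaches $q_0$: one must rule out a second concentration at the singularity without any gradient convergence for $u_{n,0}$ near $0$. This is where the H\"older-type refined estimate of Proposition \ref{singular} and the smallness built into \eqref{singular-1} are essential; the rest of the argument is a straightforward transcription of the Type~I proof with the singular bubble's energy $4\pi(1+\al)$ inserted at the right place.
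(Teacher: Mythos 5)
Your proposal follows essentially the same route as the paper's proof: local quantization at each bubble via Propositions \ref{single-1}/\ref{single-2} (giving $4\pi(1+\al)$ at $q_0$ and $4\pi$ at each non-singular center), the height balance $\ga_{n,l}=(1+o(1))\ga_{n,i}$ via Lemma \ref{lemma6-2}, the neck bound $u_n\le(1-\eta')\ga_{n,i}$ with the singular bubble's ball excluded, and then the smallness of the neck energy plugged into \eqref{betadomain}. The only caveat is that your summarized neck estimate $o(\ga_{n,i}^{p_n-2})$ is by itself not sufficient for the factor $\int hf(e^{u_n^{p_n}}-1)$ when $p_0>4/3$ (there one needs $o(\ga_{n,i}^{-2(p_n-1)})$), but the bound you actually derive from $u_n\le(1-\eta')\ga_{n,i}$ and the estimate on $\la_n$ — the analogue of \eqref{6-99}, exponentially small once $\eta'$ is chosen close to $1$ since $p_0<2$ — gives this, so the argument is correct as in the paper.
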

\begin{proof}
Up to a renumbering, we fix $i\in\{0,\cdots,N'\}$ such that 
\be\lab{6-82} \ga_{n,i}=\max\{\ga_{n,j}:~j=1,\cdots,N'\}, \ee
for all $n$. Given any $\eta\in(0,1)$, setting $r_{n,l}^{(\eta)}$ as in \eqref{6-6}, we know from \eqref{6-12} that $\bar r_{n,l}^{(\eta)}=r_{n,l}^{(\eta)}$ for all $l\in\{0,\cdots,N'\}$ and $n$. 
By the argument to get \eqref{4-66}, and using \eqref{6-9-0}-\eqref{6-8} and Propositions \ref{single-2}, \ref{single-1}, we get that
\be\lab{6-83} \f{p_n}{2}\ga_{n,l}^{p_n-2}\int_{B_{r_{n,l}^{(\eta)}}(0)} \la_np_nh_{n,l}f_{n,l}u_{n,l}^{p_n}e^{u_n^{p_n}}\rd x=\begin{cases}4\pi(1+\al)+o(1),&\quad\text{if}~l=0,\\ 4\pi+o(1),&\quad\text{if}~l\ge1,\end{cases} \ee
and
\be\lab{6-84} \f{p_n}{2}\ga_{n,l}^{2(p_n-1)}\int_{B_{r_{n,l}^{(\eta)}}(0)} \la_np_nh_{n,l}f_{n,l}e^{u_n^{p_n}}\rd x=\begin{cases}4\pi(1+\al)+o(1),&\quad\text{if}~l=0,\\ 4\pi+o(1),&\quad\text{if}~l\ge1,\end{cases}\ee
for all $l$.
By the argument between \eqref{5-85} and \eqref{5-86}, and using Lemma \ref{lemma6-2}, we get that
\be\lab{6-86} \ga_{n,l}=(1+o(1))\ga_{n,i}\ee
for all $l$.
Moreover, by using \eqref{est-6} and \eqref{6-86}, the argument in the paragraph involving \eqref{5-90} gives that
\be\lab{6-90} u_n\le (1-\eta')\ga_{n,i} \quad\text{in}~\Omega_n:=B_{\kappa_0}(q_0)\setminus\bigcup_{l=0}^{N'}\phi_{n,l}^{-1}\sbr{B_{r_{n,l}^{(\eta)}}(0)} \ee 
for large $n$ and any $\eta'\in(0,\eta)$.

By using \eqref{6-90}, and by using \eqref{6-79} (if $i=0$) and using \eqref{5-79} (if $i\ge1$) to estimate $\la_n$, we get that 
\be\lab{6-99} \begin{aligned} 
	&\int_{\Omega_n}\la_nhf\sbr{1+u_n^{p_n}}e^{u_n^{p_n}}\rd v_{g_0}\\
	&\quad=\begin{cases}O\sbr{\exp\sbr{(1-\eta')^{p_n}\ga_{n,0}^{p_n}-\sbr{1-\f{p_0}{2}}\ga_{n,0}^{p_n}+o(\ga_{n,0}^{p_n})} }, &\quad\text{if}~i=0,\\  O\sbr{|x_{n,i}|^{-2\al}\exp\sbr{(1-\eta')^{p_n}\ga_{n,i}^{p_n}-\sbr{1-\f{p_0}{2}}\ga_{n,i}^{p_n}+o(\ga_{n,i}^{p_n})} }, &\quad\text{if}~i\ge1,\end{cases}
\end{aligned}\ee
for all $n$. Noting that $p_0<2$, by choosing $\eta'\in(0,\eta)$ sufficiently close to 1 from the begining, we may plug \eqref{6-99}, \eqref{6-83} and \eqref{6-84} in \eqref{betadomain} to conclude the proof of this lemma, using also \eqref{6-86}.
\end{proof}

\vs
\subsection{Quantization for critical exponent \texorpdfstring{$p_n\to2$}{}}\lab{sec6-3}\ 

In this subsection, we assume that $p_0=2$ in \eqref{6-81}. Recall the notations $r_{n,l}$, $r_{n,l}^{(\eta)}$ and $\bar r_{n,l}^{(\eta)}$ given by \eqref{6-1}, \eqref{6-6} and \eqref{6-7}.

For all $\eta\in(0,1)$, by \eqref{6-12} and \eqref{6-6}, we have that
\be\lab{6-100} r_{n,l}^{(\eta)}=o(r_{n,l}),\quad \bar r_{n,l}^{(\eta)}=r_{n,l}^{(\eta)}\ee 
for all $n$ and $l\in\{0,\cdots,N'\}$.
For all $\eta'\in(0,\eta)$, as a consequence of \eqref{6-11-0} and \eqref{6-11}, we get that
\be\lab{6-100-1} \abs{\nabla(u_{n,l}-v_{n,l})}=o\sbr{\f{1}{\ga_{n,l}^{p_n-1}\bar r_{n,l}^{(\eta')} }}\ee
uniformly in $B_{r_{n,l}^{(\eta')}}(0)\setminus B_{\mu_{n,0}}(0)$ (if $l\ge1$) or $B_{r_{n,l}^{(\eta')}}(0)$ (if $l\ge1$). 
Then, by using the estimate in Propositions \ref{appe1-2} and \ref{appe1-1}, we get that
\be\lab{6-101} \abs{u_{n,l}-\bar u_{n,l}(r)}\le \f{2+o(1)}{\ga_{n,l}^{p_n-1}}\ln\f{2r}{|\cdot|},\ee 
uniformly in $B_{r}(0)\setminus\{0\}$ for all large $n$ and $r\in\big(0,r_{n,l}^{(\eta')}\big]$. 

Now, let $\eta_0\in(0,1)$ be fixed and will be chosen later. For any $j\in\{0,\cdots,N'\}$, we define 
\be\lab{6-102} 
\nu_{n,j}:=\sup\lbr{ r_0\in\left(r_{n,j}^{(\eta_0)},\kappa\right]:~
\left\{\begin{aligned} &\abs{u_{n,j}-\bar u_{n,j}(r)}\le 5\pi C_2\bar u_{n,j}(r)^{1-p_n}\\
&\quad\quad\quad+10\sum_{l\in I_{n,j}(r)}\ga_{n,l}^{1-p_n}\ln\f{6r}{\abs{\cdot-\phi_{n,j}(q_{n,l})}} \\
&\text{in}~B_r(0)\setminus\cup_{l\in I_{n,j}(r)}B_{r_{n,l}^{(\eta_0)}}\sbr{\phi_{n,j}(q_{n,l})}\\
&\text{for all}~r\in(0,r_0]
\end{aligned} \right. }  
\ee
for large $n$, where $C_2$ is as in \eqref{est-6} and $I_{n,j}(r)$ is given by
\be\lab{6-103} I_{n,j}(r):=\lbr{l\in\{0,\cdots,N'\}:~\phi_{n,j}(q_{n,l})\in B_{\f{3r}{2}}(0)}. \ee
As a first remark, it follows from \eqref{6-101} that, for all given $\eta_2\in[\eta_0,1)$, we have
\be\lab{6-104} \nu_{n,j}\ge r_{n,j}^{(\eta_2)} \ee 
for large $n$ and any $j$.
Moreover, we have that

\begin{lemma}\lab{lemma6-5}
By choosing $\eta_0$ such that
	$$\eta_0\in\sbr{1-\f{1}{201(1+N')^2},1}$$
we have that 
\be\lab{6-105} \bar u_{n,j}(\nu_{n,j})=O(1), \ee 
for all $j\in\{0,\cdots,N'\}$.
\end{lemma}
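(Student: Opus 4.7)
The plan is to follow the scheme of Lemma \ref{lemma5-5} very closely, arguing by contradiction by choosing $i \in \{0,\cdots,N'\}$ that realises $\nu_{n,i} = \min\{\nu_{n,j} : \lim_{n\to+\iy}\bar u_{n,j}(\nu_{n,j}) = +\iy\}$, and then deriving a contradiction from the defining inequality in \eqref{6-102} for $j=i$ at $r=\nu_{n,i}$. We set
\[
J_i := \lbr{l\in\{0,\cdots,N'\}:~d_{g_0}(q_{n,l},q_{n,i})=O(\nu_{n,i})},
\]
fix $\tau\in(0,1)$ and $R\gg1$ as in \eqref{lemma5-5}, and consider $\Gamma_n := \bar u_{n,i}(\tau\nu_{n,i})$. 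Using the auxiliary $w_n$ solving \eqref{5-109} (suitably adapted with $h_{n,i}$ and the radius $R\nu_{n,i}$) together with the estimate
\[
\nm{w_n}_{L^\iy(B_{R\nu_{n,i}}(0))} = o(\Ga_n^{1-p_n}),
\]
obtained exactly as in \eqref{5-110}--\eqref{5-118} (the H\"older argument on $h_{n,l}$ remaining valid since $\al\in(-1,0)$), we first verify $\Ga_n\to+\iy$ via the monotonicity of $\overline{u_{n,i}-w_n}$, and then get the pointwise expansion $u_{n,i}=\Ga_n+O(\Ga_n^{1-p_n})$ uniformly on $D_n$. The analog of \eqref{5-114}, namely $\Ga_n=o(\ga_{n,j})$ for all $j\in J_i$, follows by applying \eqref{6-12} of Lemma \ref{lemma6-1} to every $l\in J_i$ (including $l=0$ if present), together with \eqref{6-100} and the radially-decreasing auxiliary function argument.

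The core of the proof is then the Green representation bound \eqref{5-121}--\eqref{5-132}. Here the only genuinely new feature is that when $l=0\in J_i$ the local nonlinearity satisfies \eqref{6-49} instead of \eqref{5-125}, producing a prefactor $4(1+\al)$ rather than $4$ in the contribution of the disk $B_{r_{n,0}^{(\eta_0)}}(0)$, namely
\[
\IR_n\sbr{B_{r_{n,0}^{(\eta_0)}/2}(0)} = \f{4(1+\al)+o(1)}{p_n\ga_{n,0}^{p_n-1}}\ln\f{2R\nu_{n,i}}{|z_n|} + O(\ga_{n,0}^{1-p_n}).
\]
This smaller coefficient is still harmless for the comparison with the defining inequality of \eqref{6-102} (where the constant $10$ dominates both $4$ and $4(1+\al)$). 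The crucial quantitative estimate \eqref{5-135} is recovered by using \eqref{s6-70-0} in place of \eqref{5-70} at $r=\tau\nu_{n,i}$ when $i=0$ (giving $(1+\al)$-factors absorbed into the bound $e^{\Ga_n^{p_n}}\le\exp((1+\al)^{-1}\cdot\text{log-term}+o(1))$) and \eqref{s6-70} when $i\geq1$; the same upper bound on $\la_n\Ga_n^{p_n-1}e^{\Ga_n^{p_n}}\nu_{n,i}^{2}\sbr{\nu_{n,i}+|x_{n,i}|}^{2\al}$ is obtained. The pointwise control of $u_{n,i}^{p_n}$ and the choice $\eta_0 \in \big(1-\tfrac{1}{201(1+N')^2},1\big)$ ensure as in \eqref{5-127}--\eqref{5-130} that all off-disk contributions to $\IR_n$ are negligible.

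Combining these bounds, we reach the analog of \eqref{5-136}--\eqref{5-138}, which contradicts the equality case of \eqref{6-102} at $r=\nu_{n,i}$ on $\pa B_{\nu_{n,i}}(0)$, completing the proof. The main obstacle is the bookkeeping when $0\in J_i$ together with the presence of a regular bubble $q_{n,i}$ distinct from $q_0$: one must make sure that all the disks $B_{\tau\nu_{n,i}}(\phi_{n,i}(q_{n,l}))$ for $l\in J_i$ and the singular disk $B_{\tau\nu_{n,i}}(x_{n,i})$ are disjoint for the chosen $\tau$, which requires, as in the final case in the proof of Lemma \ref{lemma5-5} via \eqref{5-136-0}, a separate uniform estimate $u_{n,0}=\Ga_n+O(\Ga_n^{1-p_n})$ on $B_{\tau\nu_{n,i}/2}(x_{n,i})$. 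This last step relies on \eqref{6-0}, which forces $q_0\in\DR'$ to be the unique singular-bubble centre and rules out additional singular bubbles nucleating along the blow-up, so the argument between \eqref{5-47} and \eqref{5-54-1} applies verbatim. Apart from this careful case analysis, the remaining estimates are routine adaptations of \S\ref{sec5-3}.
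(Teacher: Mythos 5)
Your main adaptation is exactly what the paper intends: the paper itself omits this proof with the remark that it is ``quite similar'' to Lemma \ref{lemma5-5}, and your bookkeeping — $\Ga_n=o(\ga_{n,j})$ for all $j\in J_i$ via Lemma \ref{lemma6-1} and \eqref{6-100}, the Green-representation scheme \eqref{5-121}--\eqref{5-132}, the replacement of \eqref{5-125} by the singular-bubble estimate coming from \eqref{6-49} (giving the harmless coefficient $4(1+\al)$ for the $l=0$ disk), and the use of \eqref{s6-70-0}/\eqref{s6-70} in place of \eqref{5-70} to recover \eqref{5-135} — is the correct transcription.

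However, your final paragraph misidentifies (and mishandles) the ``new'' difficulty. You claim one needs a separate uniform estimate $u_{n,0}=\Ga_n+O(\Ga_n^{1-p_n})$ on $B_{\tau\nu_{n,i}/2}(x_{n,i})$, obtained by running the argument between \eqref{5-47} and \eqref{5-54-1} ``verbatim'' under \eqref{6-0}. This is wrong on two counts. First, that argument is not available here: in Section \ref{sec5} it hinges precisely on the assumption \eqref{5-0} (no singular bubble at $q_0$), used to derive a contradiction when a new concentration would form at the singularity; under \eqref{6-0} the opposite holds, so the step cannot be quoted verbatim. Second, the claimed estimate is false whenever $0\in J_i$: near $q_0$ the solution is of size $\ga_{n,0}$, and you have already shown $\Ga_n=o(\ga_{n,0})$. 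The point is that in Type II no such ``hole-filling'' is needed at all. Since $q_{n,0}\equiv q_0$ is itself one of the bubble points indexed in \eqref{6-102}--\eqref{6-103}, the singularity lies inside $B_{R\nu_{n,i}}(0)$ only when $d_{g_0}(q_0,q_{n,i})=O(\nu_{n,i})$, i.e.\ only when $0\in J_i$; in that case the neighbourhood of $q_0$ is covered by the disk $B_{r_{n,0}^{(\eta_0)}}(\phi_{n,i}(q_{n,0}))$ and its neck $D_{n,0}$, whose contributions are controlled by the analogs of \eqref{5-125}--\eqref{5-126} (via \eqref{6-49} and Proposition \ref{single-1}) and by the $j=0$ case of the defining inequality \eqref{6-102} together with \eqref{6-101} and \eqref{6-104}. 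Thus the third case of the covering argument in Lemma \ref{lemma5-5} (the one requiring \eqref{5-136-0}) simply does not occur, and \eqref{5-134} holds with the singular disk listed among the bubble disks. With that correction the rest of your argument goes through and yields the contradiction with the equality case of \eqref{6-102} at $r=\nu_{n,i}$.
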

\begin{proof}
The proof is quite similar to that of Lemma \ref{lemma5-5}, so we omit it.
\end{proof}

Then, we have the quantization result.
\begin{lemma}\lab{lemma6-6}
Let $p_0=2$ in \eqref{6-81}, then there holds
	$$\lim_{n\to+\iy}\beta_n(B_{\kappa_0}(q_0))=4\pi(1+\al)+4\pi N',$$ 
where $N'$ is given above \eqref{6-0} and $\beta_n(\cdot)$ is given by \eqref{betadomain}.
\end{lemma}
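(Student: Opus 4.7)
The plan is to follow the blueprint of Lemma \ref{lemma5-6}, the only new input being that the bubble at $q_{n,0}=q_0$ is singular and contributes $4\pi(1+\al)$ rather than $4\pi$ to the energy. I first pick a sequence $\ti\Ga_n\to+\iy$ with $\ti\Ga_n=o(\ga_{n,j})$ for every $j\in\{0,\cdots,N'\}$ and set
$$\ti\nu_{n,j}:=\sup\big\{r\in[0,\kappa]:~\bar u_{n,j}\ge\ti\Ga_n~\text{on}~[0,r]\big\}.$$
Choosing $\eta_0$ close to $1$ as in Lemma \ref{lemma6-5} forces $\ti\nu_{n,j}\le\nu_{n,j}$ and $\bar u_{n,j}(\ti\nu_{n,j})=\ti\Ga_n$, and by Corollary \ref{lemma6-3} we have $\ti\nu_{n,j}=o(1)$. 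Fixing $R\gg1$ so that $\pa B_{R\ti\nu_{n,j}}(0)$ lies at distance $\ge\ti\nu_{n,j}/R$ from all bubble centres, the estimate in \eqref{6-102} combined with the gradient bound \eqref{est-6} yields $u_n=\ti\Ga_n+o(1)$ uniformly on $\phi_{n,j}^{-1}(\pa B_{R\ti\nu_{n,j}}(0))$ for every $j\in\{0,\cdots,N'\}$.

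I next claim that $u_n\le 2\ti\Ga_n$ uniformly on
$$\Omega_n:=B_{\kappa_0}(q_0)\setminus\bigcup_{j=0}^{N'}\phi_{n,j}^{-1}\big(B_{R\ti\nu_{n,j}}(0)\big).$$
In Lemma \ref{lemma5-6} the analogous step used the Type I assumption \eqref{5-0}. In the Type II setting the singular bubble is already accounted for via $j=0$, so a putative new concentration point in $\Omega_n$ would contradict the exhaustivity of the selection process in Proposition \ref{bubble}, thanks to $B_{2\kappa_0}(q_0)\cap(\DR\cup\Sigma_\iy)=\{q_0\}$. The analogues of \eqref{5-143}--\eqref{5-144} then follow verbatim from Corollary \ref{lemma6-3} and \eqref{6-79}, and permit me to choose $\ti\Ga_n$ growing slowly enough that both $\la_n\ti\Ga_n^{p_n}e^{(2\ti\Ga_n)^{p_n}}=o(\ga_{n,j}^{2-p_n})$ and $(2-p_n)\ln\big(1+\la_n\ga_{n,j}^{2(p_n-1)}e^{(2\ti\Ga_n)^{p_n}}\big)=o(1)$ for every $j$ and large $n$.

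Third, using \eqref{6-10-0}--\eqref{6-9} together with Propositions \ref{single-1} and \ref{single-2}, the computation already carried out in \eqref{6-49}--\eqref{6-54} in the proof of Lemma \ref{lemma6-1} delivers the bubble-by-bubble estimates
\begin{equation*}
\f{\la_np_n^2}{2}\ga_{n,j}^{p_n-2}\int_{\phi_{n,j}^{-1}(B_{r_{n,j}^{(\eta_0)}}(0))}hfu_n^{p_n}e^{u_n^{p_n}}\rd v_{g_0}=c_j+o(1),
\end{equation*}
\begin{equation*}
\f{\la_np_n^2}{2}\ga_{n,j}^{2(p_n-1)}\int_{\phi_{n,j}^{-1}(B_{r_{n,j}^{(\eta_0)}}(0))}hf(e^{u_n^{p_n}}-1)\rd v_{g_0}=c_j+o(1),
\end{equation*}
with $c_0=4\pi(1+\al)$ and $c_j=4\pi$ for $j\ge1$, while the complement contributes $O(\la_n e^{(2\ti\Ga_n)^{p_n}})$ and $o(\ga_{n,j_0}^{2-p_n})$ respectively. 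Plugging these into \eqref{betadomain} and factoring out the smallest $\ga_{n,j_0}:=\min_j\ga_{n,j}$ exactly as in \eqref{5-147} gives
$$\beta_n(B_{\kappa_0}(q_0))=(1+o(1))\Big(\sum_{j=0}^{N'}c_j(\ga_{n,j_0}/\ga_{n,j})^{2(p_n-1)}\Big)^{\f{2-p_n}{p_n}}\Big(\sum_{j=0}^{N'}c_j(\ga_{n,j}/\ga_{n,j_0})^{2-p_n}\Big)^{\f{2(p_n-1)}{p_n}}.$$
Applying \eqref{5-85-1} with $\eta=\eta_0$ together with Corollary \ref{lemma6-3} yields $(2-p_n)\ga_{n,j}^{p_n}\le 2(1-\eta_0+o(1))\ga_{n,j_0}^{p_n}$, from which $(\ga_{n,j}/\ga_{n,j_0})^{2-p_n}=1+o(1)$; since $(2-p_n)/p_n\to 0$, the first factor tends to $1$ and the second to $\sum_{j=0}^{N'}c_j=4\pi(1+\al)+4\pi N'$.

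The main subtlety will lie in the bubble $j=0$: because $v_{n,0}$ is only H\"older regular at $0$ when $\al\le-\f12$, the comparison $u_{n,0}=v_{n,0}+o(\ga_{n,0}^{1-p_n})$ inside the singular bubble core must rest on \eqref{singular-1} of Proposition \ref{singular} rather than on the $C^1$ estimate available in Lemma \ref{lemma5-6}, and the conical weight $|x|^{2\al}$ must be tracked carefully to produce the sharp factor $1+\al$ in $c_0$; both of these have already been handled in Section \ref{sec6-1} (see in particular the derivation of \eqref{6-54}), so no new ingredients are required. The degenerate case $N'=0$ is included automatically and gives $\beta_n(B_{\kappa_0}(q_0))\to4\pi(1+\al)$.
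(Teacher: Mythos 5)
Your proposal is correct and follows essentially the same route as the paper: the paper's own proof of Lemma \ref{lemma6-6} is exactly the reduction to the scheme of Lemma \ref{lemma5-6} — slowly growing $\ti\Ga_n$ chosen via the analogues \eqref{6-143}--\eqref{6-144}, the neck bound \eqref{6-142}, the bubble-by-bubble estimates (the singular bubble contributing $4\pi(1+\al)$ as in \eqref{6-83}--\eqref{6-84}), and the final factorization and ratio estimate as in \eqref{5-147} — using the assumption \eqref{6-0} just as you describe. The only cosmetic differences are in citations: $\ti\nu_{n,j}=o(1)$ comes from \eqref{conv-0} rather than Corollary \ref{lemma6-3}; the bound $u_n\le 2\ti\Ga_n$ on $\Omega_n$ is obtained in the paper by repeating the maximum-principle/neck argument below \eqref{5-90} (the role of \eqref{6-0} being that the $j=0$ ball now covers the singularity, so the hole-filling step that required \eqref{5-0} in Type I disappears), rather than by an appeal to exhaustivity of the selection process; and the concluding estimate $(2-p_n)\ga_{n,j}^{p_n}\le 2(1-\eta_0+o(1))\ga_{n,j_0}^{p_n}$ rests on \eqref{6-79} and Lemma \ref{lemma6-2} (i.e.\ \eqref{s6-70-0}--\eqref{s6-70}) together with the $\eta_0$-level boundary comparison, not merely on Corollary \ref{lemma6-3}.
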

\begin{proof}
Picking a sequence $(\ti\Ga_n)_n$ such that $\lim_{n\to+\iy}\ti\Ga_n=+\iy$ and $\ti\Ga_n=o(\ga_{n,j})$, and setting
	$$\ti\nu_{n,j}:=\sup\lbr{r\in[0,\kappa]:~\bar u_{n,j}\ge\ti\Ga_n~\text{in}~[0,r]},$$
we get from \eqref{6-105} that (letting $\eta_0$ be close to 1)
\be\lab{6-140} \ti\nu_{n,j}\le\nu_{n,j} \ee
for all $j\in\{0,\cdots,N'\}$ and large $n$.
By \eqref{conv-0}, we have $\ti\nu_{n,j}=o(1)$, and hence $\bar u_{n,j}(\ti\nu_{n,j})=\ti\Ga_n$ for all $j$. 
By the argument below \eqref{5-106}, we may fix a large $R\gg1$ such that 
	$$d\sbr{\pa B_{R\ti\nu_{n,j}}(0),\lbr{\phi_{n,j}(q_{n,l}):~l=0,\cdots,N'}}\ge\f{\ti\nu_{n,j}}{R},$$
then, estimating $\bar u_{n,j}(\ti\nu_{n,j}/R)$ by applying \eqref{6-102} with $r=\ti\nu_{n,j}$, and using \eqref{est-6}, we get that
\be\lab{6-141} u_n=\ti\Ga_n+o(1),\ee
uniformly in $\phi_{n,j}^{-1}\sbr{\pa B_{R\ti\nu_{n,j}}(0)}$ for large $n$ and all $j$. Arguing as below \eqref{5-90}, we get from \eqref{6-141} that
\be\lab{6-142} u_n\le 2\ti\Ga_n \quad\text{unifomly in}~B_{\kappa_0}(q_0)\setminus\bigcup_{j=0}^{N'}\phi_{n,j}^{-1}\sbr{B_{R\ti\nu_{n,j}}(0)}\ee
for large $n$. 
Like in \eqref{5-143}, we get from \eqref{6-80} that
\be\lab{6-143} (2-p_n)\ln\sbr{1+\la_n\ga_{n,j}^{2(p_n-1)}}=o(1) \ee
for all $j$. 
Then, by using \eqref{6-80} and \eqref{6-143}, we may choose and fix $\ti\Ga_n$ growing slowly to $+\iy$, such that
\be\lab{6-144}\begin{aligned} 
	\la_n\ti\Ga_n^{p_n}&e^{(2\ti\Ga_n)^{p_n}}=o(\ga_{n,j}^{2-p_n}) \quad\text{and}\\
	&(2-p_n)\ln\sbr{1+\la_n\ga_{n,j}^{2(p_n-1)}e^{(2\ti\ga_n)^{p_n}} }=o(1) 
\end{aligned}\ee
for large $n$ and all $j$. 
Following the line below \eqref{5-144}, and using the assumption \eqref{6-0}, we could conclude the proof of Lemma \ref{lemma6-6}.
\end{proof}

\vs
\subsection{Compactness for critical level \texorpdfstring{$4\pi(1+\al)+4\pi N'$}{}}\lab{sec6-4}\ 

In this subsection, we assume that 
\be\lab{6-150} p_n\equiv p\in(1,2] \ee 
for all $n$. 
By Lemma \ref{lemma6-4} and \ref{lemma6-6}, we see that 
  $$\lim_{n\to+\iy}\beta_n(B_{\kappa_0}(q_0))=4\pi(1+\al)+4\pi N',$$ 
where $N'$ is given above \eqref{6-0} and $\beta_n(\cdot)$ is given by \eqref{betadomain}.
Here, we prove a more accurate estimates of the energy.

\begin{lemma}\lab{lemma6-7}
Let \eqref{6-150} hold, then we have that
\be\lab{6-151} \beta_n(B_{\kappa_0}(q_0))\ge 4\pi(1+\al)+4\pi N'+\f{4(p-1)}{p^2}(4\pi+o(1))\sum_{i=0}^{N'}\f{1+\al\delta_{i0}}{\ga_{n,i}^{2p}} \ee 
for large $n$. 
\end{lemma}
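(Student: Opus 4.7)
The plan is to imitate the strategy of Lemma \ref{lemma5-7}, but with the singular bubble at $i=0$ handled via Proposition \ref{singular} (in place of Proposition \ref{regular}) and a singular version of Proposition \ref{single-3} (built on the asymptotic analysis of Appendix \ref{appe1-1}). Concretely, I set
\be\lab{proposal-rbar}
t_{n,0}(\bar r_{n,0})=\sqrt{\ga_{n,0}}, \qquad t_{n,i}(\bar r_{n,i})=\sqrt{\ga_{n,i}} \;\;\text{for } i\in\{1,\cdots,N'\},
\ee
and choose $\eta=\tfrac12$ in \eqref{6-6}-\eqref{6-7}, so that Lemma \ref{lemma6-1} gives $\bar r_{n,l}^{(1/2)}=r_{n,l}^{(1/2)}$ for every $l\in\{0,\cdots,N'\}$. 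Exactly as in \eqref{5-153}-\eqref{5-154}, a computation using the relation $2(1+\al)\ln=t_{n,0}$ (for the singular bubble) and $2\ln=t_{n,i}$ (for the non-singular ones), combined with \eqref{6-6} at $\eta=\tfrac12$, yields $\bar r_{n,0}=o(e^{-\ga_{n,0}})$ and $\bar r_{n,i}=o(|x_{n,i}|e^{-\ga_{n,i}})$. Together with \eqref{vpn} and \eqref{6-1}, this guarantees that the domains $\phi_{n,i}^{-1}(B_{\bar r_{n,i}}(0))$, $i=0,1,\cdots,N'$, are mutually disjoint for large $n$.

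Next, I apply H\"older's inequality for vectors in $\R^{N'+1}$ to \eqref{betadomain} on $B_{\kappa_0}(q_0)$, obtaining, as in \eqref{5-155},
\be\lab{proposal-split}
\beta_n(B_{\kappa_0}(q_0))\ge \sum_{i=0}^{N'}\sbr{\f{\la_n p^2}{2}\!\!\int_{B_{\bar r_{n,i}}(0)}\!\!h_{n,i}f_{n,i}(e^{u_{n,i}^p}-1)\rd v_{g_0}}^{\!\!\f{2-p}{p}}\!\!\sbr{\f{\la_n p^2}{2}\!\!\int_{B_{\bar r_{n,i}}(0)}\!\!h_{n,i}f_{n,i}u_{n,i}^p e^{u_{n,i}^p}\rd v_{g_0}}^{\!\!\f{2(p-1)}{p}}.
\ee
Thus it suffices to sharply expand each summand. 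For $i\in\{1,\cdots,N'\}$, the argument is identical to that of Lemma \ref{lemma5-7}: using \eqref{6-11} in combination with Proposition \ref{single-2} and $\bar r_{n,i}=o(|x_{n,i}|e^{-\ga_{n,i}})$, I derive $|u_{n,i}-v_{n,i}|=o(e^{-\ga_{n,i}})$ on $B_{\bar r_{n,i}}(0)$, hence the analogues of \eqref{5-156}-\eqref{5-158}, and conclude via the sharp single-bubble identity of Proposition \ref{single-3} applied to $v_{n,i}$.

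The main new task, and expected main obstacle, is the singular bubble $i=0$. Here Proposition \ref{singular} does not give a gradient estimate near the origin, only the H\"older-type control \eqref{singular-1}; in particular, with $w_{n,0}=u_{n,0}-v_{n,0}$ I only have
\be
|w_{n,0}(x)|\le \f{C_0|x|}{\ga_{n,0}^{p-1}\bar r_{n,0}} + O\!\sbr{\f{\mu_{n,0}^{\delta_0}}{\ga_{n,0}^{p-1}\bar r_{n,0}^{\delta_0}}} \quad\text{in }B_{\bar r_{n,0}}(0).
\ee
The point is that, with the choice \eqref{proposal-rbar}, a direct calculation gives $\bar r_{n,0}/\mu_{n,0}\sim e^{\sqrt{\ga_{n,0}}/(2(1+\al))}$, so the anomalous term satisfies
\be
\f{\mu_{n,0}^{\delta_0}}{\bar r_{n,0}^{\delta_0}}\sim \exp\!\sbr{-\f{\delta_0\sqrt{\ga_{n,0}}}{2(1+\al)}}=o(\ga_{n,0}^{-2p}),
\ee
which is still sharp enough to yield $u_{n,0}^p=v_{n,0}^p(1+o(e^{-\ga_{n,0}}/\ga_{n,0}))$ and $e^{u_{n,0}^p}=e^{v_{n,0}^p}(1+o(\ga_{n,0}^{-2p}))$ uniformly on $B_{\bar r_{n,0}}(0)$, as in \eqref{5-156}. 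Since $\bar r_{n,0}=o(1)$, the factorization $h_{n,0}f_{n,0}=\ti h_{n,0}(0)f_{n,0}(0)|\cdot|^{2\al}(1+o(e^{-\ga_{n,0}}))$ holds on $B_{\bar r_{n,0}}(0)$, so the analogues of \eqref{5-157}-\eqref{5-158} become
\be
\f{\la_np^2}{2}\!\!\int_{B_{\bar r_{n,0}}(0)}\!\!h_{n,0}f_{n,0}(e^{u_{n,0}^p}-1)\rd v_{g_0}=\f{\la_np^2}{2}\!\!\int_{B_{\bar r_{n,0}}(0)}\!\!\ti\tau_{n,0}|x|^{2\al}e^{v_{n,0}^p}\rd v_{g_0}(1+o(\ga_{n,0}^{-2p})),
\ee
and similarly for the $u_{n,0}^pe^{u_{n,0}^p}$ integral. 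Feeding the corresponding singular analogue of Proposition \ref{single-3} (proved in Appendix \ref{appe1-1} for the radial bubble $v_{n,0}$ solving \eqref{6-5}) into \eqref{proposal-split} then produces the $i=0$ contribution
\be
4\pi(1+\al)+\f{4(p-1)}{p^2}(4\pi+o(1))\f{1+\al}{\ga_{n,0}^{2p}},
\ee
while the contributions from $i\ge 1$ sum to $4\pi N'+\f{4(p-1)}{p^2}(4\pi+o(1))\sum_{i=1}^{N'}\ga_{n,i}^{-2p}$, giving \eqref{6-151}. The delicate point to verify carefully is that the H\"older-only control in \eqref{singular-1} truly suffices for the $O(\ga_{n,0}^{-2p})$-level precision required in the Taylor expansion of $e^{u_{n,0}^p}$; this is why the exact form of the second error term in Proposition \ref{singular} is essential, as already flagged in Remark \ref{515-2}(2).
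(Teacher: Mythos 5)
Your overall scheme coincides with the paper's proof of this lemma: disjoint tiny balls defined by $t_{n,i}(\bar r_{n,i})=\sqrt{\ga_{n,i}}$, the vector H\"older inequality to split $\beta_n(B_{\kappa_0}(q_0))$ into single-bubble contributions, replacement of $u_{n,i}$ by the radial comparison bubbles $v_{n,i}$, and the sharp expansions of Proposition \ref{single-3} for the non-singular bubbles and its singular counterpart Proposition \ref{single-4} for $i=0$. Your treatment of the terms $i\in\{1,\cdots,N'\}$ is exactly the paper's.

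The gap is in the singular bubble $i=0$. You apply Proposition \ref{singular} with reference radius $\bar r_{n,0}$ itself, so your bound reads $|w_{n,0}|\le C_0|x|/(\ga_{n,0}^{p-1}\bar r_{n,0})+O\bigl(\mu_{n,0}^{\delta_0}\ga_{n,0}^{1-p}\bar r_{n,0}^{-\delta_0}\bigr)$ on $B_{\bar r_{n,0}}(0)$. At $|x|\sim\bar r_{n,0}$ the first term is of order $\ga_{n,0}^{1-p}$, so $\ga_{n,0}^{p-1}|w_{n,0}|=O(1)$ there, and even near the origin the second term is only $e^{-c\sqrt{\ga_{n,0}}}$, not $o(e^{-\ga_{n,0}})$. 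Hence neither of your asserted uniform expansions $u_{n,0}^p=v_{n,0}^p(1+o(e^{-\ga_{n,0}}/\ga_{n,0}))$ nor $e^{u_{n,0}^p}=e^{v_{n,0}^p}(1+o(\ga_{n,0}^{-2p}))$ follows from the bound you state: the latter needs $\ga_{n,0}^{p-1}|w_{n,0}|=o(\ga_{n,0}^{-2p})$ uniformly, which your slope term violates by a factor $\ga_{n,0}^{2p}$ at the edge of the ball, and $o(\ga_{n,0}^{-2p})$-level multiplicative precision is exactly what is needed, since an unquantified $O(\ga_{n,0}^{-2p})$ error would swamp the correction term $\frac{4(p-1)}{p^2}\frac{4\pi(1+\al)}{\ga_{n,0}^{2p}}$ the lemma is after. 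The paper instead applies Proposition \ref{singular} with the much larger radius $\bar r_{n,0}^{(1/2)}=r_{n,0}^{(1/2)}$ (legitimate by Lemma \ref{lemma6-1}) and only then restricts to $B_{\bar r_{n,0}}(0)$: the slope term becomes $O\bigl(\bar r_{n,0}/(\ga_{n,0}^{p-1}r_{n,0}^{(1/2)})\bigr)=o(e^{-\ga_{n,0}})$ by \eqref{6-153-1}, and the H\"older remainder is killed by $2(1+\al)\ln\bigl(\mu_{n,0}/r_{n,0}^{(1/2)}\bigr)=-t_{n,0}(r_{n,0}^{(1/2)})+o(1)=-\tfrac{p}{4}\ga_{n,0}^{p}+o(1)\le-3(1+\al)\delta_0^{-1}\ga_{n,0}$ (this is where $p>1$ enters), yielding $|u_{n,0}-v_{n,0}|=o(e^{-\ga_{n,0}})$ uniformly on $B_{\bar r_{n,0}}(0)$, which is what the Taylor expansion \eqref{6-156} actually requires. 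You already perform the analogous ``estimate on the big ball, then restrict to the tiny ball'' step for $i\ge1$; you must do the same for $i=0$. (One could try to salvage your smaller-radius bound by splitting $B_{\bar r_{n,0}}(0)$ and exploiting the decay of $|x|^{2\al}e^{v_{n,0}^p}$ in integral form, but that is not what you wrote, and the uniform pointwise statements as written are false.)
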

\begin{proof}
Let now $\bar r_{n,i}$ be given by
\be\lab{6-152} t_{n,i}(\bar r_{n,i})=\sqrt{\ga_{n,i}} \ee
for any $i\in\{0,\cdots,N'\}$ and all $n$. Choosing $\eta=\f12$ in \eqref{6-6} and \eqref{6-7}, we get from Lemma \ref{lemma6-1} that $\bar r_{n,i}^{(1/2)}=r_{n,l}^{(1/2)}$ for any $i\in\{0,\cdots,N'\}$ and large $n$.  
Moreover, since $r_{n,0}=O(1)$ by \eqref{6-1}, we get that
\be\lab{6-153-0} r_{n,0}^{(1/2)}=o(r_{n,0})=o(1) \ee
for large $n$, and similarly to \eqref{5-153}, we get also that
\be\lab{6-153} r_{n,i}^{(1/2)}=O(r_{n,i})=O(|x_{n,i}|)=o(1) \ee
for large $n$ and all $i\in\{1,\cdots,N'\}$ (if $N'\ge1$). 
An easy consequence of \eqref{vpn}, \eqref{6-1} and \eqref{6-153-0}-\eqref{6-153} is that the domains $\phi_{n,i}^{-1}(B_{\bar r_{n,i}}(0))$, $i\in\{0,\cdots,N'\}$, are two by two disjoint for large $n$. Then, by \eqref{betadomain}, and using H\"older inequality, we may write as in \eqref{5-155} that
\be\lab{6-155} \begin{aligned}
  \beta_n(B_{\kappa_0}(q_0))
  &\ge \sum_{i=0}^{N'} \left[ \sbr{\f{\la_np^2}{2}\int_{B_{\bar r_{n,i}}(0)}h_{n,i}f_{n,i}(e^{u_{n,i}^p}-1)\rd v_{g_0}}^{\f{2-p}{p}} \right. \\
    &\qquad\qquad \left. \times\sbr{\f{\la_np^2}{2}\int_{B_{\bar r_{n,i}}(0)}h_{n,i}f_{n,i}u_{n,i}^pe^{u_{n,i}^p}\rd v_{g_0}}^{\f{2(p-1)}{p}}\right]
\end{aligned}\ee
for large $n$.
By the argument involving \eqref{5-157}, we get that
\be\lab{6-157} \begin{aligned}  
  \f{\la_np^2}{2}&\int_{B_{\bar r_{n,i}}(0)}h_{n,i}f_{n,i}(e^{u_{n,i}^p}-1)\rd v_{g_0}\\
  &=\f{\la_np^2}{2}\int_{B_{\bar r_{n,i}}(0)}\ti \tau_{n,i}|x_{n,i}|^{2\al}e^{v_{n,i}^p}\rd v_{g_0}\sbr{1+o(\ga_{n,i}^{-2p})} \quad\text{and}\\
  \f{\la_np^2}{2}&\int_{B_{\bar r_{n,i}}(0)}h_{n,i}f_{n,i}u_{n,i}^pe^{u_{n,i}^p}\rd v_{g_0}\\
  &=\f{\la_np^2}{2}\int_{B_{\bar r_{n,i}}(0)}\ti \tau_{n,i}|x_{n,i}|^{2\al}v_{n,i}^pe^{v_{n,i}^p}\rd v_{g_0}\sbr{1+o(\ga_{n,i}^{-2p})}
\end{aligned}\ee
with $\ti \tau_{n,i}=\ti h_{n,i}(0)f_{n,i}(0)$, for all large $n$ and any $i\in\{1,\cdots,N'\}$ (if $N'\ge1$).
For $i=0$, by \eqref{6-6} and \eqref{6-152}, we first deduce that
\be\lab{6-153-1} 2(1+\al)\ln\f{\bar r_{n,0}}{r_{n,0}^{(1/2)}}=t_{n,0}(\bar r_{n,0})-t_{n,0}(r_{n,0}^{(1/2)})+o(1)\le-3(1+\al)\ga_{n,0},\ee
and then, we find from \eqref{6-153-0} that
\be\lab{6-154} \bar r_{n,0}=O(r_{n,0}^{(1/2)}e^{-\ga_{n,0}})=o(e^{-\ga_{n,0}})  \ee
for large $n$.
By using \eqref{6-153-1} and \eqref{6-10-0} with $\eta=\f12$, we get that
\be\lab{s6-31-1} \abs{u_{n,0}-v_{n,0}}=O\sbr{\f{\bar r_{n,0}}{\ga_{n,0}^{p-1}r_{n,0}^{(1/2)}}}+O\sbr{\f{\mu_{n,0}^{\delta_0}}{\ga_{n,0}^{p-1}(r_{n,0}^{(1/2)})^{\delta_0}}}=o(e^{-\ga_{n,0}}) \ee
uniformly in $B_{\bar r_{n,0}}(0)$ for large $n$, where in the second equality we use also that
  $$2(1+\al)\ln\f{\mu_{n,0}}{r_{n,0}^{(1/2)}}=-t_{n,0}(r_{n,0}^{(1/2)})+o(1)\le-3(1+\al)\delta_0^{-1}\ga_{n,0}$$
for large $n$.  
Then, using Proposition \ref{single-1} to get that $v_{n,0}=(1+o(1))\ga_{n,0}$, we obtain that
\be\lab{6-156}     u_{n,0}^{p}=v_{n,0}^p\sbr{1+O\sbr{\f{e^{-\ga_{n,0}}}{\ga_{n,0}}} } \quad\text{and}\quad  e^{u_{n,0}^{p}}=e^{v_{n,0}^p}\sbr{1+o\sbr{\f{1}{\ga_{n,0}^{2p}}} } \ee
uniformly in $B_{\bar r_{n,0}}(0)$ for large $n$. 
By using \eqref{6-154}, we get first from \eqref{6-hi} that
  $$h_{n,0}f_{n,0}=\ti h_{n,0}(0)f_{n,0}(0)|\cdot|^{2\al}(1+o(e^{-\ga_{n,i}}))$$
uniformly in $B_{\bar r_{n,0}}(0)$, and then, by using \eqref{6-156}, we get that
\be\lab{6-157-1} \begin{aligned}  
  \f{\la_np^2}{2}&\int_{B_{\bar r_{n,0}}(0)}h_{n,0}f_{n,0}(e^{u_{n,0}^p}-1)\rd v_{g_0}\\
  &=\f{\la_np^2}{2}\int_{B_{\bar r_{n,0}}(0)}\ti \tau_{n,0}|\cdot|^{2\al}e^{v_{n,0}^p}\rd v_{g_0}\sbr{1+o(\ga_{n,0}^{-2p})}+O\sbr{\la_n\bar r_{n,0}^{2+2\al}}\\
  &=\f{\la_np^2}{2}\int_{B_{\bar r_{n,0}}(0)}\ti \tau_{n,0}|\cdot|^{2\al}e^{v_{n,0}^p}\rd v_{g_0}\sbr{1+o(\ga_{n,0}^{-2p})}
\end{aligned}\ee
with $\ti \tau_{n,0}=\ti h_{n,0}(0)f_{n,0}(0)$ for large $n$, where in the second equality we use \eqref{boundla}, \eqref{6-154} and the fact that
  $$\f{\la_np^2}{2}\ga_{n,0}^{2(p-1)}\int_{B_{\bar r_{n,0}}(0)}\ti \tau_{n,0}|x_{n,i}|^{2\al}e^{v_{n,0}^p}\rd v_{g_0}=O(1)$$
by the argument to get \eqref{5-146}. 
Similar arguments give that
\be\lab{6-158-1} \begin{aligned}  
  \f{\la_np^2}{2}&\int_{B_{\bar r_{n,0}}(0)}h_{n,0}f_{n,0}u_{n,0}^pe^{u_{n,0}^p}\rd v_{g_0}\\
  &=\f{\la_np^2}{2}\int_{B_{\bar r_{n,0}}(0)}\ti \tau_{n,0}|\cdot|^{2\al}v_{n,0}^pe^{v_{n,0}^p}\rd v_{g_0}\sbr{1+o(\ga_{n,0}^{-2p})}
\end{aligned}\ee
for large $n$.
By plugging \eqref{6-157} and \eqref{6-157-1}-\eqref{6-158-1} in \eqref{6-155}, and by applying Proposition \ref{single-4} to $v_{n,0}$ and applying Proposition \ref{single-3} to $v_{n,i}$ of $i\ge1$, we conclude the proof of \eqref{6-151}.
\end{proof}

\vs
Now we are ready to prove Theorem \ref{type2}.
\begin{proof}[\bf Proof of Theorem \ref{type2}]
By Corollary \ref{lemma6-3}, we get $\lim_{n\to+\iy}\la_n=0$. Conclusion \eqref{conclusion-3} is given by Lemma \ref{lemma6-4} and \ref{lemma6-6}, and conclusion \eqref{conclusion-4} is given by Lemma \ref{lemma6-7}. 
The proof is complete.
\end{proof}

\vs
\section{Complete proofs}\lab{sec7}

We now complete the proofs of our main results.

\begin{proof}[\bf Proof of Theorem \ref{thm0-2}]
Theorem \ref{thm0-2} follows directly from Theorem \ref{type1} and Theorem \ref{type2}.
\end{proof}

\begin{proof}[\bf Proof of Theorem \ref{thm0}]
By Proposition \ref{bubble}, we know that the sequence $u_n$ concentrates at finite points. Let the set $\Sigma_\iy=\{q_1,\cdots,q_N\}$ of non-singular concentration points be defined by \eqref{Sigma}. We take a small constant $\kappa_1$ such that
  $$B_{\kappa_1}(q)\cap B_{\kappa_1}(q')=\emptyset,\quad\forall~q,q'\in \Sigma_\iy\cup\DR~s.t.~q\neq q'.$$
By the last line of Proposition \ref{bubble}-(2), we know that there is at least one bubble, so that no matter it is singular or non-singular, we must have $\lim_{n\to+\iy}\la_n=0$ by Theorem \ref{thm0-1} and Theorem \ref{thm0-2}. It follows from \eqref{conv-0} that
\be\lab{612-1} \lim_{n\to+\iy}\beta_n\sbr{\Sigma\setminus\bigcup_{q\in\Sigma_\iy\cup\DR}B_{\kappa_1}(q)}=0,   \ee
where $\beta_n(\cdot)$ is given by \eqref{betadomain}.
Let the set $\DR'$ of singular concentration points be defined by \eqref{DR'}. 
Note that 
  $$\Sigma_\iy\cup\DR=\sbr{\Sigma_\iy\setminus\DR}\sqcup\sbr{\Sigma_\iy\cap\DR\setminus\DR'}\sqcup\DR'\sqcup\sbr{\DR\setminus(\DR'\cup\Sigma_\iy)}.$$
We get from Theorem \ref{thm0-1} that
\be\lab{612-2} \lim_{n\to+\iy}\beta_n\sbr{B_{\kappa_1}(q)}=4\pi M_q, \quad\text{for any}~q\in\Sigma_\iy\setminus\DR,  \ee
with $M_q\in \N_+$, and get from Theorem \ref{thm0-2} that
\be\lab{612-3} \lim_{n\to+\iy}\beta_n\sbr{B_{\kappa_1}(q)}=4\pi M_q', \quad\text{for any}~q\in\Sigma_\iy\cap\DR\setminus\DR',  \ee
with $M_q'\in\N_+$, and that
\be\lab{612-4} \lim_{n\to+\iy}\beta_n\sbr{B_{\kappa_1}(q)}=4\pi(1+\al_q)+4\pi M_q'', \quad\text{for any}~q\in\DR',  \ee
with $M_q''\in\N$ and $\al_q\in(-1,0)$ be the order of the singularity. 
Finally, for any $q\in\DR\setminus(\DR'\cup\Sigma_\iy)$, by \eqref{est-1} we get that
  \[d_{g_0}(\cdot,q)^2\la_nhfu_n^{2(p_n-1)}e^{u_n^{p_n}}=O(1)\quad\text{in}~B_{\kappa_1}(q) \]
for large $n$, and then, since $u_n$ is not blow-up in $B_{\kappa_1}(q)$ and $u_n=O(1)$ uniformly in $\pa B_{\kappa_1}(q)$, by the argument between \eqref{5-47} and \eqref{5-54-1}, we get that $u_n=O(1)$ uniformly in $B_{\kappa_1}(q)$, so that we obtain that
\be\lab{612-5} \lim_{n\to+\iy}\beta_n\sbr{B_{\kappa_1}(q)}=0,\quad\text{for any}~q\in\DR\setminus(\DR'\cup\Sigma_\iy). \ee
Thus, plugging \eqref{612-1}-\eqref{612-5} in \eqref{betan}, we deduce that
  $$\beta=\lim_{n\to+\iy}\beta_n(\Sigma)=4\pi\sum_{q\in\Sigma_\iy\setminus\DR}M_q+4\pi\sum_{q\in\Sigma_\iy\cap\DR\setminus\DR'}M_q'+4\pi\sum_{q\in\DR'}(1+\al_q)+M_q'',$$
which gives \eqref{level-1}. Moreover, if $p_n=p\in(1,2]$ for all $n$, then by using \eqref{612-1}-\eqref{612-5}, we get from Theorem \ref{thm0-1} and Theorem \ref{thm0-2} that
  $$\begin{aligned}
    \beta_n(\Sigma)&\ge\sum_{q\in\Sigma_\iy\setminus\DR}\beta_n\sbr{B_{\kappa_1}(q)}+\sum_{q\in\Sigma_\iy\cap\DR\setminus\DR'}\beta_n\sbr{B_{\kappa_1}(q)}+\sum_{q\in\DR'}\beta_n\sbr{B_{\kappa_1}(q)}\\ 
    &>4\pi\sum_{q\in\Sigma_\iy\setminus\DR}M_q+4\pi\sum_{q\in\Sigma_\iy\cap\DR\setminus\DR'}M_q'+4\pi\sum_{q\in\DR'}(1+\al_q)+M_q''\\
    &=\beta,
  \end{aligned}$$
for large $n$. This finishes the proof of Theorem \ref{thm0}.
\end{proof} 

\begin{proof}[\bf Proof of Theorem \ref{thm1}]
Let $\beta>0$ be given. Assume first that $p$ is given in $(1,2)$. Given a sequence $(u_n)_n\subset \CR_{p,\beta}$, since $u_n$ is a positive critical point of $I_{p,\beta}$, we have that $u_n$ is a solution of \eqref{equ-2} with $\la=\la_n>0$ given by \eqref{la-1} for all $n$. By \eqref{h}, we get that $u_n$ is a solution of \eqref{equ-5} and $\beta_n$ given by \eqref{betan} satisfies $\beta_n\equiv\beta$ for all $n$. Then, by Theorem \ref{thm0}, we get that $u_n$ is uniformly bounded in $C^0(\Sigma)$. Also note from \eqref{boundla} that $\lambda_n$ is uniformly bounded. Then by the elliptic theory, we conclude as before that up to a subsequence, $u_n$ converges to some $u$ in $C^0(\Sigma)\cap C^1_{loc}(\Sigma\setminus \DR)$, and $u\in \CR_{p,\beta}$. This implies that $\CR_{p,\beta}$ is compact in $C^0(\Sigma)$.  The compactness of $\CR_{2,\beta}$ or $\CR_{1,\beta}$ with $\beta\not\in\OR$ can be proved similarly by applying Theorem \ref{thm0}.
\end{proof}

\vskip0.26in
\appendix
\renewcommand{\appendixname}{\appendixname~\Alph{section}}

\section{Standard bubbles}\lab{appe1}

\subsection{Non-singular bubble}\lab{appe1-2}\

Let $\al\in\sbr{-1,0}$ and $(p_n)_n$ be any family of numbers in $[1,2]$, and let $(\tau_n)_n$, $(\nu_n)_n$ be given families of numbers in $[C^{-1},C]$ for some constant $C\ge1$, and let $(\mu_n)_n$ be given families of positive numbers. Let $(\ga_n)_n$ be a positive number sequence satisfying $\ga_n\to+\iy$ as $n\to+\iy$, $(x_n)_n$ be given point sequence in $\R^2$ and $\la_n>0$ be given by
\be\lab{8} \mu_n^{2}\la_np_n^2\tau_n|x_n|^{2\al}\ga_n^{2(p_n-1)}e^{\ga_n^{p_n}}=8, \ee
and let $t_n$, $\bar t_n$ be defined in $\R^2$ by
\be\lab{9} t_n(x)=\ln\sbr{1+\f{|x|^{2}}{\mu_n^{2}}},\quad \bar t_n=t_n+1, \ee
for all large $n$. Let $\eta\in(0,1)$ be fixed. Let also $(\bar r_n)_n$ be any family of positive numbers such that
\be\lab{10} \lim_{n\to+\iy}\f{\mu_n}{\bar r_n}=0, \ee
\be\lab{11} t_n(\bar r_n)\le \eta\f{p_n\ga_n^{p_n}}{2}, \ee
\be\lab{12} \ga_n^{2p_n}\bar r_n^{2}|x_n|^{2\al}=O(1), \ee
\be\lab{13} \bar r_n\le\f{1}{2}|x_n|, \ee
for all large $n$. We study in this section the behavior as $n\to+\iy$ of a family $(\BR_n)_n$ of functions solving
\be\lab{14}\begin{cases}
	-\Delta\BR_n+\nu_n|x_n|^{2\al}\BR_n=\la_np_n\tau_n|x_n|^{2\al}\BR_n^{p_n-1}e^{\BR_n^{p_n}},\\
	\BR_n(0)=\ga_n>0,\\
	\BR_n~\text{is radially symmetric and positive in}~B_{\bar r_n}(0).
\end{cases}\ee
For fixed $n$, \eqref{14} reduces to an ODE with respect to the radial variable $r=|x|$. Let $w_n$ be given by
\be\lab{15} \BR_n=\ga_n\sbr{1-\f{2t_n}{p_n\ga_n^{p_n}}+\f{\omega_n}{\ga_n^{p_n}}}. \ee
Then following the strategy in \cite[Section 2]{MT-blowup-7}, we have the following result.
\begin{proposition}\lab{single-2}
We have $\BR_n\le\ga_n$,
	$$\omega_n=O(\ga_n^{-p_n}t_n),\quad \omega_n'=O(\ga_n^{-p_n}t_n'),$$
and
	$$\la_np_n\tau_n|x_n|^{2\al}\BR_n^{p_n-1}e^{\BR_n^{p_n}}=\f{8e^{-2t_n}}{\mu_n^{2}\ga_n^{p_n-1}p_n}\sbr{1+O\sbr{\f{e^{\ti\eta t_n}}{\ga_n^{p_n}}}},$$
uniformly in $[0,\bar r_n]$ and for all large $n$, where $\ti\eta$ is any fixed constant in $(\eta,1)$.
\end{proposition}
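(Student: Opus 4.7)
The plan is to adapt the ODE analysis of \cite[Section 2]{MT-blowup-7} to the present setting, exploiting that the conical factor $|x_n|^{2\al}$ is essentially frozen over $B_{\bar r_n}(0)$ thanks to \eqref{13}. First I would establish the monotonicity $\BR_n \le \ga_n$ on $[0,\bar r_n]$ via a standard radial ODE argument: writing \eqref{14} as $(r\BR_n')' = -r[\la_np_n\tau_n|x_n|^{2\al}\BR_n^{p_n-1}e^{\BR_n^{p_n}} - \nu_n|x_n|^{2\al}\BR_n]$, the bracket is strictly positive near $0$ because $\ga_n \to +\infty$ while $\la_n,\nu_n,\tau_n$ are bounded, so $r\BR_n'(r)$ starts at $0$ and decreases, giving $\BR_n' \le 0$ as long as we stay in the regime where the nonlinearity dominates, which propagates up to $\bar r_n$ by continuity combined with \eqref{11}--\eqref{12}.

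Next I would plug the ansatz \eqref{15} into \eqref{14}. The crucial identity is
$$\Delta t_n = \frac{4e^{-2t_n}}{\mu_n^{2}},$$
so that, when $\omega_n\equiv 0$, the ``would-be solution'' $\ga_n(1-\tfrac{2t_n}{p_n\ga_n^{p_n}})$ makes the leading orders of the two sides balance exactly, thanks to the normalization \eqref{8}. Expanding $\BR_n^{p_n}=\ga_n^{p_n}(1-\tfrac{2t_n}{p_n\ga_n^{p_n}}+\tfrac{\omega_n}{\ga_n^{p_n}})^{p_n}$ and $\BR_n^{p_n-1}$ by Taylor's formula yields an equation of the schematic form
$$-\Delta\omega_n = \frac{8e^{-2t_n}}{p_n\mu_n^{2}\ga_n^{p_n-1}}\bigl[p_n\omega_n + E_n(\omega_n,t_n)\bigr] - \nu_n|x_n|^{2\al}\ga_n\bigl(1-\tfrac{2t_n}{p_n\ga_n^{p_n}}\bigr) + \text{l.o.t.},$$
where $E_n$ collects the Taylor remainders. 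Assumptions \eqref{10}--\eqref{13} are tailored precisely so that $\nu_n|x_n|^{2\al}\ga_n\bar r_n^2 = O(\ga_n^{1-2p_n}) = o(\ga_n^{1-p_n})$ and $E_n$ is of controlled size relative to the leading exponential factor.

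With this perturbation ODE in hand, the plan is to integrate twice in the radial variable and run a two-step bootstrap: first a crude bound $\omega_n = o(\ga_n^{\varepsilon})$ from the integrability of $e^{-2t_n}t_n$ and \eqref{11}, then a Gronwall-type refinement using the self-improvement $E_n = O(e^{\ti\eta t_n}(\omega_n+t_n^2)/\ga_n^{p_n})$ to close the estimate at $\omega_n = O(\ga_n^{-p_n}t_n)$ uniformly on $[0,\bar r_n]$. The derivative bound $\omega_n' = O(\ga_n^{-p_n}t_n')$ follows from the integrated first-order form of the radial ODE applied to $r\omega_n'(r)$. Finally, plugging $\omega_n = O(\ga_n^{-p_n}t_n)$ back into $\BR_n^{p_n}$ and $\BR_n^{p_n-1}$ and using \eqref{8}, a direct computation gives
$$\la_np_n\tau_n|x_n|^{2\al}\BR_n^{p_n-1}e^{\BR_n^{p_n}} = \frac{8e^{-2t_n}}{p_n\mu_n^{2}\ga_n^{p_n-1}}\bigl(1+O(e^{\ti\eta t_n}/\ga_n^{p_n})\bigr),$$
where the $e^{\ti\eta t_n}$ factor comes from $e^{\omega_n(1+O(\ga_n^{-p_n}))} - 1$ and \eqref{11} (any $\ti\eta\in(\eta,1)$ suffices).

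The main obstacle, I expect, is the careful bookkeeping of the Taylor remainders when $p_n$ is close to $2$ while $\ga_n\to+\infty$: the ratios $t_n/\ga_n^{p_n}$ and $\omega_n/\ga_n^{p_n}$ need to be compared against $\ga_n^{-p_n}$ with uniformity in $n$, so that the Gronwall argument closes rather than blows up logarithmically. The assumption \eqref{11}, which bounds $t_n$ by a fixed fraction of $p_n\ga_n^{p_n}/2$, together with \eqref{12}, is precisely what keeps the feedback from $e^{\ti\eta t_n}$ harmless in the iteration.
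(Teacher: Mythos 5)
Your plan is essentially the argument the paper relies on: the paper's proof of Proposition \ref{single-2} just observes that, since $|x_n|^{2\al}$ is a frozen constant (absorbed through the normalization \eqref{8}) and $\tau_n,\nu_n\in[C^{-1},C]$, the statement follows verbatim from the ODE analysis of \cite[Proposition 2.1]{MT-blowup-7}, whose proof is exactly the ansatz \eqref{15} / perturbation-of-$t_n$ bootstrap you sketch, with \eqref{12}--\eqref{13} making the linear term harmless. So your proposal reconstructs that cited argument rather than taking a different route, and it is consistent with the paper.
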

\begin{proof}
Noting that we require \eqref{12} and \eqref{13}, Proposition \ref{single-2} can be proved exactly as \cite[Proposition 2.1]{MT-blowup-7}, and we omit the details here. Remark that, comparing with \cite[Proposition 2.1]{MT-blowup-7}, we have two additional sequences $\tau_n,\nu_n$, but these two sequences have no influence in the proof, because we require $\tau_n,\nu_n\in[C^{-1},C]$ for some constant $C\ge1$. 
\end{proof}

By Proposition \ref{single-2}, we obtain \eqref{11} and \eqref{15} that 
	\be\label{16-66}\ga_n\geq\BR_n(r)\ge (1-\eta+o(1))\ga_n\ee
uniformly in $r\in[0,\bar r_n]$ and for all large $n$, and also obtain from \eqref{8} and \eqref{11} that
\be\lab{16} \BR_n(r)=-\sbr{\f{2}{p_n}-1}\ga_n+\f{2}{p_n\ga_n^{p_n-1}}\ln \f{1}{\la_n\ga_n^{2(p_n-1)}|x_n|^{2\al}(\mu_n^{2}+r^{2})} +O(\ga_n^{1-p_n}) \ee
uniformly in $r\in[0,\bar r_n]$ and for all large $n$.

\subsection{Singular bubble}\lab{appe1-1}\ 

In this subsection, for brevity, we use some of the same symbols as in Section \ref{appe1-2}, which may have different definitions. 
Let $\al\in\sbr{-1,0}$ and $(p_n)_n$ be any family of numbers in $[1,2]$, and let $(\tau_n)_n$, $(\nu_n)_n$ be given families of numbers in $[C^{-1},C]$ for some constant $C\ge1$, and let $(\mu_n)_n$ be given families of positive numbers. Let $(\ga_n)_n$ be a positive number sequence satisfying $\ga_n\to+\iy$ as $n\to+\iy$, and $\la_n>0$ be given by
\be\lab{1} \mu_n^{2(1+\al)}\la_np_n^2\tau_n\ga_n^{2(p_n-1)}e^{\ga_n^{p_n}}=8(1+\al)^2, \ee
and let $t_n$, $\bar t_n$ be defined in $\R^2$ by
\be\lab{2} t_n(x)=\ln\sbr{1+\f{|x|^{2(1+\al)}}{\mu_n^{2(1+\al)}}},\quad \bar t_n=t_n+1, \ee
for all large $n$. Let $\eta\in(0,1)$ be fixed. Let also $(\bar r_n)_n$ be any family of positive numbers such that
\be\lab{3} \lim_{n\to+\iy}\f{\mu_n}{\bar r_n}=0, \ee
\be\lab{4} t_n(\bar r_n)\le \eta\f{p_n\ga_n^{p_n}}{2}, \ee
\be\lab{5} \ga_n^{2p_n}\bar r_n^{2(1+\al)}=O(1), \ee
for all large $n$. We study in this section the behavior as $n\to+\iy$ of a family $(\BR_n)_n$ of functions solving
\be\lab{6}\begin{cases}
	-\Delta\BR_n+|\cdot|^{2\al}\nu_n\BR_n=|\cdot|^{2\al}\la_np_n\tau_n\BR_n^{p_n-1}e^{\BR_n^{p_n}},\\
	\BR_n(0)=\ga_n>0,\\
	\BR_n~\text{is radially symmetric and positive in}~B_{\bar r_n}(0).
\end{cases}\ee
For fixed $n$, \eqref{6} reduces to an ODE with respect to the radial variable $r=|x|$, and since $\al\in\sbr{-1,0}$, we know that $\BR_n$ is continuous at $0$. Let $w_n$ be given by
\be\lab{7} \BR_n=\ga_n\sbr{1-\f{2t_n}{p_n\ga_n^{p_n}}+\f{\omega_n}{\ga_n^{p_n}}}. \ee
Then we have the following result.
\begin{proposition}\lab{single-1}
We have $\BR_n\le\ga_n$,
	$$\omega_n=O(\ga_n^{-p_n}t_n),\quad \omega_n'=O(\ga_n^{-p_n}t_n'),$$
and
	$$\la_np_n\tau_n\BR_n^{p_n-1}e^{\BR_n^{p_n}}=\f{8(1+\al)^2e^{-2t_n}}{\mu_n^{2(1+\al)}\ga_n^{p_n-1}p_n}\sbr{1+O\sbr{\f{e^{\ti\eta t_n}}{\ga_n^{p_n}}}},$$
uniformly in $(0,\bar r_n]$ and for all large $n$, where $\ti\eta$ is any fixed constant in $(\eta,1)$.
\end{proposition}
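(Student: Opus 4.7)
I will follow the strategy of Proposition \ref{single-2} (originating in \cite[Proposition 2.1]{MT-blowup-7}), adapted to the case where the weight $|x|^{2\al}$ is centered at the origin rather than away from it. First, I establish the pointwise bound $\BR_n \le \ga_n$ by a radial monotonicity argument. Writing \eqref{6} as $(r\BR_n'(r))' = r^{1+2\al}\sbr{\nu_n\BR_n - \la_n p_n\tau_n\BR_n^{p_n-1}e^{\BR_n^{p_n}}}$ and using \eqref{1} together with $\BR_n(0) = \ga_n$, the bracket is asymptotically $-\f{8(1+\al)^2}{p_n\mu_n^{2(1+\al)}\ga_n}$ near $r = 0$. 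Integrating from $0$ gives $\BR_n'(r) < 0$ for small $r > 0$; a standard continuation argument (noting that at any would-be critical point the bracket remains negative, since $\BR_n$ stays large there) extends the monotonicity to all of $(0, \bar r_n]$.

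Next, I rescale. Set $\ti\BR_n(y) := \BR_n(\mu_n y)$ and $\ti\eta_n(y) := \f{p_n}{2}\ga_n^{p_n-1}(\ga_n - \ti\BR_n(y))$ for $y \in B_{\bar r_n/\mu_n}(0)$. Using \eqref{1}, equation \eqref{6} becomes
\[
-\Delta\ti\eta_n = 4(1+\al)^2|y|^{2\al}\sbr{\f{\ti\BR_n}{\ga_n}}^{p_n-1}e^{\ti\BR_n^{p_n}-\ga_n^{p_n}} - \f{p_n\nu_n}{2}\mu_n^{2(1+\al)}\ga_n^{p_n-1}|y|^{2\al}\ti\BR_n,
\]
with $\ti\eta_n(0) = 0$ and $\ti\BR_n^{p_n} - \ga_n^{p_n} = -2\ti\eta_n + O(\ti\eta_n^2/\ga_n^{p_n})$. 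The leading profile is the explicit solution $T(y) := \ln(1+|y|^{2(1+\al)})$ of $-\Delta T = 4(1+\al)^2|y|^{2\al}e^{-2T}$, $T(0) = 0$, which in the original variable coincides with $t_n$. Writing $\ti\eta_n = T + \ti w_n$ (so $\ti w_n(0)=0$), I derive the linearized equation
\[
-\Delta\ti w_n - 8(1+\al)^2|y|^{2\al}e^{-2T}\ti w_n = R_n,
\]
where $R_n$ collects the quadratic remainder of $e^{-2\ti\eta_n}-e^{-2T}(1-2\ti w_n)$ expanded in $\ti w_n/\ga_n^{p_n}$, the $(p_n-1)$-power correction, and the linear $\nu_n$-term. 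Each of these is pointwise controlled by a multiple of $\ga_n^{-p_n}|y|^{2\al}e^{-2T}(1+T^2)$, thanks to \eqref{4} (which forces $T/\ga_n^{p_n} = O(1)$ throughout the domain) combined with \eqref{1} and \eqref{5} (which yield $\mu_n^{2(1+\al)}\ga_n^{p_n} = O(e^{-\ga_n^{p_n}})$ for the $\nu_n$-term).

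Third, I solve the radial ODE for $\ti w_n$ by variation of constants, using the explicit radial homogeneous solutions generated from the scaling invariance of the singular Liouville equation. The condition $\ti w_n(0)=0$ projects out the singular kernel direction, so the integral formula closes, and a bootstrap yields $|\ti w_n(y)| = O(\ga_n^{-p_n}T(y))$ and $|y|\,|\ti w_n'(y)| = O(\ga_n^{-p_n}|y|\,T'(y))$ uniformly on $(0, \bar r_n/\mu_n]$. Unwinding \eqref{7} gives $\omega_n = O(\ga_n^{-p_n}t_n)$ and $\omega_n' = O(\ga_n^{-p_n}t_n')$ on $(0, \bar r_n]$. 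Plugging these bounds back into the nonlinearity and invoking \eqref{1} once more produces the asymptotic formula for $\la_n p_n \tau_n \BR_n^{p_n-1}e^{\BR_n^{p_n}}$, with the relative error $O(\ga_n^{-p_n}e^{\ti\eta t_n})$ arising from the worst-case quadratic term $T^2 e^{O(T)}$ near $r = \bar r_n$, where the slack $\ti\eta \in (\eta, 1)$ in \eqref{4} is used to absorb the growth.

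The main obstacle is the bootstrap for the linearized equation in the presence of the singular weight $|y|^{2\al}$ with $\al \in (-1, 0)$: the natural gradient bound on $\ti w_n$ degenerates at the origin (which is why the conclusion $\omega_n' = O(\ga_n^{-p_n}t_n')$ is stated only on $(0, \bar r_n]$), and the radial kernel of the linearized singular Liouville operator is less transparent than in the non-singular case $\al = 0$ treated in \cite{MT-blowup-7}. Carefully using $\ti w_n(0) = 0$ to eliminate the scaling-direction component, and then iterating the variation-of-constants formula against the slowly-growing weight $1 + T$, is what makes the argument close without losing control near the origin.
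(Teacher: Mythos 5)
Your plan—redoing the blow-up/linearization/ODE analysis of \cite[Proposition 2.1]{MT-blowup-7} directly in the singular setting—could in principle be carried out, but as written the proof has a genuine gap exactly where the content of the proposition lies. The bounds $\omega_n=O(\ga_n^{-p_n}t_n)$ and $\omega_n'=O(\ga_n^{-p_n}t_n')$, uniformly up to $r=\bar r_n$, \emph{are} the proposition, and you obtain them only by asserting that ``a bootstrap yields'' them from a variation-of-constants formula that is never set up; the description of that step is also off, since the bounded radial homogeneous solution of the linearized operator is $\phi_0=\f{1-|\cdot|^{2(1+\al)}}{1+|\cdot|^{2(1+\al)}}$ with $\phi_0(0)=1\neq 0$ while the second homogeneous solution is log-singular at the origin, so the condition $\ti w_n(0)=0$ does not by itself ``project out'' a kernel direction — what pins the solution down is regularity at $0$ together with the value there, and the uniform control out to $|y|=\bar r_n/\mu_n$ is precisely the quantitative iteration you skip. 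In addition, one auxiliary claim you rely on is false under the stated hypotheses: \eqref{1} gives $\mu_n^{2(1+\al)}\ga_n^{p_n}=\f{8(1+\al)^2}{p_n^2\tau_n}\,\f{\ga_n^{2-p_n}}{\la_n}\,e^{-\ga_n^{p_n}}$, which is $O(e^{-\ga_n^{p_n}})$ only if $\la_n\ga_n^{p_n-2}$ is bounded below, and nothing in \eqref{1}--\eqref{5} (nor in the applications, where $\la_n\to0$) guarantees this; what the hypotheses \eqref{3} and \eqref{5} actually give is the weaker bound $\mu_n^{2(1+\al)}\ga_n^{2p_n}=o(1)$, and the linear $\nu_n$-term must be handled with that information, as in \cite{MT-blowup-7}. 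The continuation argument for $\BR_n\le\ga_n$ is likewise only sketched.

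You also miss the much shorter route the paper takes, which makes all of this unnecessary: since $\BR_n$ is radial and $\al\in(-1,0)$, the change of variable $t=\f{r^{1+\al}}{1+\al}$ satisfies $\Delta_r=r^{2\al}\Delta_t$ on radial functions, so \eqref{6} becomes exactly the non-singular radial problem $-\Delta\BR_n^*+\nu_n\BR_n^*=\la_np_n\tau_n(\BR_n^*)^{p_n-1}e^{(\BR_n^*)^{p_n}}$ on $B_{\bar r_n^*}(0)$ with $\mu_n^*=\f{\mu_n^{1+\al}}{1+\al}$ and $\bar r_n^*=\f{\bar r_n^{1+\al}}{1+\al}$; the normalization \eqref{1} turns into $(\mu_n^*)^2\la_np_n^2\tau_n\ga_n^{2(p_n-1)}e^{\ga_n^{p_n}}=8$, one has $t_n(r)=\ln\sbr{1+\f{t^2}{(\mu_n^*)^2}}$, and \eqref{3}--\eqref{5} become the hypotheses of \cite[Proposition 2.1]{MT-blowup-7}, whose conclusions then transfer verbatim and give all three assertions. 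Either use this reduction, or, if you prefer the direct approach, write out the monotonicity continuation, the correct treatment of the linear term, and the full bootstrap for $\ti w_n$.
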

\begin{proof}
Thanks to $\al\in\sbr{-1,0}$ and $\BR_n$ is radially symmetric, by doing the change of variable $t=\f{r^{1+\al}}{1+\al}$, and setting $\BR_n^*(t)=\BR_n(r)$, $\mu_n^*=\f{\mu_n^{1+\al}}{1+\al}$ and $\bar r_n^*=\f{\bar r_n^{1+\al}}{1+\al}$, equation \eqref{6} reduces to
\[\begin{cases}
  -\Delta\BR_n^*+\nu_n\BR_n^*=\la_np_n\tau_n(\BR_n^*)^{p_n-1}e^{(\BR_n^*)^{p_n}},\\
  \BR_n^*(0)=\ga_n>0,\\
  \BR_n^*~\text{is radially symmetric and positive in}~B_{\bar r_n^*}(0).
\end{cases}\]
namely the situation reduces to that in \cite[Section 2]{MT-blowup-7}, so Proposition \ref{single-1} follows readily from \cite[Proposition 2.1]{MT-blowup-7}.
\end{proof}

We also point out that following the proof of \cite[Proposition 2.1]{MT-blowup-7}, we actually have that
\be\lab{7-1} \lim_{n\to+\iy}\omega_n(\mu_n\cdot)=0\quad\text{in}~C^{0,\delta}_\loc(\R^2)\cap C^{1}_\loc(\R^2\setminus\{0\}), \ee
for any given $\delta\in(0,\min\{1,2(1+\al)\})$.  
By Proposition \ref{single-1}, we obtain from \eqref{4} and \eqref{7} that
	\be\label{7-1-1-1}\ga_n\geq\BR_n(r)\ge (1-\eta+o(1))\ga_n\ee
uniformly in $r\in[0,\bar r_n]$ and for all large $n$, and also obtain from \eqref{1} and \eqref{4} that
\be\lab{7-2} \BR_n(r)=-\sbr{\f{2}{p_n}-1}\ga_n+\f{2}{p_n\ga_n^{p_n-1}}\ln \f{1}{ \la_n\ga_n^{2(p_n-1)}\sbr{\mu_n^{2(1+\al)}+r^{2(1+\al)}} } +O(\ga_n^{1-p_n}) \ee
uniformly in $r\in[0,\bar r_n]$ and for all large $n$.

\vs
\section{Sharper estimates of standard bubbles}\lab{appe4}
\noindent B.1. {\bf Non-singular bubble}

Let $\al\in\sbr{-1,0}$ and $p_n\equiv p\in(1,2]$ for all $n$, and let $(\tau_n)_n$, $(\nu_n)_n$ be given families of numbers in $[C^{-1},C]$ for some constant $C\ge1$, and let $(\mu_n)_n$ be given families of positive numbers. Let $(\ga_n)_n$ be a positive number sequence satisfying $\ga_n\to+\iy$ as $n\to+\iy$, $(x_n)_n$ be given point sequence in $\R^2$ and $\la_n>0$ be given such that \eqref{8} holds. Let $t_n$ be given by \eqref{9}. 
Let also $(\bar r_n)_n$ be a family of positive numbers such that \eqref{10} and \eqref{13} both hold, and such that (instead of \eqref{11}-\eqref{12})
\be\lab{11-1} t_n(\bar r_n)=\sqrt{\ga_n}, \ee
\be\lab{12-1} \ga_n^{4p}\bar r_n^{2}|x_n|^{2\al}=O(1), \ee
for large $n$.
Let $\BR_n$ be functions solving \eqref{14} with $p_n=p$ for all $n$. 
Then, we have the following more precise estimate than in Section \ref{appe1-2}.

\begin{proposition}\lab{single-3}
We have that
\be\lab{521-0}\begin{aligned}
  &\sbr{\f{\la_np^2}{2}\int_{B_{\bar r_n}(0)}\tau_n|x_n|^{2\al}e^{\BR_n^p}\rd x}^{\f{2-p}{p}} \sbr{\f{\la_np^2}{2}\int_{B_{\bar r_n}(0)}\tau_n|x_n|^{2\al}\BR_n^pe^{\BR_n^p}\rd x}^{\f{2(p-1)}{p}}\\
  &=4\pi\sbr{1+\f{4(p-1)}{p^2\ga_n^{2p}}+o\sbr{\f{1}{\ga_n^{2p}}} }
\end{aligned}\ee
for all large $n$.
\end{proposition}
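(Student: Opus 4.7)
The plan is to reduce everything to standard integrals over $\R^2$ via the change of variables $x = \mu_n y$, after using the normalization \eqref{8} to eliminate $\lambda_n$. Concretely, write
$$A_n := \frac{\lambda_n p^2}{2}\int_{B_{\bar r_n}(0)}\tau_n|x_n|^{2\alpha}e^{\BR_n^p}dx = \frac{4}{\mu_n^{2}\gamma_n^{2(p-1)}}\int_{B_{\bar r_n}(0)}e^{\BR_n^p-\gamma_n^p}\,dx,$$
and analogously $B_n = \frac{4}{\mu_n^{2}\gamma_n^{2(p-1)}}\int_{B_{\bar r_n}(0)}\BR_n^p e^{\BR_n^p-\gamma_n^p}\,dx$. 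Since $\frac{2-p}{p}+\frac{2(p-1)}{p}=1$, the product to estimate is $A_n(B_n/A_n)^{2(p-1)/p}$, so I would compute the ratio $B_n/A_n$ (which is $\gamma_n^p$ at leading order) to sufficient precision and $A_n$ to matching precision, then expand the weighted geometric mean.

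The first substantive step is to refine the pointwise expansion of $\BR_n^p-\gamma_n^p$ beyond Proposition \ref{single-2}. Writing $\BR_n=\gamma_n(1+\zeta_n)$ with $\zeta_n = -\frac{2t_n}{p\gamma_n^p}+\frac{\omega_n}{\gamma_n^p}$ and Taylor-expanding $(1+\zeta_n)^p$ to third order, I obtain
$$\BR_n^p-\gamma_n^p = -2t_n + p\omega_n + \frac{2(p-1)}{p\gamma_n^p}t_n^2 + O\!\left(\gamma_n^{-2p}(t_n^3+t_n\omega_n)\right),$$
and the $t_n^2$ coefficient is what will eventually produce the $4(p-1)/(p^2\gamma_n^{2p})$ term. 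Exponentiating and using the bound $\omega_n = O(\gamma_n^{-p}t_n)$ together with the constraint $t_n \le \sqrt{\gamma_n}$ from \eqref{11-1}, one has $e^{\BR_n^p-\gamma_n^p} = e^{-2t_n}\bigl(1+p\omega_n + \tfrac{2(p-1)}{p\gamma_n^p}t_n^2 + O(\gamma_n^{-2p}t_n^4)\bigr)$ in the integrable region, while \eqref{11-1}--\eqref{12-1} make the contribution from the boundary annulus negligible.

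Next I would invoke the explicit Euclidean integrals
$$I_k := \int_{\R^2}\frac{(\ln(1+|y|^2))^k}{(1+|y|^2)^2}\,dy, \qquad I_k = k!\,\pi$$
(recursion via one integration by parts, $I_0=\pi$), so that after the rescaling $x=\mu_n y$,
$$A_n = \frac{4\pi}{\gamma_n^{2(p-1)}}\left[1+\frac{2(p-1)}{p\gamma_n^p}\cdot 2 + \frac{p}{\pi\mu_n^2}\int e^{-2t_n}\omega_n\,dx + O(\gamma_n^{-2p})\right],$$
and similarly $B_n = \gamma_n^p A_n - \frac{8\pi}{\gamma_n^{2(p-1)}}(1+O(\gamma_n^{-p}))$ using $\BR_n^p = \gamma_n^p + (\BR_n^p-\gamma_n^p)$ and $\int e^{-2t_n}(-2t_n)\,dx = -2\mu_n^2 I_1 = -2\pi\mu_n^2$. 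A secondary ODE analysis of $\omega_n$, substituting the refined ansatz into the equation \eqref{14}, will be needed to identify the exact contribution of $\int e^{-2t_n}\omega_n\,dx$; I expect this contribution to match identically in $A_n$ and $B_n$ in such a way that the weighted combination below annihilates it.

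The main obstacle, and the delicate part of the proof, is the cancellation of the $O(\gamma_n^{-p})$ corrections. Writing $A_n = A_n^{(0)}(1+\alpha_1\gamma_n^{-p}+\alpha_2\gamma_n^{-2p}+o(\gamma_n^{-2p}))$ and analogously for $B_n$ with coefficients $\beta_1,\beta_2$, the claim $4\pi(1+\tfrac{4(p-1)}{p^2\gamma_n^{2p}}+o(\gamma_n^{-2p}))$ forces the algebraic identity $(2-p)\alpha_1 + 2(p-1)\beta_1 = 0$ at first order and a specific value $\tfrac{2-p}{p}\alpha_2 + \tfrac{2(p-1)}{p}\beta_2 + \tfrac{(p-1)(2-p)}{p^2}(\beta_1-\alpha_1)^2 \cdot (\cdots) = \tfrac{4(p-1)}{p^2}$ at second order. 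Verifying these identities is the crux: at first order it reflects the structural relation $B_n = \gamma_n^p A_n + \int E_n e^{E_n}$ with $E_n=\BR_n^p-\gamma_n^p$, while at second order it emerges from the explicit values $I_1=\pi$, $I_2=2\pi$ applied to the $t_n^2$ term in $E_n$. I expect the $\omega_n$-dependent contributions to cancel between $A_n$ and $B_n$ in the weighted product, so the $\tfrac{4(p-1)}{p^2}$ constant is produced cleanly by the quadratic term $\tfrac{2(p-1)}{p\gamma_n^p}t_n^2$ alone, via $\int e^{-2t_n}t_n^2\,dx = 2\pi\mu_n^2$ combined with the exponent $2(p-1)/p$ from the geometric mean. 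Once this cancellation is made manifest, the conclusion \eqref{521-0} follows by direct substitution.
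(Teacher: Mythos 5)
Your reduction to $A_n=\f{4}{\mu_n^2\ga_n^{2(p-1)}}\int e^{\BR_n^p-\ga_n^p}\rd x$ and $B_n=\ga_n^pA_n+\f{4}{\mu_n^2\ga_n^{2(p-1)}}\int(\BR_n^p-\ga_n^p)e^{\BR_n^p-\ga_n^p}\rd x$, the formula $I_k=k!\,\pi$, and the negligibility of the boundary annulus under \eqref{11-1}--\eqref{12-1} are all fine, but the core of your argument has a genuine gap: the $\omega_n$-dependent contributions do \emph{not} cancel in the weighted product. Writing $A_n=\f{4\pi}{\ga_n^{2(p-1)}}(1+a_n)$, $B_n=\f{4\pi\ga_n^{p}}{\ga_n^{2(p-1)}}(1+b_n)$, the term $\f{p}{\pi\mu_n^2}\int e^{-2t_n}\omega_n\rd x$ appears \emph{identically} in both $a_n$ and $b_n$ (their difference involves $\omega_n$ only at the next order), and since the exponents $\f{2-p}{p}+\f{2(p-1)}{p}=1$, a common term survives in $\f{2-p}{p}a_n+\f{2(p-1)}{p}b_n$ with total weight one — it is precisely opposite-sign or weight-zero combinations that cancel, not matching ones. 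The only structural cancellation available is the one between the $t_n^2$-contribution $\f{4(p-1)}{p\ga_n^p}$ in $a_n$ and the $-\f{2}{\ga_n^p}$ coming from $b_n-a_n$ weighted by $\f{2(p-1)}{p}$; the $\omega_n$-term remains. Since Proposition \ref{single-2} only gives $\omega_n=O(\ga_n^{-p}t_n)$, this unresolved term is a priori of size $\ga_n^{-p}$, i.e.\ it could destroy even the absence of a $\ga_n^{-p}$ term in \eqref{521-0}, let alone determine the $\ga_n^{-2p}$ coefficient. What is actually needed — and what the paper's proof supplies by invoking \cite[Proposition 5.1 and Corollary 5.1]{MT-blowup-7} — is the second-order expansion of the bubble, $\BR_n=\ga_n\big(1-\f{2}{p\ga_n^p}T_0(\cdot/\mu_n)+\f{4(p-1)}{p^2\ga_n^{2p}}\hat w_0(\cdot/\mu_n)+\cdots\big)$, where $\hat w_0$ solves an explicit linearized Liouville ODE with a source involving $T_0^2$; the vanishing of the $\ga_n^{-p}$ term and the exact constant $\f{4(p-1)}{p^2}$ come out of computing explicit integrals of $\hat w_0$ against $e^{-2T_0}$ and $T_0e^{-2T_0}$, together with remainder control valid on the whole range $t_n\le\sqrt{\ga_n}$. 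Your ``secondary ODE analysis'' gestures at this, but the proposed mechanism (annihilation by the weighted geometric mean) is not the one that operates.

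A second, related defect is the error bookkeeping: in your expansion $\BR_n^p-\ga_n^p=-2t_n+p\omega_n+\f{2(p-1)}{p\ga_n^p}t_n^2+O\big(\ga_n^{-2p}(t_n^3+t_n\omega_n)\big)$, the discarded terms — the cubic term $-\f{4(p-1)(p-2)}{3p^2\ga_n^{2p}}t_n^3$ from $(1+\zeta_n)^p$, the cross term $-\f{2(p-1)}{\ga_n^p}t_n\omega_n$, and likewise $\f{p^2}{2}\omega_n^2$ after exponentiating — all integrate against $e^{-2t_n}$ to contributions of size exactly $\ga_n^{-2p}$, the very order of the claimed coefficient, so they cannot be absorbed into $o(\ga_n^{-2p})$. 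To reach the stated precision every term of order $\ga_n^{-2p}$ (with any power of $t_n$) must be kept and evaluated via the $I_k$, using the refined profile for $\omega_n$; this is the computation carried out in \cite{MT-blowup-4,MT-blowup-7}, to which the paper reduces the present proposition after the change of variables that removes the harmless factors $\tau_n,\nu_n,|x_n|^{2\al}$.
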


\begin{proof}
Proposition \ref{single-3} can be proved exactly as \cite[Corollary 5.1]{MT-blowup-7}, and we omit the details here. Remark that,
such result was first obtained by Malchiodi-Martinazzi \cite{MT-blowup-4} (see also \cite{MT-blowup-5}) with $p=2$, and then extended to $p\in(1,2]$ in \cite[Corollary 5.1]{MT-blowup-7}. Their strategy of proving \eqref{521-0} is to expand $\BR_n$ more precisely than \eqref{15}:
  $$\BR_n=\ga_n\sbr{1-\f{2}{p\ga_n^{p}}T_0\sbr{\f{\cdot}{\mu_n}}+\f{4(p-1)}{p^2\ga_n^{2p}}\hat w_0\sbr{\f{\cdot}{\mu_n}} +\hat w_1\sbr{\f{\cdot}{\mu_n}}+\f{\hat w_n}{\ga_n^{3p}} },$$
where $T_0=\ln(1+|\cdot|^2)$, $\hat w_0, \hat w_1$ are some special functions (see \cite[(5.5)-(5.10)]{MT-blowup-7}) and $\hat w_n$ is given by this formula. Then, by using the stronger assumption \eqref{11-1}, and by Taylor expansions, they got a sharper estimate of $\BR_n$ (see \cite[Proposition 5.1]{MT-blowup-7}). Eventually, by careful computations, they observed the cancellation of the term $\ga_n^{-p}$ in \eqref{521-0}, and we refer to \cite[Section 5.1]{MT-blowup-7} for detailed proofs. Besides, the term $\ga_n^{-2p}$ vanishes as well for $p=1$. This is the technical reason why the approach does not work for $p=1$ and why $p>1$ is required. 
\end{proof}

\vskip0.1in
\noindent B.2. {\bf Singular bubble}

Let $\al\in\sbr{-1,0}$ and $p_n\equiv p\in(1,2]$ for all $n$, and let $(\tau_n)_n$, $(\nu_n)_n$ be given families of numbers in $[C^{-1},C]$ for some constant $C\ge1$, and let $(\mu_n)_n$ be given families of positive numbers. Let $(\ga_n)_n$ be a positive number sequence satisfying $\ga_n\to+\iy$ as $n\to+\iy$, and $\la_n>0$ be given such that \eqref{1} holds.
Let $t_n$ be given by \eqref{2}. Let also $(\bar r_n)_n$ be a family of positive numbers such that \eqref{3} holds, and such that (instead of \eqref{4}-\eqref{5})
\be\lab{4--1} t_n(\bar r_n)=\sqrt{\ga_n}, \ee
\be\lab{5--1} \ga_n^{4p}\bar r_n^{2+2\al}=O(1), \ee
for large $n$.
Let $\BR_n$ be functions solving \eqref{6} with $p_n=p$ for all $n$. 
Then, we have the following more precise estimate than in Section \ref{appe1-1}.

\begin{proposition}\lab{single-4}
We have that
$$\begin{aligned}
  &\sbr{\f{\la_np^2}{2}\int_{B_{\bar r_n}(0)}\tau_n|x|^{2\al}e^{\BR_n^p}\rd x}^{\f{2-p}{p}} \sbr{\f{\la_np^2}{2}\int_{B_{\bar r_n}(0)}\tau_n|x|^{2\al}\BR_n^pe^{\BR_n^p}\rd x}^{\f{2(p-1)}{p}}\\
  &=4\pi(1+\al)\sbr{1+\f{4(p-1)}{p^2\ga_n^{2p}}+o\sbr{\f{1}{\ga_n^{2p}}} }
\end{aligned}$$ 
for all large $n$.
\end{proposition}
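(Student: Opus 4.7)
The plan is to reduce the singular bubble case entirely to the non-singular bubble case covered by Proposition \ref{single-3}, via the radial change of variables
\[
t=\frac{r^{1+\al}}{1+\al},\qquad \BR_n^*(t):=\BR_n(r),
\]
already exploited in the proof of Proposition \ref{single-1}. A direct computation shows that $\BR_n^*$ satisfies the \emph{non-singular} radial ODE
\[
-\Delta \BR_n^*+\nu_n\BR_n^*=\la_n p\,\tau_n(\BR_n^*)^{p-1}e^{(\BR_n^*)^p}\quad\text{in}~B_{\bar r_n^*}(0),\quad \BR_n^*(0)=\ga_n,
\]
on the ball of radius $\bar r_n^*:=\bar r_n^{1+\al}/(1+\al)$, and that the natural rescaled parameter $\mu_n^*:=\mu_n^{1+\al}/(1+\al)$ preserves the normalization, since
\[
(\mu_n^*)^2\,\la_n p^2\tau_n\ga_n^{2(p-1)}e^{\ga_n^p}=\frac{\mu_n^{2(1+\al)}\la_n p^2\tau_n\ga_n^{2(p-1)}e^{\ga_n^p}}{(1+\al)^2}=8
\]
by \eqref{1}, matching the normalization \eqref{8} with the coefficient $|x_n|^{2\al}$ set to $1$. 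Moreover $t_n^*(t):=\ln(1+t^2/(\mu_n^*)^2)$ agrees with $t_n$ under the change of variables, so the key hypothesis \eqref{4--1} becomes $t_n^*(\bar r_n^*)=\sqrt{\ga_n}$, i.e. the sharpened version of \eqref{11-1}, and \eqref{5--1} becomes \eqref{12-1} (again with $|x_n|^{2\al}\equiv 1$). Thus $\BR_n^*$ falls under the scope of Proposition \ref{single-3}.

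The second step is to transport the integrals. Using $r^{1+2\al}\,\rd r=(1+\al)\,t\,\rd t$, we obtain
\[
\int_{B_{\bar r_n}(0)}\tau_n|x|^{2\al}e^{\BR_n^p}\,\rd x=(1+\al)\int_{B_{\bar r_n^*}(0)}\tau_n e^{(\BR_n^*)^p}\,\rd x,
\]
and the identical identity with $(\BR_n)^p e^{(\BR_n)^p}$ replaced throughout. Since the exponents $\frac{2-p}{p}$ and $\frac{2(p-1)}{p}$ appearing in Proposition \ref{single-4} sum to $1$, the Jacobian $(1+\al)$ factors out exactly once:
\[
\text{LHS of Proposition \ref{single-4}}=(1+\al)\cdot \sbr{\frac{\la_n p^2}{2}\int_{B_{\bar r_n^*}(0)}\tau_n e^{(\BR_n^*)^p}}^{\f{2-p}{p}}\sbr{\frac{\la_n p^2}{2}\int_{B_{\bar r_n^*}(0)}\tau_n(\BR_n^*)^p e^{(\BR_n^*)^p}}^{\f{2(p-1)}{p}}.
\]
Applying Proposition \ref{single-3} to $\BR_n^*$ (the hypotheses of Section B.1 being met, as verified above, with the $|x_n|^{2\al}$ coefficient reduced to a constant), the bracketed quantity equals $4\pi\bigl(1+\tfrac{4(p-1)}{p^2\ga_n^{2p}}+o(\ga_n^{-2p})\bigr)$, and multiplying by $(1+\al)$ yields exactly the claimed expansion.

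The main obstacle, or rather the only non-automatic point, is to verify that Proposition \ref{single-3} (which is stated in the presence of the non-trivial factor $|x_n|^{2\al}$ with $\bar r_n\le\frac{1}{2}|x_n|$) indeed applies to the reduced radial problem for $\BR_n^*$, where the analogous condition \eqref{13} must be rephrased. Since $\bar r_n^*\to 0$ by \eqref{5--1} and $\ga_n\to+\iy$, one can safely reinsert a fictitious point $x_n^*\in\R^2$ with $|x_n^*|=1$; the coefficient $|x_n^*|^{2\al}=1$ is innocuous, \eqref{13} becomes $\bar r_n^*\le 1/2$ which holds for large $n$, and \eqref{12-1} follows from \eqref{5--1}. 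Equivalently, Proposition \ref{single-3} as used here is really the original statement \cite[Corollary 5.1]{MT-blowup-7} (in which no factor $|x_n|^{2\al}$ occurs), so the reduction to that result is entirely legitimate. No further sharp bubble analysis is needed, as the delicate cancellation of the $\ga_n^{-p}$ term—which is the core content of \cite[Section 5]{MT-blowup-7}—has already been performed for the reduced non-singular problem, and the singular correction enters only through the elementary geometric factor $(1+\al)$.
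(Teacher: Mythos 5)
Your proof is correct and takes essentially the same route as the paper: the paper's proof of Proposition \ref{single-4} likewise performs the radial change of variables $t=r^{1+\al}/(1+\al)$, observes that the problem then reduces to the non-singular setting of \cite[Section 5.1]{MT-blowup-7}, and concludes via \cite[Corollary 5.1]{MT-blowup-7} (equivalently Proposition \ref{single-3}). Your write-up merely spells out the Jacobian bookkeeping — the factor $(1+\al)$ appearing exactly once since $\f{2-p}{p}+\f{2(p-1)}{p}=1$ — and the verification of the rescaled hypotheses, which the paper leaves implicit.
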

\begin{proof}
Similarly as Proposition \ref{single-1}, thanks to $\al\in\sbr{-1,0}$ and $\BR_n$ is radial symmetric, by doing the variable transformation $t=\f{r^{1+\al}}{1+\al}$, and setting $\BR_n^*(t)=\BR_n(r)$, $\mu_n^*=\f{\mu_n^{1+\al}}{1+\al}$ and $\bar r_n^*=\f{\bar r_n^{1+\al}}{1+\al}$, the situation reduces to that in \cite[Section 5.1]{MT-blowup-7}, so that Proposition \ref{single-4} can be proven by using \cite[Corollary 5.1]{MT-blowup-7}.
\end{proof}

\vs
\section{Linear systems}\lab{appe3}
\begin{lemma}\lab{appe3-0}
Let $v$ be a smooth solution of
\be\begin{cases}
	-\Delta v=\f{8}{\sbr{1+|\cdot|^{2}}^{2}}v \quad\text{in}~\R^2,\\
	|v|\le C(1+|\cdot|) \quad\text{in}~\R^2,\\
	v(0)=0,\quad \nabla v(0)=0,
\end{cases}\ee
then $v\equiv0$. 
\end{lemma}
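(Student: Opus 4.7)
The plan is to classify $v$ via an angular Fourier decomposition and reduce to a sequence of ODEs in the radial variable. In polar coordinates $(r,\theta)$, write
$$v(r\cos\theta,r\sin\theta)=\sum_{k\ge0}\bigl(a_k(r)\cos k\theta+b_k(r)\sin k\theta\bigr),$$
so that each coefficient $a_k$ (and analogously $b_k$) satisfies
$$-a_k''-\frac{a_k'}{r}+\frac{k^2}{r^2}a_k=\frac{8}{(1+r^2)^2}a_k\quad\text{on }(0,+\iy),$$
with $|a_k(r)|\le C(1+r)$ inherited from the linear-growth hypothesis on $v$. Smoothness of $v$ at the origin together with its Taylor expansion forces $a_k(r)=O(r^k)$ as $r\to 0$, and a Frobenius analysis at $r=0$ (indicial exponents $0$ double for $k=0$ and $\pm k$ for $k\ge 1$) shows that the space of smooth-at-origin solutions of each mode equation is one-dimensional.

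For modes $k=0$ and $k=1$ the smooth branches are explicit, coming from differentiating the standard Liouville bubble $-\ln(1+|x|^2)$ with respect to its dilation and translation parameters: $\phi_0(r)=\frac{1-r^2}{1+r^2}$ for mode $0$ (so $\phi_0(0)=1$), and $\phi_1(r)=\frac{r}{1+r^2}$ for mode $1$ (so $\phi_1'(0)=1$); a direct substitution confirms each solves its mode ODE. The Taylor expansion of $v$ at the origin yields $v(0)=a_0(0)$ and $\nabla v(0)=(a_1'(0),b_1'(0))$, so the initial conditions $v(0)=0$, $\nabla v(0)=0$ force the coefficients of $\phi_0$, $\phi_1\cos\theta$, $\phi_1\sin\theta$ to vanish; hence $a_0\equiv a_1\equiv b_1\equiv 0$.

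The substantive step is to rule out nonzero solutions in modes $k\ge 2$. A direct computation gives
$$\frac{k^2}{r^2}-\frac{8}{(1+r^2)^2}=\frac{k^2+(2k^2-8)r^2+k^2r^4}{r^2(1+r^2)^2}>0\quad\text{on }(0,+\iy)\text{ for every }k\ge 2,$$
so the ODE rewrites as $(ra_k')'=\bigl(\tfrac{k^2}{r}-\tfrac{8r}{(1+r^2)^2}\bigr)a_k$. Letting $\psi_k$ denote the smooth branch normalized by $\psi_k(r)=r^k+O(r^{k+2})$ as $r\to 0$, this shows that $r\psi_k'$ is strictly increasing on any interval where $\psi_k>0$. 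A bootstrap argument then propagates the positivity and strict monotonicity of $\psi_k$ to all of $(0,+\iy)$: if $\psi_k$ had a first zero at some $r_0>0$, then on $(0,r_0)$ we would have $r\psi_k'>0$ (starting at $0$ and increasing), hence $\psi_k$ strictly increasing, contradicting $\psi_k(r_0)=0$. Matching with the asymptotic Euler equation $-\Delta a_k=0$ (whose radial fundamental solutions for mode $k$ are $r^{\pm k}$) then forces $\psi_k(r)\sim c\, r^k$ as $r\to+\iy$ with $c>0$; since $k\ge 2$, the linear-growth bound forces the coefficient of $\psi_k$ in both $a_k$ and $b_k$ to vanish.

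The principal technical obstacle will be making the asymptotic matching for $k\ge 2$ quantitative, i.e.\ confirming that $\psi_k$ retains a nonzero $r^k$ component at infinity rather than decaying like $r^{-k}$. I expect the cleanest route is the substitution $\psi_k(r)=r^k\xi_k(r)$, which reduces the matter to showing that $\xi_k$ has a finite positive limit at $+\iy$; the monotonicity $r\psi_k'>0$ combined with an ODE estimate on $\xi_k$ on $[1,+\iy)$ delivers the required bounds, and alternatively a Pohozaev-type identity obtained by testing the ODE against $r\psi_k'$ on $[0,R]$ and letting $R\to+\iy$ provides the same conclusion. Once all Fourier modes are shown to vanish, the conclusion $v\equiv 0$ follows.
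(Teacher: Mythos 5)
Your overall strategy is viable and genuinely different in its key mechanism from the argument the paper relies on: the paper does not reprove this lemma but cites Chen--Lin and Laurain, and the proof it reproduces (for the singular analogue, Lemma \ref{appe3-1}) first derives decay of the Fourier coefficients $\xi_k$, $k\ge1$, by iterated integration of the mode ODE, and then kills the modes $k\ge 2$ through a Sturm comparison with the explicit solution $\varphi_1(r)=\frac{r}{1+r^2}$ together with a Wronskian identity whose factor $k^2-1$ is positive precisely for $k\ge2$; modes $0$ and $1$ are handled by $v(0)=0$ and $\nabla v(0)=0$, exactly as you do. Your reduction of modes $k=0,1$ via the one-dimensionality of the regular branch and the explicit solutions $\phi_0=\frac{1-r^2}{1+r^2}$, $\phi_1=\frac{r}{1+r^2}$ is correct, as is the positivity and monotonicity of the regular branch $\psi_k$ for $k\ge2$ obtained from $(r\psi_k')'=\big(\frac{k^2}{r}-\frac{8r}{(1+r^2)^2}\big)\psi_k$ with positive bracket.

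The gap is exactly the step you flag: the claim $\psi_k(r)\sim c\,r^k$ with $c>0$ is asserted by ``matching with the Euler equation'' but not proved, and the two routes you sketch do not, as written, deliver even the super-linear growth you actually need. With $\xi_k=r^{-k}\psi_k$ one gets $(r^{2k+1}\xi_k')'=-\frac{8r^{2k+1}}{(1+r^2)^2}\xi_k$, so $\xi_k$ is positive and decreasing with a limit $\ell\ge0$; the naive bound $\xi_k'(r)\ge-\frac{4}{(k-1)r^3}$ only gives $\ell\ge\xi_k(1)-\frac{2}{k-1}$, which is not obviously positive for $k=2,3$, so ``an ODE estimate on $[1,+\infty)$'' does not by itself show $\ell>0$; likewise, $r\psi_k'\ge c>0$ bootstraps only to powers of $\ln r$, never to a power of $r$. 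The cleanest way to close it is a Riccati comparison: $w=r\psi_k'/\psi_k$ satisfies $rw'=k^2-\frac{8r^2}{(1+r^2)^2}-w^2\ge k^2-2-w^2$ (since $\frac{8r^2}{(1+r^2)^2}\le2$) with $w\to k$ as $r\to0^+$, whence a first-crossing argument gives $w\ge\sqrt{k^2-2}\ge\sqrt2$ on $(0,+\infty)$ and so $\psi_k(r)\ge\psi_k(1)\,r^{\sqrt{2}}$ for $r\ge1$; this super-linear growth already contradicts $|a_k|\le C(1+r)$, and the full asymptotic $\sim c\,r^k$ is never needed. (Alternatively a Levinson-type asymptotic theorem, using that after the substitution $t=\ln r$ the perturbation is $O(e^{-2t})$, gives $\psi_k\sim c_+r^k+c_-r^{-k}$ and monotone growth forces $c_+>0$.) Until one of these is written out, the $k\ge2$ step — which is the substantive part of the lemma — is incomplete.
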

This result was stated by Chen-Lin \cite[Lemma 2.3]{MF-3}, and was generalized on the growth assumption as above by Laurain \cite[Lemma C.1]{CMC-1}. Here, we repeat the proof of Laurain with additional estimates suitable to the singular case, to get the following result. 
\begin{lemma}\lab{appe3-1}
Let $\al\in(-1,0)$ and $v$ be a $C^0(\R^2)\cap C^2(\R^2\setminus\{0\})$ solution of
\be\lab{label-207}\begin{cases}
	-\Delta v=\f{8(1+\al)^2|\cdot|^{2\al}}{\sbr{1+|\cdot|^{2(1+\al)}}^{2}} v \quad\text{in}~\R^2,\\
	|v|\le C(1+|\cdot|) \quad\text{in}~\R^2,\\
	v(0)=0,
\end{cases}\ee
then $v\equiv0$. 
\end{lemma}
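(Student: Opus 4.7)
My plan is to decompose $v$ into spherical harmonics, $v(r,\theta)=\sum_{k\ge 0}[v_k(r)\cos k\theta+\tilde v_k(r)\sin k\theta]$, so that each radial component satisfies
\[
L_k u:=-u''-\f{1}{r}u'+\f{k^2}{r^2}u-q(r)u=0,\qquad q(r)=\f{8(1+\al)^2 r^{2\al}}{(1+r^{2(1+\al)})^2},
\]
on $(0,+\iy)$, together with the inherited conditions $v_k(0)=0$ (by continuity of $v$ at the origin and $v(0)=0$) and $|v_k(r)|\le C(1+r)$. It then suffices to show $v_k\equiv\tilde v_k\equiv 0$ for every $k\ge 0$.

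For $k=0$, differentiating the scaling family $\tilde\eta_\la(x)=\eta(\la x)+(1+\al)\ln\la$ of solutions to the singular Liouville equation at $\la=1$ produces the explicit bounded kernel element $\phi_0(r)=(1+\al)\f{1-r^{2(1+\al)}}{1+r^{2(1+\al)}}$, satisfying $L_0\phi_0=0$ and $\phi_0(0)=1+\al\ne 0$; the second linearly independent solution of $L_0u=0$ has a logarithmic singularity at $r=0$, so continuity of $v_0$ forces $v_0=c\phi_0$ and then $v_0(0)=0$ gives $c=0$. For $k\ge 2$ I would introduce $\tau=r^{1+\al}$, turning $L_k u=0$ into
\[
u''+\f{u'}{\tau}-\f{\nu^2}{\tau^2}u+\f{8}{(1+\tau^2)^2}u=0,\qquad \nu=\f{k}{1+\al},
\]
whose local behaviors at both endpoints are $\tau^{\pm\nu}$. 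Continuity at $0$ selects $\tau^\nu$, while the growth bound, which reads $|u(\tau)|\le C(1+\tau^{1/(1+\al)})=C(1+\tau^{\nu/k})$, forbids $\tau^\nu$ at infinity for $k\ge 2$ and selects the decaying branch. Since $\sup_r r^2 q(r)=2(1+\al)^2<4\le k^2$, multiplying $L_kv_k=0$ by $rv_k$ and integrating on $(0,+\iy)$ produces boundary terms $[rv_kv_k']_0^\iy=0$ (from the regular/decaying asymptotics) and a strictly positive quadratic form $\int_0^\iy[rv_k'^2+(k^2/r^2-q(r))rv_k^2]\,\rd r$, which vanishes only for $v_k\equiv 0$.

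The hard part will be the $k=1$ mode. Here $\nu=1/(1+\al)>1$, so the growth bound admits precisely the $\tau^\nu$ branch at infinity and the energy identity breaks down because its boundary term $rv_1v_1'$ is unbounded for a linearly growing solution. My plan would be to push the $k=1$ ODE to an explicit hypergeometric form via $u=\tau^\nu W$ and $s=\tau^2/(1+\tau^2)$, reducing it to the Gauss equation $s(1-s)W''+[(\nu+1)-2s]W'+2W=0$ with parameters $(a,b,c)=(2,-1,\nu+1)$; since $b=-1$ is a non-positive integer there is a polynomial solution $W_1(s)=1-\f{2s}{\nu+1}$, yielding an explicit regular-at-zero profile
\[
v_1(r)=\f{r[(\nu+1)+(\nu-1)r^{2(1+\al)}]}{(\nu+1)(1+r^{2(1+\al)})},
\]
with linear growth $\f{\nu-1}{\nu+1}\,r$ at infinity. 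To rule this out I would pair with $\Phi_1=\pa_{x_1}\eta$, using that the singular weight breaks translation invariance so that $-\Delta\Phi_1-q\Phi_1=8\al(1+\al)^2 x_1|x|^{2\al-2}e^{2\eta}$, and derive a Wronskian identity of the form $(rW_{\Phi_1,v_1})'=rB(r)v_1(r)$ whose integration from $0$ to $R\to+\iy$ should tie the linear-growth coefficient of $v_1$ to a signed integral $\int_0^\iy v_1(r)\,r^{2\al}(1+r^{2(1+\al)})^{-2}\,\rd r$, and extract a sign contradiction. This sign/rigidity step is the genuine obstacle, since the explicit $v_1$ above satisfies every hypothesis of the lemma as stated; either the Wronskian identity must force a contradiction through a delicate cancellation specific to $\al\in(-1,0)$, or — as the actual usage in Proposition~\ref{singular} suggests, where only the strictly sub-linear Hölder bound $|w_n(x)|\le C|x|^{\delta_0}$ is passed to the limit — the rigidity ultimately rests on tightening the growth hypothesis to $|v(x)|\le C|x|^\delta$ with some $\delta<1$, under which the $k\ge 1$ modes all vanish automatically.
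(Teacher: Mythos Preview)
Your analysis is correct, and you have in fact uncovered a genuine error in the statement. Your explicit $k=1$ profile
\[
v_1(r)=\f{r\bigl[(\nu+1)+(\nu-1)r^{2(1+\al)}\bigr]}{(\nu+1)\bigl(1+r^{2(1+\al)}\bigr)},\qquad \nu=\f{1}{1+\al}>1,
\]
does solve $L_1v_1=0$: one checks directly that $f(z)=1-\tfrac{2z}{(\nu+1)(1+z)}$ satisfies $(\nu+1)f'+zf''+2f/(1+z)^2=0$, which is your hypergeometric reduction. Then $V(x)=v_1(|x|)\,x_1/|x|$ lies in $C^0(\R^2)\cap C^2(\R^2\setminus\{0\})$, solves the equation, vanishes at the origin, and satisfies $|V(x)|\le |x|$; yet $V\not\equiv0$. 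So the lemma with only the linear growth bound is false, and no Wronskian pairing with $\pa_{x_1}\eta$ can rescue it.

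The paper's own proof breaks precisely here. It sets $w_k=r^{-k}\xi_k$ and bootstraps an inequality for $w_k'$ over $[R,\iy)$ using $w_k(r)=O(r^{1-k})$; for $k\ge2$ this forces $w_k(\iy)=0$ and the iteration gains decay, but for $k=1$ one has only $w_1=O(1)$, the constant $\ti C$ picked up at infinity need not vanish, and no improvement follows. The subsequent Sturm--Wronskian comparison with $\vp_1(r)=r^{1+\al}/(1+r^{2(1+\al)})$ (which plays the role of your energy identity, exploiting $(1+\al)^2<1\le k^2$) then requires $\xi_1=O(r^{-1/2})$, which is unavailable. Your diagnosis of the fix is exactly right and exactly what the application supplies: in the sole invocation of the lemma (Proposition~\ref{singular}, at \eqref{4-33}), the limit function inherits $|v(x)|\le C|x|^{\delta_0}$ with $\delta_0<1$, under which $w_1(\iy)=0$ and both the paper's bootstrap and your energy argument succeed for all $k\ge1$. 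The lemma should simply be stated with the hypothesis $|v(x)|\le C|x|^{\delta}$ for some $\delta<1$.
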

\begin{proof}
We first prove that the Fourier coefficients decrease rapidly. Let $k\ge1$ and
	\[\xi_k+i\eta_k=\int_0^{2\pi}ve^{ik\theta}\rd\theta,\]
where $i=\sqrt{-1}$ is the imaginary unit. Then 
\be\lab{label-200} -\Delta\xi_k=\sbr{8(1+\al)^2r^{2\al}\sbr{1+r^{2(1+\al)}}^{-2}-k^2r^{-2}}\xi_k. \ee
We set $w_k=r^{-k}\xi_k$ on $[1,+\iy)$, and we easily get that
\be\lab{label-201} -\Delta\xi_k=-r^kw_k''-(2k+1)r^{k-1}w_k'-k^2r^{k-2}w_k. \ee
On the other hand, thanks to the hypothesis and \eqref{label-200}, there exists a positive constant $C$ such that
\be\lab{label-202} -\Delta\xi_k\le Cr^{-1-2(1+\al)}-k^2r^{k-2}w_k \quad\text{on}~[1,+\iy). \ee
Hence, thanks to \eqref{label-201} and \eqref{label-202}, we get
\[\begin{aligned}
	-r^kw_k''-(2k+1)r^{k-1}w_k' &\le Cr^{-1-2(1+\al)}\\
	(r^{2k+1}w_k')' &\ge-Cr^{k-2(1+\al)}.
\end{aligned}\]
Then we integrate on $[1,r]$, which gives
\[\begin{aligned}
	r^{2k+1}w_k'(r)-w_k'(1) &\ge C\f{1-r^{k+1-2(1+\al)}}{k+1-2(1+\al)}\\
	w_k'(r) &\ge r^{-(2k+1)}w_k'(1)+\f{C}{k+1-2(1+\al)}\sbr{r^{-(2k+1)}-r^{-k-2(1+\al)}}.
\end{aligned}\]
Note that $w_k(r)=O(r^{1-k})$ and $k\ge1$, we integrate on $[R,\iy)$, which gives
	\[\ti C-w_k(R) \ge \f{1}{2k}R^{-2k}w_k'(1) +\f{C}{k+1-2(1+\al)}\sbr{\f{1}{2k}R^{-2k}-\f{R^{-k+1-2(1+\al)}}{k-1+2(1+\al)}}.\]
It follows that
	\[w_k(R)\le CR^{-k+1-2(1+\al)},\]
which means
	\[\xi_k(r)\le C(1+r)^{1-2(1+\al)} \quad\text{on}~[0,+\iy).\]
Since the equation is linear, we can apply the same argument to $-\xi_k$, and finally we get a improved estimate, for $k\ge1$, 
\be\lab{label-203} |\xi_k(r)|\le C_k(1+r)^{1-2(1+\al)} \quad\text{on}~[0,+\iy).\ee
In this direction, we can go further. For any fixed $\al\in(-1,0)$, let 
	\[j_\al=\mbr{\f{3}{4(1+\al)}} \quad\text{and}\quad \delta_\al=\f{3}{4(1+\al)}-j_\al,\]
where $[x]$ means the integer part of  $x$. We use induction on $j$ to prove that
\be\lab{label-204} |\xi_k(r)|\le C_k(1+r)^{1-2(\delta_\al+j)(1+\al)} \quad\text{on}~[0,+\iy),\ee
for $j=0,1,\cdots,j_\al$. Clearly, for $j=0$, by \eqref{label-203}, we get
	\[|\xi_k(r)|\le Cr^{1-2\delta_\al(1+\al)} \quad\text{on}~[1,+\iy).\]
Now we assume \eqref{label-204} holds for some $j\le j_\al-1$. Then, using \eqref{label-200}, we get
\be\lab{label-205} -\Delta\xi_k\le Cr^{-1-2(\delta_\al+j+1)(1+\al)}-k^2r^{k-2}w_k \quad\text{on}~[1,+\iy). \ee
Hence, thanks to \eqref{label-201} and \eqref{label-205}, we get
	\[(r^{2k+1}w_k')' \ge-Cr^{k-2(\delta_\al+j+1)(1+\al)}.\]
Since $k-2(\delta_\al+j+1)(1+\al)>-1$ by $k\ge1$ and $j\le j_\al-1$, we can integrate on $[1,r]$ to get
	\[w_k'(r) \ge r^{-(2k+1)}w_k'(1) +C\f{r^{-(2k+1)}-r^{-k-2(\delta_\al+j+1)(1+\al)}}{k+1-2(\delta_\al+j+1)(1+\al)}.\]
Note that $w_k(r)=O(r^{1-k-2(1+\al)})$ and $k\ge1$, we integrate on $[R,\iy)$, which gives
	\[-w_k(R) \ge \f{1}{2k}R^{-2k}w_k'(1) +C\f{\f{1}{2k}R^{-2k}-\f{R^{-k+1-2(\delta_\al+j+1)(1+\al)}}{k-1+2(\delta_\al+j+1)(1+\al)}}{k+1-2(\delta_\al+j+1)(1+\al)}.\]
It follows that
	\[\xi_k(r)\le C(1+r)^{1-2(\delta_\al+j+1)(1+\al)} \quad\text{on}~[0,+\iy).\]
Applying the same argument to $-\xi_k$, and we get \eqref{label-204} holds for $j+1$, so that we have proved \eqref{label-204} holds for $j=0,1,\cdots,j_\al$. In particular, we have
\be\lab{label-206} |\xi_k(r)|\le C_k(1+r)^{-\f{1}{2}} \quad\text{on}~[0,\iy).\ee
Of course the same result is ture considering $\eta_k$. 

Now we are going to prove that any solution of \eqref{label-207} which satisfies $|v(x)|\le C(1+|x|)$ is a linear multiple of the elementary solution of \eqref{label-207}, that is to say,
	\[v=c\phi_0 \quad\text{with}\quad \phi_0=\f{1-|\cdot|^{2(1+\al)}}{1+|\cdot|^{2(1+\al)}}.\]
Then, the initial condition will give the result.

In order to show our result it suffices to show that $\xi_k\equiv0$ and $\eta_k\equiv0$ for $k\ge1$. We are going to prove this result for $\xi_k$, and the argument are exactly the same for $\eta_k$. Let
	\[\vp_1(r)=\f{r^{1+\al}}{1+r^{2(1+\al)}}.\]
By easy computation, we see that
\be\lab{label-208} -\Delta\vp_1=\sbr{8(1+\al)^2r^{2\al}\sbr{1+r^{2(1+\al)}}^{-2}-(1+\al)^2r^{-2}}\vp_1.\ee
Suppose $\xi_k\not\equiv0$ for some $k\ge1$. Since $\vp_1>0$ on $(0,+\iy)$, applying Sturm comparison theorem on $\xi_k$ and $\vp_1$, we get that $\xi_k$ never vanishes on $(0,+\iy)$. Without loss of generality, we assume $\xi_k>0$. Thanks to \eqref{label-200} and \eqref{label-208}, we get
	\[\begin{aligned}
		\vp_1(r)\xi_k'(r)r-\vp_1'(r)\xi_k(r)r &=\int_0^r(\vp_1\Delta\xi_k-\xi_k\Delta\vp_1)t\rd t \\
		&=(k^2-(1+\al)^2)\int_0^r\f{\xi_k\vp_1}{t}\rd t
	\end{aligned}\]
Since $\xi_k(r)=O(r^{-1/2})$ as $r\to\iy$, then we get a sequnece $r_n\to\iy$ such that $\xi_k'(r_n)r_n\le Cr_n^{-1/2}$ for some positive constant $C$. Thus, 
	\[0=\lim_{n\to\iy}\vp_1(r_n)\xi_k'(r_n)r_n-\vp_1'(r_n)\xi_k(r_n)r_n =(k^2-(1+\al)^2)\int_0^\iy\f{\xi_k\vp_1}{t}\rd t,\]
thanks to the integrability of $\f{\xi_k\vp_1}{t}$. This gives a contradiction since $\xi_k>0$, and hence we finish  the proof.
\end{proof}

\subsection*{Acknowledgements}  
Z. Chen was supported by National Key R\&D Program of China (Grant 2023YFA1010002) and NSFC (No. 1222109). H. Li is supported by the postdoctoral foundation of BIMSA.

\vskip0.1in

\end{document}